\newcolumntype{P}[1]{>{\centering\arraybackslash}p{#1}}
\newcolumntype{M}[1]{>{\centering\arraybackslash}m{#1}}
\normalfont\fontfamily{ptm}\fontsize{11}{11}\bfseries}{\thesection}{1em}{}
\normalfont\fontfamily{ptm}\fontsize{10}{11}\bfseries}{\thesubsection}{1em}{}
\normalfont\fontfamily{ptm}\fontsize{10}{11}\selectfont}{\thesubsubsection}{1em}{}
\newtheorem{theorem}{Theorem}[section]
\newtheorem{corollary}[theorem]{Corollary}
\newtheorem{lemma}[theorem]{Lemma}
\newtheorem{proposition}[theorem]{Proposition}
\newtheorem{definition}[theorem]{Definition}
\newtheorem{remark}[theorem]{Remark}
\newtheorem{example}[theorem]{Example}
\newtheorem{observation}[theorem]{Observation}
\newtheorem{property}[theorem]{Property}
\newenvironment{manualtheorem}[1]{
  
  \manualtheoreminner
}{\endmanualtheoreminner}
\DeclarePairedDelimiter\ceil{\lceil}{\rceil}
\DeclarePairedDelimiter\floor{\lfloor}{\rfloor}
\providecommand{\keywords}[1]
{
  \small	
  \quad \quad \textbf{\textit{Keywords --}} #1
}
\newcommand{\conv}[1]{\text{conv}\left({#1}\right)}
\newcommand{\ch}[1]{\text{ch}\left({#1}\right)}
\newcommand{\pa}[1]{\text{pa}\left({#1}\right)}
\newcommand{\dep}[1]{\text{depth}\left({#1}\right)}
\newcommand{\eged}{\hfill $\blacksquare$}
\title{\large On Constrained Mixed-Integer DR-Submodular Minimization}
\author{ \small Qimeng Yu \quad Simge K\"u\c{c}\"ukyavuz\vspace{0.2cm}  \\ \small Department of Industrial Engineering and Management Sciences \\ \small Northwestern University, Evanston, IL, USA \\ \small \{kim.yu@u.northwestern.edu, simge@northwestern.edu\}}
\date{\small \today} 
\begin{document}
\maketitle

\begin{abstract}
\noindent DR-submodular functions encompass a broad class of functions which are generally non-convex and non-concave. We study the problem of minimizing any DR-submodular function, with continuous and general integer variables, under box constraints and possibly additional monotonicity constraints. We propose valid linear inequalities for the epigraph of any DR-submodular function under the constraints. We further provide the complete convex hull of such an epigraph, which, surprisingly, turns out to be polyhedral. We propose a polynomial-time exact separation algorithm for our proposed valid inequalities, with which we first establish the polynomial-time solvability of this class of mixed-integer nonlinear optimization problems. 
\end{abstract}
\keywords{DR-submodular minimization; mixed-integer variables; polyhedral study; convex hull.}

\section{Introduction}
For a finite non-empty ground set $N = \{1,2,\dots,n\}$, we denote its power set by $2^N$. A set function $f:2^N\rightarrow \mathbb{R}$ is called \emph{submodular} if 
\[f(X) + f(Y) \geq f(X\cup Y) + f(X\cap Y)\] 
for any $X,Y\in 2^N$. Intuitively, submodular set functions model diminishing returns (DR). To see this, $f$ is submodular if the following equivalent condition holds:
\[f(X\cup\{i\}) - f(X) \geq f(Y\cup\{i\}) - f(Y)\]
for all $X\subseteq Y\subseteq N$ and every $i\in N\backslash Y$. Optimization problems concerning such objective functions have received great interest in integer programming and combinatorial optimization, driven by numerous applications including set covering \citep{wolsey1982analysis}, graph cuts \citep{goemans1995improved}, facility location \citep{cornuejols1977uncapacitated}, image segmentation \citep{jegelka2011submodularity}, sensor placement \citep{krause2008near}, and influence propagation \citep{kempe2015maximizing}. It is known that unconstrained submodular minimization is solvable in polynomial time \citep{lovasz1983submodular, grotschel1981ellipsoid, iwata2001combinatorial, lee2015faster, orlin2009faster, cunningham1985submodular, schrijver2000combinatorial, mccormick2005submodular, iwata2009simple, iwata2008submodular}. Whereas, constrained submodular minimization problems are NP-hard in general \citep{svitkina2011submodular}. In addition, submodular set function maximization can be efficiently approximated with strong guarantees \citep{nemhauser1978analysis, nemhauser1978best, lee2010maximizing, calinescu2007maximizing, sviridenko2004note, orlin2018robust}. \\

The notion of submodularity is extendable to functions that are defined over more general domains than $2^N$ or equivalently $\{0,1\}^n$. DR-submodularity is one such extension. We let $\mathbf{e}^i\in\mathbb{R}^n$ be a vector with one in the $i$-th entry and zero everywhere else. Formally, 
\begin{definition}
\label{def:dr}
A function $f:\mathcal{X}\subseteq \mathbb{R}^n\rightarrow \mathbb{R}$ is \emph{DR-submodular} if 
\[f(\mathbf{x}+\alpha\mathbf{e}^i)-f(\mathbf{x})\geq f(\mathbf{y}+\alpha\mathbf{e}^i)-f(\mathbf{y})\] 
for every $i\in \{1,2,\dots, n\}$, for all $\mathbf{x},\mathbf{y}\in\mathcal{X}$ with $\mathbf{x}\leq\mathbf{y}$ component-wise, and for all $\alpha\in\mathbb{R}_+$ such that $\mathbf{x}+\alpha\mathbf{e}^i, \mathbf{y}+\alpha\mathbf{e}^i\in\mathcal{X}$. 
\end{definition}
DR-submodular functions encompass a wide range of functions which are highly nonlinear in general. The domain $\mathcal{X}$ can be discrete or continuous. When $\mathcal{X}$ is continuous, the DR-submodular function can be concave, convex, or neither as shown in Figure \ref{fig:venn}. For instance, the quadratic function $f(\mathbf{z}) = -z_1^2 - 13z_1z_2+50z_1+30z_2$ in Figure \ref{fig:nonconvex_nonconcave} is DR-submodular, non-convex, and non-concave. Its DR-submodularity follows from the observation \citep{bian2017guaranteed} that a twice differentiable function $f:\mathcal{X}\rightarrow \mathbb{R}$ is DR-submodular if and only if all of its Hessian entries are non-positive at every $\mathbf{z}\in\mathcal{X}$. We note that such Hessian matrices may be negative semidefinite or indefinite. DR-submodularity trivially holds for any convex function restricted to the domain of a simplex, which is the set $\bigtriangleup := \{\mathbf{z}\in\mathbb{R}^n:\sum_{i=1}^n z_i = 1, \mathbf{z}\geq \mathbf{0}\}$. To see this, for any $\mathbf{z}^1,\mathbf{z}^2 \in \bigtriangleup$, $\mathbf{z}^1 \leq \mathbf{z}^2$ only holds when $\mathbf{z}^1 = \mathbf{z}^2$. For any $i\in \{1,\dots, n\}$, $\mathbf{z}^1 + \alpha \mathbf{e}^i\in\bigtriangleup$ only when $\alpha = 0$. DR-submodular functions have found immense utility in applications, such as optimal budget allocation, revenue management, energy management in sensor networks, stability number of graphs in combinatorial optimization, and mean field inference in machine learning \citep{bian2017guaranteed, motzkin1965maxima, soma2017non, soma2018maximizing, sadeghi2021faster}. Thus, it is important to examine the optimization problems with DR-submodular objective functions. \\
\begin{figure}[ht]
   \centering
   \includegraphics[width=5cm]{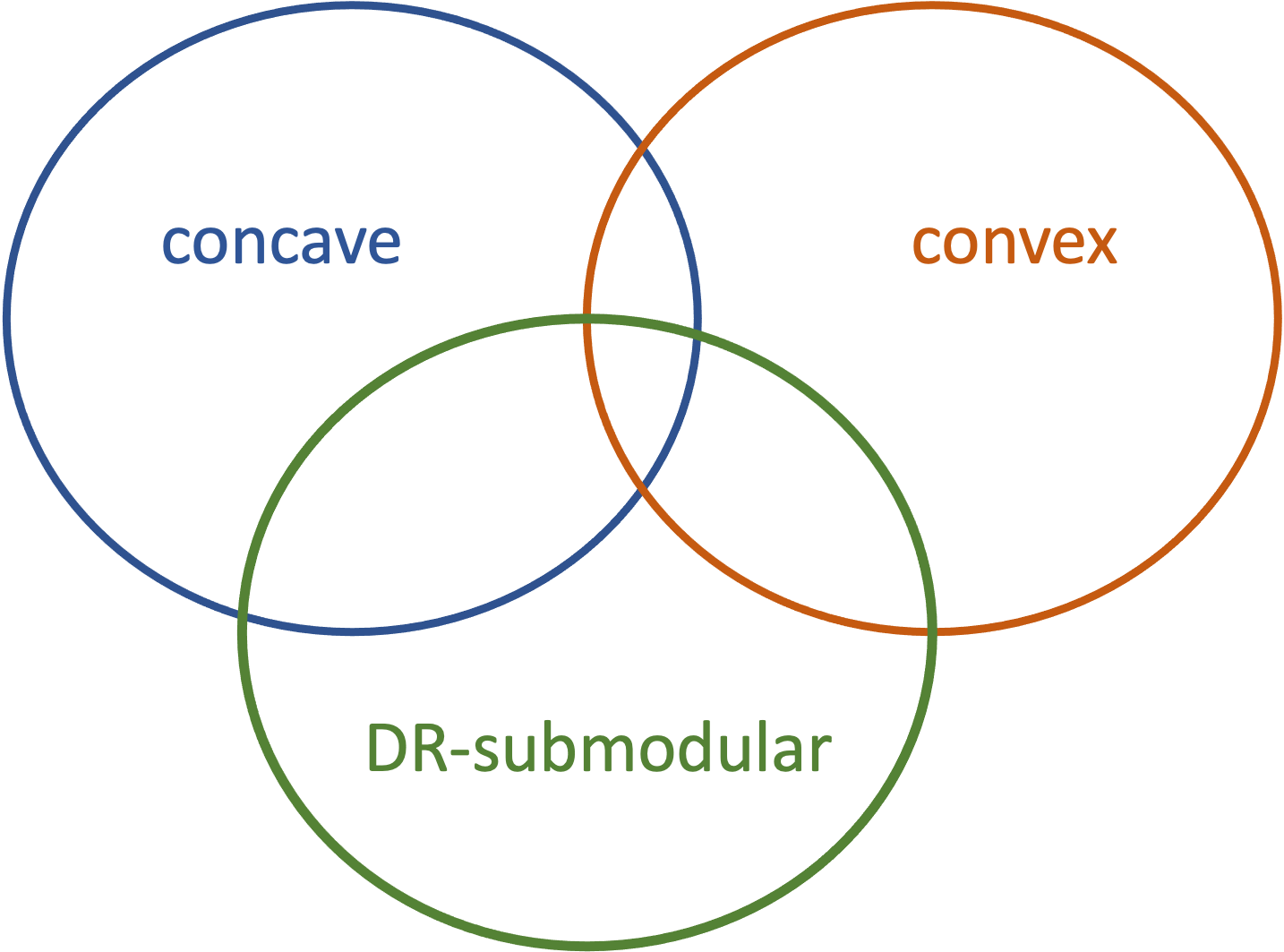} 
   \caption{Convex, concave, and DR-submodular functions (adapted from Figure 1 of \citep{bian2017guaranteed}).}
   \label{fig:venn}
\end{figure}

The current literature has predominantly focused on \emph{maximizing} DR-submodular functions. For example, approximation algorithms with provable guarantees \citep{bian2017guaranteed, bian2017continuous, hassani2017gradient, sadeghi2021faster} and a global optimization method \citep{medal2022spatial} have been proposed for constrained continuous DR-submodular maximization. Studies including \citep{soma2017non, soma2018maximizing} have also proposed approximation algorithms for DR-submodular maximization with integer variables. On the other hand, relatively few studies have considered \emph{minimizing} DR-submodular functions. \citet{ene2016reduction} provide a reduction of DR-submodular optimization with integer variables to submodular set function optimization. As noted by \citep{staib2017robust}, this reduction suggests that minimizing a DR-submodular function with pure integer variables under integer-valued box constraints is polynomial-time solvable, with time complexity dependent on the size of the decision space and logarithmic of the maximal upper bound value of the box constraints. A few works consider functions with another closely related notion of extended submodularity; that is, $f:\mathcal{X}\subseteq \mathbb{R}^n \rightarrow\mathbb{R}$ such that for all $\mathbf{x},\mathbf{y}\in\mathcal{X}$, $f(\mathbf{x}) + f(\mathbf{y}) \geq f(\mathbf{x}\wedge \mathbf{y}) + f(\mathbf{x}\vee \mathbf{y})$, where $\wedge$ and $\vee$ are the component-wise minimum and maximum operators, respectively. Such functions are referred to as \emph{submodular} \citep{bach2019submodular, staib2017robust, bian2017guaranteed, topkis1978minimizing, han2022polynomial} or \emph{lattice submodular} when $\mathcal{X}$ is an integer lattice \citep{ene2016reduction, soma2015generalization, soma2018maximizing}. The notion of (lattice) submodularity subsumes DR-submodularity as shown by \citep{bian2017guaranteed}, while the subclass of  DR-submodular functions has found more utility in real-world scenarios due to its natural diminishing returns property. \citet{bach2019submodular} considers the problem of minimizing (lattice) submodular functions restricted to box constraints and extends the results on Choquet integral by drawing connections with optimal transport. The proposed algorithm has time complexity dependent on the size of the problem as well as the upper bound values of the box constraints, making it pseudo-polynomial. \citet{topkis1978minimizing} explores the parametric (lattice) submodular minimization problems and discusses the properties of the minimizers. Concurrent with our paper, \citet{han2022polynomial} build upon \citep{topkis1978minimizing} and establish the polynomial-time solvability of a class of submodular minimization problems with mixed-binary variables under box constraints that are turned on or off by the binary variables. \\

To this end, it is unknown whether there is a polynomial-time algorithm for minimizing DR-submodular functions with \emph{mixed general integer} variables, under constraints that are beyond box constraints. We bridge this gap by examining DR-submodular minimization under box constraints, and possibly additional monotonicity constraints, with mixed general integer decision variables. Specifically, we would like to propose an algorithm with time complexity solely dependent on the size of the decision space and independent from the values of the upper bounds on the variables. The monotonicity constraints (see Section \ref{sect:prob_desc}) arise in submodular set function minimization over ring families \citep{lee2015faster, orlin2009faster, schrijver2000combinatorial} and monotone systems of linear inequalities (i.e., two variables per inequality with coefficients of opposite signs) \citep{cohen1991improved, shostak1981deciding, aspvall1980polynomial, hochbaum1994simple}. \\

We conduct a polyhedral study on this class of mixed-integer nonlinear optimization problems. The polyhedral approach has demonstrated its effectiveness in attaining global optimal solutions in submodular optimization, especially in the presence of complicating constraints. \citet{edmonds2003submodular} proposes extended polymatroid inequalities and with which establishes an explicit linear convex hull description for the epigraph of any submodular set function in this seminal work. For unconstrained submodular set function maximization, \citet{wolsey1999integer} provide a class of valid linear inequalities for the hypograph of any submodular set function, enabling the reformulation of the original nonlinear program as a mixed-integer linear program. Works including \citet{ahmed2011maximizing,yu2017maximizing,shi2022sequence,yu2017polyhedral,yu2021strong} further strengthen the aforementioned polyhedral results for constrained submodular set function minimization and maximization problems. \citet{yu2020polyhedral, yu2021exact} characterize the convex hulls of the epigraph and hypograph of another class of generalized submodular functions called the $k$-submodular functions (i.e., functions with  $k\geq 2$ set arguments that maintain submodularity), which yield efficient exact solution methods for the class of integer nonlinear optimization problems with $k$-submodular objective functions. \citet{gomez2018submodularity, atamturk2020submodularity, atamturk2020supermodularity, kilincc2020conic, atamturk2008polymatroids, atamturk2019lifted} exploit the underlying submodularity and improve the formulations of mixed-binary convex quadratic and conic optimization problems. The polyhedral approach has also been successfully applied to tackle submodular optimization in stochastic settings \citep{wu2018two, wu2019probabilistic, wu2020exact, kilincc2021joint, xie2019distributionally, zhang2018ambiguous, shen2022chance} as well as minimization of general set functions \citep{atamturk2021submodular}. Next, we provide a summary of our contributions.

\begin{figure}[ht]
   \centering
   \includegraphics[width=9cm]{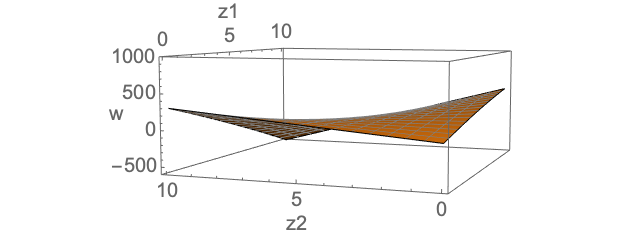} 
   \includegraphics[width=6.5cm]{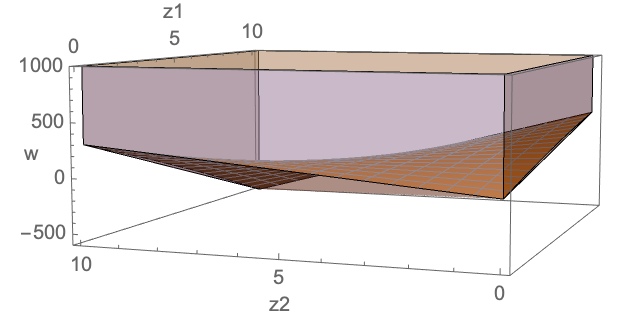} 
   \caption{A continuous DR-submodular function $f(\mathbf{z}) = -z_1^2 - 13z_1z_2+50z_1+30z_2$ that is non-convex and non-concave (left). The convex hull of its epigraph over the box-constrained set $\{\mathbf{z}\in\mathbb{R}\times \mathbb{Z}: 0\leq z_1, z_2 \leq 10\}$ (right).}
   \label{fig:nonconvex_nonconcave}
\end{figure}

\subsection{Our Contributions}
We study the problem of minimizing any DR-submodular function under box constraints, and possibly additional monotonicity constraints, with mixed-integer (including pure integer and pure continuous) decision variables. We introduce the novel \emph{DR-submodular inequalities} and provide the complete convex hull description of the epigraph of any DR-submodular function under the aforementioned constraints. Such a convex hull turns out to be polyhedral (see Figure \ref{fig:nonconvex_nonconcave} for an example). We further propose a polynomial-time exact separation algorithm for our proposed DR-submodular inequalities. In contrast to the existing literature, our approach avoids unary or binary representations of the integer variables. To the best of our knowledge, we are the first to establish the polynomial time complexity of this class of constrained mixed-integer nonlinear optimization problems.

\subsection{Outline}
We describe our problem and introduce our notation and assumptions in Section \ref{sect:prob_desc}. We then explore the properties of DR-submodular functions in Section \ref{sect:prop}. In Section \ref{sect:conv_ZGu}, we provide the complete convex hull description for the mixed-integer feasible set. We further derive helpful properties of such a convex hull in Section \ref{sect:prop_ZGu}. Next, we propose a novel class of linear inequalities, which we call the DR-submodular inequalities, and prove its validity for the epigraph of any DR-submodular function under the constraints of interest in Section \ref{sect:valid}. Lastly, we give the full characterization of the epigraph convex hull and propose an exact separation algorithm for the DR-submodular inequalities in Section \ref{sect:punchline}.

\section{Problem Description}
\label{sect:prob_desc}
Given a DR-submodular function $f:\mathcal{X}\subseteq \mathbb{R}^{n+m}\rightarrow\mathbb{R}$, we consider the minimization problem 
\begin{equation}
\label{eq:DR_min}
\min_{\mathbf{z}\in\mathcal{Z}(\mathcal{G},\mathbf{u})} f(\mathbf{z}), 
\end{equation}
where $\mathcal{Z}(\mathcal{G},\mathbf{u})$ is defined by box constraints and possibly monotonicity constraints: 
\begin{equation}
\label{eq:ZGU}
\mathcal{Z}(\mathcal{G},\mathbf{u}) := \{\mathbf{z}\in\mathbb{Z}^n\times\mathbb{R}^m : \mathbf{0}\leq \mathbf{z}\leq \mathbf{u}, z_i \leq z_j, \: \forall\: (i,j)\in\mathcal{A}\}.
\end{equation}
This feasible set is mixed-integer with $n$ discrete variables (not necessarily binary) and $m$ continuous variables. We let $N = \{i\in \{1,\dots,n+m\} : z_i\in\mathbb{Z}\}$ be the index set for the integer variables and $M = \{1,\dots,n+m\} \backslash N$ be the index set for the continuous variables. We allow $n$ or $m$ to be zero. When $n=0$, $\mathcal{Z}(\mathcal{G},\mathbf{u})$ is a continuous feasible region, and when $m=0$, all variables are discrete. Here, $\mathcal{G} = (\mathcal{V}, \mathcal{A})$ is a directed rooted forest with arcs pointing away from the root of each tree. We formally introduce the terms related to directed rooted forests in the next paragraph. The vertex set $\mathcal{V} = \{1,\dots, n+m\}$ is finite and corresponds to the indices of the decision variables. The arc set $\mathcal{A}\subset \mathcal{V}\times \mathcal{V}$ indicates the partial order on these variables. The vector $\mathbf{u}\in\mathbb{R}^{n+m}$ serves as the upper bounds on the decision variables $\mathbf{z}$; $0\leq u_i \leq \infty$ for every $i\in\mathcal{V}$. We denote the epigraph of $f$ under $\mathcal{Z}(\mathcal{G},\mathbf{u})$ by 
\[\mathcal{P}_f^{\mathcal{Z}(\mathcal{G},\mathbf{u})} := \{(\mathbf{z},w)\in\mathcal{Z}(\mathcal{G},\mathbf{z})\times\mathbb{R}: w\geq f(\mathbf{z})\}.\]
Problem \eqref{eq:DR_min} can be equivalently stated as 
\begin{equation}
\label{eq:DR_min_epi}
\min \left\{w: (\mathbf{z},w)\in\conv{\mathcal{P}_f^{\mathcal{Z}(\mathcal{G},\mathbf{u})}}\right\}.
\end{equation}
Surprisingly, despite the nonlinearity of $f$ and the mixed-integer restrictions of $\mathcal{Z}(\mathcal{G},\mathbf{u})$, we find out that $\conv{\mathcal{P}_f^{\mathcal{Z}(\mathcal{G},\mathbf{u})}}$ is polyhedral, which we will elaborate on in later sections. With our full characterization of $\conv{\mathcal{P}_f^{\mathcal{Z}(\mathcal{G},\mathbf{u})}}$, the mixed-integer nonlinear program \eqref{eq:DR_min} becomes a linear program with continuous variables. \\

We now introduce concepts related to directed rooted forests and set forth relevant notation. A \emph{tree} is an undirected graph where every pair of vertices are connected by exactly one path, and a \emph{forest} is a disjoint union of trees. We note that a tree is connected and contains no cycles. A \emph{directed tree} is a directed acyclic graph whose undirected counterpart is a tree. A tree is \emph{rooted} when one vertex is designated the root. A \emph{directed rooted tree} is a rooted tree with all of its arcs either pointing away or pointing toward its root. In this work, we assume that all the arcs in every directed rooted tree point away from the root. The \emph{height} of a directed rooted tree is the number of arcs in the longest directed path from its root to any vertex. If the directed rooted tree consists of only the root node, then we say its height is zero. A disjoint union of such directed rooted trees, each referred to as a component, is what we call a \emph{directed rooted forest}. Specifically, we assume that all the arcs in a directed rooted forest point away from the roots of its components. \\

We denote a directed rooted forest by $\mathcal{G} = (\mathcal{V},\mathcal{A})$, where $\mathcal{V}$ is the vertex set and $\mathcal{A}\subseteq \mathcal{V}\times\mathcal{V}$ is the arc set. For any $\mathcal{V}'\subset \mathcal{V}$, we define $\mathcal{G}(\mathcal{V}')$ to be a subgraph of $\mathcal{G}$ with vertices $\mathcal{V}'$ and arcs $\mathcal{A}\cap\mathcal{V}'\times\mathcal{V}'$. For any $i\in\mathcal{V}$, we let $R^+(i)\subseteq \mathcal{V}$ be the set of vertices that $i$ can reach along the arcs in $\mathcal{A}$. We call such vertices the \emph{descendants} of $i$. Similarly, we let $R^-(i)\subseteq \mathcal{V}$ denote the vertices that can reach $i$ following the arcs in $\mathcal{A}$, and we call them the \emph{ascendants} of $i$. We note that \emph{$i\in R^+(i)$} and  \emph{$i\in R^-(i)$} for all $i\in\mathcal{V}$, and that $R^+(i),R^-(i)$ belong to the same component in $\mathcal{G}$ because the components are disjoint. The \emph{parent} of $i\in\mathcal{V}$, or $\pa{i}$, is $j\in R^-(i)$ such that $(j,i)\in\mathcal{A}$. Every vertex $i$ has at most one parent because there is only one path from the root node of the component containing $i$ to $i$ itself. We call every $j\in R^+(i)$ with $(i,j)\in\mathcal{A}$ a \emph{child} of $i$. We represent the set of children of $i$ by $\ch{i}$ for any $i\in\mathcal{V}$. The \emph{depth} of any vertex $i\in\mathcal{V}$ is the number of arcs in the path from the root  to $i$ in the component that $i$ belongs to, and we denote it by $\dep{i}$.

\begin{figure}[htbp]
   \centering
   \includegraphics[width=6cm]{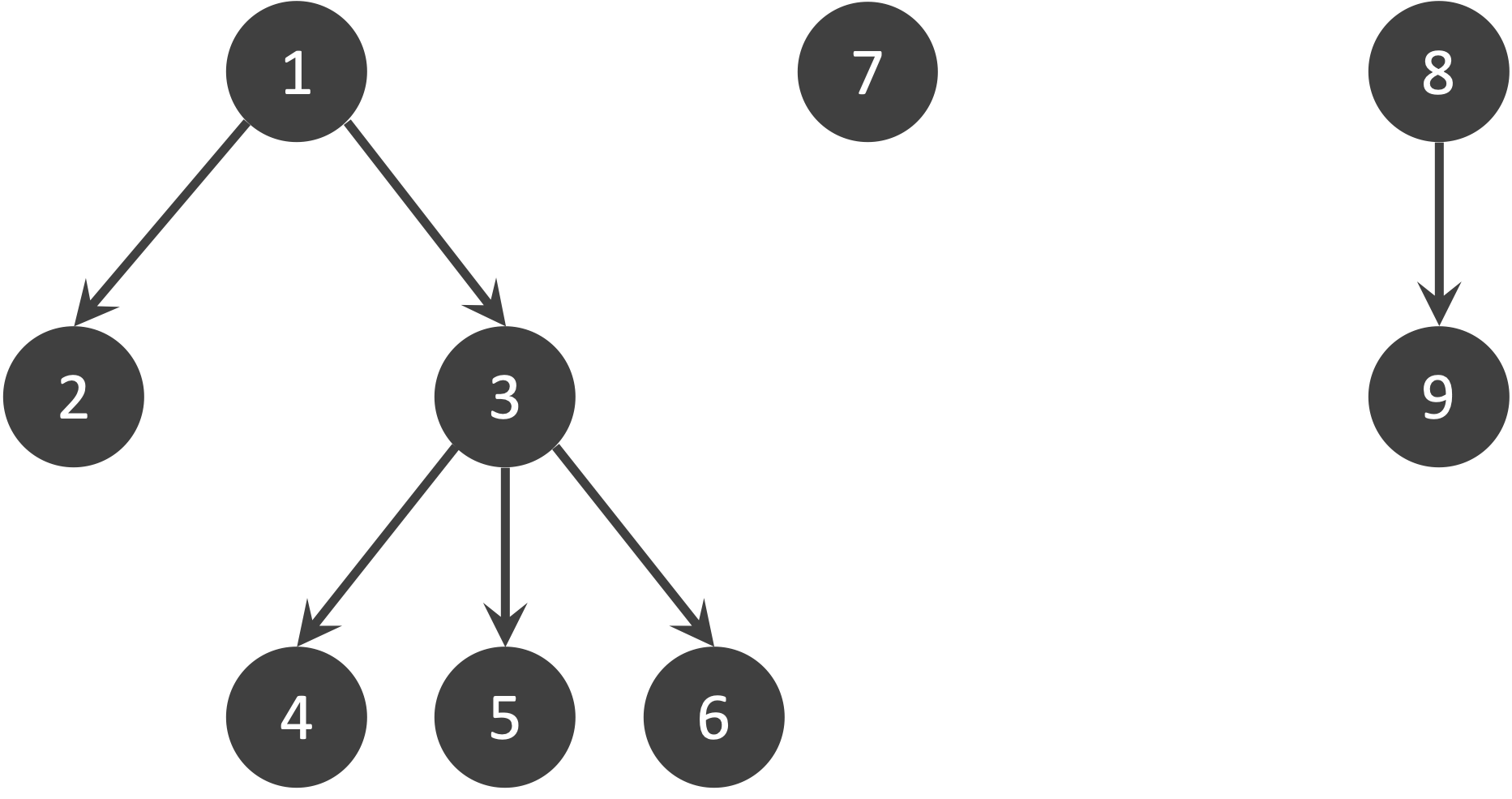} 
   \caption{An example of a directed rooted forest with all the arcs pointing away from the roots. }
   \label{fig:eg_forest}
\end{figure}
\begin{example}
The digraph $\mathcal{G}$ in Figure \ref{fig:eg_forest} is a directed rooted forest with $\mathcal{V} = \{1,2,\dots,9\}$ and $\mathcal{A} = \{(1,2),(1,3),\\(3,4),(3,5),(3,6),(8,9)\}$. It contains three disjoint directed rooted trees with root nodes $1, 7$ and $8$, respectively. All the arcs point away from the roots. The component with root vertex $1$ has height two. Take vertex 3 as an example, we observe that $R^+(3) = \{3,4,5,6\}$, $R^-(3) = \{1,3\}$, and $\dep{3} = 1$. This configuration entails that $z_1\leq z_2$, $z_8\leq z_9$, and $z_1\leq z_3 \leq z_4,z_5,z_6$ in $\mathcal{Z}(\mathcal{G},\mathbf{u})$. Variable $z_7$ has no partial order restrictions in this example, as vertex 7 has no parent or child. \eged
\end{example}

We consider directed rooted forests because directed rooted trees contain no cycles, and the monotonicity constraints do not imply equalities. We note that not all $z_i$ for $i\in \mathcal{V}$ have to be present in the partial order---every vertex in $\mathcal{G}$ is allowed to have no children and no parent. When $\mathcal{A} = \emptyset$, $\mathcal{Z}(\mathcal{G},\mathbf{u})$ reduces to the box-constrained mixed-integer set $\{\mathbf{z}\in\mathbb{Z}^n\times\mathbb{R}^m : \mathbf{0}\leq \mathbf{z}\leq \mathbf{u}\}$. For any $\alpha\in\mathbb{R}$, we let $\floor{\alpha}$ and $\ceil{\alpha}$ denote the floor and ceiling of $\alpha$, respectively; in particular, when $\alpha\in\mathbb{Z}$, we let $\floor{\alpha} = \alpha -1$ and $\ceil{\alpha} = \alpha$. We assume that $f$ is defined over $\mathcal{X}\supseteq \{\mathbf{z}\in\mathbb{R}^{|\mathcal{V}|}: \mathbf{0} \leq \mathbf{z} \leq \mathbf{u}, z_i \leq z_j, \: \forall\: (i,j)\in\mathcal{A}\}$, which is a set containing the continuous relaxation of $\mathcal{Z}(\mathcal{G},\mathbf{u})$. \\

Without loss of generality, we assume that the following holds in problem \eqref{eq:DR_min}. Given any $i\in N$, we let $u_i\in\mathbb{Z}$ because we may round its fractional upper bound down to the nearest integer. We also let $u_i >0$ for all $i\in \mathcal{V}$ to rid the trivial variables.  Naturally, we assume that the upper bounds follow the same partial order of $\mathcal{G}$; that is, if $(i,j)\in\mathcal{A}$, then $u_i \leq u_j$. Furthermore, we assume that $f(\mathbf{0}) = \mathbf{0}$. This can be achieved by shifting $f(\cdot)$ by a constant. \\

When problem \eqref{eq:DR_min} has a finite optimal objective value, we may assume $u_i < \infty$ for all $i\in\mathcal{V}$ without loss of generality. Suppose there exists $i\in \mathcal{V}$ such that $u_i = \infty$. Given that $\mathbf{u}$ follows the partial order imposed by $\mathcal{G}$, all descendants of $i$ are unbounded from above. That is, $u_j = \infty$ for all $j\in R^+(i)$. If problem \eqref{eq:DR_min} has a finite optimal objective value, then we may impose finite upper bounds on $j\in R^+(i)$ without affecting the optimal solutions. Let $\overline{\mathbf{z}}$ be an optimal solution to problem \eqref{eq:DR_min}. We show that $\mathbf{z}^*$, given by $z^*_k = \overline{z}_k$ for $k\in \mathcal{V}\backslash R^+(i)$ and 
\[z^*_j = \begin{cases}
\overline{z}_j, & \text{if } \overline{z}_j < \ceil{\overline{z}_{\pa{i}}}, \\
\ceil{\overline{z}_{\pa{i}}}, & \text{otherwise,}
\end{cases}
\] 
for $j\in R^+(i)$, is also an optimal solution. Here, $\overline{z}_{\pa{i}} = 0$ when $\pa{i}$ does not exist. Due to feasibility of $\overline{\mathbf{z}}$, $\overline{z}_{\pa{i}} \leq \overline{z}_j$ for all $j\in R^+(i)$. If $\overline{z}_{\pa{i}} \leq \overline{z}_j < \ceil{\overline{z}_{\pa{i}}}$, then $\overline{z}_{\pa{i}}\notin\mathbb{Z}$ and the variable $z_j$ must be continuous. By construction, $\mathbf{z}^*$ is feasible to problem \eqref{eq:DR_min}, and $\overline{z}_j - z^*_j\geq 0$ for $j\in R^+(i)$. We now construct $\mathbf{z}'\in\mathbb{R}^{|\mathcal{V}|}$ such that $z'_k = \overline{z}_k = z^*_k$ for $k\in\mathcal{V}\backslash R^+(i)$. Whereas for $j\in R^+(i)$, $z'_j = \overline{z}_j + (\overline{z}_j - z^*_j)$. If $j\in N$, then $\overline{z}_j, z^*_j\in\mathbb{Z}$, which makes $\mathbf{z}'$ feasible with respect to the integrality constraints. Now consider any $j^1,j^2\in R^+(i)$ such that $(j^1,j^2)\in\mathcal{A}$. If $\overline{z}_{j^1} \leq \overline{z}_{j^2} < \ceil{\overline{z}_{\pa{i}}}$, then $z'_{j^1} = \overline{z}_{j^1} + (\overline{z}_{j^1} - \overline{z}_{j^1}) \leq \overline{z}_{j^2} + (\overline{z}_{j^2} - \overline{z}_{j^2}) = z'_{j^2}$. Similarly, if $\ceil{\overline{z}_{\pa{i}}} \leq \overline{z}_{j^1} \leq \overline{z}_{j^2}$, then $z'_{j^1} = \overline{z}_{j^1} + (\overline{z}_{j^1} - \ceil{\overline{z}_{\pa{i}}}) \leq \overline{z}_{j^2} + (\overline{z}_{j^2} - \ceil{\overline{z}_{\pa{i}}}) = z'_{j^2}$. Lastly, we could have $\overline{z}_{j^1} < \ceil{\overline{z}_{\pa{i}}} \leq \overline{z}_{j^2}$. In this case, $z'_{j^1} = \overline{z}_{j^1} + (\overline{z}_{j^1} - \overline{z}_{j^1}) \leq \overline{z}_{j^2} \leq  \overline{z}_{j^2} + (\overline{z}_{j^2} - \ceil{\overline{z}_{\pa{i}}}) = z'_{j^2}$. Therefore, $\mathbf{z}'$ also satisfies the monotonicity constraints, and it is feasible to problem \eqref{eq:DR_min}. Given that $\overline{\mathbf{z}}$ is a minimizer, $f(\overline{\mathbf{z}}) \leq f(\mathbf{z}^*)$ and $f(\overline{\mathbf{z}}) \leq f(\mathbf{z}')$. We arbitrarily index the elements of $R^+(i)$ such that $R^+(i) = \{j^1, \dots, j^{|R^+(i)|}\}$. We observe that 
\begingroup
\allowdisplaybreaks
\begin{align*}
f(\overline{\mathbf{z}}) - f(\mathbf{z}^*) & = f\left(\mathbf{z}^* + \sum_{\ell=1}^{|R^+(i)|} (\overline{z}_{j^\ell} - z^*_{j^\ell})\mathbf{e}^{j^\ell} \right) - f(\mathbf{z}^*) \\
& = \sum_{\kappa=1}^{|R^+(i)|} \left[f\left(\mathbf{z}^* + \sum_{\ell=1}^{\kappa} (\overline{z}_{j^\ell} - z^*_{j^\ell})\mathbf{e}^{j^\ell} \right) - f\left(\mathbf{z}^* + \sum_{\ell=1}^{\kappa-1} (\overline{z}_{j^\ell} - z^*_{j^\ell})\mathbf{e}^{j^\ell} \right)\right] \\
& \geq \sum_{\kappa=1}^{|R^+(i)|} \left[f\left(\overline{\mathbf{z}} + \sum_{\ell=1}^{\kappa} (\overline{z}_{j^\ell} - z^*_{j^\ell})\mathbf{e}^{j^\ell} \right) - f\left(\overline{\mathbf{z}} + \sum_{\ell=1}^{\kappa-1} (\overline{z}_{j^\ell} - z^*_{j^\ell})\mathbf{e}^{j^\ell} \right)\right]  \\
& \quad \text{(by DR-submodularity of $f$)} \\
& = f(\mathbf{z}') - f(\overline{\mathbf{z}}),
\end{align*}
\endgroup
which means that $2f(\overline{\mathbf{z}}) \geq f(\mathbf{z}^*) + f(\mathbf{z}')$. We conclude that $f(\overline{\mathbf{z}}) = f(\mathbf{z}^*)$. An implication of the discussion above is that we may impose finite upper bounds on $z_j$ for $j\in R^+(i)$, such as $u_j = \ceil{u_{\pa{i}}}$, when problem \eqref{eq:DR_min} has a finite optimal objective value.

\section{Properties of DR-submodular Functions}
\label{sect:prop}
In this section, we provide a few useful properties of the DR-submodular functions. {We state these properties in a series of lemmas whose proofs are in Appendix \ref{sec:app}, along with other  proofs omitted in the main paper.} 

Let $\mathcal{X}_i\subseteq \mathbb{R}$ be a compact set for $i\in\{1,2,\dots,n+m\}$; particularly, $\mathcal{X}_i$ is a closed interval for every $i\in M$. Throughout this section, function $f:\prod_{i=1}^{n+m}\mathcal{X}_i \subseteq \mathbb{Z}^n\times \mathbb{R}^m \rightarrow \mathbb{R}$ is DR-submodular. 

\begin{lemma}
\label{lem:int_scale_increment}
Let any $\mathbf{z}\in\mathcal{X}$ and any index $i\in N$ be given. For all $\alpha,\beta\in\mathbb{Z}$ such that $0 < \alpha < \beta$ and $\mathbf{z}+\alpha \mathbf{e}^i, \mathbf{z}+\beta \mathbf{e}^i\in\mathcal{X}$, 
\[f(\mathbf{z}+\alpha \mathbf{e}^i)-f(\mathbf{z})\geq \frac{\alpha}{\beta}[f(\mathbf{z}+\beta \mathbf{e}^i)-f(\mathbf{z})].\] 
\end{lemma}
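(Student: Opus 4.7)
The plan is to reduce the claim to the elementary fact that for a non-increasing sequence of real numbers, the running averages are also non-increasing. The bridge between this fact and DR-submodularity is obtained by writing the finite differences $f(\mathbf{z}+\alpha\mathbf{e}^i)-f(\mathbf{z})$ and $f(\mathbf{z}+\beta\mathbf{e}^i)-f(\mathbf{z})$ as telescoping sums of unit-step increments along coordinate $i$.

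Concretely, first I would define, for $k=1,\dots,\beta$, the single-unit marginals
\[
\delta_k \;:=\; f(\mathbf{z}+k\mathbf{e}^i)-f(\mathbf{z}+(k-1)\mathbf{e}^i),
\]
after observing that each intermediate point $\mathbf{z}+k\mathbf{e}^i$ lies in $\mathcal{X}$: the non-$i$ coordinates are unchanged, while the $i$-th coordinate lies between the two integers $z_i$ and $z_i+\beta$, both in $\mathcal{X}_i$, which for $i\in N$ contains the consecutive integers up to $u_i$. Next I would apply Definition \ref{def:dr} with $\mathbf{x}=\mathbf{z}+(k-1)\mathbf{e}^i$, $\mathbf{y}=\mathbf{z}+k\mathbf{e}^i$, and step size $1$ (noting that these choices satisfy $\mathbf{x}\leq\mathbf{y}$ and differ only in coordinate $i\in N$, so a unit step is admissible) to conclude $\delta_k \geq \delta_{k+1}$ for $k=1,\dots,\beta-1$. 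Hence $\{\delta_k\}_{k=1}^{\beta}$ is a non-increasing sequence.

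Finally, using the telescoping identities $f(\mathbf{z}+\alpha\mathbf{e}^i)-f(\mathbf{z})=\sum_{k=1}^{\alpha}\delta_k$ and $f(\mathbf{z}+\beta\mathbf{e}^i)-f(\mathbf{z})=\sum_{k=1}^{\beta}\delta_k$, the desired inequality is equivalent to
\[
\beta\sum_{k=1}^{\alpha}\delta_k \;\geq\; \alpha\sum_{k=1}^{\beta}\delta_k,
\]
which I would prove by rewriting the difference as $\alpha(\beta-\alpha)$ times the gap between the average of $\delta_1,\dots,\delta_\alpha$ and the average of $\delta_{\alpha+1},\dots,\delta_\beta$; monotonicity of $\{\delta_k\}$ guarantees this gap is non-negative. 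There is no real obstacle here: the only point that needs a moment's care is verifying that every intermediate iterate $\mathbf{z}+k\mathbf{e}^i$ lies in the domain so that Definition \ref{def:dr} can be invoked at each step, and that the hypothesis $i\in N$ is exactly what lets us take unit (and hence integer) increments.
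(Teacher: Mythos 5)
Your proposal is correct and takes essentially the same approach as the paper: both decompose the increments into unit-step marginals $\delta_k$, use DR-submodularity to show $\{\delta_k\}$ is non-increasing, and conclude via a telescoping/averaging argument. The paper packages the last step slightly differently (first proving $[f(\mathbf{z}+\alpha\mathbf{e}^i)-f(\mathbf{z})]/\alpha \geq f(\mathbf{z}+(\alpha+1)\mathbf{e}^i)-f(\mathbf{z}+\alpha\mathbf{e}^i)$ and then telescoping from $\alpha$ to $\beta$), whereas you directly invoke the fact that running averages of a non-increasing sequence are non-increasing, which is a clean and equivalent phrasing.
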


\begin{lemma}
\label{lem:con_scale_increment}
Let any $\mathbf{z}\in\mathcal{X}$ and any index $i\in M$ be given. For any $\alpha,\beta\in\mathbb{R}$ with $0 < \alpha < \beta$ such that $\mathbf{z}+\alpha \mathbf{e}^i, \mathbf{z}+\beta \mathbf{e}^i\in\mathcal{X}$, 
\[f(\mathbf{z}+\alpha \mathbf{e}^i)-f(\mathbf{z})\geq \frac{\alpha}{\beta}[f(\mathbf{z}+\beta \mathbf{e}^i)-f(\mathbf{z})].\] 
\end{lemma}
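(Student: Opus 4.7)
The plan is to reduce the claim to a statement about the one-dimensional slice $g(t) := f(\mathbf{z} + t\mathbf{e}^i) - f(\mathbf{z})$ on $t \in [0, \beta]$, for which $g(0) = 0$ and the target inequality reads $g(\alpha) \geq (\alpha/\beta)\,g(\beta)$. DR-submodularity of $f$ descends to $g$ in the following strong form: for every fixed step $\delta > 0$ (real, not just integer), the consecutive differences $g(j\delta) - g((j-1)\delta)$ are non-increasing in $j$ so long as $j\delta$ stays in $[0, \beta]$. This follows by applying Definition~\ref{def:dr} with $\mathbf{x} = \mathbf{z} + (j-1)\delta\,\mathbf{e}^i$, $\mathbf{y} = \mathbf{z} + j\delta\,\mathbf{e}^i$, and increment $\delta$.

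First I would handle the case of rational $\alpha/\beta$. Writing $\alpha/\beta = p/q$ with $0 < p < q$ positive integers, set the common step $\delta := \alpha/p = \beta/q$. The first $p$ consecutive differences at step $\delta$ sum telescopically to $g(\alpha)$, while all $q$ of them sum to $g(\beta)$. Since the sequence of differences is non-increasing, the average of the first $p$ terms dominates the average of all $q$ terms, giving $g(\alpha)/p \geq g(\beta)/q$, which rearranges to the desired inequality. This is essentially the same averaging argument as in Lemma~\ref{lem:int_scale_increment}, transplanted from the integer scale to the continuous scale $\delta$.

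For general real $\alpha, \beta$ with possibly irrational ratio, I would approximate $\alpha/\beta$ from below by a sequence of rationals $p_n/q_n \nearrow \alpha/\beta$, so that $\alpha_n := (p_n/q_n)\beta \nearrow \alpha$, and apply the rational case to obtain $g(\alpha_n) \geq (\alpha_n/\beta)\,g(\beta)$ for every $n$. The main obstacle is then passing to the limit on the left-hand side, which requires some continuity of $g$ near $\alpha$. This is ensured by the observation that the non-increasing-increment property of $g$ at every real step size yields Jensen mid-concavity of $g$; combined with boundedness on the compact interval $[0, \beta]$, this forces $g$ to be concave in $t$, and hence continuous on the interior of $[0, \beta]$. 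One then has $g(\alpha_n) \to g(\alpha)$, and the rational inequalities survive the limit to give $g(\alpha) \geq (\alpha/\beta)\,g(\beta)$, as required.
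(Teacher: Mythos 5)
Your proof is correct, but it takes a more circuitous route than the paper. The paper's argument is short: applying Definition~\ref{def:dr} with $\mathbf{x}$, $\mathbf{y} = \mathbf{x} + (\lambda/2)\mathbf{e}^i$, and increment $\lambda/2$ yields
\[
f\left(\mathbf{x}+\tfrac{\lambda}{2}\mathbf{e}^i\right) - f(\mathbf{x}) \geq f(\mathbf{x}+\lambda\mathbf{e}^i)-  f\left(\mathbf{x}+ \tfrac{\lambda}{2}\mathbf{e}^i\right),
\]
which rearranges at once into mid-point concavity of $f$ along the $i$-th coordinate; the authors then declare $f$ concave on the interval $\mathcal{X}_i$, and the claimed inequality is simply the three-chord bound for a concave one-variable function with $g(0) = 0$, since $\alpha = (\alpha/\beta)\beta + (1-\alpha/\beta)\cdot 0$. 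Your proposal reaches the same conclusion with extra scaffolding: you first prove the rational case by transplanting the telescoping-average argument of Lemma~\ref{lem:int_scale_increment} to an arbitrary real step size $\delta$, and only then invoke concavity (obtained from mid-concavity plus a regularity hypothesis) to control the limit in the irrational case. Once concavity of $g$ is in hand, the rational case is already subsumed, so that whole step is redundant work. That said, your version is more honest about the one subtlety both arguments share: passing from mid-point concavity to genuine concavity requires some regularity of $f$ along $\mathcal{X}_i$ (measurability, or local boundedness above). The paper asserts the passage without comment; you at least name a boundedness hypothesis, but you do not justify it---mid-point concavity on a compact interval does not by itself force boundedness---so in the end you and the paper are resting on the same unstated regularity assumption.
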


Lemma \ref{lem:con_scale_increment} is the continuous analogue of Lemma \ref{lem:int_scale_increment}. In fact, function $f$ has concavity-like properties along every non-negative or non-positive directions. 

\begin{lemma}
\label{lem:pos_dir_increment}
Let any $\mathbf{z}\in\mathcal{X}$ and any non-negative direction $\mathbf{d} = \sum_{i=1}^{n+m} d_i\mathbf{e}^i \geq \mathbf{0}$ be given. For a real number $0\leq t\leq 1$, such that $\mathbf{z} + \sum_{j=1}^i d_j\mathbf{e}^j, \mathbf{z} + t\sum_{j=1}^i d_j\mathbf{e}^j \in\mathcal{X}$ for all $i\in\{1,\dots,n+m\}$, the inequality 
\[f(\mathbf{z}+t\mathbf{d})-f(\mathbf{z})\geq t[f(\mathbf{z}+\mathbf{d})-f(\mathbf{z})]\] 
is satisfied. 
\end{lemma}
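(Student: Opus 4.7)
The plan is to telescope the displacement $t\mathbf d$ coordinate by coordinate and bound each increment separately. Define the two piecewise paths
\[
\mathbf{z}^{i} \;:=\; \mathbf{z}+t\sum_{j=1}^{i} d_j\mathbf{e}^{j},
\qquad
\hat{\mathbf{z}}^{i} \;:=\; \mathbf{z}+\sum_{j=1}^{i} d_j\mathbf{e}^{j},
\]
for $i\in\{0,1,\dots,n+m\}$, with $\mathbf{z}^{0}=\hat{\mathbf{z}}^{0}=\mathbf{z}$. By hypothesis each $\mathbf{z}^{i}$ and $\hat{\mathbf{z}}^{i}$ lies in $\mathcal{X}$, so I can write
\[
f(\mathbf{z}+t\mathbf{d})-f(\mathbf{z}) \;=\; \sum_{i=1}^{n+m}\bigl[f(\mathbf{z}^{i-1}+td_i\mathbf{e}^{i})-f(\mathbf{z}^{i-1})\bigr],
\]
and similarly $f(\mathbf{z}+\mathbf{d})-f(\mathbf{z})=\sum_{i=1}^{n+m}[f(\hat{\mathbf{z}}^{i-1}+d_i\mathbf{e}^{i})-f(\hat{\mathbf{z}}^{i-1})]$. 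The goal reduces to showing, for each $i$, that the $i$-th term of the first sum is at least $t$ times the $i$-th term of the second.

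I would do this in two steps at each coordinate $i$ (assuming $d_i>0$; otherwise the term is $0$ on both sides). First, at the base point $\mathbf{z}^{i-1}$, the step sizes $td_i$ and $d_i$ satisfy $0<td_i<d_i$, so the appropriate scaling lemma applies: Lemma \ref{lem:int_scale_increment} when $i\in N$ (noting that $d_i,td_i\in\mathbb{Z}$ is forced by $z_i\in\mathbb{Z}$ together with $\hat{\mathbf{z}}^{i},\mathbf{z}^{i}\in\mathcal{X}$) and Lemma \ref{lem:con_scale_increment} when $i\in M$. Either way I obtain
\[
f(\mathbf{z}^{i-1}+td_i\mathbf{e}^{i})-f(\mathbf{z}^{i-1}) \;\geq\; t\bigl[f(\mathbf{z}^{i-1}+d_i\mathbf{e}^{i})-f(\mathbf{z}^{i-1})\bigr].
\]
Second, since $t\le 1$ and $\mathbf{d}\ge \mathbf{0}$, the base points satisfy $\mathbf{z}^{i-1}\le \hat{\mathbf{z}}^{i-1}$, and both $\mathbf{z}^{i-1}+d_i\mathbf{e}^{i}$ and $\hat{\mathbf{z}}^{i-1}+d_i\mathbf{e}^{i}=\hat{\mathbf{z}}^{i}$ lie in $\mathcal{X}$; applying the DR-submodular inequality in coordinate $i$ yields
\[
f(\mathbf{z}^{i-1}+d_i\mathbf{e}^{i})-f(\mathbf{z}^{i-1}) \;\geq\; f(\hat{\mathbf{z}}^{i-1}+d_i\mathbf{e}^{i})-f(\hat{\mathbf{z}}^{i-1}).
\]
Chaining these two inequalities, summing over $i$, and recognizing the two telescoping sums delivers $f(\mathbf{z}+t\mathbf{d})-f(\mathbf{z})\ge t[f(\mathbf{z}+\mathbf{d})-f(\mathbf{z})]$.

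The main obstacle is bookkeeping the domain membership: at every invocation of Lemma \ref{lem:int_scale_increment}, Lemma \ref{lem:con_scale_increment}, or Definition \ref{def:dr}, the four points involved must lie in $\mathcal{X}$, and when $i\in N$ the scaled step size $td_i$ must be integral. Both facts follow from the hypothesis that all partial-sum points $\mathbf{z}^{i}$ and $\hat{\mathbf{z}}^{i}$ are in $\mathcal{X}$ (which forces $d_i$ and $td_i$ to preserve integrality of the $i$-th coordinate when $i\in N$) together with the product structure of $\mathcal{X}=\prod_{i}\mathcal{X}_i$. Once those verifications are spelled out, the proof is just the two-step bound above applied to each summand.
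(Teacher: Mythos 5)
Your proposal is correct and follows essentially the same telescoping-by-coordinates argument as the paper, with the only difference being the order in which the two elementary moves are applied at each coordinate: you apply the scaling lemma (Lemmas \ref{lem:int_scale_increment}/\ref{lem:con_scale_increment}) at base point $\mathbf{z}^{i-1}$ first and then shift the base point up to $\hat{\mathbf{z}}^{i-1}$ via DR-submodularity, whereas the paper shifts the base point first (keeping step size $td_i$) and then rescales at $\hat{\mathbf{z}}^{i-1}$. Both orderings go through because of the product structure of $\mathcal{X}$, and your observations that the hypothesis forces $td_i\in\mathbb{Z}$ for $i\in N$ and that the intermediate ``mixed'' points lie in $\mathcal{X}$ are exactly the right domain checks.
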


The inequality in Lemma \ref{lem:pos_dir_increment} is equivalent to
\begin{equation*}
\label{eq:p2}
f(\mathbf{z}+\mathbf{d}) - f(\mathbf{z}) \geq 1/(1-t) [f(\mathbf{z}+\mathbf{d}) - f(\mathbf{z}+t\mathbf{d})], 
\end{equation*}
and 
\begin{equation*}
\label{eq:p3}
f(\mathbf{z}+t\mathbf{d}) - f(\mathbf{z}+\mathbf{d}) \geq (1-t)[f(\mathbf{z}) - f(\mathbf{z}+\mathbf{d})].
\end{equation*}

As we apply Lemma \ref{lem:pos_dir_increment} in later sections, the condition $\mathbf{z} + \sum_{j=1}^i d_j\mathbf{e}^j, \mathbf{z} + t\sum_{j=1}^i d_j\mathbf{e}^j \in\mathcal{X}$ for a given $t$ and all $i\in\{1,\dots,n+m\}$ always holds because $f$ is assumed to be defined over the continuous relaxation of $\mathcal{Z}(\mathcal{G}, \mathbf{u})$.

\section{Full Description of $\conv{\mathcal{Z}(\mathcal{G}, \mathbf{u})}$}
\label{sect:conv_ZGu}
 In this section, we provide valid inequalities for $\mathcal{Z}(\mathcal{G}, \mathbf{u})$ and fully characterize its convex hull. We further state the explicit form of the extreme points in $\conv{\mathcal{Z}(\mathcal{G}, \mathbf{u})}$, which is crucial to the complete description of $\conv{\mathcal{P}_f^{\mathcal{Z}(\mathcal{G}, \mathbf{u})}}$. We observe that the matrix form of the linear constraints in $\mathcal{Z}(\mathcal{G}, \mathbf{u})$ is totally unimodular. Thus, when $\mathbf{u}\in\mathbb{Z}^{|\mathcal{V}|}$, the continuous relaxation of $\mathcal{Z}(\mathcal{G}, \mathbf{u})$ is $\conv{\mathcal{Z}(\mathcal{G}, \mathbf{u})}$. \\
 
However, when some upper bounds are non-integer, the box constraints and the monotonicity constraints no longer guarantee the integrality of the discrete variables. For ease of notation, we let 
\[\Psi := \{\psi\in \mathcal{V} : u_\psi \notin\mathbb{Z}, R^+(\psi)\cap N\neq \emptyset\}.\] 
This is a collection of the vertices corresponding to the non-integer upper-bounded variables that each has at least one discrete descendant. While not explicit in the definition, the fractional upper bound implies that $\psi\in M$. We impose Assumption \hyperlink{A1}{1} on this set of vertices. \\

\textbf{Assumption \hypertarget{A1}{1}.} There can be at most one $\psi\in\Psi$ in any directed path of $\mathcal{G}$. For any $\psi\in\Psi$, $\ch{\psi}\subseteq N$. \\

The collection of directed paths in $\mathcal{G}$ consists of every directed path from any root vertex to any leaf in the same component. Assumption \hyperlink{A1}{1} requires that no two elements of $\Psi$ fall along the same directed path. In addition, for any $\psi\in \Psi$, the child/children of $\psi$ correspond to discrete variables. By definition of $\Psi$, $\ch{\psi}\neq\emptyset$. Other descendants of $\psi$, that are not the children, can be either discrete or continuous if they exist. We note that no restrictions are imposed on any fractionally upper-bounded continuous variable $i\in\mathcal{V}$ such that all its descendants, $R^+(i)$, correspond to continuous variables. 

{
\begin{remark}
\label{remark:trivialA1}
Many instances of $\mathcal{Z}(\mathcal{G},\mathbf{u})$ trivially satisfy Assumption \hyperlink{A1}{1}, including the following examples. 
\begin{itemize}
\item[(a)] The feasible set $\mathcal{Z}(\mathcal{G},\mathbf{u})$ is defined by box constraints only. In other words, $\mathcal{A} = \emptyset$ in $\mathcal{G} = (\mathcal{V}, \mathcal{A})$. For any $i\in\mathcal{V}$ with $u_i \notin\mathbb{Z}$, $i \notin N$. When $\mathcal{A} = \emptyset$, $R^+(i) = \{i\}$, and $R^+(i)\cap N = \emptyset$. Thus $\Psi = \emptyset$, and Assumption \hyperlink{A1}{1} holds. 

\item[(b)] All variables are discrete. That is, $M = \emptyset$. By our assumption without loss of generality, $u_i \in\mathbb{Z}$ for all $i\in N$. Therefore, $\Psi$ has to be empty, and Assumption \hyperlink{A1}{1} is satisfied. 

\item[(c)] All variables are continuous. That is, $N = \emptyset$. In this case, $R^+(i)\cap N = \emptyset$ for all $i\in\mathcal{V}$, so $\Psi = \emptyset$. Assumption \hyperlink{A1}{1} is satisfied. 

\item[(d)] The upper bounds $\mathbf{u}\in\mathbb{Z}^{|\mathcal{V}|}$. Each variable $z_i$, $i\in\mathcal{V}$, can be either discrete or continuous. This case subsumes scenario (b). We note that $\Psi = \emptyset$ because $\mathbf{u}\in\mathbb{Z}^{|\mathcal{V}|}$, and Assumption \hyperlink{A1}{1} holds. 
\end{itemize}
\end{remark}
}

We now discuss an important implication of Assumption \hyperlink{A1}{1}. That is, if $\mathcal{G}$ satisfies Assumption \hyperlink{A1}{1}, then Property \ref{p} holds without loss of generality. 
\begin{property}
\label{p} 
For any $\psi\in \Psi$, $\ch{\psi} = \{\rho \}$ such that $\rho  \in N$ and $u_\rho  = \ceil{u_\psi}$. 
\end{property}
This property states that $\psi \in \Psi$, if exists, has a single child $\rho \in N$ whose upper bound is $\ceil{u_\psi}$. If any $\psi\in \Psi$ does not already satisfy Property \ref{p}, we insert a vertex $\rho \in N$ with $u_\rho  =  \ceil{u_\psi}$ between $\psi$ and the original children $\ch{\psi}$ of $\psi$. We will soon show that this update does not affect the solutions to the corresponding problem \eqref{eq:DR_min}. We represent the directed rooted forest after incorporating $\rho $ by $\mathcal{G}^E = (\mathcal{V}^E, \mathcal{A}^E)$, where $\mathcal{V}^E = \mathcal{V}\cup \{\rho \}$ and $\mathcal{A}^E = (\mathcal{A}\backslash \{(\psi, i) : i\in \ch{\psi} \text{ from $\mathcal{G}$}\} )\cup\{(\psi,\rho )\} \cup \{(\rho , j) : j\in \ch{\psi} \text{ from $\mathcal{G}$}\}$. Similarly, we let the extended upper bounds be $\mathbf{u}^E$. We illustrate Property \ref{p} and the aforementioned construction of $\mathcal{G}^E, \mathbf{u}^E$ in Example \ref{eg:p}. 

\begin{figure}[htbp]
   \centering
   \includegraphics[width=10cm]{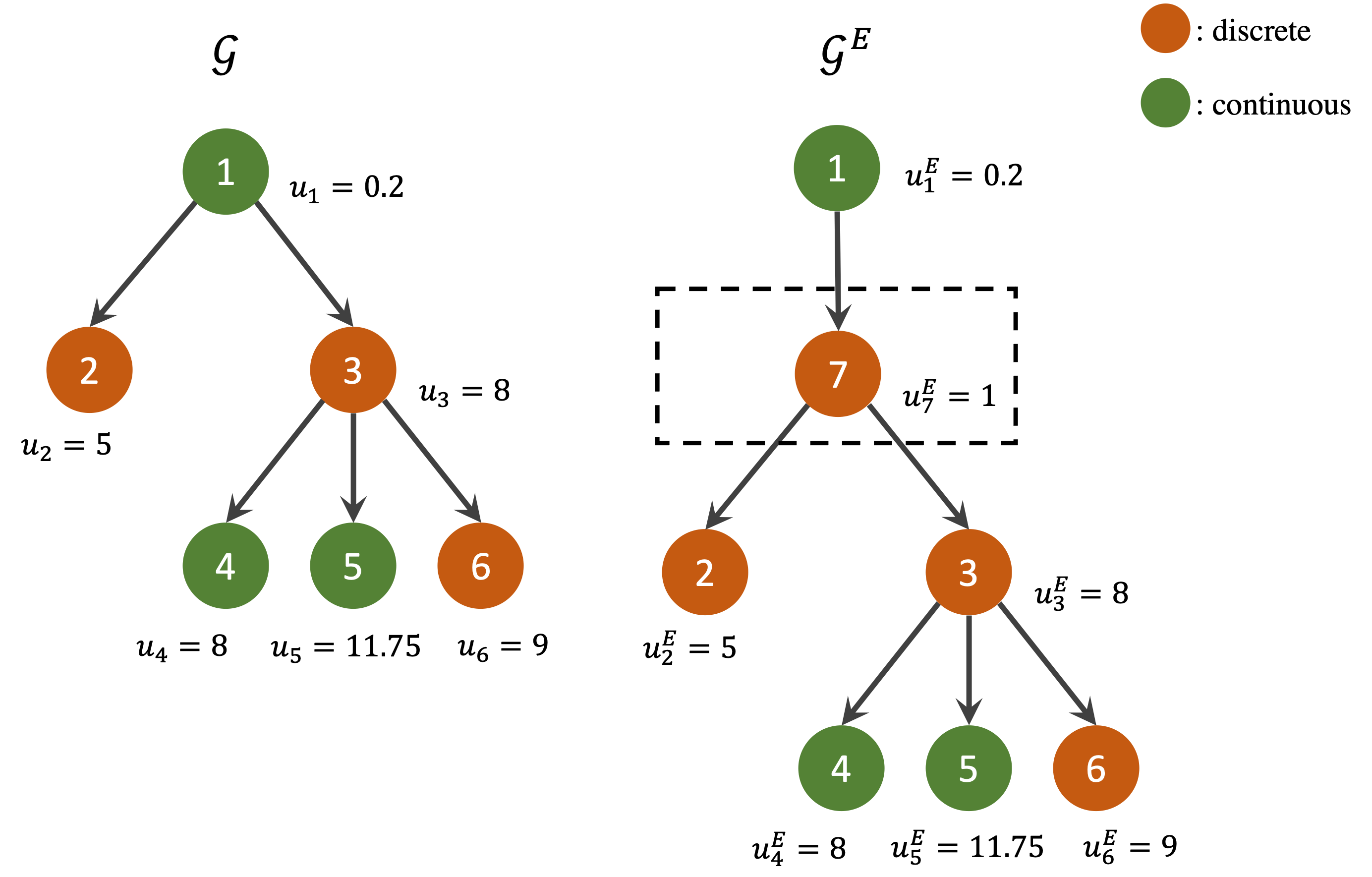} 
   \caption{An illustration of $\mathcal{G},\mathbf{u}$ and $\mathcal{G}^E,\mathbf{u}^E$ in Example \ref{eg:p}. }
   \label{fig:frac_tree_eg}
\end{figure}

\begin{example}
\label{eg:p}
Figure \ref{fig:frac_tree_eg} shows two directed rooted forests, $\mathcal{G}$ and $\mathcal{G}^E$, each with a single component. In $\mathcal{G}$, the only vertex associated with a fractional upper bound that has discrete descendants is the root, vertex $1$. The children of vertex 1, namely $2$ and $3$, correspond to discrete variables. Thus $\mathcal{G}$ satisfies Assumption \hyperlink{A1}{1}. In the extended counterpart $\mathcal{G}^E$, an additional vertex $\rho  = 7$ is inserted; it corresponds to a discrete variable with upper bound $u_7 = \ceil{u_1}$. Property \ref{p} holds in $\mathcal{G}^E$ and $\mathbf{u}^E$. \eged
\end{example}

We next show that problem \eqref{eq:DR_min} over the original feasible set $\mathcal{Z}(\mathcal{G}, \mathbf{u})$ is equivalent to another DR-submodular minimization problem over the extended feasible set $\mathcal{Z}(\mathcal{G}^E, \mathbf{u}^E)$. In what follows, we consider a single element $\psi$ of $\Psi$ and let  $\rho$ be the newly added child of $\psi$. For ease of discussion, we argue for the case where only $\rho$ is added to attain $\mathcal{G}^E$. If multiple adjustments need to be made for Property \ref{p}, our argument still holds for every intermediate step at which one vertex is added and in turn addresses the general case. \\ 

Based on the given DR-submodular function $f:\mathcal{X}\subseteq \mathbb{R}^{|\mathcal{V}|} \rightarrow \mathbb{R}$, we construct a new function $F: \mathcal{X}^E\subseteq \mathbb{R}^{|\mathcal{V}\cup\{\rho \}|} \rightarrow \mathbb{R}$. For any $\mathbf{x}\in \mathcal{X}^E$, we define 
\begin{equation}
\label{def:F}
F(\mathbf{x}) := f(\mathbf{z}^\mathbf{x}),
\end{equation}
where $\mathbf{z}^\mathbf{x} := [x_i]_{i\in \mathcal{V}}$. We know that $\mathbf{z}^\mathbf{x}\in\mathcal{X}$ for every $\mathbf{x}\in\mathcal{X}^E$, and $f$ is defined over $\mathcal{X}$, so $F$ is defined. 

\begin{lemma}
\label{lem:FisDR}
Function $F: \mathcal{X}^E\rightarrow\mathbb{R}$ defined by \eqref{def:F} is DR-submodular. 
\end{lemma}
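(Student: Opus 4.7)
The plan is to verify the DR-submodularity definition for $F$ directly by splitting on whether the direction of increment is along a coordinate indexed by the original vertex set $\mathcal{V}$ or along the newly inserted coordinate $\rho$. The key observation driving the proof is that $F$ is built to ignore the $\rho$-coordinate entirely: for any $\mathbf{x}\in\mathcal{X}^E$, $F(\mathbf{x})$ depends only on the projection $\mathbf{z}^\mathbf{x}=[x_i]_{i\in\mathcal{V}}$. Thus moving in the $\mathbf{e}^\rho$ direction is invisible to $F$, and moving in an $\mathbf{e}^i$ direction for $i\in\mathcal{V}$ amounts to the same move in the original domain $\mathcal{X}$, where DR-submodularity of $f$ applies.

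Fix $\mathbf{x},\mathbf{y}\in\mathcal{X}^E$ with $\mathbf{x}\le\mathbf{y}$ componentwise, an index $i\in\mathcal{V}\cup\{\rho\}$, and $\alpha\ge 0$ with $\mathbf{x}+\alpha\mathbf{e}^i,\mathbf{y}+\alpha\mathbf{e}^i\in\mathcal{X}^E$. First I would handle the case $i\in\mathcal{V}$: since $\mathbf{x}\le\mathbf{y}$ implies $\mathbf{z}^\mathbf{x}\le\mathbf{z}^\mathbf{y}$, and since the projections of $\mathbf{x}+\alpha\mathbf{e}^i$ and $\mathbf{y}+\alpha\mathbf{e}^i$ are exactly $\mathbf{z}^\mathbf{x}+\alpha\mathbf{e}^i$ and $\mathbf{z}^\mathbf{y}+\alpha\mathbf{e}^i$ (both in $\mathcal{X}$ because $\mathcal{X}^E$ is constructed so that its projection onto $\mathcal{V}$ lies in $\mathcal{X}$), applying DR-submodularity of $f$ yields
\[F(\mathbf{x}+\alpha\mathbf{e}^i)-F(\mathbf{x}) = f(\mathbf{z}^\mathbf{x}+\alpha\mathbf{e}^i)-f(\mathbf{z}^\mathbf{x}) \ge f(\mathbf{z}^\mathbf{y}+\alpha\mathbf{e}^i)-f(\mathbf{z}^\mathbf{y}) = F(\mathbf{y}+\alpha\mathbf{e}^i)-F(\mathbf{y}).\]

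Second, I would handle $i=\rho$. Here $\mathbf{z}^{\mathbf{x}+\alpha\mathbf{e}^\rho}=\mathbf{z}^\mathbf{x}$ since the increment touches only the $\rho$-coordinate, which is absent from $\mathbf{z}^\mathbf{x}$; hence $F(\mathbf{x}+\alpha\mathbf{e}^\rho)=F(\mathbf{x})$, and likewise $F(\mathbf{y}+\alpha\mathbf{e}^\rho)=F(\mathbf{y})$. Both sides of the DR-submodularity inequality are $0$ and the inequality holds with equality. Combining the two cases, $F$ satisfies Definition \ref{def:dr} on $\mathcal{X}^E$.

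I do not anticipate a hard step: the result is essentially bookkeeping, reducing DR-submodularity of the extended function to that of the original. The only care needed is ensuring the domain condition, i.e.\ that whenever $\mathbf{x}+\alpha\mathbf{e}^i\in\mathcal{X}^E$ for $i\in\mathcal{V}$, its projection $\mathbf{z}^\mathbf{x}+\alpha\mathbf{e}^i$ indeed lies in $\mathcal{X}$, which follows from the way $\mathcal{X}^E$ is built to contain the continuous relaxation of $\mathcal{Z}(\mathcal{G}^E,\mathbf{u}^E)$ and the fact that coordinate-wise projection of this relaxation lies in the continuous relaxation of $\mathcal{Z}(\mathcal{G},\mathbf{u})\subseteq\mathcal{X}$.
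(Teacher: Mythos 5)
Your proof is correct and uses essentially the same argument as the paper: split on whether the increment direction $i$ lies in $\mathcal{V}$ or equals $\rho$, reduce the former case to DR-submodularity of $f$ via the projection identity $\mathbf{z}^{\mathbf{x}+\alpha\mathbf{e}^i}=\mathbf{z}^{\mathbf{x}}+\alpha\mathbf{e}^i$, and observe that the latter case yields zero on both sides.
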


Due to Lemma \ref{lem:FisDR}, the following optimization problem: 
\begin{equation}
\label{eq:DR_min_E}
\min_{\mathbf{x}\in\mathcal{Z}(\mathcal{G}^E, \mathbf{u}^E)} F(\mathbf{x})
\end{equation}
is a DR-submodular minimization problem over a mixed-integer feasible set that satisfies Property \ref{p}. In the following lemma and proposition, we relate the feasible and optimal solutions of the original problem \eqref{eq:DR_min} to those of the extended counterpart \eqref{eq:DR_min_E}. 

\begin{lemma}
\label{lem:feas_AE}
Given any $\overline{\mathbf{z}}\in\mathcal{Z}(\mathcal{G}, \mathbf{u})$, the vector
\[\overline{\mathbf{x}} := \begin{cases}
\overline{z}_i, & i\in \mathcal{V}, \\
 \ceil{\overline{z}_{\psi}}, &  i = \rho , 
\end{cases}\]
is an element of $\mathcal{Z}(\mathcal{G}^E, \mathbf{u}^E)$. Furthermore, for any ${\mathbf{x}}\in\mathcal{Z}(\mathcal{G}^E, \mathbf{u}^E)$, $\mathbf{z}^{{\mathbf{x}}}\in\mathcal{Z}(\mathcal{G}, \mathbf{u})$.
\end{lemma}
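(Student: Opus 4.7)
The plan is to verify the two claims by checking, in each direction, the three defining conditions of $\mathcal{Z}(\cdot,\cdot)$: integrality, box constraints, and the monotonicity inequalities along the arcs. Because the construction of $\mathcal{G}^E$ only edits the local structure around $\psi$ (removing arcs from $\psi$ to its original children, inserting $\rho$ as a new unique child, and rerouting the old children under $\rho$), the proof reduces to a sequence of direct verifications; the only step requiring actual work is the inequality for the newly introduced arc $(\rho, j)$, which needs Property \ref{p}.

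For the first statement, I would argue as follows. Integrality of $\overline{x}_i$ for $i\in N$ is inherited from $\overline{\mathbf{z}}\in\mathcal{Z}(\mathcal{G},\mathbf{u})$, and at the new coordinate $i=\rho$ integrality holds because $\ceil{\overline{z}_\psi}\in\mathbb{Z}$, matching the fact that $\rho\in N^E$ by Property \ref{p}. For the box constraints, all coordinates $i\in\mathcal{V}$ are unchanged and $u^E_i=u_i$; for $i=\rho$ non-negativity is immediate, and $\overline{z}_\psi\leq u_\psi$ gives $\ceil{\overline{z}_\psi}\leq\ceil{u_\psi}=u^E_\rho$. For the monotonicity constraints, arcs $(i,j)\in\mathcal{A}\cap\mathcal{A}^E$ transfer from $\overline{\mathbf{z}}$. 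The new arc $(\psi,\rho)$ reduces to the trivial bound $\overline{z}_\psi\leq\ceil{\overline{z}_\psi}$. For each new arc $(\rho,j)$ with $j\in\ch{\psi}$ in $\mathcal{G}$, Property \ref{p} gives $j\in N$, so $\overline{z}_j\in\mathbb{Z}$; combined with the original inequality $\overline{z}_\psi\leq\overline{z}_j$, this yields $\ceil{\overline{z}_\psi}\leq\overline{z}_j$, which is precisely $\overline{x}_\rho\leq\overline{x}_j$.

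For the second statement, I would observe that $\mathbf{z}^{\mathbf{x}}$ is the coordinate projection onto $\mathcal{V}$, so the integrality and box-constraint restrictions at indices in $\mathcal{V}$ transfer verbatim because $u^E_i=u_i$ for all $i\in\mathcal{V}$. The only monotonicity arcs that require extra attention are the $(\psi,j)$ with $j$ a child of $\psi$ in $\mathcal{G}$: these are absent from $\mathcal{A}^E$ but are replaced by the two-step path $\psi\to\rho\to j$. Since $x_\psi\leq x_\rho\leq x_j$ in $\mathcal{G}^E$, transitivity gives $z^{\mathbf{x}}_\psi\leq z^{\mathbf{x}}_j$, as required.

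The main obstacle, if any, is minor: verifying the inequality for the inserted arc $(\rho,j)$, where one must exploit the integrality of $\overline{z}_j$, which is delivered by Property \ref{p}. The rest of the argument consists of unpacking the definitions of $\mathcal{G}^E$ and $\mathbf{u}^E$.
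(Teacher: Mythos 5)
Your proof is correct and takes essentially the same approach as the paper's: check integrality, box, and monotonicity constraints directly in each direction, with the key step being that $\ceil{\overline{z}_\psi}\leq\overline{z}_j$ for each former child $j$ of $\psi$ because $\overline{z}_j$ is integer-valued, and transitivity along $\psi\to\rho\to j$ handles the reverse direction. One small attribution slip: the fact that $j\in N$ for $j\in\ch{\psi}$ in the original $\mathcal{G}$ follows from Assumption \hyperlink{A1}{1} (which posits $\ch{\psi}\subseteq N$), not from Property \ref{p}; Property \ref{p} describes the extended structure $\mathcal{G}^E$, in which $\ch{\psi}=\{\rho\}$ and those nodes $j$ are now children of $\rho$, so it does not directly assert $j\in N$.
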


In the next proposition, we show that the optimal solutions to \eqref{eq:DR_min} and \eqref{eq:DR_min_E} are also closely connected. 

\begin{proposition}
Suppose $\mathcal{G}$ and $\mathbf{u}$ satisfy Assumption \hyperlink{A1}{1}. If $\hat{\mathbf{x}}$ is an optimal solution to \eqref{eq:DR_min_E} over $\mathcal{Z}(\mathcal{G}^E, \mathbf{u}^E)$, then $\mathbf{z}^{\hat{\mathbf{x}}}$ is an optimal solution to \eqref{eq:DR_min} over $\mathcal{Z}(\mathcal{G}, \mathbf{u})$. Conversely, if $\mathbf{z}^*$ is an optimal solution to \eqref{eq:DR_min}, then 
\[\mathbf{x}^* := \begin{cases}
{z}^*_i, & i\in \mathcal{V}, \\
 \ceil{{z}^*_{\psi}}, &  i = \rho , 
\end{cases}\]
is an optimal solution to \eqref{eq:DR_min_E}. 
\end{proposition}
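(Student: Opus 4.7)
The plan is to observe that the objective values $F$ and $f$ agree through the projection $\mathbf{x}\mapsto\mathbf{z}^{\mathbf{x}}$, and that Lemma \ref{lem:feas_AE} provides a two-way correspondence between feasible points. Everything then reduces to unwinding definitions. The crucial point is that by \eqref{def:F}, $F(\mathbf{x})$ depends on $\mathbf{x}$ only through its restriction $\mathbf{z}^{\mathbf{x}} = [x_i]_{i\in\mathcal{V}}$, so the auxiliary coordinate $x_\rho$ does not affect the objective value; it only appears in the monotonicity constraints of $\mathcal{Z}(\mathcal{G}^E, \mathbf{u}^E)$.

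For the forward direction, I would let $\hat{\mathbf{x}}$ be optimal for \eqref{eq:DR_min_E} and first apply Lemma \ref{lem:feas_AE} to conclude $\mathbf{z}^{\hat{\mathbf{x}}}\in\mathcal{Z}(\mathcal{G}, \mathbf{u})$, establishing feasibility. To prove optimality, take an arbitrary $\mathbf{z}\in\mathcal{Z}(\mathcal{G}, \mathbf{u})$ and lift it using the construction of Lemma \ref{lem:feas_AE}: let $\overline{x}_i = z_i$ for $i\in\mathcal{V}$ and $\overline{x}_\rho = \ceil{z_\psi}$, which yields $\overline{\mathbf{x}}\in\mathcal{Z}(\mathcal{G}^E, \mathbf{u}^E)$. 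Since $\mathbf{z}^{\overline{\mathbf{x}}} = \mathbf{z}$, by definition $F(\overline{\mathbf{x}}) = f(\mathbf{z})$; and $F(\hat{\mathbf{x}}) = f(\mathbf{z}^{\hat{\mathbf{x}}})$. Optimality of $\hat{\mathbf{x}}$ then gives
\[
f(\mathbf{z}^{\hat{\mathbf{x}}}) \;=\; F(\hat{\mathbf{x}}) \;\leq\; F(\overline{\mathbf{x}}) \;=\; f(\mathbf{z}),
\]
so $\mathbf{z}^{\hat{\mathbf{x}}}$ minimizes $f$ over $\mathcal{Z}(\mathcal{G}, \mathbf{u})$.

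For the converse, let $\mathbf{z}^*$ be optimal for \eqref{eq:DR_min} and define $\mathbf{x}^*$ as in the statement. By Lemma \ref{lem:feas_AE}, $\mathbf{x}^*\in\mathcal{Z}(\mathcal{G}^E, \mathbf{u}^E)$. Since $\mathbf{z}^{\mathbf{x}^*} = \mathbf{z}^*$, we have $F(\mathbf{x}^*) = f(\mathbf{z}^*)$. For any competing $\mathbf{x}\in\mathcal{Z}(\mathcal{G}^E, \mathbf{u}^E)$, Lemma \ref{lem:feas_AE} gives $\mathbf{z}^{\mathbf{x}}\in\mathcal{Z}(\mathcal{G}, \mathbf{u})$, whence
\[
F(\mathbf{x}^*) \;=\; f(\mathbf{z}^*) \;\leq\; f(\mathbf{z}^{\mathbf{x}}) \;=\; F(\mathbf{x})
\]
by optimality of $\mathbf{z}^*$, proving optimality of $\mathbf{x}^*$ for \eqref{eq:DR_min_E}.

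The argument is entirely formal and there is no substantial obstacle once Lemma \ref{lem:feas_AE} and the definition \eqref{def:F} are in hand; the only care needed is to verify that the two maps $\mathbf{x}\mapsto\mathbf{z}^{\mathbf{x}}$ and $\mathbf{z}\mapsto\overline{\mathbf{x}}$ are compatible with the objective, which is immediate because the auxiliary coordinate $x_\rho$ is absent from $F$. Note also that the argument does not use DR-submodularity directly (that was already used in Lemma \ref{lem:FisDR} to make problem \eqref{eq:DR_min_E} a genuine DR-submodular minimization problem to begin with); Assumption~\hyperlink{A1}{1} enters only through the fact that the extension $\mathcal{G}^E, \mathbf{u}^E$ is well-defined and satisfies Property~\ref{p}.
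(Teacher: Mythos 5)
Your proof is correct and takes essentially the same route as the paper: both directions rest on the two-way feasibility correspondence from Lemma \ref{lem:feas_AE} together with the fact that $F$ depends on $\mathbf{x}$ only through $\mathbf{z}^{\mathbf{x}}$. The only cosmetic difference is that you argue optimality directly while the paper phrases the same comparison as a proof by contradiction.
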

\begin{proof} 
First, we consider the case where $\hat{\mathbf{x}}$ is optimal in \eqref{eq:DR_min_E}. The solution $\mathbf{z}^{\hat{\mathbf{x}}}$ is feasible in \eqref{eq:DR_min} by Lemma \ref{lem:feas_AE}. For a contradiction, suppose there exists $\hat{\mathbf{z}}\in\mathcal{Z}(\mathcal{G},\mathbf{u})$ such that $f(\hat{\mathbf{z}}) < f(\mathbf{z}^{\hat{\mathbf{x}}})$. We define a vector $\hat{\mathbf{x}}'$ such that $\hat{x}'_i = \hat{z}_i$ for $i\in \mathcal{V}$, and $\hat{x}'_\rho  = \ceil{\hat{z}_{\psi}}$. Feasibility of $\hat{\mathbf{x}}'$ follows from Lemma \ref{lem:feas_AE}. We observe that $F(\hat{\mathbf{x}}') = f(\hat{\mathbf{z}}) < f(\mathbf{z}^{\hat{\mathbf{x}}}) = F(\hat{\mathbf{x}})$, which is a contradiction. For the converse, we first notice that $\mathbf{x}^*\in\mathcal{Z}(\mathcal{G}^E,\mathbf{u}^E)$ due to Lemma \ref{lem:feas_AE}. By construction, $\mathbf{z}^{\mathbf{x}^*} = \mathbf{z}^*$. Suppose there exists $\overline{\mathbf{x}}\in\mathcal{Z}(\mathcal{G}^E,\mathbf{u}^E)$ such that $F(\overline{\mathbf{x}})<F(\mathbf{x}^*)$. Then $f(\mathbf{z}^{\overline{\mathbf{x}}}) = F(\overline{\mathbf{x}})<F(\mathbf{x}^*) = f(\mathbf{z}^{\mathbf{x}^*}) = f( \mathbf{z}^*)$. We again reach a contradiction. 
\end{proof}

Under Assumption \hyperlink{A1}{1}, it is without loss of generality to assume that Property \ref{p} holds in $\mathcal{Z}(\mathcal{G},\mathbf{u})$. Property \ref{p}  gives $\conv{\mathcal{Z}(\mathcal{G},\mathbf{u})}$ a special structure that simplifies the characterization of its extreme points, which are crucial to describing $\conv{\mathcal{P}_f^{\mathcal{Z}(\mathcal{G},\mathbf{u})}}$. In what follows, all $\mathcal{Z}(\mathcal{G},\mathbf{u})$ satisfy Assumption \hyperlink{A1}{1} as well as Property \ref{p}. Every $\psi\in\Psi$ has exactly one child, so by slightly abusing notation, we denote the index of the child of $\psi$ by $\ch{\psi}$. \\

Next, we derive non-trivial valid inequalities for $\mathcal{Z}(\mathcal{G}, \mathbf{u})$. For any $\psi\in\Psi$, recall that $u_\psi\notin\mathbb{Z}$. The constraint $z_\psi \leq z_{\ch{\psi}}$ is equivalent to the inequality $-z_{\ch{\psi}} - (u_\psi - z_\psi) \leq -u_\psi$. The mixed-integer rounding (MIR) inequality: 
\[-z_{\ch{\psi}} - \frac{u_\psi - z_\psi}{1 - (- u_\psi - \floor{-u_\psi})} \leq \floor{-u_\psi},\]
or equivalently
 \[-z_{\ch{\psi}} - \frac{u_\psi - z_\psi}{u_\psi - \floor{u_\psi}} \leq -\ceil{u_\psi},\]
is known to be valid for the set $\{\mathbf{z}\in\mathbb{R}_+^{n+m}: z_{\ch{\psi}}\in\mathbb{Z}, -z_{\ch{\psi}}- (u_\psi - z_\psi) \leq -u_\psi, z_\psi-u_\psi \leq 0\}$ (Proposition 6.1, pg. 243 of \citep{wolsey1999integer}). Thus the MIR inequality is valid for the more restrictive set $\mathcal{Z}(\mathcal{G}, \mathbf{u})$. After rearranging the terms, the MIR inequality becomes 
\[-z_{\ch{\psi}} + \frac{z_\psi}{u_\psi - \floor{u_\psi}} \leq \frac{\floor{u_\psi}(\ceil{u_\psi} - u_\psi)}{u_\psi - \floor{u_\psi}},\]
with which we construct the following set:
\begin{subequations}
\begin{alignat}{2}
\mathcal{CZ} := \{\mathbf{z}\in\mathbb{R}^{n+m}_+: \quad & z_i \leq u_i, && \:\forall\:i\in\mathcal{V}, \label{eq:ub} \\
&z_i - z_j \leq 0, && \:\forall\: (i,j)\in\mathcal{A}, \label{eq:mn} \\
& -z_{\ch{\psi}} + \frac{z_\psi}{u_\psi - \floor{u_\psi}} \leq \frac{\floor{u_\psi}(\ceil{u_\psi} - u_\psi)}{u_\psi - \floor{u_\psi}}, && \:\forall\: \psi\in\Psi  \label{eq:mir} \}.
\end{alignat}
\end{subequations}
The set $\mathcal{CZ}$ is defined by the non-negativity constraints, the box constraints with upper bound $\mathbf{u}$, the monotonicity constraints given by $\mathcal{G}$, and all the MIR inequalities involving $\psi\in\Psi$ and their children. We claim that $\mathcal{CZ} = \conv{\mathcal{Z}(\mathcal{G}, \mathbf{u})}$, and we justify this statement by exploring the extreme points of $\mathcal{CZ}$.  \\

For any $\mathcal{S}\subseteq\mathcal{V}$ and every $i\in\mathcal{V}$, we define
\begin{equation}
\label{eq:i_triangle}
i^\vartriangle(\mathcal{S}) := 
\begin{cases}
\arg\max\{\dep{j}: j\in \mathcal{S}\cap R^-(i)\}, & \text{if } \mathcal{S}\cap R^-(i)\neq\emptyset, \\
0, & \text{otherwise.}
\end{cases}
\end{equation}
That is, $i^\vartriangle(\mathcal{S})$ is an element of $\mathcal{S}$ such that it is the ascendant of $i$ with maximum depth. If no ascendant of $i$ is in $\mathcal{S}$, then we let $i^\vartriangle(\mathcal{S})$ be zero. We note that when $\mathcal{S}\cap R^-(i)\neq\emptyset$, $\arg\max\{\dep{j}: j\in \mathcal{S}\cap R^-(i)\}$ is unique because in the component of $\mathcal{G}$ containing $i$,  $R^-(i)$ is a directed path from the root to $i$. For $\mathcal{S}\subseteq\mathcal{V}$ and $i\in\mathcal{V}$, let
\begin{equation}
\label{eq:sigma_i}
\sigma_i(\mathcal{S}) := \{j\in R^+(i) : j\notin R^+(k), \forall\: k\in\mathcal{S}\cap [R^+(i)\backslash \{i\}]\}. 
\end{equation}
{When $i\in\mathcal{S}$, this set contains all the vertices in $\mathcal{V}$ whose closest (maximum depth) ascendant in $\mathcal{S}$ is $i$. That is, $\sigma_i(\mathcal{S}) = \{j\in\mathcal{V}: j^\vartriangle(\mathcal{S}) = i\}$.} Let $u_0 = 0$. We define a set of points $P(\mathcal{S})\in\mathbb{R}^{|\mathcal{V}|}$ for any $\mathcal{S}\subseteq \mathcal{V}$ by the following:
\begin{equation}
\label{ext}
P(\mathcal{S})_i = 
\begin{cases}
\floor{u_{\psi}}, & \text{if } i^\vartriangle(\mathcal{S}) = \psi\in\Psi, \text{ and } \ch{\psi}\notin\mathcal{S},   \\
u_{i^\vartriangle(\mathcal{S})}, & \text{otherwise,} \\
\end{cases}
\end{equation}
for every $i\in\mathcal{V}$. Intuitively, $\mathcal{S}$ determines the variables that attain the highest possible values allowed by the box constraints and MIR inequalities. The remaining variables assume the minimal values required by the monotonicity constraints. We note that distinct subsets of $\mathcal{V}$ may yield the same $P(\cdot)$. We illustrate the construction of $P(\cdot)$ in the next example. 

\begin{example}
Consider the directed rooted forest $\mathcal{G}^E$ provided in Figure \ref{fig:frac_tree_eg}, with the associated box constraints upper bounds $\mathbf{u}^E$. Suppose $\mathcal{S}=\{1,3,5,6\}$. Then $i^\vartriangle(\mathcal{S}) = 1$ for $i=1,2,7$, $i^\vartriangle(\mathcal{S}) = 3$ for $i=3,4$, and $i^\vartriangle(\mathcal{S}) = i$ for $i=5,6$. Since $\psi = 1 \in\Psi$ and $\ch{\psi} = 7 \notin\mathcal{S}$, $P(\mathcal{S})_j = \floor{0.2} = 0$ for $j\in\sigma_1(\mathcal{S})=\{1,2,7\}$. Thus $P(\mathcal{S}) = [0, 0, 8, 8, 11.75, 9, 0]^\top$. We note that $P(\{1,3,4,5,6\}) = [0, 0, 8, 8, 11.75, 9, 0]^\top = P(\{1,3,5,6\})$. \eged
\end{example}

\begin{lemma}
\label{lem:F_P_feas}
For any $\mathcal{S}\subseteq\mathcal{V}$, $P(\mathcal{S})\in\mathcal{Z}(\mathcal{G},\mathbf{u})$. 
\end{lemma}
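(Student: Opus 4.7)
The proof amounts to verifying that $P(\mathcal{S})$ satisfies every defining constraint of $\mathcal{Z}(\mathcal{G},\mathbf{u})$: the non-negativity and box bounds $\mathbf{0}\leq P(\mathcal{S})\leq \mathbf{u}$, the integrality $P(\mathcal{S})_i\in\mathbb{Z}$ for $i\in N$, and the monotonicity $P(\mathcal{S})_i\leq P(\mathcal{S})_j$ for $(i,j)\in\mathcal{A}$. The plan is a direct case analysis on the two branches of the definition \eqref{ext}, supported by three standing facts: the partial order $u_a\leq u_b$ whenever $a\in R^-(b)$; Property \ref{p}, which gives each $\psi\in\Psi$ a unique integer-valued child with $u_{\ch{\psi}}=\ceil{u_\psi}$; and Assumption \hyperlink{A1}{1}, which forbids two elements of $\Psi$ on any directed path.

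Non-negativity is immediate because $u_0=0$ and all relevant $u_k$ and $\floor{u_\psi}$ are non-negative. For the box bound $P(\mathcal{S})_i\leq u_i$, I would branch on \eqref{ext}: if $P(\mathcal{S})_i = u_{i^\vartriangle(\mathcal{S})}$, the inequality is the partial order applied to $i^\vartriangle(\mathcal{S})\in R^-(i)$; if $P(\mathcal{S})_i=\floor{u_\psi}$ with $i^\vartriangle(\mathcal{S})=\psi$, then either $i=\psi$ (so $\floor{u_\psi}\leq u_\psi$), or $i\in R^+(\ch{\psi})$, in which case Property \ref{p} and the partial order give $u_i\geq u_{\ch{\psi}}=\ceil{u_\psi}\geq\floor{u_\psi}$.

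For integrality with $i\in N$, the only delicate branch is $P(\mathcal{S})_i=u_{i^\vartriangle(\mathcal{S})}$. I would argue that $i^\vartriangle(\mathcal{S})$ cannot lie in $\Psi$: if $i^\vartriangle(\mathcal{S})=\psi\in\Psi$, landing in this branch rather than the $\floor{u_\psi}$ branch would require $\ch{\psi}\in\mathcal{S}$, but then $\ch{\psi}$ would be a strictly deeper element of $\mathcal{S}\cap R^-(i)$ unless $i=\psi$, contradicting $i\in N$ (since $\psi\in M$ by definition of $\Psi$). With $i^\vartriangle(\mathcal{S})\notin\Psi$ and $i\in R^+(i^\vartriangle(\mathcal{S}))\cap N$ non-empty, the definition of $\Psi$ forces $u_{i^\vartriangle(\mathcal{S})}\in\mathbb{Z}$.

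The main step is monotonicity, which I would handle by splitting on whether $j\in\mathcal{S}$. If $j\notin\mathcal{S}$, then $\mathcal{S}\cap R^-(j)=\mathcal{S}\cap R^-(i)$, so $j^\vartriangle(\mathcal{S})=i^\vartriangle(\mathcal{S})$ and \eqref{ext} yields $P(\mathcal{S})_i=P(\mathcal{S})_j$. If $j\in\mathcal{S}$, then $j^\vartriangle(\mathcal{S})=j$; when $P(\mathcal{S})_j=u_j$, the inequality reduces to $P(\mathcal{S})_i\leq u_i\leq u_j$ via the previous step and the partial order. The only subtle subcase is $j\in\Psi$ with $\ch{j}\notin\mathcal{S}$, giving $P(\mathcal{S})_j=\floor{u_j}$; here I would apply Assumption \hyperlink{A1}{1} to the root-to-$j$ path to rule out $i^\vartriangle(\mathcal{S})\in\Psi$, invoke $\ch{j}\in R^+(i^\vartriangle(\mathcal{S}))\cap N$ to conclude $u_{i^\vartriangle(\mathcal{S})}\in\mathbb{Z}$, and then promote $u_{i^\vartriangle(\mathcal{S})}\leq u_j$ to $u_{i^\vartriangle(\mathcal{S})}\leq\floor{u_j}$. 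This interplay between the MIR-driven $\Psi$-vertices and the integrality of intermediate upper bounds is where I expect the bookkeeping to be heaviest.
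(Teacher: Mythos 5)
Your proof is correct and follows the paper's own route: direct verification of integrality, box bounds, and monotonicity via a case analysis on the two branches of the definition of $P(\mathcal{S})$. Your monotonicity argument is in fact more careful than the paper's, which simply writes $P(\mathcal{S})_i = u_{i^\vartriangle(\mathcal{S})} \leq u_{j^\vartriangle(\mathcal{S})} = P(\mathcal{S})_j$ and leaves implicit the subcase $j \in \Psi$ with $\ch{j}\notin\mathcal{S}$, where $P(\mathcal{S})_j = \floor{u_j}$ and one must combine Assumption 1 with the integrality of $u_{i^\vartriangle(\mathcal{S})}$ to strengthen $u_{i^\vartriangle(\mathcal{S})}\leq u_j$ to $u_{i^\vartriangle(\mathcal{S})}\leq\floor{u_j}$; you spell this out explicitly.
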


We would like to show that the extreme points of $\mathcal{CZ}$ are exactly $\{P(\mathcal{S})\}_{\mathcal{S}\subseteq \mathcal{V}}$. We prove this claim by showing that $\{P(\mathcal{S})\}_{\mathcal{S}\subseteq \mathcal{V}}$ are the optimal solutions to the following linear program with an arbitrary objective $\mathbf{a}\in\mathbb{R}^{|\mathcal{V}|}$:
\begin{subequations}
\label{frac_convZ_lp}
\begin{alignat}{3}
\min \quad & \mathbf{a}^\top\mathbf{z} && &&\\
\text{s.t. }  & z_i \leq u_i, && \: \forall \: i\in\mathcal{V}, && \quad \quad (p_i)\\
& z_i -z_j \leq 0, &&\:\forall \: (i,j)\in \mathcal{A}, && \quad\quad (q_{i,j}) \\
& -z_{\ch{\psi}} + \frac{z_\psi}{u_\psi - \floor{u_\psi}} \leq \frac{\floor{u_\psi}(\ceil{u_\psi} - u_\psi)}{u_\psi - \floor{u_\psi}}, && \:\forall \: \psi\in\Psi, && \quad\quad (r_\psi) \\
& \mathbf{z}\geq \mathbf{0}. && &&
\end{alignat}
\end{subequations} 

Problem \eqref{frac_convZ_lp} belongs to the class of linear programs with two variables per inequality, for which many efficient algorithms have been proposed \citep{cohen1991improved, shostak1981deciding, aspvall1980polynomial, hochbaum1994simple}. However, we are interested in the explicit form of the optimal solutions to problem \eqref{frac_convZ_lp}, which is not available in the existing literature. In what follows, we examine the dual problem of \eqref{frac_convZ_lp} in order to derive the corresponding optimal primal-dual solution pairs in their explicit forms. The dual problem of \eqref{frac_convZ_lp} is 

\begin{subequations}
\label{frac_convZ_dual}
\begin{alignat}{3}
\max \quad & \sum_{i\in\mathcal{V}} u_ip_i + \sum_{\psi\in\Psi}\frac{\floor{u_\psi}(\ceil{u_\psi} - u_\psi)}{u_\psi - \floor{u_\psi}} r_\psi && && \label{obj:frac_chain_dual}\\
\text{s.t. }  & p_i + \sum_{j\in\ch{i}}q_{i,j} - q_{\pa{i},i} \leq a_i, && \:\forall\: i\in\mathcal{V}\backslash\bigcup_{\psi\in\Psi} \{\psi, \ch{\psi}\},  && \label{constr:frac_chain_dual_1}\\
& p_\psi + q_{\psi,\rho } - q_{\pa{\psi},\psi} + \frac{r_\psi}{u_\psi - \floor{u_\psi}} \leq a_\psi, && \:\forall\: \psi\in\Psi, && \label{constr:frac_chain_dual_2}\\
& p_\rho  + \sum_{j\in\ch{\rho }} q_{\rho ,j} - q_{\psi,\rho } - r_\psi \leq a_\rho , && \:\forall\: \rho = \ch{\psi}, \psi\in\Psi, && \label{constr:frac_chain_dual_3}\\
& [\mathbf{p},\mathbf{q},\mathbf{r}] \leq \mathbf{0}. && && \label{constr:frac_chain_dual_nonpos}
\end{alignat}
\end{subequations}

We let $q_{\pa{i},i} = 0$ for any $i\in\mathcal{V}$ such that $\pa{i}$ does not exist.

\begin{lemma}
\label{lem:convZ_primal_feasible}
For every $\mathcal{S}\subseteq \mathcal{V}$, $P(\mathcal{S})$ is feasible to problem \eqref{frac_convZ_lp}. 
\end{lemma}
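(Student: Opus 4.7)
The plan is to invoke Lemma \ref{lem:F_P_feas}, which already certifies that $P(\mathcal{S}) \in \mathcal{Z}(\mathcal{G}, \mathbf{u})$. This membership immediately yields non-negativity, the box constraints $z_i \leq u_i$, and the monotonicity constraints $z_i \leq z_j$ at $P(\mathcal{S})$. The only remaining work is therefore to verify the MIR inequalities $(r_\psi)$ in \eqref{frac_convZ_lp} at $P(\mathcal{S})$ for each $\psi \in \Psi$.

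For this step I would reuse the validity argument that the excerpt has already laid out when introducing \eqref{eq:mir}. By Property \ref{p}, $\ch{\psi} = \rho \in N$, so $P(\mathcal{S})_\rho \in \mathbb{Z}$; moreover the box constraint gives $P(\mathcal{S})_\psi \leq u_\psi$, and the monotonicity $P(\mathcal{S})_\psi \leq P(\mathcal{S})_\rho$ is exactly $-P(\mathcal{S})_\rho - (u_\psi - P(\mathcal{S})_\psi) \leq -u_\psi$. These are precisely the hypotheses of Proposition 6.1 of \citep{wolsey1999integer}, whose MIR inequality is identical to \eqref{eq:mir}. Hence \eqref{eq:mir} holds at $P(\mathcal{S})$, and feasibility in \eqref{frac_convZ_lp} follows.

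If a more self-contained verification is preferred, one can instead check \eqref{eq:mir} by a short case analysis on whether $\psi$ and $\rho$ lie in $\mathcal{S}$. By Assumption \hyperlink{A1}{1} no other element of $\Psi$ can be an ascendant of $\psi$, and since $\rho$ is the only discrete descendant that forces $\psi$ into $\Psi$, every ascendant $j$ of $\psi$ either lies outside $\mathcal{S}$ or satisfies $u_j \in \mathbb{Z}$, hence $u_j \leq \floor{u_\psi}$. Using \eqref{ext}, $P(\mathcal{S})_\psi$ is then one of $0$, $u_j$, $\floor{u_\psi}$ (when $\psi \in \mathcal{S}$, $\rho \notin \mathcal{S}$), or $u_\psi$ (when $\psi, \rho \in \mathcal{S}$), with $P(\mathcal{S})_\rho$ correspondingly equal to $P(\mathcal{S})_\psi$ or to $\ceil{u_\psi}$. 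In every case \eqref{eq:mir} collapses after clearing the denominator $u_\psi - \floor{u_\psi} > 0$ to a routine inequality that reduces to $u_j \leq u_\psi$ or $0 \leq u_\psi$.

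I do not anticipate a substantive obstacle. The one place that demands care is the subcase $\psi \in \mathcal{S}$, $\rho \notin \mathcal{S}$, in which \eqref{ext} rounds $P(\mathcal{S})_\psi$ down to $\floor{u_\psi}$; this is in fact the case where \eqref{eq:mir} becomes tight, so any direct verification must treat it explicitly rather than lumping it with the $\psi, \rho \in \mathcal{S}$ case.
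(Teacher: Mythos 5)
Your main route is exactly the paper's proof: Lemma \ref{lem:F_P_feas} places $P(\mathcal{S})$ in $\mathcal{Z}(\mathcal{G},\mathbf{u})$, and the already-established validity of the MIR inequalities for $\mathcal{Z}(\mathcal{G},\mathbf{u})$ shows this set sits inside the LP's feasible region $\mathcal{CZ}$, so feasibility is immediate. The optional case analysis in your second paragraph is a correct (if unnecessary) direct verification, and you are right that the subcase $\psi\in\mathcal{S},\ \rho\notin\mathcal{S}$ is where \eqref{eq:mir} holds with equality.
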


\begin{observation}
\label{obs:tree_multi_comp}
Suppose $\mathcal{G}$ consists of $g\geq 1$ disjoint components. We remark that any two variables $z_i$ and $z_j$, where $i$ and $j$ belong to two distinct components of $\mathcal{G}$, are not linked by any constraint in problem \eqref{frac_convZ_lp}. We may decompose problem \eqref{frac_convZ_lp} into $g$ subproblems, namely $LP^i$ for $i\in\{1,\dots, g\}$, such that each subproblem $LP^i$ concerns only the variables that correspond to the vertices in the $i$-th component of $\mathcal{G}$. An optimal solution to \eqref{frac_convZ_lp} is the concatenation of the optimal solutions to $LP^i$ for $i\in\{1,\dots, g\}$. 
\end{observation}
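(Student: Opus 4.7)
The plan is to verify that the constraint matrix of problem \eqref{frac_convZ_lp} is block-diagonal with respect to the partition of $\mathcal{V}$ into the vertex sets of the $g$ components of $\mathcal{G}$, and that the objective is separable along this partition. Once both are established, the standard fact that a linear program with a block-diagonal feasible system and a separable objective decomposes into independent subproblems concludes the argument.

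First, I would go through the three families of non-trivial constraints and check that each couples only variables within a single component. The box constraints $z_i \leq u_i$ and the non-negativity $z_i \geq 0$ involve a single variable, so they trivially lie inside one component. The monotonicity constraints $z_i - z_j \leq 0$ for $(i,j) \in \mathcal{A}$ couple two vertices that are connected by an arc of $\mathcal{G}$, hence belong to the same directed rooted tree (since components are disjoint). The only constraints that require a brief check are the MIR inequalities \eqref{eq:mir}: they couple $z_\psi$ and $z_{\ch{\psi}}$ for $\psi \in \Psi$, but by Property~\ref{p} (which we are assuming throughout) $\ch{\psi}$ is a child of $\psi$, hence $\psi$ and $\ch{\psi}$ lie in the same component. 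Thus no constraint links variables across components.

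Next, I would observe that the objective $\mathbf{a}^\top \mathbf{z} = \sum_{i \in \mathcal{V}} a_i z_i$ is a sum over vertices and therefore splits as $\sum_{k=1}^{g} \sum_{i \in \mathcal{V}_k} a_i z_i$, where $\mathcal{V}_k$ denotes the vertex set of the $k$-th component. Combining this with the previous paragraph, for each $k \in \{1,\dots,g\}$ let $LP^k$ be the linear program obtained by restricting the variables, constraints, and objective of \eqref{frac_convZ_lp} to indices in $\mathcal{V}_k$. The feasible region of \eqref{frac_convZ_lp} is then exactly the Cartesian product of the feasible regions of $LP^1,\dots,LP^g$, and the objective is additive across this product.

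Finally, I would invoke the elementary fact that minimizing a separable linear function over a Cartesian product of feasible sets is equivalent to minimizing each summand over the corresponding factor independently. Hence if $\mathbf{z}^{(k)}$ is optimal for $LP^k$, then the concatenation $(\mathbf{z}^{(1)},\dots,\mathbf{z}^{(g)})$ is optimal for \eqref{frac_convZ_lp}, and the optimal value of \eqref{frac_convZ_lp} is the sum of the optimal values of the $LP^k$'s. There is no genuine obstacle here; the only care needed is the one-line justification that the MIR inequalities respect the component structure, which is immediate from $\ch{\psi} \in R^+(\psi)$.
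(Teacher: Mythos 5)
Your proof is correct and follows essentially the same reasoning the paper relies on implicitly: the observation in the paper is stated without a separate formal proof, and its justification rests precisely on noting that each constraint family (box, non-negativity, monotonicity, and MIR via $\ch{\psi}\in R^+(\psi)$) involves only vertices within a single component and that the linear objective is separable across components. Your constraint-by-constraint verification together with the Cartesian-product decomposition of the feasible region spells out exactly what the paper takes for granted.
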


Based on the previous observation, it suffices to solve problem \eqref{frac_convZ_lp} when $\mathcal{G}$ has only one component. Throughout the following discussion on the primal-dual solution pairs to problem \eqref{frac_convZ_lp}, we assume that $\mathcal{G}$ is a directed rooted tree. In case there exists $\psi\in\Psi$ in $\mathcal{G}$, we notice that the subtrees $\mathcal{G}(R^+(\psi^1))$ and $\mathcal{G}(R^+(\psi^2))$ are disjoint for any $\psi^1,\psi^2\in\Psi$ such that $\psi^1\neq\psi^2$, due to Assumption \hyperlink{A1}{1}. In Section \ref{sect:subtree}, we first explain how to solve a subproblem with respect to the directed tree $\mathcal{G}(R^+(\psi))$ rooted at $\psi$ for any $\psi\in\Psi$. The solutions to such subproblems will then help us construct an optimal solution to problem \eqref{frac_convZ_lp} over the directed rooted tree $\mathcal{G}$, which we elaborate on in Section \ref{sect:wholetree}.

\subsection{Directed Trees Rooted at $\psi\in\Psi$} 
\label{sect:subtree}
In this section, we study the special case where the root vertex of $\mathcal{G}$ is $\psi\in \Psi$. For any such $\mathcal{G}$, $\Psi = \{\psi\}$ by Assumption \hyperlink{A1}{1}, which means that in the dual problem \eqref{frac_convZ_dual}, $\mathbf{r} = r_\psi \in\mathbb{R}^1$. Thus we drop the subscript $\psi$ of $r$ in this section. We also use $\rho$ to denote $\ch{\psi}$ for brevity. We propose a way to determine $\mathcal{S}^*\subseteq \mathcal{V}$ such that $\overline{\mathbf{z}} = P(\mathcal{S}^*)$ is a minimizer of problem \eqref{frac_convZ_lp}. Let the height of $\mathcal{G}$ be $h$. Given $\psi, \rho \in\mathcal{V}$, we know that $h\geq 1$. Let $\mathcal{S}^{h+1} = \emptyset$. In the order of $d = h, h-1, \dots, 2$, we compute all $s^i$ for $i\in\mathcal{V}$ with $\dep{i} = d$, where
\begin{equation}
\label{si}
s^i = \sum_{j\in \sigma_i(\mathcal{S}^{d+1})} a_j
\end{equation} 
for any $i\in\mathcal{V}$. This is the sum of the objective coefficients corresponding to the vertices among descendants of $i$, including $i$ itself, which cannot be reached by any vertex in $\mathcal{S}^{\dep{i}+1}$. Once we have all $s^i$ for $i\in\mathcal{V}$ with $\dep{i} = d$, we construct
\begin{equation}
\label{Sd}
\mathcal{S}^d := \{i\in\mathcal{V} : \dep{i} = d, s^i < 0\} \cup \mathcal{S}^{d+1}.
\end{equation}
The set $\mathcal{S}^d$ contains $\mathcal{S}^{d+1}$ and further includes vertices with depth $d$ whose $s^i < 0$. The intuition behind constructing $\mathcal{S}^d$ is that, assigning a positive value to $i\in\mathcal{V}$ with $s^i < 0$ is ideal because this results in a positive reduction in the objective value. At the end of the aforementioned procedure, we obtain $\mathcal{S}^2$, which enables us to compute $s^\rho$. Let
\[\mathcal{R} := \begin{cases}
\{\psi\}, & a_\psi < 0, \\
\emptyset, & \text{otherwise.}
\end{cases}\]
Now we construct  $\mathcal{S}^*$ as follows:
\[\mathcal{S}^* := \begin{cases}
\mathcal{S}^2, & -s^{\rho } \leq \sum_{j\in \mathcal{R}} a_j, \hfill \hypertarget{C1}{(C1)}\\
\mathcal{S}^2\cup \mathcal{R}, & \sum_{j\in \mathcal{R}} a_j < -s^{\rho } \leq (u_\psi - \floor{u_\psi})\sum_{j\in \mathcal{R}} a_j, \hfill{\hypertarget{C2}{(C2)}}\\
\mathcal{S}^2\cup \mathcal{R}\cup \{\rho \}, & (u_\psi - \floor{u_\psi})\sum_{j\in \mathcal{R}} a_j < -s^{\rho } \leq 0, \hfill \hypertarget{C3}{(C3)}\\
\mathcal{S}^2\cup \mathcal{R}\cup\{\rho \}, & \text{otherwise.} \hfill \hypertarget{C4}{(C4)}
\end{cases}
\]

Despite the identical form of $\mathcal{S}^*$, (\hyperlink{C3}{C3}) and (\hyperlink{C4}{C4}) are stated separately because their corresponding dual solutions are distinct. We note that 
\[\sum_{j\in \mathcal{R}} a_j = \min(0, a_{\psi}) \leq (u_\psi - \floor{u_\psi})\min(0, a_{\psi}) \leq 0\] because $\min(0, a_{\psi}) \leq 0$ and $0 < u_\psi - \floor{u_\psi} < 1$. We claim that $\overline{\mathbf{z}} = P(\mathcal{S}^*)$ is an optimal solution to problem \eqref{frac_convZ_lp}. We next construct solutions $[\overline{\mathbf{p}}, \overline{\mathbf{q}},\overline{r}]$ to problem \eqref{frac_convZ_dual} in each of the four cases (\hyperlink{C1}{C1})-(\hyperlink{C4}{C4}). Then we show that the proposed primal-dual solution pairs are optimal by strong duality. \\

We first consider (\hyperlink{C1}{C1}) and (\hyperlink{C4}{C4}) jointly and provide the following dual solution---for $i\in\mathcal{V}$, 
\begin{equation}
\label{p14}
\overline{p}_i := \begin{cases}
s^j, & i\in \mathcal{S}^*, \\
0, & \text{otherwise.}
\end{cases}
\end{equation}
For any $(i,j)\in\mathcal{A}$, 
\begin{equation}
\label{q14}
\overline{q}_{i,j} := 
\begin{cases}
-s^j, & j\notin\mathcal{S}^*, \\
0, & \text{otherwise,}
\end{cases}
\end{equation}
and 
\begin{equation}
\label{r14}
\overline{r} = 0.
\end{equation}

\begin{lemma}
\label{lem:feas_14}
The solution \eqref{p14}-\eqref{r14} is feasible to problem \eqref{frac_convZ_dual} in cases (\hyperlink{C1}{C1}) and (\hyperlink{C4}{C4}).
\end{lemma}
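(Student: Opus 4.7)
The plan is to verify that the proposed dual solution $[\overline{\mathbf{p}}, \overline{\mathbf{q}}, \overline{r}]$ defined in \eqref{p14}--\eqref{r14} satisfies every constraint of \eqref{frac_convZ_dual}. I split this into two tasks: (i) the sign restriction \eqref{constr:frac_chain_dual_nonpos}, and (ii) the inequality constraints \eqref{constr:frac_chain_dual_1}--\eqref{constr:frac_chain_dual_3}. Non-positivity of $\overline{r}=0$ is immediate. For $\overline{p}_i = s^i$ with $i\in\mathcal{S}^*$ at depth $d\geq 2$, non-positivity follows because such $i$ is included in $\mathcal{S}^d$ precisely when $s^i<0$; for $i=\rho\in\mathcal{S}^*$, which can only happen in case (\hyperlink{C4}{C4}), the defining condition $-s^\rho>0$ gives $s^\rho<0$; for $i=\psi\in\mathcal{R}$, I first observe that $\sigma_\psi(\mathcal{S}^*\setminus\{\psi\})=\{\psi\}$ (since $\rho\in\mathcal{S}^*$ in (\hyperlink{C4}{C4}) whenever $\psi$ is included), so $s^\psi=a_\psi<0$ by definition of $\mathcal{R}$. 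Similarly, $\overline{q}_{i,j}=-s^j\leq 0$ whenever $j\notin\mathcal{S}^*$, because such $j$ (at depth $\geq 2$) fails the admission test $s^j<0$; in the boundary case $j=\rho\notin\mathcal{S}^*$, which forces (\hyperlink{C1}{C1}), the case hypothesis $-s^\rho\leq\sum_{j\in\mathcal{R}}a_j\leq 0$ gives $s^\rho\geq 0$.

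The key algebraic tool for (ii) is the recursion
\[
s^i \;=\; a_i \;+\; \sum_{\substack{j\in\ch{i}\\ j\notin\mathcal{S}^{\dep{i}+1}}} s^j,
\]
which I would derive from \eqref{si} by decomposing $\sigma_i(\mathcal{S}^{\dep{i}+1})$ as $\{i\}\cup\bigcup_{j\in\ch{i},\, j\notin\mathcal{S}^{\dep{i}+1}}\sigma_j(\mathcal{S}^{\dep{j}+1})$; this uses the fact that $\mathcal{S}^{\dep{j}+1}$ and $\mathcal{S}^{\dep{i}+1}$ agree on $R^+(j)\setminus\{j\}$ whenever $j\notin\mathcal{S}^{\dep{i}+1}$. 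Because $\mathcal{S}^d$ only contains vertices of depth $\geq d$, any child of a depth-$d$ member of $\mathcal{S}^*$ that lies in $\mathcal{S}^*$ must already lie in $\mathcal{S}^{d+1}$, so the relation also holds with $\mathcal{S}^{\dep{i}+1}$ replaced by $\mathcal{S}^*$.

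Substituting this recursion into \eqref{constr:frac_chain_dual_1} for each $i\notin\{\psi,\rho\}$, both subcases $i\in\mathcal{S}^*$ and $i\notin\mathcal{S}^*$ collapse to the tight equality $a_i=a_i$ after the $s^j$-terms in $\sum_{j\in\ch{i}}\overline{q}_{i,j}$ cancel against $\overline{p}_i$ (respectively against $-\overline{q}_{\pa{i},i}$, which contributes $s^i$ exactly when $i\notin\mathcal{S}^*$). The identical computation applied at $\rho$, using $\overline{r}=0$, verifies \eqref{constr:frac_chain_dual_3} with equality in both (\hyperlink{C1}{C1}) and (\hyperlink{C4}{C4}). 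It remains to check \eqref{constr:frac_chain_dual_2} at $\psi$; since $\pa{\psi}$ does not exist and $\overline{r}=0$, this reduces to $\overline{p}_\psi+\overline{q}_{\psi,\rho}\leq a_\psi$. In (\hyperlink{C1}{C1}) this reads $-s^\rho\leq a_\psi$, which is the chain $-s^\rho\leq\sum_{j\in\mathcal{R}}a_j=\min(0,a_\psi)\leq a_\psi$. In (\hyperlink{C4}{C4}), $\overline{q}_{\psi,\rho}=0$ since $\rho\in\mathcal{S}^*$, and either $\overline{p}_\psi=s^\psi=a_\psi$ (when $a_\psi<0$, so $\psi\in\mathcal{R}$) or $\overline{p}_\psi=0$ with $a_\psi\geq 0$; both subcases give the required inequality.

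The main obstacle will be stating the recursion for $s^i$ carefully at the shallow depths, where the procedure does not explicitly produce $s^\psi$ and the single-vertex identity $\sigma_\psi(\mathcal{S}^*\setminus\{\psi\})=\{\psi\}$ has to be invoked to make sense of $\overline{p}_\psi$. Once that bookkeeping is settled and the recursion is in place, the verification of each inequality reduces to a direct substitution and the two case-defining inequalities of (\hyperlink{C1}{C1}) and (\hyperlink{C4}{C4}).
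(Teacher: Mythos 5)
Your proof is correct and follows essentially the same approach as the paper's: verify non-positivity of $[\overline{\mathbf{p}},\overline{\mathbf{q}},\overline{r}]$, then use the recursion $s^i = a_i + \sum_{j\in\ch{i}\setminus\mathcal{S}^*} s^j$ to show each of \eqref{constr:frac_chain_dual_1}--\eqref{constr:frac_chain_dual_3} is satisfied (with equality at all $i\neq\psi$). The only differences are cosmetic: you spell out the depth bookkeeping justifying the replacement of $\mathcal{S}^{\dep{i}+1}$ by $\mathcal{S}^*$ more explicitly, and you split the non-positivity check at $\psi$ and $\rho$ into subcases rather than the paper's unified claim that $s^i<0$ for $i\in\mathcal{S}^*$ and $s^j\geq 0$ for $j\notin\mathcal{S}^*$.
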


\begin{lemma}
\label{lem:obj_14}
In both cases (\hyperlink{C1}{C1}) and (\hyperlink{C4}{C4}), the primal objective values at $\overline{\mathbf{z}} = P(\mathcal{S}^*)$ match the dual objective values at $[\overline{\mathbf{p}}, \overline{\mathbf{q}},\overline{r}]$ given in \eqref{p14}-\eqref{r14}.
\end{lemma}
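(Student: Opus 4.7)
The plan is to match the two objective values by telescoping the primal sum over the partition of $\mathcal{V}$ induced by $\mathcal{S}^*$ and then comparing terms against the dual. Since $\overline{r}=0$, the dual objective collapses to $\sum_{i\in\mathcal{S}^*} u_i s^i$. On the primal side, the first step is to verify that the special rule in the definition of $P(\mathcal{S}^*)$---namely $P(\mathcal{S}^*)_i = \floor{u_\psi}$ whenever $i^\vartriangle(\mathcal{S}^*)=\psi$ and $\ch{\psi}\notin\mathcal{S}^*$---is never activated in cases (\hyperlink{C1}{C1}) or (\hyperlink{C4}{C4}). In (\hyperlink{C1}{C1}) this is immediate because $\mathcal{S}^*=\mathcal{S}^2$ excludes $\psi$, so no vertex has $\psi$ as its closest ascendant in $\mathcal{S}^*$. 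In (\hyperlink{C4}{C4}) we have $\rho=\ch{\psi}\in\mathcal{S}^*$ by construction, so the side condition $\ch{\psi}\notin\mathcal{S}^*$ fails. Consequently, $P(\mathcal{S}^*)_i = u_{i^\vartriangle(\mathcal{S}^*)}$ for every $i\in\mathcal{V}$, with the convention $u_0=0$.

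Next I would group the primal sum $\mathbf{a}^\top P(\mathcal{S}^*)$ by closest ascendant in $\mathcal{S}^*$. Vertices with $i^\vartriangle(\mathcal{S}^*)=0$ contribute zero. For each $j\in\mathcal{S}^*$, the remark following \eqref{eq:sigma_i} identifies $\sigma_j(\mathcal{S}^*) = \{k\in\mathcal{V}: k^\vartriangle(\mathcal{S}^*) = j\}$, so the primal objective rewrites as
\[\mathbf{a}^\top P(\mathcal{S}^*) = \sum_{j\in\mathcal{S}^*} u_j \sum_{k\in\sigma_j(\mathcal{S}^*)} a_k.\]
The proof then reduces to the termwise identity $\sum_{k\in\sigma_j(\mathcal{S}^*)} a_k = s^j$ for each $j\in\mathcal{S}^*$.

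To establish this identity, for any $j\in\mathcal{S}^2$ with $\dep{j}=d\geq 2$, definition \eqref{si} gives $s^j = \sum_{k\in\sigma_j(\mathcal{S}^{d+1})} a_k$, and the key observation is $\sigma_j(\mathcal{S}^{d+1})=\sigma_j(\mathcal{S}^*)$. Indeed, only strict descendants of $j$ can affect $\sigma_j(\cdot)$, and every vertex in $\mathcal{S}^*\setminus\mathcal{S}^{d+1}$ has depth at most $d$ (in (\hyperlink{C4}{C4}), these extra vertices lie among $\{\psi,\rho\}$, at depths $0$ and $1$), whereas strict descendants of $j$ have depth at least $d+1$. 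An analogous argument handles $j=\rho$ at depth $1$ in (\hyperlink{C4}{C4}): the elements added when passing from $\mathcal{S}^2$ to $\mathcal{S}^*$, namely $\{\rho\}$ or $\{\psi,\rho\}$, are not strict descendants of $\rho$, so $\sigma_\rho(\mathcal{S}^2)=\sigma_\rho(\mathcal{S}^*)$, giving $s^\rho = \sum_{k\in\sigma_\rho(\mathcal{S}^*)}a_k$. Finally, in the subcase of (\hyperlink{C4}{C4}) where $\psi\in\mathcal{S}^*$ (that is, $a_\psi<0$), the child $\rho\in\mathcal{S}^*$ blocks every strict descendant of $\psi$, so $\sigma_\psi(\mathcal{S}^*)=\{\psi\}$; the natural extension of \eqref{si} to depth $0$ yields $s^\psi = a_\psi$, which matches the primal contribution $a_\psi\cdot P(\mathcal{S}^*)_\psi = a_\psi u_\psi$ at vertex $\psi$.

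The main obstacle is the bookkeeping: first, confirming that the piecewise definition of $P(\mathcal{S}^*)$ collapses to the uniform form $u_{i^\vartriangle(\mathcal{S}^*)}$ in both cases, and second, interpreting the depth-$0$ quantity $s^\psi$ in (\hyperlink{C4}{C4}) consistently, since the construction procedure only computes $s^i$ down to depth $1$. Once these points are settled, summing the termwise equalities $u_j s^j = u_j \sum_{k\in\sigma_j(\mathcal{S}^*)} a_k$ over $j\in\mathcal{S}^*$ delivers the desired equality between the primal objective at $\overline{\mathbf{z}}=P(\mathcal{S}^*)$ and the dual objective at $[\overline{\mathbf{p}},\overline{\mathbf{q}},\overline{r}]$.
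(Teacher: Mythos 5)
Your proof is correct and follows the same route as the paper: confirm that the floor rule in the definition of $P(\mathcal{S}^*)$ is never triggered in (\hyperlink{C1}{C1}) or (\hyperlink{C4}{C4}) so that $P(\mathcal{S}^*)_j = u_{j^\vartriangle(\mathcal{S}^*)}$ for all $j$, regroup the primal sum by closest ascendant in $\mathcal{S}^*$, and match each group total $\sum_{k\in\sigma_j(\mathcal{S}^*)} a_k$ with $s^j$, hence with $\overline{p}_j$. The paper's own proof compresses all of this to a single equality chain and treats the identity $\sum_{k\in\sigma_j(\mathcal{S}^*)} a_k = s^j$ as immediate from the construction, whereas you spell out why $\sigma_j(\mathcal{S}^*)=\sigma_j(\mathcal{S}^{\dep{j}+1})$ (the vertices added beyond $\mathcal{S}^{\dep{j}+1}$ all have depth $\leq \dep{j}$, so they cannot be strict descendants of $j$) and handle $j\in\{\rho,\psi\}$ separately—useful bookkeeping, same argument.
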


Next, we consider case (\hyperlink{C2}{C2}). Recall that $\sum_{j\in \mathcal{R}} a_j < -s^{\rho } \leq (u_\psi - \floor{u_\psi})\sum_{j\in \mathcal{R}} a_j$, and $\mathcal{S}^* = \mathcal{S}^2 \cup \mathcal{R}$. We note that $s^{\rho } \geq - (u_\psi - \floor{u_\psi})\sum_{j\in \mathcal{R}} a_j \geq 0$, and $\sum_{j\in \mathcal{R}} a_j  + s^{\rho } < 0$. It must be that $\sum_{j\in \mathcal{R}} a_j = a_\psi < -s^{\rho } \leq 0$, which implies that $\mathcal{R} = \{\psi\}$ and $\mathcal{S}^* = \{\psi\}\cup\mathcal{S}^2$. Moreover, $s^\psi = a_\psi + s^{\rho } < 0$, and $0 \leq s^{\rho } +  (u_\psi - \floor{u_\psi}) a_\psi$. We propose the following dual solution for (\hyperlink{C2}{C2}): 
\begin{equation}
\label{p2}
\overline{p}_i := \begin{cases}
s^i, & i\in \mathcal{S}^2, \\
0, & \text{otherwise.}
\end{cases}
\end{equation}
for $i\in\mathcal{V}$, and for any $(i,j)\in\mathcal{A}$, 
\begin{equation}
\label{q2}
\overline{q}_{i,j} := 
\begin{cases}
-s^\rho  - \overline{r}, & j = \rho , \\
-s^j, & j\notin\mathcal{S}^*\cup \{\rho \}, \\
0, & \text{otherwise,}
\end{cases}
\end{equation}
and 
\begin{equation}
\label{r2}
\overline{r} = \frac{u_\psi - \floor{u_\psi}}{\ceil{u_\psi}- u_\psi}\left(a_\psi + s^\rho \right).
\end{equation}

\begin{lemma}
\label{lem:feas_2}
The solution \eqref{p2}-\eqref{r2} is feasible to problem \eqref{frac_convZ_dual} in case (\hyperlink{C2}{C2}).
\end{lemma}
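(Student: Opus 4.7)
The plan is to verify directly that $[\overline{\mathbf{p}},\overline{\mathbf{q}},\overline{r}]$ defined in \eqref{p2}--\eqref{r2} satisfies all of \eqref{constr:frac_chain_dual_1}--\eqref{constr:frac_chain_dual_nonpos}, in a manner parallel to the argument for Lemma \ref{lem:feas_14}. The workhorse is the recursive identity
\[
s^i = a_i + \sum_{k\in\ch{i},\, k\notin \mathcal{S}^{\dep{i}+1}} s^k,
\]
which falls out of the definitions \eqref{eq:sigma_i} and \eqref{Sd}, together with the fact that for any $k$ at depth $d\geq 2$, membership in $\mathcal{S}^2$, $\mathcal{S}^d$, and $\mathcal{S}^{\dep{k}+1}$ all coincide with the single inequality $s^k<0$.

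First I would dispose of the sign constraint \eqref{constr:frac_chain_dual_nonpos}. Nonpositivity of $\overline{p}_i$ is immediate because $\mathcal{S}^2$ was built by collecting precisely the vertices with $s^i<0$. For $\overline{q}_{i,j}$ with $j\neq\rho$, the task reduces to showing $s^j\geq 0$ whenever $j\notin\mathcal{S}^*\cup\{\rho\}$, which holds by the construction of $\mathcal{S}^2$ (such a $j$ has depth $\geq 2$ and is not in $\mathcal{S}^2$). For $\overline{q}_{\psi,\rho}$, substituting \eqref{r2} into \eqref{q2} and writing $\epsilon=u_\psi-\floor{u_\psi}$ yields
\[
\overline{q}_{\psi,\rho}=\frac{-s^\rho-\epsilon a_\psi}{1-\epsilon},
\]
whose numerator is nonpositive precisely because (\hyperlink{C2}{C2}) enforces $-s^\rho\leq \epsilon a_\psi$. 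Finally $\overline{r}=\tfrac{\epsilon}{1-\epsilon}(a_\psi+s^\rho)=\tfrac{\epsilon}{1-\epsilon}s^\psi<0$ since (\hyperlink{C2}{C2}) implies $s^\psi=a_\psi+s^\rho<0$.

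I would then check the three equality-constraint classes, each of which in fact tightens, matching the complementary slackness one would expect given the form of $\overline{\mathbf{z}}=P(\mathcal{S}^*)$. For \eqref{constr:frac_chain_dual_1}, any $i\in\mathcal{V}\setminus\{\psi,\rho\}$ has $\dep{i}\geq 2$, so its children lie at depth $\geq 3$ and bypass the special entry at $\rho$; a short case split on whether $i\in\mathcal{S}^2$ shows that the left-hand side collapses to $s^i+\sum_{k\in\ch{i},\,k\notin\mathcal{S}^2}(-s^k)$, which by the recursive identity equals $a_i$. For \eqref{constr:frac_chain_dual_3} at $\rho$, the stand-alone $-\overline{r}$ cancels against the $+\overline{r}$ hidden inside $\overline{q}_{\psi,\rho}=-s^\rho-\overline{r}$, reducing the left-hand side to $s^\rho+\sum_{j\in\ch{\rho},\,j\notin\mathcal{S}^2}(-s^j)=a_\rho$ by the recursion applied to $\rho$. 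For \eqref{constr:frac_chain_dual_2} at $\psi$, using $\overline{p}_\psi=\overline{q}_{\pa{\psi},\psi}=0$ the nontrivial pieces combine as
\[
(-s^\rho-\overline{r})+\overline{r}/\epsilon=-s^\rho+\overline{r}(1-\epsilon)/\epsilon=-s^\rho+(a_\psi+s^\rho)=a_\psi,
\]
using exactly the defining form \eqref{r2} of $\overline{r}$.

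The delicate step is the interlocking choice of $\overline{q}_{\psi,\rho}$ and $\overline{r}$: they are tied together so that the coefficient of $a_\psi+s^\rho$ cancels cleanly, simultaneously making \eqref{constr:frac_chain_dual_2} tight and keeping $\overline{q}_{\psi,\rho}\leq 0$. Getting the signs right, in particular the factor $\epsilon/(1-\epsilon)$ in \eqref{r2} and the observation that nonpositivity of $\overline{q}_{\psi,\rho}$ consumes the full case-(\hyperlink{C2}{C2}) hypothesis $-s^\rho\leq\epsilon a_\psi$, is the one point that requires genuine care. Everything else propagates the recursion for $s^i$ mechanically through the tree rooted at $\psi$.
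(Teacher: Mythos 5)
Your proposal is correct and follows essentially the same route as the paper's proof: verify non-positivity of $[\overline{\mathbf{p}},\overline{\mathbf{q}},\overline{r}]$ using the sign structure of $s^{(\cdot)}$ and the case-(C2) inequality $0\leq s^\rho+(u_\psi-\floor{u_\psi})a_\psi$, then show each of \eqref{constr:frac_chain_dual_1}--\eqref{constr:frac_chain_dual_3} holds (in fact with equality) via the recursion $s^i=a_i+\sum_{j\in\ch{i}\setminus\mathcal{S}^*}s^j$ and direct algebraic cancellation at $\psi$ and $\rho$. The $\epsilon$ shorthand is a cosmetic reorganization, not a different argument.
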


\begin{lemma}
\label{lem:obj_2}
In case (\hyperlink{C2}{C2}), the primal objective value at $\overline{\mathbf{z}} = P(\mathcal{S}^*)$ matches the dual objective value at $[\overline{\mathbf{p}}, \overline{\mathbf{q}},\overline{r}]$ given in \eqref{p2}-\eqref{r2}.
\end{lemma}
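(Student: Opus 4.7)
The plan is to establish the equality by direct computation: evaluate the primal objective at $\overline{\mathbf{z}} = P(\mathcal{S}^*)$, evaluate the dual objective at $[\overline{\mathbf{p}}, \overline{\mathbf{q}}, \overline{r}]$ defined in \eqref{p2}--\eqref{r2}, and check that the two expressions coincide term by term. Before starting the computation, I would unpack the structure of $\mathcal{S}^*$ in case (\hyperlink{C2}{C2}). The inequality $\sum_{j\in\mathcal{R}} a_j < -s^\rho \leq (u_\psi - \floor{u_\psi})\sum_{j\in\mathcal{R}} a_j$, combined with $\sum_{j\in\mathcal{R}} a_j = \min(0, a_\psi)$ and $0 < u_\psi - \floor{u_\psi} < 1$, forces $a_\psi < 0$, $\mathcal{R} = \{\psi\}$, and hence $\mathcal{S}^* = \mathcal{S}^2 \cup \{\psi\}$. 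Since $\mathcal{S}^2$ contains only vertices of depth at least two while $\rho$ has depth one, we also have $\rho \notin \mathcal{S}^*$.

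Next, I would rewrite the primal objective $\mathbf{a}^\top \overline{\mathbf{z}}$ by partitioning $\mathcal{V}$ according to $i^\vartriangle(\mathcal{S}^*)$; vertices with $i^\vartriangle(\mathcal{S}^*) = 0$ contribute nothing because $P(\mathcal{S}^*)_i = u_0 = 0$. For each $j \in \mathcal{S}^2$, the definition \eqref{ext} assigns $P(\mathcal{S}^*)_i = u_j$ to every $i \in \sigma_j(\mathcal{S}^*)$. A short check shows that the descendants of $j$ lying in $\mathcal{S}^*$ all have depth strictly greater than $\dep{j}$, so they already belong to $\mathcal{S}^{\dep{j}+1}$; this yields $\sigma_j(\mathcal{S}^*) = \sigma_j(\mathcal{S}^{\dep{j}+1})$, and therefore $\sum_{i \in \sigma_j(\mathcal{S}^*)} a_i = s^j$ by \eqref{si}. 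For $j = \psi$, the condition $\rho \notin \mathcal{S}^*$ activates the first branch of \eqref{ext}, giving $P(\mathcal{S}^*)_i = \floor{u_\psi}$ for every $i \in \sigma_\psi(\mathcal{S}^*)$. Invoking Property \ref{p}, which ensures that $\rho$ is the unique child of $\psi$, delivers the key set identity $\sigma_\psi(\mathcal{S}^*) = \{\psi\} \cup \sigma_\rho(\mathcal{S}^2)$, so that $\sum_{i \in \sigma_\psi(\mathcal{S}^*)} a_i = a_\psi + s^\rho$. Assembling these pieces yields a primal value of $\sum_{j \in \mathcal{S}^2} u_j s^j + \floor{u_\psi}(a_\psi + s^\rho)$.

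Finally, I would evaluate the dual objective \eqref{obj:frac_chain_dual} at $[\overline{\mathbf{p}}, \overline{\mathbf{q}}, \overline{r}]$. Since $\overline{p}_i$ is nonzero precisely on $\mathcal{S}^2$, the sum $\sum_{i\in\mathcal{V}} u_i \overline{p}_i$ collapses to $\sum_{j \in \mathcal{S}^2} u_j s^j$, matching the first piece of the primal value. Substituting \eqref{r2} into the $\overline{r}$ term, the coefficient $\tfrac{\floor{u_\psi}(\ceil{u_\psi} - u_\psi)}{u_\psi - \floor{u_\psi}}$ cancels the reciprocal factor in $\overline{r}$ and leaves exactly $\floor{u_\psi}(a_\psi + s^\rho)$, matching the second piece. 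The two objective values therefore agree. The main substantive step is the set identity $\sigma_\psi(\mathcal{S}^*) = \{\psi\}\cup \sigma_\rho(\mathcal{S}^2)$, which hinges on Property \ref{p}; the remaining manipulations are straightforward bookkeeping that tracks how $s^j$ accumulates contributions across depths of the tree.
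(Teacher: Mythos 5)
Your proof is correct and follows essentially the same computation as the paper's: evaluate the primal objective by grouping $\mathcal{V}$ into the sets $\sigma_j(\mathcal{S}^*)$, identify $\sum_{i\in\sigma_j(\mathcal{S}^*)}a_i$ with $s^j$ for $j\in\mathcal{S}^2$ and with $a_\psi+s^\rho$ for $j=\psi$, then match against the dual objective via the cancellation in the $\overline{r}$-term. The extra justifications you supply for the set identities $\sigma_j(\mathcal{S}^*)=\sigma_j(\mathcal{S}^{\dep{j}+1})$ and $\sigma_\psi(\mathcal{S}^*)=\{\psi\}\cup\sigma_\rho(\mathcal{S}^2)$ are accurate and merely make explicit what the paper treats as immediate.
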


In the last case (\hyperlink{C3}{C3}), $\mathcal{S}^* = \mathcal{S}^2\cup \mathcal{R}\cup \{\rho \}$, and $(u_\psi - \floor{u_\psi})\sum_{j\in \mathcal{R}} a_j < -s^{\rho } \leq 0$. Given that $s^{\rho } \geq 0$ and $(u_\psi - \floor{u_\psi})\sum_{j\in \mathcal{R}} a_j  + s^{\rho } < 0$, we know that $a_\psi < 0$ and $\psi \in \mathcal{S}^*$. In other words, $\mathcal{S}^* = \mathcal{S}^2\cup  \{\psi, \rho \}$. Below is our proposed dual solution for (\hyperlink{C3}{C3}): 
\begin{equation}
\label{p3}
\overline{p}_i := \begin{cases}
(u_\psi - \floor{u_\psi})a_\psi  + s^{\rho }, & i = \rho , \\
s^i, & i\in \mathcal{S}^2, \\
0, & \text{otherwise,}
\end{cases}
\end{equation}
for $i\in\mathcal{V}$, and for any $(i,j)\in\mathcal{A}$, 
\begin{equation}
\label{q3}
\overline{q}_{i,j} := 
\begin{cases}
-s^j, & j\notin\mathcal{S}^*, \\
0, & \text{otherwise,}
\end{cases}
\end{equation}
and 
\begin{equation}
\label{r3}
\overline{r} = (u_\psi - \floor{u_\psi})a_\psi. 
\end{equation}

In the next two lemmas, we show dual feasibility of $[\overline{\mathbf{p}}, \overline{\mathbf{q}},\overline{r}]$ given by \eqref{p3}-\eqref{r3} and compare the primal and dual objective values at the proposed solutions.

\begin{lemma}
\label{lem:feas_3}
The solution \eqref{p3}-\eqref{r3} is feasible to problem \eqref{frac_convZ_dual} in case (\hyperlink{C3}{C3}).
\end{lemma}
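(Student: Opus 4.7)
The plan is to verify dual feasibility in three stages: first the sign constraints \eqref{constr:frac_chain_dual_nonpos}, then each of the three constraint families \eqref{constr:frac_chain_dual_1}--\eqref{constr:frac_chain_dual_3}. As in the earlier cases, I expect all three constraint families to hold with equality, which will set up the strong-duality computation in the companion lemma.

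For the non-positivity, each coordinate reduces to a direct reading of the case (\hyperlink{C3}{C3}) hypotheses. The right inequality $(u_\psi - \floor{u_\psi})\sum_{j\in\mathcal{R}} a_j < -s^\rho \le 0$, rewritten with $\mathcal{R} = \{\psi\}$, is exactly $\overline{p}_\rho = (u_\psi - \floor{u_\psi})a_\psi + s^\rho \le 0$. For $i \in \mathcal{S}^2$, $\overline{p}_i = s^i < 0$ because the construction \eqref{Sd} adds $i$ at its own depth precisely when $s^i < 0$. For $j \notin \mathcal{S}^* \supseteq \mathcal{S}^2$ with $j\neq\psi,\rho$, $\overline{q}_{\pa{j},j} = -s^j \le 0$ for the symmetric reason. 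Finally $\overline{r} = (u_\psi - \floor{u_\psi})a_\psi \le 0$ since $a_\psi < 0$ in (\hyperlink{C3}{C3}).

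The workhorse for the three constraint families is the telescoping identity
\[
s^i \;=\; a_i \;+\; \sum_{\substack{j \in \ch{i} \\ j \notin \mathcal{S}^{\dep{i}+1}}} s^j, \qquad i \in \mathcal{V} \setminus \{\psi\},
\]
which follows immediately from \eqref{si} together with the decomposition $\sigma_i(\mathcal{S}^{\dep{i}+1}) = \{i\} \cup \bigcup_{j \in \ch{i},\, j \notin \mathcal{S}^{\dep{i}+1}} \sigma_j(\mathcal{S}^{\dep{j}+1})$. With it, each family becomes a substitution. For \eqref{constr:frac_chain_dual_2} at $\psi$, the facts that $\psi$ is the root (so $\overline{q}_{\pa{\psi},\psi}=0$), that $\rho \in \mathcal{S}^*$ (so $\overline{q}_{\psi,\rho}=0$), and that $\overline{p}_\psi = 0$ collapse the left-hand side to $\overline{r}/(u_\psi - \floor{u_\psi}) = a_\psi$. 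For \eqref{constr:frac_chain_dual_3} at $\rho$, the two $(u_\psi - \floor{u_\psi})a_\psi$ contributions from $\overline{p}_\rho$ and $-\overline{r}$ cancel, $\overline{q}_{\psi,\rho}$ vanishes, and what remains is $s^\rho - \sum_{j \in \ch{\rho},\, j \notin \mathcal{S}^2} s^j = a_\rho$. For \eqref{constr:frac_chain_dual_1} at $i \in \mathcal{V} \setminus \{\psi, \rho\}$, split on whether $i \in \mathcal{S}^*$: when $i \in \mathcal{S}^2$, $\overline{p}_i = s^i$ and $\overline{q}_{\pa{i},i} = 0$; when $i \notin \mathcal{S}^*$, $\overline{p}_i = 0$ and $\overline{q}_{\pa{i},i} = -s^i$; in both subcases the telescoping identity yields $a_i$.

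I do not anticipate any hard step. The only delicate bookkeeping is that $\mathcal{S}^*$ and $\mathcal{S}^2$ differ exactly by the pair $\{\psi,\rho\}$, so the two-case definitions of $\overline{p}_i$ and $\overline{q}_{i,j}$ must be applied to the correct set when $i$ or $j$ equals $\psi$ or $\rho$. The structural input that keeps the algebra tidy, aside from the case (\hyperlink{C3}{C3}) inequality, is Property \ref{p}: the guarantee $\ch{\psi} = \{\rho\}$ is what restricts the dual constraint at $\psi$ to the single variable $\overline{q}_{\psi,\rho}$ and ensures the cancellation in \eqref{constr:frac_chain_dual_3} is clean.
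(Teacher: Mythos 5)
Your proposal is correct and follows essentially the same route as the paper's proof: verify non-positivity from the case (\hyperlink{C3}{C3}) hypotheses (in particular $a_\psi < 0$ and $(u_\psi - \floor{u_\psi})a_\psi + s^\rho < 0$), then check \eqref{constr:frac_chain_dual_1}--\eqref{constr:frac_chain_dual_3} via the telescoping identity $s^i = a_i + \sum_{j\in\ch{i}\backslash\mathcal{S}^*} s^j$, with the same cancellations at $\psi$ and $\rho$. (One small wording slip: it is the left inequality of the (\hyperlink{C3}{C3}) chain, $(u_\psi - \floor{u_\psi})a_\psi < -s^\rho$, that directly yields $\overline{p}_\rho < 0$, not the ``right inequality'' as written; the conclusion is unaffected.)
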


\begin{lemma}
\label{lem:obj_3}
In case (\hyperlink{C3}{C3}), the primal objective value at $\overline{\mathbf{z}} = P(\mathcal{S}^*)$ matches the dual objective value at the solution defined by \eqref{p3}-\eqref{r3}.
\end{lemma}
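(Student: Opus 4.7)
The plan is to evaluate both the primal objective $\mathbf{a}^\top P(\mathcal{S}^*)$ and the dual objective \eqref{obj:frac_chain_dual} at the solution \eqref{p3}--\eqref{r3}, reduce each to the same compact expression, and conclude equality. This mirrors the structure of the analogous Lemmas \ref{lem:obj_14} and \ref{lem:obj_2} for the other three cases.

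For the primal side, note that in case (\hyperlink{C3}{C3}) we have $\mathcal{S}^* = \mathcal{S}^2 \cup \{\psi, \rho\}$. Because $\rho = \ch{\psi} \in \mathcal{S}^*$, the floor exception in \eqref{ext} is inactive, so $P(\mathcal{S}^*)_i = u_{i^\vartriangle(\mathcal{S}^*)}$ for every $i\in\mathcal{V}$. I would then group by deepest ancestor in $\mathcal{S}^*$ and argue that $\sigma_\psi(\mathcal{S}^*) = \{\psi\}$ (since $\ch{\psi} = \{\rho\}$ forces every strict descendant of $\psi$ to lie in $R^+(\rho)$), and that $\sigma_k(\mathcal{S}^*)$ coincides with the set underlying $s^k$ in \eqref{si} for every $k \in \mathcal{S}^2 \cup \{\rho\}$ (since every element of $\mathcal{S}^2$ has depth at least $2$ and is therefore a strict descendant of $\rho$, so adjoining $\{\psi,\rho\}$ affects none of the relevant subtrees). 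Using $u_\rho = \ceil{u_\psi}$ from Property \ref{p}, this yields
\[
\mathbf{a}^\top P(\mathcal{S}^*) \;=\; u_\psi\, a_\psi \;+\; \ceil{u_\psi}\, s^\rho \;+\; \sum_{k\in\mathcal{S}^2} u_k\, s^k.
\]

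For the dual side, substituting \eqref{p3}--\eqref{r3} leaves only the contributions from $\overline{p}_\rho$, $\overline{p}_k$ for $k\in\mathcal{S}^2$, and $\overline{r}$, so the dual objective becomes
\[
\ceil{u_\psi}[(u_\psi - \floor{u_\psi})\, a_\psi + s^\rho] \;+\; \sum_{k\in\mathcal{S}^2} u_k\, s^k \;+\; \floor{u_\psi}(\ceil{u_\psi} - u_\psi)\, a_\psi.
\]
The coefficient of $a_\psi$ collapses via the identity
\[
\ceil{u_\psi}(u_\psi - \floor{u_\psi}) + \floor{u_\psi}(\ceil{u_\psi} - u_\psi) \;=\; u_\psi(\ceil{u_\psi} - \floor{u_\psi}) \;=\; u_\psi,
\]
where the last step uses $u_\psi \notin \mathbb{Z}$, so $\ceil{u_\psi} - \floor{u_\psi} = 1$. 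The dual reduces to the same expression as the primal, giving the desired equality.

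The main obstacle is the combinatorial bookkeeping that identifies $\sigma_k(\mathcal{S}^*)$ with the set underlying $s^k$: for each $k$ at depth $d$, I must verify that no element of $\mathcal{S}^2$ outside the subtree rooted at $k$ can be an ancestor of any descendant of $k$, so that the constraint set defining $\sigma_k(\mathcal{S}^{d+1})$ is equivalent to that defining $\sigma_k(\mathcal{S}^*)$. Once this identification is granted, the remainder is a short algebraic simplification.
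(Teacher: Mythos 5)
Your proposal is correct and follows essentially the same route as the paper: you group $\mathbf{a}^\top P(\mathcal{S}^*) = \sum_{i\in\mathcal{S}^*} u_i\sum_{j\in\sigma_i(\mathcal{S}^*)} a_j$ by deepest ancestor, identify $\sum_{j\in\sigma_i(\mathcal{S}^*)} a_j$ with $s^i$ for $i\in\mathcal{S}^2\cup\{\rho\}$ and with $a_\psi$ for $i=\psi$, and reduce both primal and dual sides to $u_\psi a_\psi + \ceil{u_\psi} s^\rho + \sum_{k\in\mathcal{S}^2} u_k s^k$ using $\ceil{u_\psi}-\floor{u_\psi}=1$. The paper writes the same computation as a single chain from the primal expression down to $\sum_i u_i \overline{p}_i + \tfrac{\floor{u_\psi}(\ceil{u_\psi}-u_\psi)}{u_\psi-\floor{u_\psi}}\overline{r}$, but the algebra and the structural observation ($\psi,\rho\in\mathcal{S}^*$, so the floor exception in \eqref{ext} is inactive and $\sigma_\psi(\mathcal{S}^*)=\{\psi\}$) are identical; your explicit flag of the $\sigma_k(\mathcal{S}^*)=\sigma_k(\mathcal{S}^{\dep{k}+1})$ identification is a step the paper leaves implicit.
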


\begin{proposition}
\label{prop:root_opt}
The solution $\overline{\mathbf{z}} = P(\mathcal{S}^*)$, where $\mathcal{S}^*$ is determined by (\hyperlink{C1}{C1})-(\hyperlink{C4}{C4}), is optimal to problem \eqref{frac_convZ_lp} when the root of $\mathcal{G}$ is in $\Psi$. 
\end{proposition}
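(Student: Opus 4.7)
The plan is to obtain optimality via LP duality, using the already-established feasibility and objective-matching lemmas on the primal and dual sides. Specifically, for a primal-dual linear programming pair, if a primal feasible point and a dual feasible point achieve equal objective values, then both are optimal. Hence the proposition reduces to verifying this matching pair across the four cases that define $\mathcal{S}^*$.

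First I would note that $\overline{\mathbf{z}} = P(\mathcal{S}^*)$ is feasible to the primal \eqref{frac_convZ_lp}, regardless of which of (\hyperlink{C1}{C1})--(\hyperlink{C4}{C4}) applies, by Lemma \ref{lem:convZ_primal_feasible} (which holds for any $\mathcal{S}\subseteq\mathcal{V}$). Then I would split into four cases according to the sign relationships between $-s^{\rho}$, $\sum_{j\in\mathcal{R}} a_j$, and $(u_\psi - \floor{u_\psi})\sum_{j\in\mathcal{R}} a_j$, as in the definition of $\mathcal{S}^*$:
\begin{itemize}
\item In cases (\hyperlink{C1}{C1}) and (\hyperlink{C4}{C4}), invoke the dual solution $[\overline{\mathbf{p}}, \overline{\mathbf{q}}, \overline{r}]$ defined by \eqref{p14}--\eqref{r14}. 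Its dual feasibility is given by Lemma \ref{lem:feas_14}, and the equality of primal and dual objective values at $\overline{\mathbf{z}}$ and $[\overline{\mathbf{p}}, \overline{\mathbf{q}}, \overline{r}]$ is given by Lemma \ref{lem:obj_14}.
\item In case (\hyperlink{C2}{C2}), invoke the dual solution from \eqref{p2}--\eqref{r2}, with dual feasibility from Lemma \ref{lem:feas_2} and objective value matching from Lemma \ref{lem:obj_2}.
\item In case (\hyperlink{C3}{C3}), invoke the dual solution from \eqref{p3}--\eqref{r3}, with dual feasibility from Lemma \ref{lem:feas_3} and objective matching from Lemma \ref{lem:obj_3}.
\end{itemize}

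Finally, in each case I have a primal feasible solution $\overline{\mathbf{z}}$ and a dual feasible solution with identical objective values, so strong (in fact, weak) LP duality yields that $\overline{\mathbf{z}} = P(\mathcal{S}^*)$ is optimal to \eqref{frac_convZ_lp}. Since (\hyperlink{C1}{C1})--(\hyperlink{C4}{C4}) cover every possible configuration of $-s^\rho$ and $\sum_{j\in\mathcal{R}} a_j$, the conclusion follows in general. The proposition is therefore a short consolidation of the preceding lemmas; the substantive work — the explicit construction of case-dependent dual solutions and the verification of feasibility and objective matching for each — has already been done. The only mild subtlety is ensuring the case partition is exhaustive and that the distinction between (\hyperlink{C3}{C3}) and (\hyperlink{C4}{C4}), which yield the same $\mathcal{S}^*$ but use different dual constructions, is correctly dispatched to the right dual solution; this is a bookkeeping matter rather than a genuine obstacle.
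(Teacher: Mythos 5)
Your proposal is correct and follows essentially the same route as the paper: cite Lemma \ref{lem:convZ_primal_feasible} for primal feasibility, dispatch the four cases to Lemmas \ref{lem:feas_14}--\ref{lem:obj_3} for dual feasibility and objective matching, and conclude by LP duality. Your observation that weak duality already suffices once objectives match, and your remark on the (\hyperlink{C3}{C3})/(\hyperlink{C4}{C4}) bookkeeping, are both accurate and consistent with the paper's argument.
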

\begin{proof}
We have shown in Lemma \ref{lem:convZ_primal_feasible} that the primal solution is feasible. By Lemmas \ref{lem:feas_14}-\ref{lem:obj_3}, the proposed dual solutions are feasible and attain dual objective values that equate the corresponding primal objective values. Following from strong duality, the proposed primal and the dual solutions are optimal to problems \eqref{frac_convZ_lp} and \eqref{frac_convZ_dual}, respectively. 
\end{proof}

\subsection{General Directed Rooted Trees}
\label{sect:wholetree}
We utilize the results from the previous section to construct an optimal solution to the general problem \eqref{frac_convZ_lp} over a directed rooted tree $\mathcal{G}$ as follows. First, for every $\psi\in\Psi$, we solve a subproblem of \eqref{frac_convZ_lp} over the subtree rooted at $\psi$ and obtain $\mathcal{S}^*(\psi)$ based on (\hyperlink{C1}{C1})-(\hyperlink{C4}{C4}). We denote the optimal dual solution by $[\mathbf{p}^*(\psi), \mathbf{q}^*(\psi), r^*(\psi)]$. If $\mathcal{G}$ itself is a directed rooted tree with root $\psi\in\Psi$, then we have found the optimal solution. Now suppose $\mathcal{G}$ is not a directed rooted tree with root in $\Psi$. We then create a new directed rooted tree $\mathcal{G}' = (\mathcal{V}', \mathcal{A}')$ by removing $\bigcup_{\psi\in\Psi}\bigcup_{i\in \mathcal{S}^*(\psi)} R^+(i)$ from $\mathcal{G}$. Consider any $\psi'\in\Psi$ such that the subproblem of \eqref{frac_convZ_lp} over the directed tree rooted at $\psi'$ falls under (\hyperlink{C2}{C2}) or (\hyperlink{C3}{C3}). By the discussions on (\hyperlink{C2}{C2}) and (\hyperlink{C3}{C3}) in Section \ref{sect:subtree}, we know that $\psi'\in\mathcal{S}^*(\psi')$. Therefore, $\mathcal{V}'$ does not contain any vertex from $R^+(\psi')$. In other words, if any $i\in\mathcal{V}'$ belongs to $R^+(\psi)$ for any $\psi\in\Psi$, it is implied that $i\notin\mathcal{S}^*(\psi)$, and the subproblem of \eqref{frac_convZ_lp} over the directed tree rooted at $\psi$ falls under either (\hyperlink{C1}{C1}) or (\hyperlink{C4}{C4}).  \\

We let the height of $\mathcal{G}'$ be $h$, and $h\geq 0$. Let $\mathcal{S}^{h+1} = \emptyset$. In the order of $d = h, h-1, \dots, 0$, we compute $s^i$ as in \eqref{si} for $i\in \mathcal{V}'$ with $\dep{i} = d$, and we iteratively build $\mathcal{S}^d$ given by \eqref{Sd}. In the end, we obtain $\mathcal{S}^0$. We claim that $P\left(\mathcal{S}^0 \cup \bigcup_{\psi\in\Psi} \mathcal{S}^*(\psi)\right)$ is an optimal solution to problem \eqref{frac_convZ_lp}. According to Lemma \ref{lem:convZ_primal_feasible}, this proposed primal solution is feasible. We propose the following dual solution $[\overline{\mathbf{p}}, \overline{\mathbf{q}}, \overline{\mathbf{r}}]$: 
\begin{equation}
\label{pgen}
\overline{p}_i := \begin{cases}
s^j, & i\in \mathcal{S}^0, \\
0, & \text{otherwise.}
\end{cases}
\end{equation}
for $i\in\mathcal{V}'$,  
\begin{equation}
\label{qgen}
\overline{q}_{i,j} := 
\begin{cases}
-s^j, & j\notin\mathcal{S}^0, \\
0, & \text{otherwise,}
\end{cases}
\end{equation}
for any $(i,j)\in\mathcal{A}'$, and
\begin{equation}
\label{rgen}
\overline{r}_\psi = 0
\end{equation} 
if $\psi\in \mathcal{V}'$. For all $i\notin\mathcal{V}'$, $i\in R^+(\psi)$ for some $\psi\in\Psi$. We let $\overline{p}_i = p^*(\psi)_i$. For any $\psi\notin \mathcal{V}'$, we let $\overline{r}_\psi = r^*(\psi)$. Lastly, any $(i,j)\in\mathcal{A}$ such that $i,j\notin\mathcal{V}'$ must satisfy $i,j\in R^+(\psi)$ for some $\psi\in\Psi$. We let $\overline{q}_{i,j} = q^*(\psi)_{i,j}$. For any $(i,j)\in\mathcal{A}$ such that $i\in\mathcal{V}'$ while $j\notin\mathcal{V}'$, we let $\overline{q}_{i,j} = 0$. \\

The construction of $\mathcal{S}^0$, which we have just described, closely resembles the construction of $\mathcal{S}^*(\cdot)$ in Section \ref{sect:subtree}. For every $i\in\mathcal{V}'$ that belongs to $R^+(\psi)$ for any $\psi\in\Psi$, we make the following two observations. First, $i\notin\mathcal{S}^0$ because otherwise $i\in\mathcal{S}^*(\psi)$, and $i$ would not have been present in $\mathcal{V}'$. Second, the proposed dual solution \eqref{pgen}-\eqref{rgen} coincides with $[\mathbf{p}^*(\psi), \mathbf{q}^*(\psi), r^*(\psi)]$ in all the entries involving $i$. In the next proposition, we address the feasibility and optimality of this dual solution, and we show optimality of the proposed primal solution using strong duality.

\begin{proposition}
The proposed solution $\overline{\mathbf{z}} = P\left(\mathcal{S}^0 \cup \bigcup_{\psi\in\Psi} \mathcal{S}^*(\psi)\right)$ is an optimal solution to problem \eqref{frac_convZ_lp}. 
\end{proposition}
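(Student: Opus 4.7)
The plan is to invoke linear programming strong duality in exactly the same way as in the proof of Proposition \ref{prop:root_opt}. Primal feasibility of $\overline{\mathbf{z}} = P(\mathcal{S}^0 \cup \bigcup_{\psi\in\Psi}\mathcal{S}^*(\psi))$ is immediate from Lemma \ref{lem:convZ_primal_feasible}, so the remaining work is to verify dual feasibility of $[\overline{\mathbf{p}},\overline{\mathbf{q}},\overline{\mathbf{r}}]$ defined in \eqref{pgen}--\eqref{rgen} (together with the subproblem solutions $[\mathbf{p}^*(\psi),\mathbf{q}^*(\psi),r^*(\psi)]$ that extend it to vertices and arcs outside $\mathcal{V}'$), and then to match the primal and dual objective values.

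For dual feasibility, I would partition the constraints of \eqref{frac_convZ_dual} into three groups. The first group consists of the constraints indexed by a vertex $i\in\mathcal{V}'$ whose parent and children all lie in $\mathcal{V}'$: these are structurally identical to the single-component constraints handled in Lemma \ref{lem:feas_14}, and the telescoping identities built into \eqref{si}--\eqref{Sd} give feasibility verbatim on $\mathcal{G}'$. The second group consists of constraints indexed by a vertex $i\notin\mathcal{V}'$, which by construction sits in some subtree $R^+(\psi)$ with all incident arcs inside that subtree; on these the dual values coincide with $[\mathbf{p}^*(\psi),\mathbf{q}^*(\psi),r^*(\psi)]$, and feasibility is inherited from whichever of Lemmas \ref{lem:feas_14}, \ref{lem:feas_2}, or \ref{lem:feas_3} applies to the case (\hyperlink{C1}{C1})--(\hyperlink{C4}{C4}) reached at $\psi$. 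The third group consists of the interface constraints at $\pa{\psi}\in\mathcal{V}'$ (for some $\psi\in\Psi$ with $\psi\notin\mathcal{V}'$) and the root constraint at such a $\psi$: setting $\overline{q}_{\pa{\psi},\psi}=0$ collapses each of them to a constraint already handled by one of the first two groups. Nonpositivity \eqref{constr:frac_chain_dual_nonpos} is handled by the same splitting, using the sign rule $i\in\mathcal{S}^0 \Leftrightarrow s^i<0$ on $\mathcal{V}'$ and the nonpositivity of the subproblem solutions on each subtree.

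Next I would show that the primal and dual objective values agree. Both sides decompose along $\mathcal{V}'$ and the subtrees $\{R^+(\psi)\}_{\psi\in\Psi}$: on the primal side, $P(\mathcal{S}^0\cup\bigcup_\psi\mathcal{S}^*(\psi))$ restricted to $R^+(\psi)$ coincides with $P(\mathcal{S}^*(\psi))$, because once $\psi\in\Psi$ enters the conditioning set, $i^\vartriangle(\cdot)$ for vertices in $R^+(\psi)$ only sees ascendants from that subtree. On the dual side, the only terms in \eqref{obj:frac_chain_dual} that could straddle the boundary between $\mathcal{V}'$ and a subtree are the $\overline{r}_\psi$ entries and the arc multipliers $\overline{q}_{\pa{\psi},\psi}$; when $\psi\in\mathcal{V}'$ we have $\overline{r}_\psi=0$ by \eqref{rgen}, and when $\psi\notin\mathcal{V}'$ the cross-arc value $\overline{q}_{\pa{\psi},\psi}$ is set to $0$, so the dual objective splits cleanly as well. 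Piecewise equality then follows from Lemma \ref{lem:obj_14} on $\mathcal{V}'$ and from the relevant lemma among \ref{lem:obj_14}, \ref{lem:obj_2}, \ref{lem:obj_3} on each subtree, and strong duality closes the argument.

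The main obstacle I anticipate is the interface argument in the second paragraph: justifying why zeroing out $\overline{q}_{\pa{\psi},\psi}$ leaves every constraint satisfied. On the $\psi$-side this relies on the fact that in the Section \ref{sect:subtree} subproblem there is no $q_{\pa{\psi},\psi}$ variable because $\psi$ is the root there, so the full-problem constraint of type \eqref{constr:frac_chain_dual_2} at $\psi$ becomes exactly the subproblem's root constraint once $\overline{q}_{\pa{\psi},\psi}$ vanishes. On the $\pa{\psi}$-side, the sum over children in \eqref{constr:frac_chain_dual_1} simply loses the term indexed by $\psi$, which is the correct behavior for a constraint restricted to $\mathcal{V}'$. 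Property \ref{p} and the disjointness of subtrees rooted at distinct elements of $\Psi$ guaranteed by Assumption \hyperlink{A1}{1} are what make this clean interface possible.
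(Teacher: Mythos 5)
Your proof takes the same strong-duality route as the paper: verify feasibility of the proposed primal solution and the spliced dual solution, match the objectives via the subtree lemmas, and conclude. However, the case analysis has two concrete gaps. The first is in the dual-feasibility partition. Your three groups miss boundary vertices inside $R^+(\psi)$ when the subproblem at $\psi$ falls under (\hyperlink{C1}{C1}) or under (\hyperlink{C4}{C4}) with $\mathcal{R}=\emptyset$. In those situations $\psi\in\mathcal{V}'$, yet some $i\in\mathcal{V}'\cap R^+(\psi)$ has a child $j\in\mathcal{S}^*(\psi)$ with $j\notin\mathcal{V}'$ and $j\neq\psi$. The constraint at such an $i$ is not in your first group (a child of $i$ lies outside $\mathcal{V}'$), not in your second group ($i\in\mathcal{V}'$), and not in your third group ($i\neq\pa{\psi'}$ for any $\psi'\in\Psi\setminus\mathcal{V}'$). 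The paper sidesteps this by treating all of $\mathcal{V}'$ uniformly: because $\overline{q}_{i,j}=0$ whenever $(i,j)$ crosses the $\mathcal{V}'$ boundary, every constraint at a vertex of $\mathcal{V}'$ reduces to the corresponding constraint over $\mathcal{G}'$, and the Lemma \ref{lem:feas_14} telescoping argument then applies verbatim with no separate boundary cases.

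The second gap is in the objective-matching step. The claim that the restriction of $P\bigl(\mathcal{S}^0\cup\bigcup_\psi\mathcal{S}^*(\psi)\bigr)$ to $R^+(\psi)$ coincides with $P(\mathcal{S}^*(\psi))$ is false precisely on $R^+(\psi)\cap\mathcal{V}'$ when $\psi$ does not enter the conditioning set (again possible under (\hyperlink{C1}{C1}) and (\hyperlink{C4}{C4})): there $P(\mathcal{S}^*(\psi))_i=u_0=0$, while $i^\vartriangle\bigl(\mathcal{S}^0\cup\bigcup\mathcal{S}^*(\cdot)\bigr)$ can pick up an ascendant in $\mathcal{S}^0\cap R^-(\pa{\psi})$, giving a positive value. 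This also makes your proposed decomposition along $\mathcal{V}'$ and the full subtrees $R^+(\psi)$ non-disjoint. The paper instead partitions $\mathcal{V}$ into $\mathcal{V}'$ and $\mathcal{V}\setminus\mathcal{V}'$, and the restriction identity does hold on $\mathcal{V}\setminus\mathcal{V}'$; the offending entries on $R^+(\psi)\cap\mathcal{V}'$ contribute zero to both $\sum a_i P(\mathcal{S}^*(\psi))_i$ and $\sum u_i p^*(\psi)_i$ because $i\notin\mathcal{S}^*(\psi)$ there, so Lemmas \ref{lem:obj_14}, \ref{lem:obj_2}, and \ref{lem:obj_3} still give exactly the needed equality over the removed vertices. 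Your interface discussion in the last paragraph and your use of Property \ref{p} and Assumption \hyperlink{A1}{1} to guarantee disjoint subtrees are both sound; the fix is to apply the zeroing argument uniformly across $\mathcal{V}'$ and to partition by $\mathcal{V}'$ versus $\mathcal{V}\setminus\mathcal{V}'$ rather than by the full subtrees.
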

\begin{proof}
We first verify that the proposed dual solution $[\overline{\mathbf{p}}, \overline{\mathbf{q}}, \overline{\mathbf{r}}]$ for \eqref{frac_convZ_dual} is feasible. The non-positivity constraints are immediately satisfied by construction. For all $i\in\mathcal{V}'$, $\overline{p}_i + \sum_{j\in\ch{i}}\overline{q}_{i,j} - \overline{q}_{\pa{i},i} \leq a_i$ holds by the same argument as in Lemma \ref{lem:feas_14}. The remaining constraints in \eqref{frac_convZ_dual} are satisfied because of Lemmas \ref{lem:feas_14}, \ref{lem:feas_2} and \ref{lem:feas_3}. Thus, $[\overline{\mathbf{p}}, \overline{\mathbf{q}}, \overline{\mathbf{r}}]$ is dual feasible. We next show that the primal objective value at $\overline{\mathbf{z}} = P\left(\mathcal{S}^0 \cup \bigcup_{\psi\in\Psi} \mathcal{S}^*(\psi)\right)$ matches the dual objective value at $[\overline{\mathbf{p}}, \overline{\mathbf{q}}, \overline{\mathbf{r}}]$. Based on the construction of $\mathcal{G}'$ and Lemmas \ref{lem:obj_14}, \ref{lem:obj_2}, and \ref{lem:obj_3}, we have $\sum_{i\in\mathcal{V}\backslash\mathcal{V}'} a_i \overline{z}_i = \sum_{i\in\mathcal{V}\backslash \mathcal{V}'} u_i\overline{p}_i + \sum_{\psi\in\Psi}\floor{u_\psi}(\ceil{u_\psi} - u_\psi)/(u_\psi - \floor{u_\psi}) \overline{r}_\psi$. In addition, $\sum_{i\in\mathcal{V}'} a_i \overline{z}_i = \sum_{i\in\mathcal{V}'} u_i \overline{p}_i$ due to Lemma \ref{lem:obj_14}. Hence, $\overline{\mathbf{z}}$ is an optimal solution to problem \eqref{frac_convZ_lp} with respect to $\mathcal{G}$ by strong duality. 
\end{proof}

\begin{figure}[htbp]
   \centering
   \includegraphics[width=10cm]{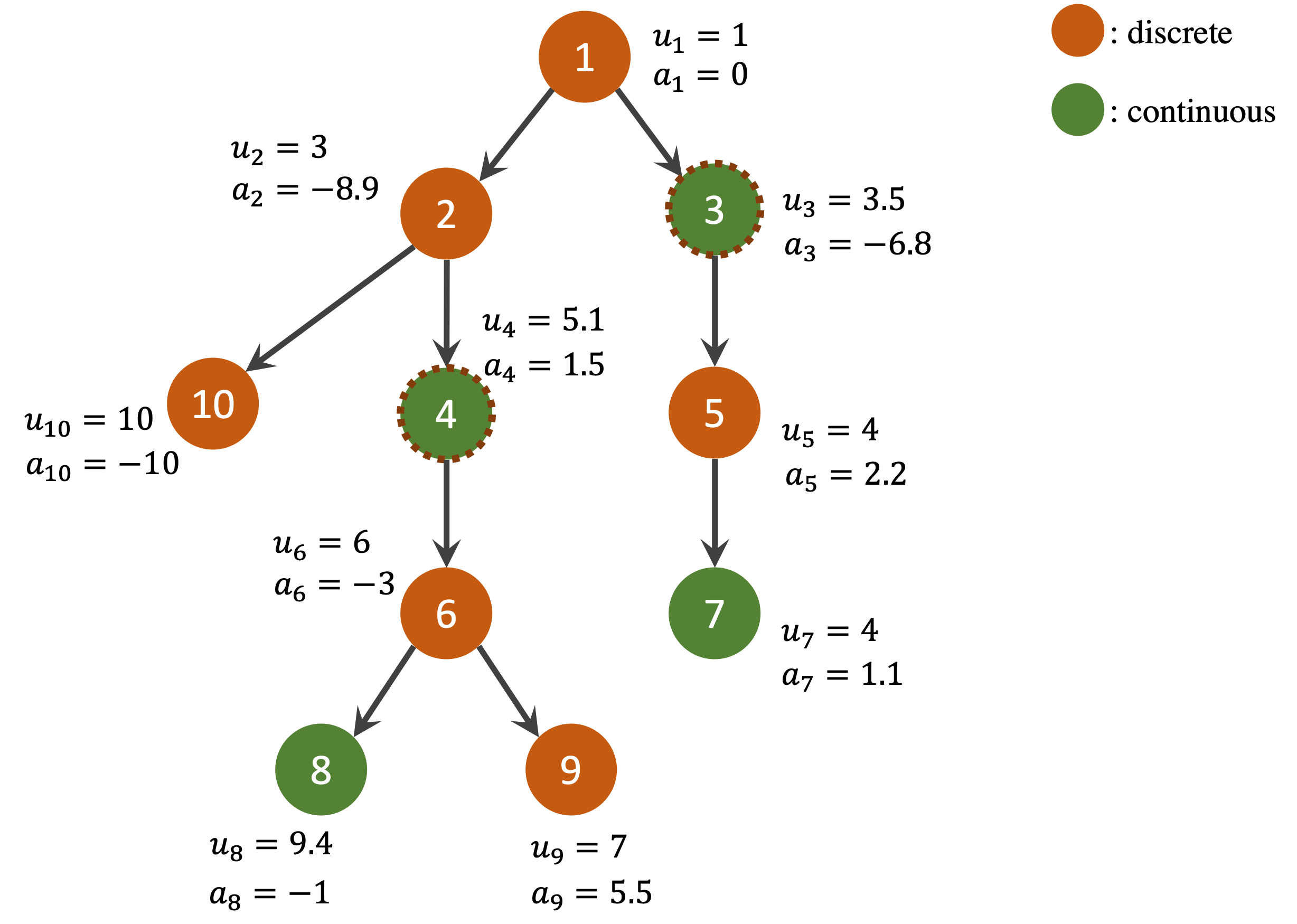} 
   \caption{An instance of problem \eqref{frac_convZ_lp}. }
   \label{fig:lp_demo}
\end{figure}

\begin{example}
Consider an instance of problem \eqref{frac_convZ_lp} provided in Figure \ref{fig:lp_demo}. The set of fractionally upper-bounded continuous variables with discrete descendants is $\Psi = \{3,4\}$. The subproblem over the directed rooted tree comprised of vertices 3,5, and 7 falls under (\hyperlink{C3}{C3}), with $\mathcal{S}^*(3) = \{3, 5\}$, 
\[p^*(3)_5 = -0.1, p^*(3)_3 = p^*(3)_7 = 0, \] 
\[q^*(3)_{3,5} = 0, q^*(4)_{5,7} = -1.1, \] 
and $r^*(3) = -3.4$. The subproblem over the directed rooted tree comprised of vertices 4,6,8, and 9 falls under (\hyperlink{C1}{C1}), with $\mathcal{S}^*(4) = \{8\}$, 
\[p^*(4)_4 = p^*(4)_6 = p^*(4)_9 = 0, p^*(4)_8 = -1, \] 
\[q^*(4)_{4,6} = -4, q^*(4)_{6,8} = 0, q^*(4)_{6,9} = -2.5,\] 
and $r^*(4) = 0$. Now we construct $\mathcal{G}' = (\mathcal{V}', \mathcal{A}')$, where $\mathcal{V}' = \{1,2,4,6,9,10\}$ and $\mathcal{A}' = \{(1,2),(2,10),(2,4),(4,6),\\(6,9)\}$. We obtain $\mathcal{S}^0 = \{2,10\}$. Thus the primal solution we construct is $\overline{\mathbf{z}} = P(\{2,3,5,8,10\}) = [0,3,3.5,3,4,3,\\4,9.4, 3, 10]^\top$ which is optimal.  \eged
\end{example}

For any $P(\mathcal{S})$, $\mathcal{S}\subseteq\mathcal{V}$, we can reversely construct an objective vector $\mathbf{a}\in\mathbb{R}^{|\mathcal{V}|}$, such that $P(\mathcal{S})$ is an optimal solution to the corresponding problem \eqref{frac_convZ_lp}. Let $h$ denote the height of $\mathcal{G}$. In the order of $d=h,h-1,\dots, 1,0$, we may assign any value to $a_i$, for $i\in\mathcal{V}$ with $\dep{i}=d$, to make $s^i$ and $a_\psi$ negative or non-negative as desired. Additionally, we note that even though our discussion above focuses on $\mathcal{G}$ with a single component, our results directly apply to general directed rooted forests $\mathcal{G}$, by Observation \ref{obs:tree_multi_comp}. We now draw a conclusion on the extreme points of $\conv{\mathcal{Z}(\mathcal{G}, \mathbf{u})}$ in the corollary below.

\begin{corollary}
\label{coro:CZ_ext_frac}
The extreme points of $\mathcal{CZ}$, defined by \eqref{eq:ub}-\eqref{eq:mir}, are exactly $\{P(\mathcal{S})\}_{\mathcal{S}\subseteq \mathcal{V}}$ given by \eqref{ext}.
\end{corollary}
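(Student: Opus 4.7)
The plan is to prove both inclusions: every $P(\mathcal{S})$ is an extreme point of $\mathcal{CZ}$, and every extreme point of $\mathcal{CZ}$ equals $P(\mathcal{S})$ for some $\mathcal{S}\subseteq \mathcal{V}$.

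For the second inclusion, I would invoke the standard LP characterization: any extreme point $\bar{\mathbf{z}}$ of the polyhedron $\mathcal{CZ}$ is the unique minimizer of some linear functional $\mathbf{a}^\top\mathbf{z}$ over $\mathcal{CZ}$. Proposition \ref{prop:root_opt} together with the companion proposition in Section \ref{sect:wholetree} guarantees that problem \eqref{frac_convZ_lp} admits an optimal solution of the form $P(\mathcal{S}^*)$, where $\mathcal{S}^*$ is produced by the recursion \eqref{si}--\eqref{Sd} combined with the case split (\hyperlink{C1}{(C1)})--(\hyperlink{C4}{(C4)}). Uniqueness then forces $\bar{\mathbf{z}} = P(\mathcal{S}^*)$, completing this direction.

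For the first inclusion, given a target $\mathcal{S}\subseteq\mathcal{V}$, I would construct an objective vector $\mathbf{a}$ so that the algorithm outputs $\mathcal{S}$ (up to the freedom in distinct subsets yielding the same $P(\cdot)$) and $P(\mathcal{S})$ is the unique optimum of \eqref{frac_convZ_lp}. As outlined in the paragraph preceding the corollary, one can build $\mathbf{a}$ top-down by depth $d = h, h-1, \ldots, 0$: at each depth $d$, freely pick $a_i$ for $\dep{i} = d$ to make $s^i$ strictly negative or strictly positive as desired, and in the presence of $\psi \in \Psi$, choose $a_\psi$ so that the relevant case among (\hyperlink{C1}{(C1)})--(\hyperlink{C4}{(C4)}) holds with strict inequality. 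By Observation \ref{obs:tree_multi_comp}, this construction extends from a single component to general directed rooted forests.

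The main obstacle I anticipate is the uniqueness claim in the first inclusion: one must rule out any other feasible point of $\mathcal{CZ}$ attaining the same optimal value. To handle this, I would exploit the explicit primal-dual pairs furnished by Lemmas \ref{lem:feas_14}--\ref{lem:obj_3}: under the strict sign conditions above, the constructed dual solution has strictly negative entries precisely on the components that index active primal constraints at $P(\mathcal{S})$. Complementary slackness then forces these primal constraints to be tight at every optimizer, and walking from the root downward along each tree pins down the coordinates of any optimum in sequence, so the optimum must coincide with $P(\mathcal{S})$.
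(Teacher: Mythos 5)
Your proposal is correct and follows the same LP-duality route the paper uses: it combines the forward direction of Proposition~\ref{prop:root_opt} and its counterpart in Section~\ref{sect:wholetree} (every objective $\mathbf{a}$ admits an optimizer of the form $P(\mathcal{S}^*)$) with the reverse construction of an objective vector $\mathbf{a}$ that the paper sketches in the paragraph just before the corollary. The extra step you supply---choosing the signs in the construction strictly and then invoking complementary slackness with the dual solutions from Lemmas~\ref{lem:feas_14}--\ref{lem:obj_3} to conclude that $P(\mathcal{S})$ is the \emph{unique} minimizer, hence a vertex---is a useful tightening of the uniqueness claim that the paper leaves implicit, not a different argument.
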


\begin{theorem}
\label{thm:convZ_frac}
The set $\mathcal{CZ}$ defined by \eqref{eq:ub}-\eqref{eq:mir} is $\conv{\mathcal{Z}(\mathcal{G},\mathbf{u})}$.  
\end{theorem}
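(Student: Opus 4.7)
The plan is a standard two-way inclusion argument in which essentially all of the heavy lifting has already been completed in the preceding lemmas and corollary.

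For the inclusion $\conv{\mathcal{Z}(\mathcal{G},\mathbf{u})}\subseteq\mathcal{CZ}$, I would verify that every defining inequality of $\mathcal{CZ}$ is valid on $\mathcal{Z}(\mathcal{G},\mathbf{u})$. Non-negativity, the box upper bounds \eqref{eq:ub}, and the monotonicity constraints \eqref{eq:mn} hold on $\mathcal{Z}(\mathcal{G},\mathbf{u})$ directly from its definition. The MIR inequalities \eqref{eq:mir} were derived in the paragraph preceding the definition of $\mathcal{CZ}$: they are the mixed-integer rounding inequalities (Proposition 6.1 of \citep{wolsey1999integer}) applied to $-z_{\ch{\psi}}-(u_\psi-z_\psi)\le -u_\psi$ together with $z_\psi\le u_\psi$ and $z_{\ch{\psi}}\in\mathbb{Z}$, so they are valid on the more restrictive set $\mathcal{Z}(\mathcal{G},\mathbf{u})$. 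Hence $\mathcal{Z}(\mathcal{G},\mathbf{u})\subseteq\mathcal{CZ}$, and since $\mathcal{CZ}$ is convex, $\conv{\mathcal{Z}(\mathcal{G},\mathbf{u})}\subseteq\mathcal{CZ}$.

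For the reverse inclusion $\mathcal{CZ}\subseteq\conv{\mathcal{Z}(\mathcal{G},\mathbf{u})}$, I would invoke the vertex characterization that has just been built up. Under the paper's standing assumption that $u_i<\infty$ for all $i\in\mathcal{V}$, the polyhedron $\mathcal{CZ}$ is bounded (every coordinate is trapped between $0$ and $u_i$) and is therefore a polytope. Corollary \ref{coro:CZ_ext_frac} identifies its extreme points as exactly $\{P(\mathcal{S})\}_{\mathcal{S}\subseteq\mathcal{V}}$, while Lemma \ref{lem:F_P_feas} guarantees $P(\mathcal{S})\in\mathcal{Z}(\mathcal{G},\mathbf{u})$ for every $\mathcal{S}\subseteq\mathcal{V}$. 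By the Minkowski--Weyl theorem, every point of $\mathcal{CZ}$ is a convex combination of its extreme points, and therefore lies in $\conv{\mathcal{Z}(\mathcal{G},\mathbf{u})}$. Combining the two inclusions yields the claim.

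The main obstacle is not in this theorem itself but in Corollary \ref{coro:CZ_ext_frac}, whose proof rested on the case-by-case primal--dual construction of Sections \ref{sect:subtree}--\ref{sect:wholetree} showing that for every linear objective $\mathbf{a}$ some $P(\mathcal{S})$ is optimal to \eqref{frac_convZ_lp}. Once the extreme points of $\mathcal{CZ}$ have been pinned down and shown to be integer-feasible, the polyhedrality of $\conv{\mathcal{Z}(\mathcal{G},\mathbf{u})}$ and its explicit description \eqref{eq:ub}--\eqref{eq:mir} drop out in just a few lines.
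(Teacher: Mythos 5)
Your proof is correct and follows essentially the same route as the paper's: the forward inclusion from validity of the MIR inequalities, and the reverse inclusion from Corollary \ref{coro:CZ_ext_frac} together with Lemma \ref{lem:F_P_feas}. The only difference is that you make explicit the boundedness of $\mathcal{CZ}$ and the Minkowski--Weyl step, which the paper leaves implicit.
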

\begin{proof}
The MIR inequalities are valid for $\mathcal{Z}(\mathcal{G},\mathbf{u})$, so $\mathcal{CZ}\supseteq \conv{\mathcal{Z}(\mathcal{G},\mathbf{u})}$. By Corollary \ref{coro:CZ_ext_frac}, $\{P(\mathcal{S}): \mathcal{S}\subseteq \mathcal{V}\}$ are the extreme points of $\mathcal{CZ}$. Lemma \ref{lem:F_P_feas} shows that $P(\mathcal{S}) \in \mathcal{Z}(\mathcal{G},\mathbf{u})$ for any $\mathcal{S}\subseteq \mathcal{V}$, which implies that $\mathcal{CZ}\subseteq \conv{\mathcal{Z}(\mathcal{G},\mathbf{u})}$. Hence, $\mathcal{CZ} = \conv{\mathcal{Z}(\mathcal{G},\mathbf{u})}$.
\end{proof}

{
\begin{remark}
The MIR inequalities are valid for $\mathcal{Z}(\mathcal{G}, \mathbf{u})$ even when Assumption \hyperlink{A1}{1} does not hold. To see this, each MIR inequality is valid for a relaxation of $\mathcal{Z}(\mathcal{G}, \mathbf{u})$, defined by the box and monotonicity constraints only involving the pair of variables in this MIR inequality. If Assumption \hyperlink{A1}{1} fails to hold, then the MIR inequalities are not necessarily facet-defining, and additional classes of non-trivial inequalities are needed to fully describe $\conv{\mathcal{Z}(\mathcal{G}, \mathbf{u})}$.
\end{remark}

\begin{remark}
In some special cases, the MIR inequalities \eqref{eq:mir} are not needed for the full description of $\conv{\mathcal{Z}(\mathcal{G},\mathbf{u})}$. Recall the special cases of $\mathcal{Z}(\mathcal{G},\mathbf{u})$ described in Remark \ref{remark:trivialA1}. The set $\Psi$ is empty in any such instance of $\mathcal{Z}(\mathcal{G},\mathbf{u})$. Therefore, the MIR inequalities are void in the construction of $\mathcal{CZ}$, which is exactly $\conv{\mathcal{Z}(\mathcal{G},\mathbf{u})}$.
\end{remark}
}

Theorem \ref{thm:convZ_frac} implies that any $\overline{\mathbf{z}}\in \conv{\mathcal{Z}(\mathcal{G}, \mathbf{u})}$ can be written as a convex combination of $P(\mathcal{S})$ for all $\mathcal{S}\subseteq \mathcal{V}$. In the next section, we show that for any $\overline{\mathbf{z}} \in \conv{\mathcal{Z}(\mathcal{G}, \mathbf{u})}$, we can determine a subset of $|\mathcal{V}|+1$ extreme points of $\conv{\mathcal{Z}(\mathcal{G}, \mathbf{u})}$, such that $\overline{\mathbf{z}}$ is a convex combination of these particular extreme points. We also provide the coefficients of the convex combination in an explicit form. These special subsets of extreme points and the corresponding coefficients of the convex combinations will appear in the valid inequalities that we propose later.

\section{Properties of $\conv{\mathcal{Z}(\mathcal{G}, \mathbf{u})}$}
\label{sect:prop_ZGu}
Let $\delta = (\delta(1), \delta(2), \dots, \delta(|\mathcal{V}|))$ be any permutation of $\mathcal{V}$. Given any $i\in\mathcal{V}$, we denote its order in $\delta$ by $\delta^{-1}(i)$; in other words, $\delta(\delta^{-1}(i)) = i$. We refer to the set of all permutations of $\mathcal{V}$ by $\mathfrak{S}(\mathcal{V})$. A \emph{partial permutation} of $\mathcal{V}$ is any $\delta'\in\mathfrak{S}(\mathcal{V}')$ for some $\mathcal{V}'\subseteq\mathcal{V}$. With any $\delta\in\mathfrak{S}(\mathcal{V})$, we define $\mathcal{T}^{\delta,k}=\{\delta(1), \dots, \delta(k)\}$ for $k\in\{1,2,\dots,|\mathcal{V}|\}$. In particular, $\mathcal{T}^{\delta, 0} = \emptyset$. The set of extreme points associated with $\delta$ is $\{P(\mathcal{T}^{\delta,k})\}_{k=1}^{|\mathcal{V}|}$. To ease notation, we will use $P(\mathcal{T}^{\delta,k})$ and $P(\delta,k)$ interchangeably. 

\begin{figure}[ht]
   \centering
   \includegraphics[width=9cm]{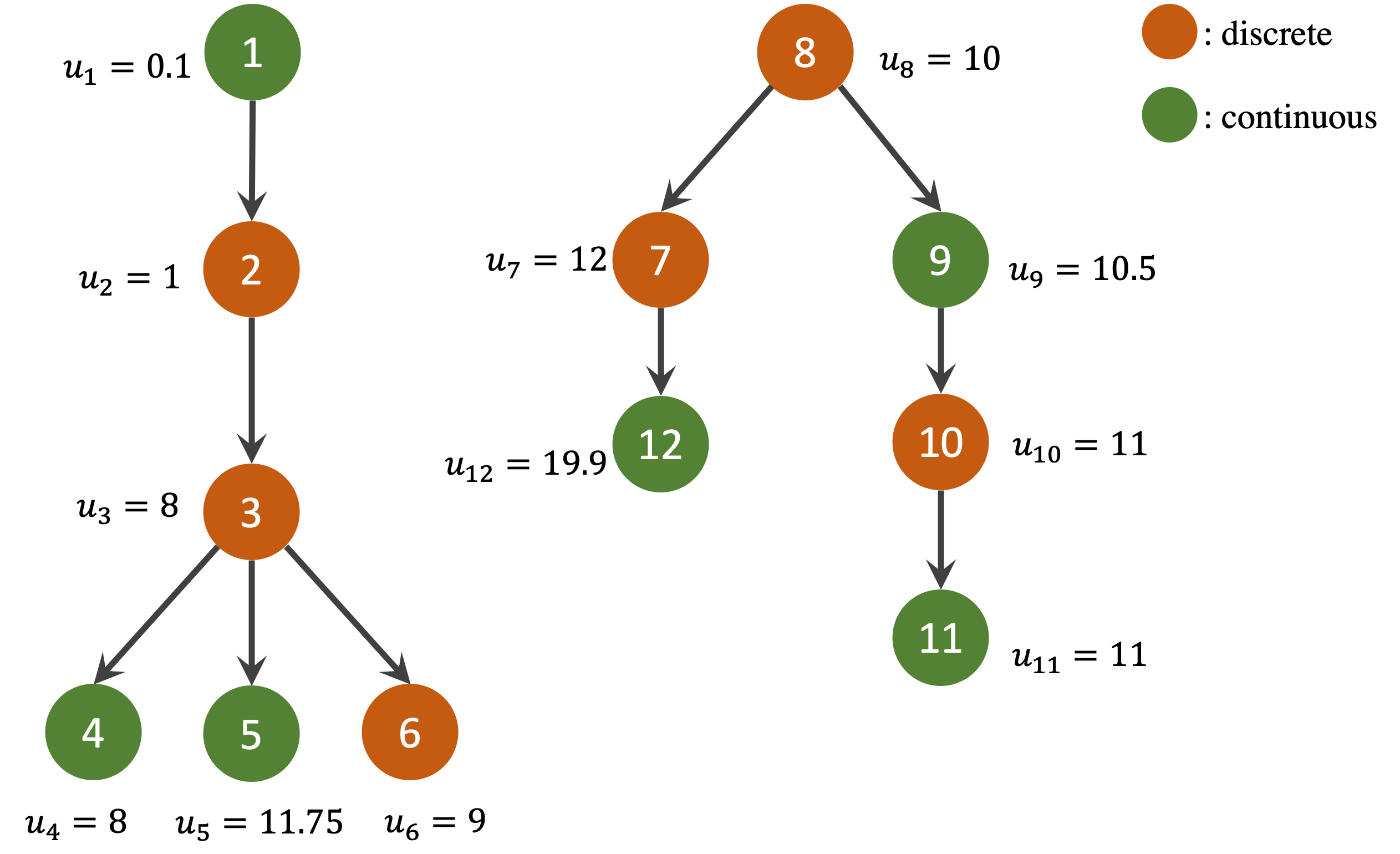} 
   \caption{A directed rooted forest for Example \ref{eg:valid_perm}. }
   \label{fig:valid_perm}
\end{figure}

\begin{example}
\label{eg:delta_extreme_points}
Consider the directed rooted forest $\mathcal{G}$ in Figure \ref{fig:valid_perm} and $\delta = (6,4,7,5,2,3,9,1,11,8,10,12) \in \mathfrak{S}(\mathcal{V})$. By \eqref{ext}, the extreme points of $\conv{\mathcal{Z}(\mathcal{G}, \mathbf{u})}$ associated with $\delta$ are listed in Table \ref{tab:delta_extreme_points}. \eged

\begin{table}[htbp]
   \centering
   \resizebox{\textwidth}{!}{
   \begin{tabular}{c | c | c | cccccccccccc  } 
      \hline
      $k$ & $\delta(k)$ & $\mathcal{T}^{\delta, k}$ & \multicolumn{12}{c}{$P(\mathcal{T}^{\delta,k}) = P(\delta, k)$} \\
      \hline
      0 & -- & $\emptyset$ & [0, & 0, & 0, & 0, & 0, & 0, & 0, & 0, & 0, & 0, & 0, & 0] \\
      1 & 6 & $\{6\}$ & [0, & 0, & 0, & 0, & 0, & 9, & 0, & 0, & 0, & 0, & 0, & 0] \\
      2 & 4 & $\{4,6\}$ & [0, & 0, & 0, & 8, & 0, & 9, & 0, & 0, & 0, & 0, & 0, & 0]\\
      3 & 7 & $\{4,6,7\}$ & [0, & 0, & 0, & 8, & 0, & 9, & 12, & 0, & 0, & 0, & 0, & 12]\\
      4 & 5 & $\{4,5,6,7\}$ & [0, & 0, & 0, & 8, & 11.75, & 9, & 12, & 0, & 0, & 0, & 0, & 12]\\
      5 & 2 & $\{2,4,5,6,7\}$ & [0, & 1, & 1, & 8, & 11.75, & 9, & 12, & 0, & 0, & 0, & 0, & 12]\\
      6 & 3 & $\{2,3,4,5,6,7\}$ & [0, & 1, & 8, & 8, & 11.75, & 9, & 12, & 0, & 0, & 0, & 0, & 12]\\
      7 & 9 & $\{2,3,4,5,6,7,9\}$ & [0, & 1, & 8, & 8, & 11.75, & 9, & 12, & 0, & 10, & 10, & 10, & 12]\\
      8 & 1 & $\{1,2,3,4,5,6,7,9\}$ & [0.1, & 1, & 8, & 8, & 11.75, & 9, & 12, & 0, & 10, & 10, & 10, & 12]\\
      9 & 11 & $\{1,2,3,4,5,6,7,9,11\}$ & [0.1, & 1, & 8, & 8, & 11.75, & 9, & 12, & 0, & 10, & 10, & 11, & 12]\\
      10 & 8 & $\{1,2,3,4,5,6,7,8,9,11\}$ & [0.1, & 1, & 8, & 8, & 11.75, & 9, & 12, & 10, & 10, & 10, & 11, & 12] \\
      11 & 10 & $\{1,2,3,4,5,6,7,8,9,10,11\}$ & [0.1, & 1, & 8, & 8, & 11.75, & 9, & 12, & 10, & 10.5, & 11, & 11, & 12]\\
      12 & 12 & $\{1,2,3,4,5,6,7,8,9,10,11,12\}$ & [0.1, & 1, & 8, & 8, & 11.75, & 9, & 12, & 10, & 10.5, & 11, & 11, & 19.9] \\
      \hline 
   \end{tabular} }
   \caption{The extreme points of $\conv{\mathcal{Z}(\mathcal{G}, \mathbf{u})}$ associated with $\delta$ in Example \ref{eg:delta_extreme_points}.}
   \label{tab:delta_extreme_points}
  \end{table} 
\end{example}

Recall that two distinct subsets of $\mathcal{V}$ may be associated with the same extreme point. To avoid redundancy in $\{P(\delta, k)\}_{k=1}^{|\mathcal{V}|}$, we focus on the permutations that are \emph{valid} as described in the next definition. 

\begin{definition}
\label{def:tree_valid_perm}
A permutation $\delta\in\mathfrak{S}(\mathcal{V})$ is considered \emph{valid} if all conditions below hold. 
\begin{enumerate}
\item[(1)] For any two distinct vertices $i,j\in \mathcal{V}$ such that $j\in R^+(i)$ and $u_i = u_j$, $\delta^{-1}(i) > \delta^{-1}(j)$ is satisfied. 
\item[(2)] If there exists $\psi\in\Psi$ with $\floor{u_\psi} = 0$, then $\delta^{-1}(\ch{\psi}) < \delta^{-1}(\psi)$.
\item[(3)] Consider any $\psi\in\Psi$ such that there exists $i\in R^-(\psi)$ with $u_i = \floor{u_\psi}$. The permutation $\delta$ must \emph{avoid} simultaneously satisfying $\delta^{-1}(i) < \delta^{-1}(\psi)$ and $\delta^{-1}(\psi) < \delta^{-1}(\ch{\psi})$.
\end{enumerate}
\end{definition}

Intuitively, a valid permutation $\delta$ is consistent with the reversed monotonicity order for all the vertices along the same directed path in $\mathcal{G}$, that share the same upper bound. Moreover, when $u_{\ch{\psi}} = 1$ for any $\psi\in\Psi$, $\ch{\psi}$ must precede $\psi$ in a valid permutation. Condition (3) entails that if $\psi\in\Psi$ has any ascendant $i\in\mathcal{V}$ with upper bound $\floor{u_\psi}$, and if $\delta^{-1}(\psi) < \delta^{-1}(\ch{\psi})$, then $i$ must not precede $\psi$ in the valid permutation. It is implied that $i\neq \psi$ because $u_\psi \notin\mathbb{Z}$. 

\begin{example}
\label{eg:valid_perm}
For the directed rooted forest $\mathcal{G}$ in Figure \ref{fig:valid_perm}, the permutation $\tau = (1,2,3,4,5,6,7,8,9,10,11,12)$ is not valid. It violates requirement (1) in Definition \ref{def:tree_valid_perm} because vertices 3 and 4 are connected and $u_3 = u_4$, while $\tau^{-1}(3) < \tau^{-1}(4)$. This permutation also violates requirement (2) because $\psi = 1$ and $\floor{u_1} = 0$; however, $\tau^{-1}(2)  > \tau^{-1}(1)$. Requirement (3) is violated as well. To see this, $9\in\Psi$, $8\in R^-(9)$ with $u_8 = \floor{u_9}$, and $\tau^{-1}(10) > \tau^{-1}(9)$. Whereas, $\tau^{-1}(8) < \tau^{-1}(9)$. In contrast, the permutation $\delta = (6,4,7,5,2,3,9,1,11,8,10,12)$ is valid. \eged
\end{example}

{
\begin{remark}
\label{remark:special_valid_perm}
In the special cases described in Remark \ref{remark:trivialA1}, $\Psi = \emptyset$. Therefore, any permutation $\delta \in \mathfrak{S}(\mathcal{V})$ that satisfies condition (1) in Definition \ref{def:tree_valid_perm} is valid. In particular, in case (a) of Remark \ref{remark:trivialA1}, every $\delta \in \mathfrak{S}(\mathcal{V})$ is a valid permutation. To see this, condition (1) in Definition \ref{def:tree_valid_perm} trivially holds because $\mathcal{A} = \emptyset$ and $R^+(i) = \{i\}$ for all $i\in\mathcal{V}$ in this special case. 
\end{remark}
}

\begin{observation}
For any valid $\delta\in\mathfrak{S}(\mathcal{V})$ and any $k\in\{1,\dots,|\mathcal{V}|\}$, $P(\delta, k) - P(\delta, k-1)$ is a non-negative vector with at least one positive entry. Non-negativity follows from the fact that $\mathcal{T}^{\delta, k-1}\subsetneq \mathcal{T}^{\delta,k}$. The difference contains at least one positive entry because validity of $\delta$ avoids redundancy in the sequence of extreme points associated with $\delta$. {To see this, suppose there exist $k_1, k_2 \in \{1,\dots, |\mathcal{V}|\}$ such that $P(\delta,k_1) = P(\delta,k_2)$ and $k_1 < k_2$. This implies that we have a subsequence of identical extreme points because of component-wise monotonicity. Now consider any consecutive pair of extreme points within this subsequence, $P(\delta,k)$ and $P(\delta,k+1)$, where $k_1 \leq k < k_2$. We note that $P(\delta,k) = P(\delta,k+1)$; in other words, $P(\mathcal{T}^{\delta, k}) = P(\mathcal{T}^{\delta,k} \cup \{\delta(k+1)\})$. The following list enumerates all the possible scenarios under which these two vectors can be identical. 
\begin{enumerate}
\item[(1)] There exists $k' \in \{1,\dots, k\}$ such that $u_{\delta(k+1)} = u_{\delta(k')}$ and $\delta(k+1) \in R^+(\delta(k'))$. 

\item[(2)] The vertex $\delta(k+1)$ satisfies $\delta(k+1) \in\Psi$, $0 < u_{\delta(k+1)} <1$, and $\delta^{-1}(\ch{\delta(k+1)}) > k+1$. 

\item[(3)] The vertex $\delta(k+1)$ satisfies $\delta(k+1) \in\Psi$ and $u_{\delta(k+1)} > 1$. In addition, there exists $i\in R^-(\delta(k+1))$ with $u_i = \floor{u_{\delta(k+1)}}$. The permutation $\delta$ is such that $\delta^{-1}(i) < k+1 < \delta^{-1}(\ch{\delta(k+1)})$. 
\end{enumerate}
By Definition \ref{def:tree_valid_perm}, none of these three scenarios holds when $\delta$ is valid. Therefore, the sequence of extreme points associated with $\delta$ are distinct, given validity of $\delta$. }
\end{observation}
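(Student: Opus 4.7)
The plan is to establish both parts of the observation by analyzing how the vector $P(\delta, k)$ changes when a single vertex is appended to $\mathcal{T}^{\delta, k-1}$ to form $\mathcal{T}^{\delta, k}$. For non-negativity, I would argue directly from the definition \eqref{ext}: for every coordinate $i$, adding $\delta(k)$ to the subset $\mathcal{S}$ can only replace the ``closest ascendant in $\mathcal{S}$'' $i^\vartriangle(\mathcal{T}^{\delta, k-1})$ by a vertex of weakly greater depth, namely $i^\vartriangle(\mathcal{T}^{\delta, k})$. Because $\mathbf{u}$ is assumed to respect the partial order of $\mathcal{G}$, the value $u_{i^\vartriangle(\cdot)}$ can only weakly increase; moreover, if the new closest ascendant coincides with some $\psi\in\Psi$ whose child was absent from $\mathcal{T}^{\delta, k-1}$ but is present in $\mathcal{T}^{\delta, k}$, the coordinate value jumps from $\floor{u_\psi}$ up to $u_{i^\vartriangle(\mathcal{T}^{\delta,k})}\geq \ceil{u_\psi}$, which is again an increase. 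A short case analysis on these transitions shows $P(\delta, k)_i \geq P(\delta, k-1)_i$ for all $i$.

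For the strict part, I would argue by contradiction: suppose $P(\delta, k) = P(\delta, k-1)$. Focusing on the coordinate $i = \delta(k)$, the fact that $\delta(k) \in \mathcal{T}^{\delta, k}$ means $(\delta(k))^\vartriangle(\mathcal{T}^{\delta, k}) = \delta(k)$, so the $\delta(k)$-coordinate of $P(\delta, k)$ equals either $u_{\delta(k)}$ or $\floor{u_{\delta(k)}}$ (the latter only if $\delta(k) \in \Psi$ and $\ch{\delta(k)} \notin \mathcal{T}^{\delta, k}$). Meanwhile, in $P(\delta, k-1)$, the $\delta(k)$-coordinate equals $u_{i^\vartriangle(\mathcal{T}^{\delta,k-1})}$ (or a floor variant). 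Matching these two expressions partitions the failure into the three scenarios listed by the author:
\begin{enumerate}
\item[(1)] some ascendant $i$ of $\delta(k)$ with $u_i = u_{\delta(k)}$ already lies in $\mathcal{T}^{\delta, k-1}$;
\item[(2)] $\delta(k)=\psi\in\Psi$ with $\floor{u_\psi}=0$ and $\ch{\psi}$ not yet placed;
\item[(3)] $\delta(k)=\psi\in\Psi$ with $u_\psi>1$, some ascendant $i$ with $u_i=\floor{u_\psi}$ has already been placed, and $\ch{\psi}$ has not.
\end{enumerate}

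Each scenario is directly forbidden by one of the three conditions of Definition \ref{def:tree_valid_perm}: scenario (1) violates condition (1), scenario (2) violates condition (2), and scenario (3) violates condition (3). This gives the desired contradiction. After ruling out the $\delta(k)$-coordinate collapse, I still need to ensure that equality at every other coordinate cannot happen independently of what occurs at position $\delta(k)$; but any other coordinate $j \neq \delta(k)$ whose value changes is necessarily a descendant of $\delta(k)$ with $j^\vartriangle(\mathcal{T}^{\delta, k-1}) \neq \delta(k)$ being ``shadowed'' by $\delta(k)$, and for such $j$ the value strictly increases by the monotonicity of $\mathbf{u}$ along arcs unless one of the same three scenarios recurs, at which point validity is again violated.

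The main obstacle is the careful bookkeeping in the second part: I need to enumerate exhaustively how a single new element $\delta(k)$ can fail to change $P$ at the coordinate $\delta(k)$ itself (distinguishing the $\Psi$-related floor cases from the generic case), and to verify that conditions (1)--(3) of validity are exactly calibrated to block each failure mode. The subtlety is the interaction between condition (3) and the $\ch{\psi}$ placement: one must track whether $\ch{\psi}$ appears before or after $\psi$ in $\delta$, and whether an ascendant of $\psi$ with the critical upper bound $\floor{u_\psi}$ has already been inserted. Once this case analysis is organized cleanly, the rest of the argument is routine.
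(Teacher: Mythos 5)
Your proposal is correct and follows essentially the same route as the paper: reduce to showing $P(\delta,k) \ne P(\delta,k-1)$ for consecutive indices, focus on the coordinate $\delta(k)$, enumerate the three ways its value could fail to change, and match each failure mode to one of the three conditions of Definition~\ref{def:tree_valid_perm}. Your enumeration is complete (in subcase B2 one also needs Assumption~\hyperlink{A1}{1} to rule out two members of $\Psi$ on the same path, and in subcase A2 Property~\ref{p} ensures $u_{\delta(k)} \geq \ceil{u_\phi} > \floor{u_\phi}$; both are implicit in your setup and in the paper's). The only misstep is the final paragraph: once you have shown that coordinate $\delta(k)$ strictly increases, the vectors already differ and, combined with non-negativity, the conclusion follows immediately---no separate analysis of other coordinates is needed. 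Moreover the claim that any other changing coordinate $j \ne \delta(k)$ is a descendant of $\delta(k)$ is not quite right: when $\delta(k) = \ch{\psi}$ for a $\psi\in\Psi$ already in $\mathcal{T}^{\delta,k-1}$, the entry at the \emph{ascendant} $\psi$ also changes (from $\floor{u_\psi}$ to $u_\psi$). Since that paragraph is unnecessary anyway, this inaccuracy does not affect the validity of the argument.
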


We next show that, given any $\overline{\mathbf{z}} \in \conv{\mathcal{Z}(\mathcal{G}, \mathbf{u})}$, we can determine a valid permutation $\delta\in\mathfrak{S}(\mathcal{V})$ such that $\overline{\mathbf{z}}$ is a convex combination of $\{P(\delta, k)\}_{k=0}^{|\mathcal{V}|}$. We build such a permutation sequentially. In the intermediate steps, it is important to check whether a partial permutation leads to a valid full permutation in the end. Thus, we extend the notion of validity to partial permutations of $\mathcal{V}$. 
\begin{definition}
\label{def:part_valid}
A partial permutation $\delta'\in\mathfrak{S}(\mathcal{V}')$ is \emph{valid} if there exists any valid $\delta\in\mathfrak{S}(\mathcal{V})$ such that $\delta'(i) = \delta(i)$ for $i\in\{1,\dots, |\mathcal{V}'|\}$. 
\end{definition}

Suppose $\delta^k = (\delta(1),\dots,\delta(k))\in\mathfrak{S}(\mathcal{V}')$ where $|\mathcal{V}'| = k < |\mathcal{V}|$ is a valid partial permutation. We provide Algorithm \ref{alg:tree_valid} to construct a set $\Delta$ that contains all $i\in\mathcal{V}\backslash\mathcal{V}'$ such that $(\delta(1),\dots,\delta(k), \delta(k+1)=i)$ is valid.  

\begin{algorithm}[H]
\label{alg:tree_valid}
\SetAlgoLined
\textbf{Input} $\mathcal{G}=(\mathcal{V},\mathcal{A})$, $\mathbf{u}$, and a valid partial permutation $\delta'\in\mathfrak{S}(\mathcal{V}')$\;
$\Delta \leftarrow \mathcal{V}\backslash \mathcal{V}'$\; 
\For{$i \in \mathcal{V}\backslash \mathcal{V}'$}{
    \If{$\exists \: j\in R^+(i)\backslash (\{i\}\cup\mathcal{V}')$ with $u_j = u_i$}{ 
        $\Delta \leftarrow {\Delta}\backslash\{i\}$\;
    }
    \If{$i\in\Psi$}{
        \If{$u_i <1$ and $\ch{i}\notin\mathcal{V}'$}{
            $\Delta \leftarrow {\Delta}\backslash\{i\}$\;
        }
        \If{$\exists\: j\in R^-(i)\cap \mathcal{V}'$ with $u_j = \floor{u_i}$ and $\ch{i}\notin \mathcal{V}'$}{
            $\Delta \leftarrow {\Delta}\backslash\{i\}$\;
        }
    }
   }
\textbf{Output} $\Delta$.
\caption{\texttt{Valid\_Candidates}}
\end{algorithm} 

Algorithm \ref{alg:tree_valid} examines every $i\in\mathcal{V}\backslash \mathcal{V}'$ and excludes $i$ from $\Delta$ if the partial permutation $(\delta'(1), \dots, \delta'(|\mathcal{V}'|), i)$ is not valid by Definition \ref{def:part_valid}. Lines 4-6 of Algorithm  \ref{alg:tree_valid} ensure that condition (1) in Definition \ref{def:tree_valid_perm} is satisfied. Lines 8-11 and lines 11-13 address conditions (2) and (3) in Definition \ref{def:tree_valid_perm}, respectively.

\begin{observation}
\label{obs:Delta}
The output $\Delta$ from Algorithm \ref{alg:tree_valid} is non-empty when $\mathcal{V}'\subsetneq\mathcal{V}$. For a contradiction, suppose $\Delta = \emptyset$. If there exists $i\in\mathcal{V}\backslash\mathcal{V}'$ that is disqualified for $\Delta$ due to line 4 of Algorithm \ref{alg:tree_valid}, then $i\notin\Psi$ by Assumption \hyperlink{A1}{1}. {To see this, $i\in \Psi$  implies that $i$ corresponds to a fractionally upper bounded continuous variable with at least one discrete descendant. By line 4 of Algorithm \ref{alg:tree_valid}, $u_j = u_i\notin \mathbb{Z}$, and due to the monotonicity ordering, $j$ also has at least one discrete descendant just like $i$. These observations suggest that  $j\in\Psi$. However, under Assumption \hyperlink{A1}{1}, no two distinct members of $\Psi$ ($i$ and $j$ in this case) can fall along the same directed path. Therefore, $i\notin\Psi$.} The vertex $\ell = \arg\max\{\dep{j}: j\in R^+(i)\backslash(\{i\}\cup \mathcal{V}'), u_j = u_i\}$ should be in $\Delta$. {We note that $\ell\notin \Psi$ because of Assumption \hyperlink{A1}{1}.} Given that $\Delta = \emptyset$, it must be that every $i\in\mathcal{V}\backslash\mathcal{V}'$ belongs to $\Psi$ and satisfies the condition in either line 8 or line 11. By definition of $\Psi$, $\ch{i}$ exists. We notice that $\ch{i}\notin\mathcal{V}'$, and {$\ch{i} \notin \Psi$ by Assumption \hyperlink{A1}{1}}. Thus it should have been included in $\Delta$. We reach a contradiction. 
\end{observation}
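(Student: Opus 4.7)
The plan is to argue by contradiction. Suppose $\Delta = \emptyset$; since $\mathcal{V}'\subsetneq\mathcal{V}$, the starting set $\mathcal{V}\backslash\mathcal{V}'$ is non-empty, so every $i\in\mathcal{V}\backslash\mathcal{V}'$ must be removed from $\Delta$ by at least one of the three disqualification tests of Algorithm \ref{alg:tree_valid}: (T1) there exists $j\in R^+(i)\backslash(\{i\}\cup\mathcal{V}')$ with $u_j = u_i$; (T2) $i\in\Psi$, $u_i<1$, and $\ch{i}\notin\mathcal{V}'$; or (T3) $i\in\Psi$, some $j\in R^-(i)\cap\mathcal{V}'$ has $u_j = \floor{u_i}$, and $\ch{i}\notin\mathcal{V}'$. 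I would split on whether at least one vertex is disqualified by (T1), or all disqualifications come from (T2) or (T3).

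First I would handle the case where some $i\in\mathcal{V}\backslash\mathcal{V}'$ triggers (T1). Define
\[\ell := \arg\max\{\dep{j}: j\in R^+(i)\backslash(\{i\}\cup\mathcal{V}'),\ u_j = u_i\},\]
which is well defined by (T1). I claim $\ell\in\Delta$, contradicting $\Delta=\emptyset$. Test (T1) cannot remove $\ell$: any witness $j\in R^+(\ell)\backslash(\{\ell\}\cup\mathcal{V}')$ with $u_j = u_\ell = u_i$ would lie in $R^+(i)\backslash(\{i\}\cup\mathcal{V}')$ and be strictly deeper than $\ell$, contradicting the choice of $\ell$. Tests (T2) and (T3) require $\ell\in\Psi$, and I would rule this out as follows: if $\ell\in\Psi$, then $u_\ell\notin\mathbb{Z}$ and $R^+(\ell)\cap N\neq\emptyset$, so $u_i = u_\ell\notin\mathbb{Z}$ together with $R^+(i)\cap N\supseteq R^+(\ell)\cap N\neq\emptyset$ forces $i\in\Psi$; but then $i,\ell\in\Psi$ with $\ell\in R^+(i)\backslash\{i\}$ lie on the same directed path, violating Assumption \hyperlink{A1}{1}.

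Otherwise, no vertex triggers (T1), so every $i\in\mathcal{V}\backslash\mathcal{V}'$ is disqualified by (T2) or (T3), which forces $i\in\Psi$ and $\ch{i}\notin\mathcal{V}'$. Pick any such $i$; then $\ch{i}\in\mathcal{V}\backslash\mathcal{V}'$. By Property \ref{p}, $\ch{i}$ is the unique child of $i$, lies in $N$, and has $u_{\ch{i}} = \ceil{u_i}\in\mathbb{Z}$; in particular $\ch{i}\notin\Psi$ because $\Psi$-membership requires a fractional upper bound. Thus (T2) and (T3) cannot apply to $\ch{i}$, and by the case hypothesis (T1) also fails to disqualify $\ch{i}$. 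Therefore $\ch{i}\in\Delta$, a contradiction.

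The main obstacle will be the $\Psi$-inheritance step in the first case, namely showing that $\ell\in\Psi$ forces $i\in\Psi$ so as to invoke Assumption \hyperlink{A1}{1}; this is the only place the structural constraint on $\Psi$ really enters. The rest is bookkeeping that unpacks the three disqualification tests together with the single discrete-child guarantee provided by Property \ref{p}.
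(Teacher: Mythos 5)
Your proof is correct and follows essentially the same two-case structure as the paper: rule out the possibility that any vertex is disqualified by the line-4 test (by showing the deepest such descendant $\ell$ would have to be in $\Delta$), then show that if everything is disqualified by lines 8 or 11, the corresponding $\ch{i}$ must be in $\Delta$. The one genuinely cleaner step in your version is the $\Psi$-inheritance argument: you argue $\ell\in\Psi$ forces $i\in\Psi$ by directly using $R^+(\ell)\subseteq R^+(i)$ and $u_\ell=u_i$, which is automatic; the paper goes in the opposite direction ($i\in\Psi$ forces the witness $j$ into $\Psi$) and justifies the ``$j$ has a discrete descendant'' step only with the vague phrase ``due to the monotonicity ordering,'' which really leans on Property~\ref{p}. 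Your direction sidesteps that. You also give a more elementary reason why $\ch{i}\notin\Psi$ in the second case (integrality of $u_{\ch{i}}$ under Property~\ref{p} rather than invoking Assumption~1), which is a sound simplification. Otherwise the decomposition, the use of Property~\ref{p} and Assumption~1, and the final contradiction all match the paper.
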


Next, we show that any $\overline{\mathbf{z}}\in\mathbb{R}^{|\mathcal{V}|}$ can be written as an affine combination of $\{P(\mathcal{T}^{\delta,k})\}_{k=0}^{|\mathcal{V}|}$ for any valid permutation $\delta\in\mathfrak{S}(\mathcal{V})$. We now define two functions that are closely related to the affine combination coefficients. For every $\psi\in\Psi$, let $\eta_\psi:\mathbb{R}^{|\mathcal{V}|} \rightarrow\mathbb{R}$ be
\begin{equation}
\eta_\psi(\mathbf{z}) = \frac{z_\psi - (u_\psi - \floor{u_\psi}) z_{\ch{\psi}}}{u_{\ch{\psi}} - u_\psi}.
\end{equation} 
Recall $u_\psi\notin\mathbb{Z}$ and $u_{\ch{\psi}} = \ceil{u_\psi}$, so the denominator of $\eta_\psi(\mathbf{z})$ is strictly positive. Let $z_0 = 0$ and $u_0 = 0$. We further define a function $\mathbf{t}:\mathfrak{S}(\mathcal{V})\times\mathbb{R}^{|\mathcal{V}|}\rightarrow\mathbb{R}^{|\mathcal{V}|}$. For $k\in\{1,\dots,|\mathcal{V}|\}$, we let $i=\delta(k)$ for brevity, and 
\begin{numcases}{t(\delta, \mathbf{z})_k := } 
\frac{\eta_\psi(\mathbf{z}) - z_{i^{\vartriangle}(\mathcal{T}^{\delta, k-1})}}{\floor{u_\psi}- u_{i^\vartriangle(\mathcal{T}^{\delta, k-1})}}, & if $i = \psi\in\Psi$ and $\ch{\psi}\notin\mathcal{T}^{\delta, k-1}$, \label{t_1} \\
\frac{z_i - \eta_\psi(\mathbf{z})}{u_i - \floor{u_\psi}}, & else if $i^\vartriangle(\mathcal{T}^{\delta, k-1}) = \psi\in\Psi$, \label{t_2} \\
\frac{z_i- z_{i^{\vartriangle}(\mathcal{T}^{\delta, k-1})}}{u_i - u_{i^\vartriangle(\mathcal{T}^{\delta, k-1})}}, & otherwise.  \label{t_3} 
\end{numcases}
In addition, we let $t(\delta, {\mathbf{z}})_0 = 1$ and $t(\delta, {\mathbf{z}})_{|\mathcal{V}|+1} = 0$. \\

In case \eqref{t_2}, $i^\vartriangle(\mathcal{T}^{\delta, k-1}) = \psi\in\Psi$ implies that $\ch{\psi}\notin\mathcal{T}^{\delta, k-1}$. The conditions under which \eqref{t_3} applies are (1) $i = \psi\in\Psi$ and $\ch{\psi}\in\mathcal{T}^{\delta, k-1}$, or (2) $i, i^\vartriangle(\mathcal{T}^{\delta, k-1})\notin \Psi$. A partial permutation $\delta' = (\delta(1),\dots,\delta(k))$ is sufficient to evaluate $t(\delta, \mathbf{z})_k$ for any $\mathbf{z}\in\mathbb{R}^{|\mathcal{V}|}$ and $k\in\{1,\dots,|\mathcal{V}|\}$. Thus by abusing notation, we allow the first argument of $t(\cdot,\cdot)_k$ to be any valid partial permutation $\delta'\in\mathfrak{S}(\mathcal{V}')$ where $|\mathcal{V}'|\geq k$. 

{
\begin{remark}
The function $\mathbf{t}(\cdot, \cdot)$ can be simplified in special cases of $\mathcal{Z}(\mathcal{G},\mathbf{u})$. Recall cases (a)--(d) described in Remark \ref{remark:trivialA1}. In case (a), the feasible set $\mathcal{Z}(\mathcal{G},\mathbf{u})$ is defined by box constraints only. As mentioned in Remark \ref{remark:special_valid_perm}, any $\delta\in\mathfrak{S}(\mathcal{V})$ is valid in this case. For $k\in\{1,\dots,|\mathcal{V}|\}$, $t(\delta,\mathbf{z})_k := z_{\delta(k)}/u_{\delta(k)}$. This is because $\Psi = \emptyset$ as discussed in Remark \ref{remark:trivialA1}. Moreover, $\mathcal{A} = \emptyset$, so $\delta(k)^{\vartriangle}(\mathcal{T}^{\delta, k-1}) = 0$ by definition, and $z_0 = u_0 = 0$. In cases (b), (c), and (d) of Remark \ref{remark:trivialA1}, $\Psi = \emptyset$ as well. Therefore, $t(\delta,\mathbf{z})_k := (z_{\delta(k)}- z_{{\delta(k)}^{\vartriangle}(\mathcal{T}^{\delta, k-1})})/(u_{\delta(k)} - u_{{\delta(k)}^\vartriangle(\mathcal{T}^{\delta, k-1})})$ for all $k\in\{1,\dots,|\mathcal{V}|\}$. 
\end{remark}
}

\begin{observation}
\label{obs:denom}
The denominators in all cases of $t(\cdot, \cdot)_k$ are strictly positive for $k\in\{1,\dots,|\mathcal{V}|\}$. Recall that $i = \delta(k)$. By definition, $i^\vartriangle(\cdot) \in R^-(i)$ so the denominators must be non-negative. In \eqref{t_1}, $\delta^{-1}(\ch{\psi}) > \delta^{-1}(\psi)$. Property (2) from Definition \ref{def:tree_valid_perm} of $\delta$ ensures that when $i\in\Psi$ with $\floor{u_i} = 0$, $t(\delta,\mathbf{z})_k$ never falls under \eqref{t_1}. Property (3) in Definition \ref{def:tree_valid_perm} guarantees that $\floor{u_\psi} \neq u_{i^\vartriangle(\mathcal{T}^{\delta, k-1})}$. In \eqref{t_2}, $i$ is a descendant of $\psi$, so $u_i \geq \ceil{u_\psi} = u_{\ch{\psi}}$. By property (1) (see Definition \ref{def:tree_valid_perm}) of $\delta$, $u_i \neq u_{i^\vartriangle(\mathcal{T}^{\delta, k-1})}$. 
\end{observation}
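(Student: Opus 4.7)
The plan is to verify strict positivity case-by-case, handling the three branches \eqref{t_1}, \eqref{t_2}, and \eqref{t_3} of the definition of $t(\delta,\mathbf{z})_k$ separately. In every branch the denominator has the shape ``upper bound minus upper bound,'' and because $i^\vartriangle(\mathcal{T}^{\delta,k-1}) \in R^-(\delta(k)) \cup \{0\}$ while $\mathbf{u}$ respects the arcs of $\mathcal{G}$, each denominator is a priori non-negative; the real work is to exclude equality, and this is exactly what validity of $\delta$ together with Assumption \hyperlink{A1}{1} is designed to provide.

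For branch \eqref{t_3} with denominator $u_i - u_{i^\vartriangle(\mathcal{T}^{\delta,k-1})}$ (where $i = \delta(k)$), I would argue that if the two upper bounds were equal, property (1) of Definition \ref{def:tree_valid_perm} would force $\delta^{-1}(i^\vartriangle(\cdot)) > \delta^{-1}(i)$, contradicting $i^\vartriangle(\cdot) \in \mathcal{T}^{\delta,k-1}$. For branch \eqref{t_2} with denominator $u_i - \floor{u_\psi}$ and $i^\vartriangle(\mathcal{T}^{\delta,k-1}) = \psi \in \Psi$, I would observe that $i \neq \psi$ (since $\psi \in \mathcal{T}^{\delta,k-1}$ but $i\notin \mathcal{T}^{\delta,k-1}$), and hence $i \in R^+(\ch{\psi})$ by Property \ref{p}; monotonicity of $\mathbf{u}$ then gives $u_i \geq u_{\ch{\psi}} = \ceil{u_\psi} = \floor{u_\psi} + 1$, yielding a denominator of at least one.

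The main obstacle is branch \eqref{t_1}, where $i = \psi \in \Psi$, $\ch{\psi} \notin \mathcal{T}^{\delta,k-1}$, and the denominator $\floor{u_\psi} - u_{\psi^\vartriangle(\mathcal{T}^{\delta,k-1})}$ must be shown to be positive. My plan is in three parts. First, property (2) of Definition \ref{def:tree_valid_perm} rules out $\floor{u_\psi} = 0$: otherwise $\ch{\psi}$ would have to precede $\psi$ in $\delta$, contradicting $\ch{\psi} \notin \mathcal{T}^{\delta,k-1}$. This simultaneously handles the boundary where $\psi$ has no ascendant in $\mathcal{T}^{\delta,k-1}$, i.e., $\psi^\vartriangle(\cdot) = 0$ and $u_{\psi^\vartriangle(\cdot)} = u_0 = 0$, in which case the denominator reduces to $\floor{u_\psi} \geq 1$. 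Second, I would appeal to Assumption \hyperlink{A1}{1} to show that any nonzero $\psi^\vartriangle(\cdot)$, being an ascendant of $\psi$, must carry an integer upper bound: a fractional upper bound would give that ascendant the discrete descendant $\ch{\psi}$, placing it in $\Psi$ and violating the requirement that at most one member of $\Psi$ lie on a directed path. Combined with monotonicity this yields $u_{\psi^\vartriangle(\cdot)} \leq \floor{u_\psi}$. Third, property (3) upgrades $\leq$ to strict inequality: the case condition gives $\delta^{-1}(\psi^\vartriangle(\cdot)) < \delta^{-1}(\psi) < \delta^{-1}(\ch{\psi})$, and in this configuration property (3) forbids any ascendant of $\psi$ from having upper bound exactly $\floor{u_\psi}$. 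Together these three steps give strict positivity of the denominator in branch \eqref{t_1}, completing the proof.
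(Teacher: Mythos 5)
Your proof is correct and follows the same case-by-case strategy the paper uses: branch \eqref{t_3} is handled by Definition~\ref{def:tree_valid_perm}(1), branch \eqref{t_2} by Property~\ref{p} together with monotonicity of $\mathbf{u}$, and branch \eqref{t_1} by Definition~\ref{def:tree_valid_perm}(2)--(3). The one place you go beyond the paper's terse remark is in branch \eqref{t_1}, where you explicitly invoke Assumption~\hyperlink{A1}{1} to conclude that $\psi^\vartriangle(\mathcal{T}^{\delta,k-1})$ has an integer upper bound (hence $u_{\psi^\vartriangle(\cdot)}\le\floor{u_\psi}$); the paper treats this as implicit in its blanket ``$i^\vartriangle(\cdot)\in R^-(i)$ so the denominators must be non-negative,'' so you have filled in a small but genuine step, without changing the route.
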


\begin{example}
\label{eg:t_eg}
Consider the directed rooted forest $\mathcal{G}$ in Figure \ref{fig:valid_perm} and the valid permutation $\delta = (6,4,7,5,2,3,9,1,\\11,8,10,12)$. We notice that vertices 1 and 9 are the only non-integer upper-bounded continuous variables that have discrete descendants, so $\Psi = \{1, 9\}$. The form of $\mathbf{t}(\delta,\mathbf{z})$ is provided in Table \ref{tab:t_eg}.  \eged
\begin{table}[htbp]
   \centering
   \begin{tabular}{c | c | l} 
      \hline
      $k$ & $\delta(k)$ & $t(\delta,\mathbf{z})_k$  \\
      \hline
      1 & 6 & $z_6/u_6 = z_6/9$  \\
      2 & 4 & $z_4/u_4 = z_4/8$ \\
      3 & 7 & $z_7/u_7 = z_7/12$ \\
      4 & 5 & $z_5/u_5 = z_5/11.75$ \\
      5 & 2 & $z_2/u_2 = z_2$ \\
      6 & 3 & $(z_3 - z_2)/(u_3-u_2) = (z_3-z_2)/7$ \\
      7 & 9 & $\eta_9(\mathbf{z})/\floor{u_9} = (z_9-0.5 z_{10})/5$ \\
      8 & 1 & $z_1/u_1 = 10 z_1$ \\
      9 & 11 & $[z_{11} - \eta_9(\mathbf{z})]/(u_{11} - \floor{u_9}) = z_{11} - 2z_9 + z_{10}$ \\
      10 & 8 & $z_8/u_8 = z_8/10$  \\
      11 & 10 & $[z_{10} - \eta_9(\mathbf{z})]/(u_{10} - \floor{u_9}) = 2z_{10} - 2z_9$ \\
      12 & 12 & $(z_{12} - z_7)/(u_{12} - u_7) = (z_{12} - z_7)/7.9$ \\
      \hline
   \end{tabular}
   \caption{Form of $\mathbf{t}(\delta,\mathbf{z})$ in Example \ref{eg:t_eg}.}
   \label{tab:t_eg}
  \end{table} 
\end{example}

\begin{observation}
\label{obs:ch_t}
Suppose $i^\vartriangle(\mathcal{T}^{\delta, k-1}) = \psi\in\Psi$ as in \eqref{t_2}. If $i = \ch{\psi}$, then 
\begin{align*}
t(\delta,\mathbf{z})_k  = z_{\ch{\psi}} - \frac{z_\psi - (u_\psi - \floor{u_\psi}) z_{\ch{\psi}}}{u_{\ch{\psi}} - u_\psi}  = \frac{(u_{\ch{\psi}} - u_\psi + u_\psi - \floor{u_\psi}) z_{\ch{\psi}} - z_\psi}{u_{\ch{\psi}} - u_\psi}  = \frac{z_{\ch{\psi}} - z_\psi}{u_{\ch{\psi}} - u_\psi},
\end{align*} which coincides with \eqref{t_3}.
\end{observation}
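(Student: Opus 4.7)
The plan is a direct algebraic verification. I would start by substituting $i=\ch{\psi}$ into the formula \eqref{t_2} to obtain
\[
t(\delta,\mathbf{z})_k \;=\; \frac{z_{\ch{\psi}} - \eta_\psi(\mathbf{z})}{u_{\ch{\psi}} - \floor{u_\psi}},
\]
and then expand $\eta_\psi(\mathbf{z})$ using its definition $\eta_\psi(\mathbf{z}) = \bigl(z_\psi - (u_\psi - \floor{u_\psi})z_{\ch{\psi}}\bigr)/(u_{\ch{\psi}} - u_\psi)$. The goal is to show that this expression equals $(z_{\ch{\psi}} - z_\psi)/(u_{\ch{\psi}} - u_\psi)$, which is precisely what \eqref{t_3} would evaluate to under the identifications $i=\ch{\psi}$ and $i^\vartriangle(\mathcal{T}^{\delta,k-1})=\psi$ (so that $z_{i^\vartriangle(\mathcal{T}^{\delta,k-1})} = z_\psi$ and $u_{i^\vartriangle(\mathcal{T}^{\delta,k-1})} = u_\psi$).

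Next, I would place the two terms in the numerator of $t(\delta,\mathbf{z})_k$ over the common denominator $u_{\ch{\psi}} - u_\psi$, giving
\[
t(\delta,\mathbf{z})_k \;=\; \frac{(u_{\ch{\psi}} - u_\psi)z_{\ch{\psi}} - z_\psi + (u_\psi - \floor{u_\psi})z_{\ch{\psi}}}{(u_{\ch{\psi}} - \floor{u_\psi})(u_{\ch{\psi}} - u_\psi)}.
\]
Collecting the $z_{\ch{\psi}}$ coefficients, the numerator simplifies to $(u_{\ch{\psi}} - \floor{u_\psi})z_{\ch{\psi}} - z_\psi$, at which point the common factor $(u_{\ch{\psi}} - \floor{u_\psi})$ cancels with the same factor in the denominator, yielding the desired form.

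To justify the cancellation, I would note that by Property \ref{p} the child $\rho=\ch{\psi}$ satisfies $u_{\ch{\psi}} = \ceil{u_\psi}$, and since $\psi\in\Psi$ implies $u_\psi\notin\mathbb{Z}$, we have $\ceil{u_\psi} - \floor{u_\psi} = 1$; in particular $u_{\ch{\psi}} - \floor{u_\psi} > 0$, so the cancellation is legitimate. I would also briefly observe that the denominator $u_{\ch{\psi}} - u_\psi = \ceil{u_\psi} - u_\psi > 0$ is nonzero, consistent with Observation \ref{obs:denom}.

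There is no real obstacle here: the statement is a routine identity confirming internal consistency of the piecewise definition of $\mathbf{t}(\cdot,\cdot)$ in the one regime where clauses \eqref{t_2} and \eqref{t_3} could both be regarded as applicable. The only point requiring care is tracking the role of Property \ref{p} (namely $u_{\ch{\psi}} = \ceil{u_\psi}$) and the non-integrality of $u_\psi$, both of which are already guaranteed because we are working under Assumption \hyperlink{A1}{1}.
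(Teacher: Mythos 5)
Your proposal is correct and follows essentially the same direct algebraic route as the paper. The one thing to tighten is the final step: you describe it as cancelling a common factor $(u_{\ch{\psi}} - \floor{u_\psi})$ between numerator and denominator, but the numerator $(u_{\ch{\psi}} - \floor{u_\psi})z_{\ch{\psi}} - z_\psi$ is \emph{not} a multiple of $(u_{\ch{\psi}} - \floor{u_\psi})$ for an arbitrary value of that quantity, so ``cancellation'' is the wrong word and invoking only $u_{\ch{\psi}} - \floor{u_\psi} > 0$ would not justify the step. What makes the simplification go through is the exact identity $u_{\ch{\psi}} - \floor{u_\psi} = \ceil{u_\psi} - \floor{u_\psi} = 1$, which you do state (via Property \ref{p} and $u_\psi\notin\mathbb{Z}$); substituting this value of $1$ directly turns the numerator into $z_{\ch{\psi}} - z_\psi$ and the denominator into $u_{\ch{\psi}} - u_\psi$. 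In fact the paper applies this substitution at the very start, replacing the denominator of \eqref{t_2} by $1$ immediately (its first displayed expression is already $z_{\ch{\psi}} - \eta_\psi(\mathbf{z})$ with no outer denominator), which keeps the algebra a touch cleaner; doing it at the end, as you do, is equally valid once the justification is phrased as a substitution rather than a cancellation.
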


Function $\mathbf{t}(\delta,\mathbf{z})$ can be expressed as $\mathbf{T}^\delta\mathbf{z}$, where $\mathbf{T}^\delta\in\mathbb{R}^{|\mathcal{V}|\times|\mathcal{V}|}$. For $k\in\{1,\dots,|\mathcal{V}|\}$, we again let $i=\delta(k)$ for brevity. The $k$-th row of matrix $\mathbf{T}^\delta$ is defined as follows.

\begin{enumerate}
\item[(i)] If $i = \psi\in\Psi$ and $\ch{\psi}\notin\mathcal{T}^{\delta, k-1}$ as in \eqref{t_1}, then
\begin{equation}
\label{T1}
\mathbf{T}^\delta_{k,j} = 
\begin{cases}
1/[(u_{\ch{\psi}} - u_\psi)(\floor{u_\psi} - u_{i^\vartriangle(\mathcal{T}^{\delta, k-1})})], & j = \psi, \\
-1/(\floor{u_\psi} - u_{i^\vartriangle(\mathcal{T}^{\delta, k-1})}), & j = i^\vartriangle(\mathcal{T}^{\delta, k-1}), \\
-(u_\psi - \floor{u_\psi})/[(u_{\ch{\psi}} - u_\psi)(\floor{u_\psi} - u_{i^\vartriangle(\mathcal{T}^{\delta, k-1})})], & j = \ch{\psi}, \\
0, & \text{otherwise.} 
\end{cases}
\end{equation}

\item[(ii)] Else suppose $i^\vartriangle(\mathcal{T}^{\delta, k-1}) = \psi\in\Psi$ as in \eqref{t_2}. When $i = \ch{\psi}$, by Observation \ref{obs:ch_t},
\begin{equation}
\label{T2ch}
\mathbf{T}^\delta_{k,j} := 
\begin{cases}
1/(u_{\ch{\psi}} - u_\psi), & j = \ch{\psi}, \\
-1/(u_{\ch{\psi}} - u_\psi), & j = \psi, \\
0, & \text{otherwise.} 
\end{cases}
\end{equation}
If $i \neq \ch{\psi}$, then 
\begin{equation}
\label{T2}
\mathbf{T}^\delta_{k,j} := 
\begin{cases}
1/(u_i - \floor{u_\psi}), & j = i, \\
-1/[(u_{\ch{\psi}}-u_\psi)(u_i - \floor{u_\psi})], & j = \psi, \\
(u_\psi - \floor{u_\psi})/[(u_{\ch{\psi}}-u_\psi)(u_i - \floor{u_\psi})], & j = \ch{\psi}, \\
0, & \text{otherwise.} 
\end{cases}
\end{equation}

\item[(iii)] Otherwise, $i$ and $i^\vartriangle(\mathcal{T}^{\delta, k-1})$ do not belong to $\bigcup_{\psi\in\Psi} \{\psi, \ch{\psi}\}$ as in \eqref{t_3}. Then
\begin{equation}
\label{T3}
\mathbf{T}^\delta_{k,j} = 
\begin{cases}
1/(u_i - u_{i^\vartriangle(\mathcal{T}^{\delta, k-1})}), & j = i, \\
-1/(u_i - u_{i^\vartriangle(\mathcal{T}^{\delta, k-1})}), & j = i^\vartriangle(\mathcal{T}^{\delta, k-1}), \\
0, & \text{otherwise.} 
\end{cases}
\end{equation}
\end{enumerate}

Recall $P(\delta, 0) = \mathbf{0}$. Given a valid permutation $\delta\in\mathfrak{S}(\mathcal{V})$, we define $\mathbf{D}^\delta\in \mathbb{R}^{|\mathcal{V}|\times |\mathcal{V}|}$ by 
\begin{equation}
\mathbf{D}^\delta_{\cdot, k} := P(\delta,k) - P(\delta, k-1), 
\end{equation}
for $k\in\{1,\dots, |\mathcal{V}|\}$.

\begin{observation}
\label{obs:p_inv_diag}
For any valid $\delta\in\mathfrak{S}(\mathcal{V})$ and $k\in\{1,\dots,|\mathcal{V}|\}$, $\sum_{j=1}^{|\mathcal{V}|}\mathbf{T}^\delta_{k,j}\mathbf{D}^\delta_{j,k} = 1$. We explain this observation by case. For consistency, we let $i = \delta(k)$. 

\begin{enumerate}
\item[(i)] If $i = \psi\in\Psi$ and $\ch{\psi}\notin\mathcal{T}^{\delta, k-1}$, then 
\begin{align*}
\sum_{j=1}^{|\mathcal{V}|}\mathbf{T}^\delta_{k,j}\mathbf{D}^\delta_{j,k} & = \frac{\floor{u_\psi} - u_{i^\vartriangle(\mathcal{T}^{\delta, k-1})}}{(u_{\ch{\psi}} - u_\psi)(\floor{u_\psi} - u_{i^\vartriangle(\mathcal{T}^{\delta, k-1})})} - \frac{0}{\floor{u_\psi}  - u_{i^\vartriangle(\mathcal{T}^{\delta, k-1})}} \\
& \quad - \frac{(\floor{u_\psi} - u_{i^\vartriangle(\mathcal{T}^{\delta, k-1})})(u_\psi - \floor{u_\psi})}{(u_{\ch{\psi}} - u_\psi)(\floor{u_\psi} - u_{i^\vartriangle(\mathcal{T}^{\delta, k-1})})} \\
& = \frac{1 - u_\psi + \floor{u_\psi}}{u_{\ch{\psi}} - u_\psi} \\
& = 1. 
\end{align*}

\item[(ii)] Now consider the case where $i^\vartriangle(\mathcal{T}^{\delta, k-1}) = \psi\in\Psi$. When $i = \ch{\psi}$, 
\begin{align*}
\sum_{j=1}^{|\mathcal{V}|}\mathbf{T}^\delta_{k,j}\mathbf{D}^\delta_{j,k}  = \frac{1}{u_{\ch{\psi}} - u_\psi} - \frac{u_\psi - \floor{u_\psi}}{u_{\ch{\psi}} - u_\psi}  = \frac{1 + \floor{u_\psi} - u_\psi}{u_{\ch{\psi}} - u_\psi} = 1.
\end{align*}
When $i \neq \ch{\psi}$, 
\begin{align*}
\sum_{j=1}^{|\mathcal{V}|}\mathbf{T}^\delta_{k,j}\mathbf{D}^\delta_{j,k} = \frac{u_i - \floor{u_\psi}}{u_i - \floor{u_\psi}} + 0 = 1.
\end{align*}

\item[(iii)] Otherwise, $\sum_{j=1}^{|\mathcal{V}|}\mathbf{T}^\delta_{k,j}\mathbf{D}^\delta_{j,k} = \mathbf{T}^\delta_{k,i} (u_i - u_{i^\vartriangle(\mathcal{T}^{\delta, k-1})}) + 0 = 1$.
\end{enumerate}
\end{observation}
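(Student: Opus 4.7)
The plan is to verify the identity by a direct case analysis that mirrors the three-way split \eqref{T1}--\eqref{T3} defining row $k$ of $\mathbf{T}^\delta$. In each case row $k$ has at most three nonzero entries, so the inner product $\sum_{j=1}^{|\mathcal{V}|}\mathbf{T}^\delta_{k,j}\mathbf{D}^\delta_{j,k}$ reduces to at most three terms, each of the form $\mathbf{T}^\delta_{k,j}(P(\delta,k)_j - P(\delta,k-1)_j)$. I would compute these increments directly from the definition \eqref{ext} of the extreme points and then simplify.

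The bookkeeping hinges on tracking, for each relevant $j$, how the closest-ancestor map $j^\vartriangle(\mathcal{T})$ behaves when $i = \delta(k)$ is added to $\mathcal{T}^{\delta,k-1}$, and which branch of \eqref{ext} applies before and after. Two structural facts keep this manageable: (a) by Assumption \hyperlink{A1}{1}, any $\psi\in\Psi$ appearing in \eqref{T1}--\eqref{T2} has no ancestor in $\Psi$, so for any $j\in R^-(\psi)$ the floor branch of \eqref{ext} is never triggered at $j$; and (b) validity of $\delta$ (Definition \ref{def:tree_valid_perm}) rules out the configurations in which denominators vanish, as already invoked in Observation \ref{obs:denom}.

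Unpacking the three cases: in case (i), where $i=\psi\in\Psi$ and $\ch{\psi}\notin\mathcal{T}^{\delta,k-1}$, one checks that $\mathbf{D}^\delta_{i^\vartriangle(\mathcal{T}^{\delta,k-1}),k}=0$ while both $\mathbf{D}^\delta_{\psi,k}$ and $\mathbf{D}^\delta_{\ch{\psi},k}$ equal $\floor{u_\psi}-u_{i^\vartriangle(\mathcal{T}^{\delta,k-1})}$, since $\psi$ and $\ch{\psi}$ previously drew their value from the ancestor $i^\vartriangle(\mathcal{T}^{\delta,k-1})$ and both jump to $\floor{u_\psi}$ once $\psi$ enters $\mathcal{T}$; substituting into \eqref{T1} and using $u_{\ch{\psi}}=\ceil{u_\psi}$ collapses the sum to $(\ceil{u_\psi}-u_\psi)/(u_{\ch{\psi}}-u_\psi)=1$. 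In case (ii), with $i^\vartriangle(\mathcal{T}^{\delta,k-1})=\psi\in\Psi$, I would split on whether $i=\ch{\psi}$: when $i=\ch{\psi}$, adding $\ch{\psi}$ flips the $\psi$-coordinate from $\floor{u_\psi}$ to $u_\psi$ and the $\ch{\psi}$-coordinate from $\floor{u_\psi}$ to $\ceil{u_\psi}$, so \eqref{T2ch} simplifies to $1$; when $i\neq\ch{\psi}$, adding $i$ does not alter the ancestor lookup at $\psi$ or $\ch{\psi}$, so only the increment at $i$ survives and \eqref{T2} yields $1$. Case (iii) is immediate: only $\mathbf{D}^\delta_{i,k}=u_i-u_{i^\vartriangle(\mathcal{T}^{\delta,k-1})}$ is nonzero and \eqref{T3} gives $1$.

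The main obstacle will be the ancestor-tracking in case (ii) with $i\neq\ch{\psi}$: one must verify that $P(\delta,k)_\psi=P(\delta,k-1)_\psi$ and $P(\delta,k)_{\ch{\psi}}=P(\delta,k-1)_{\ch{\psi}}$, which requires checking that $i\notin R^-(\psi)\cup R^-(\ch{\psi})$ so that neither $\psi^\vartriangle$ nor $(\ch{\psi})^\vartriangle$ changes at step $k$, and that the branch of \eqref{ext} applied at $\psi$ and $\ch{\psi}$ is the same before and after. Once this is settled, every case collapses to short arithmetic on the upper bounds, all of which evaluate to $1$.
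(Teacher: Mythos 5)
Your proposal is correct and takes the same route as the paper: a direct three-way case analysis mirroring \eqref{T1}--\eqref{T3}, computing the nonzero increments $\mathbf{D}^\delta_{j,k}$ from \eqref{ext} and simplifying using $u_{\ch{\psi}}=\ceil{u_\psi}=\floor{u_\psi}+1$. Your bookkeeping of the ancestor map and the branch switch at $\psi,\ch{\psi}$ matches the increments used in the paper's displayed computations, so no gap remains.
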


We next argue that, given any valid $\delta\in\mathfrak{S}(\mathcal{V})$, $\sum_{j=1}^{|\mathcal{V}|}\mathbf{T}^\delta_{k,j}\mathbf{D}^\delta_{j,k'} = 0$ for all $k, k'\in\{1,\dots,|\mathcal{V}|\}$ such that $k \neq k'$. Suppose $\delta(k)$ and $\delta(k')$ belong to two disjoint components of $\mathcal{G}$, $G^1 = (V^1, A^1)$ and $G^2 = (V^2, A^2)$ respectively. Then $\mathbf{D}^\delta_{j,k'} = 0$ for all $j\notin V^2$, and $\mathbf{T}^\delta_{k,\ell} = 0$ for $\ell\in V^2$. Therefore, $\sum_{j=1}^{|\mathcal{V}|}\mathbf{T}^\delta_{k,j}\mathbf{D}^\delta_{j,k'} = \sum_{j\in V^2}\mathbf{T}^\delta_{k,j}\mathbf{D}^\delta_{j,k'} + \sum_{j\notin V^2}\mathbf{T}^\delta_{k,j}\mathbf{D}^\delta_{j,k'} = 0$. In Observations \ref{obs:p_inv_off_diag_lowkprime} and \ref{obs:p_inv_off_diag_highkprime}, we focus on the cases in which $\delta(k)$ and $\delta(k')$ belong to the same component of $\mathcal{G}$.

\begin{observation}
\label{obs:p_inv_off_diag_lowkprime}
Given any valid $\delta\in\mathfrak{S}(\mathcal{V})$ and $k, k'\in\{1,\dots,|\mathcal{V}|\}$ such that $k > k'$, we note that $\sum_{j=1}^{|\mathcal{V}|}\mathbf{T}^\delta_{k,j}\mathbf{D}^\delta_{j,k'} = 0$. In what follows, we again let $i = \delta(k)$. 

\begin{enumerate}
\item[(i)] In the case of \eqref{T1}, $i = \psi\in\Psi$ and $\delta^{-1}(\ch{\psi}) > \delta^{-1}(\psi)$. Given that $k' < k$, $\sigma_{\delta(k')}(\mathcal{T}^{\delta, k'})$ either contains all of $\psi$, $\ch{\psi}$ and $i^\vartriangle(\mathcal{T}^{\delta, k-1})$, or none of them. This is because otherwise there would exist a higher depth ascendant of $i$ than $i^\vartriangle(\mathcal{T}^{\delta, k-1})$ that precedes $i$ in $\delta$, which cannot happen. Therefore, $\mathbf{D}^\delta_{\psi,k'} = \mathbf{D}^\delta_{\ch{\psi},k'} =\mathbf{D}^\delta_{i^\vartriangle(\mathcal{T}^{\delta, k-1}),k'}$. It follows that 
\begin{align*}
\sum_{j=1}^{|\mathcal{V}|}\mathbf{T}^\delta_{k,j}\mathbf{D}^\delta_{j,k'} & = \frac{\mathbf{D}^\delta_{i,k'}}{(u_{\ch{\psi}} - u_\psi)(\floor{u_\psi} - u_{i^\vartriangle(\mathcal{T}^{\delta, k-1})})} -\frac{\mathbf{D}^\delta_{i,k'}}{\floor{u_\psi} - u_{i^\vartriangle(\mathcal{T}^{\delta, k-1})}}\\
& \quad -\frac{\mathbf{D}^\delta_{i,k'}(u_\psi - \floor{u_\psi})}{(u_{\ch{\psi}} - u_\psi)(\floor{u_\psi} - u_{i^\vartriangle(\mathcal{T}^{\delta, k-1})})} \\
& =  \frac{1-(u_{\ch{\psi}} - u_\psi) - (u_\psi - \floor{u_\psi})}{(u_{\ch{\psi}} - u_\psi)(\floor{u_\psi} - u_{i^\vartriangle(\mathcal{T}^{\delta, k-1})})}\mathbf{D}^\delta_{i,k'} \\
& = \frac{1-u_{\ch{\psi}} + u_\psi - u_\psi + \floor{u_\psi}}{(u_{\ch{\psi}} - u_\psi)(\floor{u_\psi} - u_{i^\vartriangle(\mathcal{T}^{\delta, k-1})})} \mathbf{D}^\delta_{i,k'}\\
& = 0. 
\end{align*}

\item[(ii)] Now we consider the case of \eqref{T2ch}-\eqref{T2}, where $i^\vartriangle(\mathcal{T}^{\delta,k-1}) = \psi \in \Psi$. Following the same reasoning as in (i), $\mathbf{D}^\delta_{\psi,k'} = \mathbf{D}^\delta_{\ch{\psi},k'} =\mathbf{D}^\delta_{i,k'}$. When $i=\ch{\psi}$, 
\[\sum_{j=1}^{|\mathcal{V}|}\mathbf{T}^\delta_{k,j}\mathbf{D}^\delta_{j,k'}  = \frac{1-1}{u_{\ch{\psi}} - u_\psi}\mathbf{D}^\delta_{\psi, k'} = 0.\]
When $i\neq \ch{\psi}$,
\begin{align*}
\sum_{j=1}^{|\mathcal{V}|}\mathbf{T}^\delta_{k,j}\mathbf{D}^\delta_{j,k'} & = \left[\frac{1}{u_i - \floor{u_\psi}} -\frac{1}{(u_{\ch{\psi}}-u_\psi)(u_i - \floor{u_\psi})} + \frac{u_\psi - \floor{u_\psi}}{(u_{\ch{\psi}}-u_\psi)(u_i - \floor{u_\psi})} \right]\mathbf{D}^\delta_{i, k'} \\
& =  \frac{(u_{\ch{\psi}}-u_\psi) - 1 + u_\psi - \floor{u_\psi}}{(u_{\ch{\psi}}-u_\psi)(u_i - \floor{u_\psi})}\mathbf{D}^\delta_{i, k'} \\
& = \frac{u_{\ch{\psi}}-u_\psi + u_\psi - u_{\ch{\psi}}}{(u_{\ch{\psi}}-u_\psi)(u_i - \floor{u_\psi})}\mathbf{D}^\delta_{i, k'} \\
& = 0.
\end{align*}

\item[(iii)] Suppose $i$ and $i^\vartriangle(\mathcal{T}^{\delta, k-1})$ do not belong to $\Psi$ as in \eqref{T3}. Due to the fact that $k' < k$, $\sigma_{\delta(k)}(\mathcal{T}^{\delta, k'})$ either contains both $i$ and $i^\vartriangle(\mathcal{T}^{\delta, k-1})$, or neither, which means that $\mathbf{D}^\delta_{i,k'}=\mathbf{D}^\delta_{i^\vartriangle(\mathcal{T}^{\delta, k-1}),k'}$. Therefore, $\sum_{j=1}^{|\mathcal{V}|}\mathbf{T}^\delta_{k,j}\mathbf{D}^\delta_{j,k'} = [1/(u_i - u_{i^\vartriangle(\mathcal{T}^{\delta, k-1})}) - 1/(u_i - u_{i^\vartriangle(\mathcal{T}^{\delta, k-1})})]\mathbf{D}^\delta_{i,k'} = 0$.
\end{enumerate}
\end{observation}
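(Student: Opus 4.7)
The plan is to argue by case analysis matching the four branches in the definition of the row $\mathbf{T}^\delta_{k,\cdot}$, namely \eqref{T1}, \eqref{T2ch}, \eqref{T2}, and \eqref{T3}. In every branch the support of $\mathbf{T}^\delta_{k,\cdot}$ consists of at most three indices: the ``focal'' vertex $i=\delta(k)$, its closest previously-added ancestor $i^\vartriangle(\mathcal{T}^{\delta,k-1})$, and possibly one auxiliary vertex from $\{\psi,\ch{\psi}\}$ coming from the MIR adjustment. Because these three vertices always lie on a single root-to-$i$ chain in $\mathcal{G}$, the goal is to show that the corresponding three entries of the column $\mathbf{D}^\delta_{\cdot,k'}$ are \emph{all equal}; the required zero then follows from a short algebraic identity among the coefficients of $\mathbf{T}^\delta_{k,\cdot}$ that mirrors the identity used in Observation \ref{obs:p_inv_diag} but cancels to $0$ instead of $1$.

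First I would split on whether $\delta(k)\in R^+(\delta(k'))$. If not, then since every index in the support of $\mathbf{T}^\delta_{k,\cdot}$ lies on the root-to-$i$ chain, none of them is a descendant of $\delta(k')$ either, so $\mathbf{D}^\delta_{j,k'}=P(\mathcal{T}^{\delta,k'})_j-P(\mathcal{T}^{\delta,k'-1})_j=0$ at each support index by \eqref{ext} (the value at such $j$ is untouched by adding $\delta(k')$), and the inner product trivially vanishes. If $\delta(k)\in R^+(\delta(k'))$, then since $k'<k$ we have $\delta(k')\in\mathcal{T}^{\delta,k-1}\cap R^-(\delta(k))$, so $i^\vartriangle(\mathcal{T}^{\delta,k-1})$ is an ascendant of $\delta(k)$ whose depth is at least that of $\delta(k')$; in particular $i^\vartriangle(\mathcal{T}^{\delta,k-1})\in R^+(\delta(k'))$. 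Consequently $\delta(k)$ and $i^\vartriangle(\mathcal{T}^{\delta,k-1})$ share the same closest ancestor in $\mathcal{T}^{\delta,k'}$, which forces $\mathbf{D}^\delta_{\delta(k),k'}=\mathbf{D}^\delta_{i^\vartriangle(\mathcal{T}^{\delta,k-1}),k'}$.

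The main work, and where I expect the main obstacle, is in handling the two $\Psi$-sensitive branches \eqref{T1} and \eqref{T2}/\eqref{T2ch}, where the auxiliary index ($\ch{\psi}$ in the first case, $\psi$ in the second) enters the support of $\mathbf{T}^\delta_{k,\cdot}$. The subtle point is to verify that this auxiliary index lies in the same ``$\sigma$-block'' under $\mathcal{T}^{\delta,k'}$ as $\delta(k)$ and $i^\vartriangle(\mathcal{T}^{\delta,k-1})$, so that its $\mathbf{D}^\delta_{\cdot,k'}$ value coincides with the other two. I would use that in \eqref{T1} the child $\ch{\psi}\notin\mathcal{T}^{\delta,k-1}$ (so certainly $\ch{\psi}\notin\mathcal{T}^{\delta,k'}$), and in \eqref{T2}/\eqref{T2ch} the parent $\psi=i^\vartriangle(\mathcal{T}^{\delta,k-1})\in\mathcal{T}^{\delta,k-1}$ is the closest ancestor of $i$ in $\mathcal{T}^{\delta,k-1}$; combined with validity of $\delta$ (Definition \ref{def:tree_valid_perm}) this forces the auxiliary vertex to share the same closest ancestor in $\mathcal{T}^{\delta,k'}$ as $\delta(k)$, exactly because it lies on the same root-to-$i$ chain and no intermediate vertex in $\mathcal{T}^{\delta,k'}$ could separate them without having already appeared in $\mathcal{T}^{\delta,k-1}$.

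Once the equal-$\mathbf{D}$-value facts are in hand, factoring out the common value $\mathbf{D}^\delta_{\delta(k),k'}$ leaves me to verify in each branch that the sum of the nonzero entries of $\mathbf{T}^\delta_{k,\cdot}$ is zero. For \eqref{T3} this is immediate from the form $1/(u_i-u_{i^\vartriangle})-1/(u_i-u_{i^\vartriangle})=0$. For \eqref{T2ch} it is $1/(u_{\ch{\psi}}-u_\psi)-1/(u_{\ch{\psi}}-u_\psi)=0$. For \eqref{T2} with $i\neq\ch{\psi}$ and for \eqref{T1}, the cancellation is a short algebraic check involving the three summands, using the identities $(u_{\ch{\psi}}-u_\psi)+(u_\psi-\lfloor u_\psi\rfloor)=1$ on the fractional piece. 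These are routine once the structural step is completed, and strong duality is not needed---only the explicit forms in \eqref{T1}--\eqref{T3} and the partition structure of $\sigma_{\delta(k')}(\mathcal{T}^{\delta,k'})$.
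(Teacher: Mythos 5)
Your proof follows essentially the same strategy as the paper's Observation~\ref{obs:p_inv_off_diag_lowkprime}: identify the (at most three) indices in the support of $\mathbf{T}^\delta_{k,\cdot}$, show that they carry a common value of $\mathbf{D}^\delta_{\cdot,k'}$ because adding $\delta(k')$ to $\mathcal{T}^{\delta,k'-1}$ affects all of them identically, and then verify that the corresponding $\mathbf{T}^\delta_{k,\cdot}$ coefficients sum to zero. One small inaccuracy worth noting: in the branch of \eqref{T1} the auxiliary index $\ch{\psi}$ is a \emph{descendant} of $i=\psi$, not an ascendant, so it does not ``lie on the root-to-$i$ chain'' as you assert when handling $\delta(k)\notin R^+(\delta(k'))$; the conclusion that $\ch{\psi}\notin R^+(\delta(k'))$ still holds (since $\ch{\psi}\notin\mathcal{T}^{\delta,k-1}$ rules out $\ch{\psi}=\delta(k')$, and otherwise $\delta(k')$ would have to be an ascendant of $\psi=\delta(k)$), but that step needs its own sentence rather than the ``on-chain'' argument. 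Your preliminary split on $\delta(k)\in R^+(\delta(k'))$ is not actually needed, because the shared-closest-ancestor argument you give in the second half already covers both cases, which is how the paper organizes it.
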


\begin{observation}
\label{obs:p_inv_off_diag_highkprime}
We now verify that $\sum_{j=1}^{|\mathcal{V}|}\mathbf{T}^\delta_{k,j}\mathbf{D}^\delta_{j,k'} = 0$ for $k < k' \leq |\mathcal{V}|$. Recall that we assume $\delta(k)$ and $\delta(k')$ to belong to the same component of $\mathcal{G}$. 

\begin{enumerate} 
\item[(i)] Suppose $\mathbf{T}^\delta_{k,j}$ assumes the form of \eqref{T1}. That is, $i = \psi\in\Psi$ and $\ch{\psi}\notin\mathcal{T}^{\delta, k-1}$. If $\delta(k') \neq \ch{\psi}$, then $\mathbf{D}^\delta_{q,k'} = 0$ for all $q\in\{\psi, \ch{\psi}, i^\vartriangle(\mathcal{T}^{\delta, k-1})\}$, and $\sum_{j=1}^{|\mathcal{V}|}\mathbf{T}^\delta_{k,j}\mathbf{D}^\delta_{j,k'} = 0$. When $\delta(k') = \ch{\psi}$, we have $\mathbf{D}^\delta_{\ch{\psi},k'} = 1$, $\mathbf{D}^\delta_{\psi,k'} = u_{\psi} - \floor{u_\psi}$ and $\mathbf{D}^\delta_{i^\vartriangle(\mathcal{T}^{\delta, k-1}),k'} = 0$. In turn,
\begin{align*}
\sum_{j=1}^{|\mathcal{V}|}\mathbf{T}^\delta_{k,j}\mathbf{D}^\delta_{j,k'} = \frac{u_{\psi} - \floor{u_\psi}}{(u_{\ch{\psi}} - u_\psi)(\floor{u_\psi} - u_{i^\vartriangle(\mathcal{T}^{\delta, k-1})})} - \frac{u_\psi - \floor{u_\psi}}{(u_{\ch{\psi}} - u_\psi)(\floor{u_\psi} - u_{i^\vartriangle(\mathcal{T}^{\delta, k-1})})}  = 0.
\end{align*}

\item[(ii)] Now suppose $i^\vartriangle(\mathcal{T}^{\delta, k-1}) = \psi\in\Psi$. When $i = \ch{\psi}$, we have $P(\delta, k)_\psi = u_\psi$ and $P(\delta, k)_{\ch{\psi}} = u_{\ch{\psi}}$, which are the highest attainable values in these two entries. Thus, $\mathbf{D}^\delta_{\ch{\psi},k'} = \mathbf{D}^\delta_{\psi,k'} = 0$, and $\sum_{j=1}^{|\mathcal{V}|}\mathbf{T}^\delta_{k,j}\mathbf{D}^\delta_{j,k'} = 0$. \\

Suppose $i \neq \ch{\psi}$. If $\delta(k') \neq \ch{\psi}$, then $\mathbf{D}^\delta_{i,k'} = \mathbf{D}^\delta_{\ch{\psi},k'} = \mathbf{D}^\delta_{\psi,k'} = 0$ and $\sum_{j=1}^{|\mathcal{V}|}\mathbf{T}^\delta_{k,j}\mathbf{D}^\delta_{j,k'} = 0$. If $\delta(k') = \ch{\psi}$, then $\mathbf{D}^\delta_{\ch{\psi},k'} = 1$, $\mathbf{D}^\delta_{\psi,k'} = u_{\psi} - \floor{u_\psi}$ and $\mathbf{D}^\delta_{i,k'} = 0$. We have 
\begin{align*}
\sum_{j=1}^{|\mathcal{V}|}\mathbf{T}^\delta_{k,j}\mathbf{D}^\delta_{j,k'} = -\frac{u_{\psi} - \floor{u_\psi}}{(u_{\ch{\psi}}-u_\psi)(u_i - \floor{u_\psi})} + \frac{u_\psi - \floor{u_\psi}}{(u_{\ch{\psi}}-u_\psi)(u_i - \floor{u_\psi})}  = 0.
\end{align*}

\item[(iii)] If $\mathbf{T}^\delta_{k,j}$ assumes the form of \eqref{T3}, then $i$ and $i^\vartriangle(\mathcal{T}^{\delta, k-1})$ do not belong to $\bigcup_{\psi\in\Psi} \{\psi, \ch{\psi}\}$. We note that $P(\delta, k)_i$ and $P(\delta, k)_{i^\vartriangle(\mathcal{T}^{\delta, k-1})}$ have attained the respective upper bounds $u_i$ and $u_{i^\vartriangle(\mathcal{T}^{\delta, k-1})}$. Thus $\mathbf{D}^\delta_{i,k'} = \mathbf{D}^\delta_{i^\vartriangle(\mathcal{T}^{\delta, k-1}),k'} = 0$. It follows that $\sum_{j=1}^{|\mathcal{V}|}\mathbf{T}^\delta_{k,j}\mathbf{D}^\delta_{j,k'} = 0$.
\end{enumerate}
\end{observation}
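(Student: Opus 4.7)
The plan is a case analysis mirroring the three possible forms \eqref{T1}, \eqref{T2ch}--\eqref{T2}, and \eqref{T3} of the $k$-th row of $\mathbf{T}^\delta$. In each case that row is supported on at most the coordinates $\psi$, $\ch{\psi}$, $i = \delta(k)$, and $i^{\vartriangle}(\mathcal{T}^{\delta, k-1})$, so $\sum_{j} \mathbf{T}^\delta_{k,j}\mathbf{D}^\delta_{j,k'}$ collapses to at most three scalar contributions of the form (row coefficient)(increment $P(\delta,k')_j - P(\delta, k'-1)_j$). The guiding observation is that because $k < k'$, every coordinate in the support of row $k$ has already been ``committed'' at step $k$, so most of the corresponding increments $\mathbf{D}^\delta_{j,k'}$ vanish identically; the only coordinate that may still be active is $\ch{\psi}$, precisely at the unique step where $\delta(k') = \ch{\psi}$.

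In the ``clean'' case \eqref{T3}, neither $i$ nor $i^{\vartriangle}(\mathcal{T}^{\delta,k-1})$ belongs to $\bigcup_{\psi\in\Psi}\{\psi,\ch{\psi}\}$, so by \eqref{ext} the construction of $P(\delta, k)$ already fixes these two coordinates at their upper bounds $u_i$ and $u_{i^{\vartriangle}(\mathcal{T}^{\delta,k-1})}$; no subsequent step can touch them, hence both increments vanish. The sub-case \eqref{T2ch} with $i = \ch{\psi}$ and $i^{\vartriangle}(\mathcal{T}^{\delta,k-1}) = \psi$ is analogous: after step $k$ both $P(\delta,k)_\psi = u_\psi$ and $P(\delta,k)_{\ch{\psi}} = u_{\ch{\psi}}$ are saturated, so once again the relevant increments are zero.

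The delicate situations are \eqref{T1} and \eqref{T2} with $i \neq \ch{\psi}$, because $\ch{\psi}$ need not appear in $\mathcal{T}^{\delta,k}$ yet. My plan is to split on whether $\delta(k') = \ch{\psi}$. If not, then by \eqref{ext} none of $i, i^{\vartriangle}(\mathcal{T}^{\delta,k-1}), \psi, \ch{\psi}$ changes at step $k'$ and the sum is trivially zero. If $\delta(k') = \ch{\psi}$, then exactly two relevant increments are nonzero, namely $\mathbf{D}^\delta_{\ch{\psi},k'} = 1$ and $\mathbf{D}^\delta_{\psi, k'} = u_\psi - \floor{u_\psi}$: inserting $\ch{\psi}$ into the index set flips the conditional in \eqref{ext} so that the entry in coordinate $\psi$ jumps from $\floor{u_\psi}$ up to $u_\psi$. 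Substituting these two increments into the explicit coefficients in \eqref{T1} or \eqref{T2} reduces the identity to a single algebraic cancellation of the same flavor as the one already verified in Observation \ref{obs:p_inv_diag}.

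The main obstacle is the bookkeeping: tracking which coordinate is saturated after which step, and using the validity conditions in Definition \ref{def:tree_valid_perm} to rule out edge cases in which a coordinate in the support of row $k$ somehow remains unsaturated. In particular, one must argue that no ascendant of $\psi$ with upper bound $\floor{u_\psi}$ can sit between $\psi$ and $\ch{\psi}$ in $\delta$ to interfere with the computation, which is exactly what condition~(3) of the validity definition rules out. Once this bookkeeping is discharged, the algebraic identities in cases \eqref{T1} and \eqref{T2} are immediate and essentially duplicate the cancellations already performed in Observations \ref{obs:p_inv_diag} and \ref{obs:p_inv_off_diag_lowkprime}.
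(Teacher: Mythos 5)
Your proposal is correct and follows essentially the same case analysis as the paper: in cases \eqref{T3} and \eqref{T2ch} all relevant coordinates of $P(\delta,k)$ are already saturated at their upper bounds so every relevant increment vanishes, while in cases \eqref{T1} and \eqref{T2} with $\delta(k)\neq\ch{\psi}$ the only nontrivial step is $\delta(k')=\ch{\psi}$, where $\mathbf{D}^\delta_{\ch{\psi},k'}=1$ and $\mathbf{D}^\delta_{\psi,k'}=u_\psi-\floor{u_\psi}$ produce the same cancellation as in Observation~\ref{obs:p_inv_diag}. One small inaccuracy in your closing remark: condition~(3) of Definition~\ref{def:tree_valid_perm} forbids an ascendant $i$ with $u_i=\floor{u_\psi}$ from preceding $\psi$ (when $\psi$ precedes $\ch{\psi}$), not from sitting between $\psi$ and $\ch{\psi}$; and this condition is really invoked in Observation~\ref{obs:denom} to keep denominators nonzero rather than in the bookkeeping of this observation.
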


\begin{lemma}
\label{lem:TD_inv}
Given a valid $\delta\in\mathfrak{S}(\mathcal{V})$, $\mathbf{T}^\delta\mathbf{D}^\delta  = \mathbf{D}^\delta\mathbf{T}^\delta = \mathbf{I}$. 
\end{lemma}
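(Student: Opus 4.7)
The plan is to assemble the three preceding observations into a single matrix identity and then invoke a standard linear-algebra fact to conclude both directions at once. The key insight is that Observations on the diagonal and off-diagonal entries of $\mathbf{T}^\delta \mathbf{D}^\delta$ have already done essentially all the work; the lemma merely packages them.

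First, I would compute the $(k, k')$ entry of $\mathbf{T}^\delta \mathbf{D}^\delta$ directly as $(\mathbf{T}^\delta \mathbf{D}^\delta)_{k, k'} = \sum_{j=1}^{|\mathcal{V}|} \mathbf{T}^\delta_{k, j} \mathbf{D}^\delta_{j, k'}$. By Observation~\ref{obs:p_inv_diag}, this sum equals $1$ when $k = k'$. By Observation~\ref{obs:p_inv_off_diag_lowkprime}, it equals $0$ when $k > k'$, and by Observation~\ref{obs:p_inv_off_diag_highkprime}, it equals $0$ when $k < k'$. Before invoking these, I would also briefly note the case treated in the paragraph between those observations: if $\delta(k)$ and $\delta(k')$ lie in different components of $\mathcal{G}$, then the supports of row $k$ of $\mathbf{T}^\delta$ and column $k'$ of $\mathbf{D}^\delta$ are disjoint, so the sum vanishes trivially. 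Combining these cases yields $(\mathbf{T}^\delta \mathbf{D}^\delta)_{k, k'} = \mathbb{1}[k = k']$, that is, $\mathbf{T}^\delta \mathbf{D}^\delta = \mathbf{I}$.

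For the reverse product $\mathbf{D}^\delta \mathbf{T}^\delta = \mathbf{I}$, I would appeal to the standard fact that for square matrices $A, B \in \mathbb{R}^{n \times n}$, the identity $AB = I$ implies both $A$ and $B$ are invertible with $B = A^{-1}$, hence $BA = I$. Since $\mathbf{T}^\delta, \mathbf{D}^\delta \in \mathbb{R}^{|\mathcal{V}| \times |\mathcal{V}|}$ are both square, this observation finishes the argument.

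There is no real obstacle here — all the combinatorial and arithmetic work is already done in Observations~\ref{obs:p_inv_diag}, \ref{obs:p_inv_off_diag_lowkprime}, and \ref{obs:p_inv_off_diag_highkprime}. The only thing to be careful about is making sure that the three observations collectively cover every pair $(k, k')$, which they do once the different-component case is noted. The proof is essentially one paragraph of bookkeeping plus the appeal to square-matrix inversion.
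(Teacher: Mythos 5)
Your proof is correct and follows the paper's own approach: the paper's proof is the one-liner ``This lemma follows from Observations \ref{obs:p_inv_diag}--\ref{obs:p_inv_off_diag_highkprime},'' which is exactly the bookkeeping you carry out, including the disjoint-component remark made in the paragraph between the observations. Your explicit appeal to the square-matrix fact that $AB = \mathbf{I}$ implies $BA = \mathbf{I}$ is the (implicit) final step the paper relies on to get both orders of the product.
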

\begin{proof}
This lemma follows from Observations \ref{obs:p_inv_diag}-\ref{obs:p_inv_off_diag_highkprime}. 
\end{proof}

\begin{lemma}
\label{lem:tree_affine}
Any $\overline{\mathbf{z}}\in\mathbb{R}^{|\mathcal{V}|}$ can be written as an affine combination of $\{P(\delta,k)\}_{k=0}^{|\mathcal{V}|}$ for any valid $\delta\in\mathfrak{S}(\mathcal{V})$, namely
\[\overline{\mathbf{z}} = \sum_{k=0}^{|\mathcal{V}|} \left[t(\delta, \overline{\mathbf{z}})_k-t(\delta, \overline{\mathbf{z}})_{k+1}\right] P(\delta, k).\]
\end{lemma}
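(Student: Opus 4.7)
The plan is to recognize that the claimed identity is really a compact rewriting of the matrix identity $\mathbf{D}^\delta \mathbf{T}^\delta = \mathbf{I}$ already established in Lemma \ref{lem:TD_inv}, unwound via Abel summation. Two things need to be checked separately: that the coefficients sum to one (so that we indeed have an affine combination), and that the weighted sum of the $P(\delta,k)$ reproduces $\overline{\mathbf{z}}$.

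First I would handle the coefficient sum by telescoping. Writing $t_k := t(\delta, \overline{\mathbf{z}})_k$, the sum $\sum_{k=0}^{|\mathcal{V}|}(t_k - t_{k+1})$ collapses to $t_0 - t_{|\mathcal{V}|+1}$, which equals $1 - 0 = 1$ by the boundary conventions $t(\delta,\overline{\mathbf{z}})_0 = 1$ and $t(\delta,\overline{\mathbf{z}})_{|\mathcal{V}|+1} = 0$ set just after \eqref{t_3}. This is an immediate verification and needs no further machinery.

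For the main identity, I would apply summation by parts to the right-hand side. Splitting and reindexing gives
\[
\sum_{k=0}^{|\mathcal{V}|}(t_k - t_{k+1}) P(\delta,k) = t_0 P(\delta,0) + \sum_{k=1}^{|\mathcal{V}|} t_k\bigl[P(\delta,k) - P(\delta,k-1)\bigr] - t_{|\mathcal{V}|+1} P(\delta,|\mathcal{V}|).
\]
The boundary terms vanish: $P(\delta,0) = P(\emptyset) = \mathbf{0}$ by \eqref{ext}, and $t_{|\mathcal{V}|+1} = 0$ by convention. By the definition of $\mathbf{D}^\delta$, the $k$-th difference $P(\delta,k) - P(\delta,k-1)$ is precisely the $k$-th column $\mathbf{D}^\delta_{\cdot,k}$. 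Hence the right-hand side equals $\sum_{k=1}^{|\mathcal{V}|} \mathbf{D}^\delta_{\cdot,k}\, t_k = \mathbf{D}^\delta\, \mathbf{t}(\delta, \overline{\mathbf{z}}) = \mathbf{D}^\delta \mathbf{T}^\delta \overline{\mathbf{z}}$, where the last equality uses the fact, stated just before Observation \ref{obs:p_inv_diag}, that $\mathbf{t}(\delta, \mathbf{z}) = \mathbf{T}^\delta \mathbf{z}$. Invoking Lemma \ref{lem:TD_inv}, $\mathbf{D}^\delta \mathbf{T}^\delta = \mathbf{I}$, so the expression reduces to $\overline{\mathbf{z}}$, as required.

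I do not expect any serious obstacle here: once Lemma \ref{lem:TD_inv} is in hand, the remainder is essentially bookkeeping through an Abel-type rearrangement together with the two endpoint conventions $P(\delta,0) = \mathbf{0}$ and $t(\delta,\overline{\mathbf{z}})_{|\mathcal{V}|+1} = 0$. The only place where care is needed is in keeping the index shifts consistent when reindexing the second sum, and in being explicit that validity of $\delta$ is what makes $\mathbf{T}^\delta$ well-defined on all of $\mathbb{R}^{|\mathcal{V}|}$ (see Observation \ref{obs:denom}), so that the statement indeed applies to \emph{any} $\overline{\mathbf{z}} \in \mathbb{R}^{|\mathcal{V}|}$ rather than just to feasible points.
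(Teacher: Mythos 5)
Your proof is correct and takes essentially the same route as the paper: both proofs reduce the claim to the matrix identity $\mathbf{D}^\delta\mathbf{T}^\delta=\mathbf{I}$ from Lemma \ref{lem:TD_inv}, connected to the displayed identity by an Abel-type rearrangement using the boundary conventions $P(\delta,0)=\mathbf{0}$, $t(\delta,\overline{\mathbf{z}})_0=1$, $t(\delta,\overline{\mathbf{z}})_{|\mathcal{V}|+1}=0$. You merely make the summation-by-parts step more explicit and run the chain from right to left rather than left to right, which is a presentational difference, not a substantive one.
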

\begin{proof}
Lemma \ref{lem:TD_inv} proves that 
\[\overline{\mathbf{z}} = \mathbf{D}^\delta\mathbf{T}^\delta\overline{\mathbf{z}} = \sum_{k=1}^{|\mathcal{V}|} \left[P(\delta, k) - P(\delta, k-1)\right] t(\delta, \overline{\mathbf{z}})_k = \sum_{k=0}^{|\mathcal{V}|} \left[t(\delta, \overline{\mathbf{z}})_k-t(\delta, \overline{\mathbf{z}})_{k+1}\right] P(\delta, k).\]
By construction, $\sum_{k=0}^{|\mathcal{V}|} \left[t(\delta, \overline{\mathbf{z}})_k-t(\delta, \overline{\mathbf{z}})_{k+1}\right] = t(\delta, \overline{\mathbf{z}})_{0} + \sum_{k=1}^{|\mathcal{V}|} \left[-t(\delta, \overline{\mathbf{z}})_k+t(\delta, \overline{\mathbf{z}})_{k}\right] - t(\delta, \overline{\mathbf{z}})_{|\mathcal{V}|+1} = 1$. Thus this combination is affine. 
\end{proof}

In fact, we may find a particular valid permutation $\delta\in\mathfrak{S}(\mathcal{V})$ for any given $\overline{\mathbf{z}}\in \conv{\mathcal{Z}(\mathcal{G},\mathbf{u})}$, such that the aforementioned linear combination is a \emph{convex} combination.

\begin{algorithm}[H]
\label{alg:tree_greedy}
\SetAlgoLined
\textbf{Input} $\mathcal{G}$, $\mathbf{u}$, $\overline{\mathbf{z}}\in\conv{\mathcal{Z}(\mathcal{G},\mathbf{u})}$\;
$\delta^0 \leftarrow ()$\; 
\For{$k=1,2,\dots, |\mathcal{V}|$}{
    ${\Delta} \leftarrow \text{\texttt{Valid\_Candidates}}(\mathcal{G}, \mathbf{u}, \delta^{k-1})$ (Algorithm \ref{alg:tree_valid})\;
    $i^* \leftarrow \arg\max_{i\in{\Delta}} t((\delta^{k-1}(1),\dots, \delta^{k-1}(k-1), \delta^{k-1}(k)=i),\overline{\mathbf{z}})_k$\; \tcp{Break ties arbitrarily in case of multiple maximizers.}
    $\delta^{k} \leftarrow (\delta^{k-1}(1),\dots, \delta^{k-1}(k-1), i^*)$\;
}
\textbf{Output} $\delta^{|\mathcal{V}|}$.
\caption{\texttt{Permutation\_Finder}}
\end{algorithm} 

By Observation \ref{obs:Delta}, $\Delta\neq\emptyset$ in every iteration, and the partial permutation $\delta^k$ for every $k\in\{1,2,\dots, |\mathcal{V}|\}$ is valid. Therefore, the output from Algorithm \ref{alg:tree_greedy} is a valid and full permutation of $\mathcal{V}$.

\begin{lemma}
\label{lem:first_and_one}
If $\delta$ is the output of Algorithm \ref{alg:tree_greedy} given $\mathcal{G},\mathbf{u}$ and $\overline{\mathbf{z}} \in \conv{\mathcal{Z}(\mathcal{G}, \mathbf{u})}$, then $t(\delta,\overline{\mathbf{z}})_1 \leq 1$. 
\end{lemma}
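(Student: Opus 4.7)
The plan is to prove this claim by a direct case analysis on $i^* := \delta(1)$, noting that the maximality enforced by Algorithm \ref{alg:tree_greedy} is actually not needed here: the bound $t((i),\overline{\mathbf{z}})_1\leq 1$ holds uniformly for every admissible candidate $i\in\Delta$ at the first iteration. At $k=1$ we have $\mathcal{T}^{\delta,0}=\emptyset$, so $i^\vartriangle(\emptyset)=0$ for every $i$, and by the convention $z_0=u_0=0$ together with $0\notin\Psi$, the second branch \eqref{t_2} of the definition of $\mathbf{t}$ cannot be triggered. Only branches \eqref{t_1} and \eqref{t_3} can apply.

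First I would handle the case $i^*\notin\Psi$. Then branch \eqref{t_3} gives $t(\delta,\overline{\mathbf{z}})_1 = \overline{z}_{i^*}/u_{i^*}$. By Theorem \ref{thm:convZ_frac}, $\overline{\mathbf{z}}\in\conv{\mathcal{Z}(\mathcal{G},\mathbf{u})}=\mathcal{CZ}$ satisfies the box constraints \eqref{eq:ub}, so $\overline{z}_{i^*}\leq u_{i^*}$ and therefore $t(\delta,\overline{\mathbf{z}})_1\leq 1$.

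Next I would handle the case $i^*=\psi\in\Psi$. The key point is that $\floor{u_\psi}\geq 1$ in this situation: lines 7--8 of Algorithm \ref{alg:tree_valid} (invoked with $\mathcal{V}'=\emptyset$) remove any $\psi\in\Psi$ with $u_\psi<1$ from $\Delta$, so such a $\psi$ cannot have been chosen as $\delta(1)$. Now branch \eqref{t_1} of $\mathbf{t}$ evaluates to $t(\delta,\overline{\mathbf{z}})_1 = \eta_\psi(\overline{\mathbf{z}})/\floor{u_\psi}$. Invoking the MIR inequality \eqref{eq:mir} — which is valid on $\conv{\mathcal{Z}(\mathcal{G},\mathbf{u})}$ — and multiplying through by $u_\psi-\floor{u_\psi}>0$ yields
\[
\overline{z}_\psi - (u_\psi-\floor{u_\psi})\,\overline{z}_{\ch{\psi}} \;\leq\; \floor{u_\psi}(\ceil{u_\psi}-u_\psi).
\]
Dividing by $u_{\ch{\psi}}-u_\psi=\ceil{u_\psi}-u_\psi>0$ gives exactly $\eta_\psi(\overline{\mathbf{z}})\leq \floor{u_\psi}$, from which $t(\delta,\overline{\mathbf{z}})_1\leq 1$ follows.

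The main obstacle, modest as it is, lies in correctly ruling out branch \eqref{t_2} at $k=1$ and in using the validity output of Algorithm \ref{alg:tree_valid} to guarantee $\floor{u_\psi}\geq 1$ whenever $\delta(1)\in\Psi$; after these two observations the inequalities defining $\mathcal{CZ}$ deliver the conclusion directly, with the MIR inequality being precisely what is needed for the $\Psi$ case.
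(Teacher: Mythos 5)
Your proof is correct and takes essentially the same approach as the paper: split on whether $\delta(1)\in\Psi$, handle the first case via the box constraint and the second via validity (giving $\floor{u_{\delta(1)}}\geq 1$) together with the MIR inequality \eqref{eq:mir}. Your added observation that only validity of $\delta$ (not greedy maximality from Algorithm~\ref{alg:tree_greedy}) is needed here is accurate and consistent with the paper's argument.
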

\begin{proof}
By definition, $\delta(1)^\vartriangle(\mathcal{T}^{\delta,0}) = 0$, so $t(\delta,\overline{\mathbf{z}})_1 = \overline{z}_{\delta(1)}/u_{\delta(1)}$, or $t(\delta,\overline{\mathbf{z}})_1 =\eta_{\delta(1)}/\floor{u_{\delta(1)}}$. In the former case, $t(\delta,\overline{\mathbf{z}})_1 \leq 1$ because $\overline{z}_{\delta(1)} \leq u_{\delta(1)}$. In the latter, $\delta(1) \in \Psi$. By validity of $\delta$, $\floor{u_{\delta(1)}} \geq 1$. Since $\overline{\mathbf{z}}$ satisfies the MIR inequality \eqref{eq:mir} involving $\delta(1)$ and $\ch{\delta(1)}$: 
 \[-\overline{z}_{\ch{\delta(1)}} + \frac{\overline{z}_{\delta(1)}}{u_{\delta(1)} - \floor{u_{\delta(1)}}} \leq \frac{\floor{u_{\delta(1)}}(u_{\ch{\delta(1)}} - u_{\delta(1)})}{u_{\delta(1)} - \floor{u_{\delta(1)}}}. \]
Therefore, 
\begin{align*}
t(\overline{\delta},\overline{\mathbf{z}})_1 & = \frac{\overline{z}_{\delta(1)} - (u_{\delta(1)} - \floor{u_{\delta(1)}})\overline{z}_{\ch{\delta(1)}}}{\floor{u_{\delta(1)}}(u_{\ch{\delta(1)}} - u_{\delta(1)})} \\
& = \left[-\overline{z}_{\ch{\delta(1)}} + \frac{\overline{z}_{\delta(1)}}{u_{\delta(1)} - \floor{u_{\delta(1)}}}\right]\frac{u_{\delta(1)} - \floor{u_{\delta(1)}}}{\floor{u_{\delta(1)}}(u_{\ch{\delta(1)}} - u_{\delta(1)})} \\
& \leq \frac{\floor{u_{\delta(1)}}(u_{\ch{\delta(1)}} - u_{\delta(1)})}{u_{\delta(1)} - \floor{u_{\delta(1)}}} \frac{u_{\delta(1)} - \floor{u_{\delta(1)}}}{\floor{u_{\delta(1)}}(u_{\ch{\delta(1)}} - u_{\delta(1)})} \\
& = 1.
\end{align*}
\end{proof}

In the following two observations, we consider permutation $\delta$ returned by Algorithm \ref{alg:tree_greedy} given $\mathcal{G},\mathbf{u}$ and $\overline{\mathbf{z}}\in\conv{\mathcal{Z}(\mathcal{G}, \mathbf{u})}$. We argue that $t(\delta,\overline{\mathbf{z}})_{k+1} \leq t(\delta,\overline{\mathbf{z}})_k$ for any $k\in\{1,\dots, |\mathcal{V}|-1\}$. 

\begin{observation}
\label{obs:monotone_invalid}
Consider any $k\in\{1,\dots, |\mathcal{V}|-1\}$. There are three scenarios under which $\delta(k+1)$ is an invalid candidate in the $k$-th iteration and becomes a valid candidate in the $(k+1)$-th iteration.  

\begin{enumerate}
\item[(i)] The first scenario corresponds to line 4 of Algorithm \ref{alg:tree_valid}. In particular, $\delta(k+1)$ must be the parent of $\delta(k)$ and $u_{\delta(k)} = u_{\delta(k+1)}$. It is implied that $\delta(k),\delta(k+1)\notin\Psi$. Moreover, $\delta(k)^\vartriangle(\mathcal{T}^{\delta, k-1}) = \delta(k+1)^\vartriangle(\mathcal{T}^{\delta, k-1})$, which we denote by $\phi$; $\phi$ could be zero. For ease of notation, let 
\begin{equation}
\label{nu}
\nu_{\phi} = 
\begin{cases}
\eta_{\phi}(\overline{\mathbf{z}}), & \phi\in\Psi, \\
\overline{z}_{\phi}, & \phi\notin\Psi, 
\end{cases}
\end{equation}
and 
\begin{equation}
\label{gamma}
\gamma_{\phi} = 
\begin{cases}
\floor{u_{\phi}}, &  \phi\in\Psi, \\
u_{\phi}, &  \phi\notin\Psi. 
\end{cases}
\end{equation}
Now, $t(\delta,\overline{\mathbf{z}})_k = (\overline{z}_{\delta(k)}-\nu_\phi)/(u_{\delta(k)} - \gamma_\phi)$ and $t(\delta,\overline{\mathbf{z}})_{k+1} = (\overline{z}_{\delta(k+1)}-\nu_\phi)/(u_{\delta(k+1)} - \gamma_\phi)$. From the observation that $u_{\delta(k)} = u_{\delta(k+1)}$, the denominators of $t(\delta,\overline{\mathbf{z}})_k$ and $t(\delta,\overline{\mathbf{z}})_{k+1}$ are the same. Given that $\overline{z}_{\delta(k+1)} \leq \overline{z}_{\delta(k)}$, we have $t(\delta,\overline{\mathbf{z}})_{k+1} \leq t(\delta,\overline{\mathbf{z}})_k$. 

\item[(ii)] The second scenario corresponds to line 8 of Algorithm \ref{alg:tree_valid}, in which $\delta(k+1)\in\Psi$ and $0< u_{\delta(k+1)} < 1$. This tells us that $\delta(k+1)$ is a root vertex. Based on the fact that $\delta(k+1)$ is a valid candidate in the $(k+1)$-th iteration, $\delta(k)$ must be $\ch{\delta(k+1)}$. Thus $u_{\delta(k)} = 1$. In this case, $t(\delta,\overline{\mathbf{z}})_k = (\overline{z}_{\delta(k)}-0)/(u_{\delta(k)}-0) = \overline{z}_{\delta(k)}$ and similarly $t(\delta,\overline{\mathbf{z}})_{k+1} = \overline{z}_{\delta(k+1)}/u_{\delta(k+1)}$. We know that $\overline{\mathbf{z}}$ satisfies the MIR inequality $-z_{\delta(k)} + z_{\delta(k+1)}/(u_{\delta(k+1)}-\floor{u_{\delta(k+1)}}) = -z_{\delta(k)} + z_{\delta(k+1)}/u_{\delta(k+1)} \leq 0$. Thus $\overline{z}_{\delta(k+1)}/u_{\delta(k+1)} \leq \overline{z}_{\delta(k)}$; that is, $t(\delta,\overline{\mathbf{z}})_{k+1} \leq t(\delta,\overline{\mathbf{z}})_k$.

\item[(iii)] The last scenario corresponds to line 11 of Algorithm \ref{alg:tree_valid}. Here, $\delta(k+1)\in\Psi$ and $\delta(k+1)^\vartriangle(\mathcal{T}^{\delta,k-1})$ must be $\pa{\delta(k+1)}$ with $u_{\delta(k+1)^\vartriangle(\mathcal{T}^{\delta,k-1})} = \floor{u_{\delta(k+1)}}$. We note that $\delta(k+1)^\vartriangle(\mathcal{T}^{\delta,k-1}) = \pa{\delta(k+1)}$ because otherwise there exists a descendant of $\delta(k+1)^\vartriangle(\mathcal{T}^{\delta,k-1})$ with the same upper bound but succeeds $\delta(k+1)^\vartriangle(\mathcal{T}^{\delta,k-1})$ in the order of $\delta$---this cannot happen when $\delta$ is valid. Vertex $\delta(k+1)$ may only become a valid candidate in the $(k+1)$-th iteration if $\delta(k) = \ch{\delta(k+1)}$. We infer that $\delta(k)^\vartriangle(\mathcal{T}^{\delta,k-1}) = \pa{\delta(k+1)}$ as well. Thus \[t(\delta,\overline{\mathbf{z}})_k = \frac{\overline{z}_{\delta(k)} - \overline{z}_{\pa{\delta(k+1)}}}{\floor{u_{\delta(k+1)}} + 1 - \floor{u_{\delta(k+1)}}} = \overline{z}_{\delta(k)} - \overline{z}_{\pa{\delta(k+1)}},\] and 
\[t(\delta,\overline{\mathbf{z}})_{k+1} = \frac{\overline{z}_{\delta(k+1)} - \overline{z}_{\pa{\delta(k+1)}}}{u_{\delta(k+1)} - \floor{u_{\delta(k+1)}}}.\] 
Let $k' = \delta^{-1}(\pa{\delta(k+1)})$. In the $k'$-th iteration, $\delta(k+1)$ is a valid candidate because $\floor{\delta(k+1)}\neq 0$ and at this point no ascendant of $\delta(k+1)$ with upper bound $\floor{\delta(k+1)}$ precedes it in the partial permutation. By line 5 of Algorithm \ref{alg:tree_greedy}, we have 
\[\frac{\overline{z}_{\pa{\delta(k+1)}} - \overline{z}_{\pa{\delta(k+1)}^\vartriangle(\mathcal{T}^{\delta, k'-1})}}{u_{\pa{\delta(k+1)}} - u_{\pa{\delta(k+1)}^\vartriangle(\mathcal{T}^{\delta, k'-1})}} \geq \frac{\eta_{\delta(k+1)}({\mathbf{\overline{z}}}) - \overline{z}_{\pa{\delta(k+1)}^\vartriangle(\mathcal{T}^{\delta, k'-1})}}{\floor{u_{\delta(k+1)}} - u_{\pa{\delta(k+1)}^\vartriangle(\mathcal{T}^{\delta, k'-1})}}.\]
Recall $u_{\pa{\delta(k+1)}} = \floor{u_{\delta(k+1)}}$. Thus $\overline{z}_{\pa{\delta(k+1)}} \geq \eta_{\delta(k+1)}({\mathbf{\overline{z}}})$. Equivalently, 
\begin{align*}
& \quad \quad  \overline{z}_{\pa{\delta(k+1)}} \geq \frac{\overline{z}_{\delta(k+1)} - (u_{\delta(k+1)} - \floor{u_{\delta(k+1)}}) \overline{z}_{\ch{{\delta(k+1)}}}}{u_{\ch{{\delta(k+1)}}} - u_{\delta(k+1)}} \\
& \Rightarrow \overline{z}_{\pa{\delta(k+1)}} \geq \frac{\overline{z}_{\delta(k+1)} - (u_{\delta(k+1)} - \floor{u_{\delta(k+1)}}) \overline{z}_{\delta(k)}}{\ceil{u_{\delta(k+1)}} - u_{\delta(k+1)}} \\
& \Rightarrow (\ceil{u_{\delta(k+1)}} - u_{\delta(k+1)}) \overline{z}_{\pa{\delta(k+1)}} \geq \overline{z}_{\delta(k+1)} - (u_{\delta(k+1)} - \floor{u_{\delta(k+1)}}) \overline{z}_{\delta(k)} \\
& \Rightarrow (u_{\delta(k+1)} - \floor{u_{\delta(k+1)}}) \overline{z}_{\delta(k)} + (\ceil{u_{\delta(k+1)}} - u_{\delta(k+1)} - 1) \overline{z}_{\pa{\delta(k+1)}} \geq \overline{z}_{\delta(k+1)} - \overline{z}_{\pa{\delta(k+1)}} \\
& \Rightarrow (u_{\delta(k+1)} - \floor{u_{\delta(k+1)}}) \overline{z}_{\delta(k)} - (u_{\delta(k+1)} - \floor{u_{\delta(k+1)}}) \overline{z}_{\pa{\delta(k+1)}} \geq \overline{z}_{\delta(k+1)} - \overline{z}_{\pa{\delta(k+1)}} \\
& \Rightarrow (u_{\delta(k+1)} - \floor{u_{\delta(k+1)}}) (\overline{z}_{\delta(k)} - \overline{z}_{\pa{\delta(k+1)}}) \geq \overline{z}_{\delta(k+1)} - \overline{z}_{\pa{\delta(k+1)}} \\
& \Rightarrow \overline{z}_{\delta(k)} - \overline{z}_{\pa{\delta(k+1)}} \geq \frac{\overline{z}_{\delta(k+1)} - \overline{z}_{\pa{\delta(k+1)}}}{u_{\delta(k+1)} - \floor{u_{\delta(k+1)}}}\\
& \Rightarrow t(\delta,\overline{\mathbf{z}})_k \geq t(\delta,\overline{\mathbf{z}})_{k+1}.
\end{align*}
\end{enumerate}
In summary, $t(\delta,\overline{\mathbf{z}})_{k} \geq t(\delta,\overline{\mathbf{z}})_{k+1}$ when $\delta(k+1)$ is not a valid candidate in the $k$-th iteration. 
\end{observation}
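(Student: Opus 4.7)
The plan is to case-split on the line of Algorithm \ref{alg:tree_valid} that disqualifies $\delta(k+1)$ at iteration $k$, since the algorithm removes $\delta(k+1)$ from $\Delta$ only via line~4, line~8, or line~11; hence there are at most three scenarios. In each case, the fact that $\delta(k+1)$ becomes valid at iteration $k+1$ forces $\delta(k)$ to be the unique vertex whose placement resolves the obstruction. This pins down $\delta(k)^\vartriangle(\mathcal{T}^{\delta,k-1})$ and $\delta(k+1)^\vartriangle(\mathcal{T}^{\delta,k-1})$, so one can evaluate $t(\delta,\overline{\mathbf{z}})_k$ and $t(\delta,\overline{\mathbf{z}})_{k+1}$ using \eqref{t_1}--\eqref{t_3} and compare them directly.

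For scenario (i), where line~4 rejects $\delta(k+1)$ due to a descendant sharing its upper bound, the only unplaced such descendant at the start of iteration $k+1$ is $\delta(k)$ itself. By Assumption \hyperlink{A1}{1}, neither $\delta(k)$ nor $\delta(k+1)$ lies in $\Psi$, so both $t$-values fall under the generic formula \eqref{t_3} with the same denominator $u_{\delta(k)}-u_{\phi}$, where $\phi$ denotes their shared closest placed ascendant. The monotonicity constraint $\overline{z}_{\delta(k+1)}\leq \overline{z}_{\delta(k)}$ valid on $\conv{\mathcal{Z}(\mathcal{G},\mathbf{u})}$ then yields $t(\delta,\overline{\mathbf{z}})_{k+1}\leq t(\delta,\overline{\mathbf{z}})_k$. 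For scenario (ii), where line~8 rejects $\delta(k+1)\in\Psi$ with $\floor{u_{\delta(k+1)}}=0$, one identifies $\delta(k)=\ch{\delta(k+1)}$ with $u_{\delta(k)}=1$, so $t(\delta,\overline{\mathbf{z}})_k=\overline{z}_{\delta(k)}$ and $t(\delta,\overline{\mathbf{z}})_{k+1}=\overline{z}_{\delta(k+1)}/u_{\delta(k+1)}$; the MIR inequality \eqref{eq:mir} for $\psi=\delta(k+1)$ rearranges directly into $t(\delta,\overline{\mathbf{z}})_{k+1}\leq t(\delta,\overline{\mathbf{z}})_k$.

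The main obstacle is scenario (iii), where line~11 rejects a $\delta(k+1)\in\Psi$ because an already-placed ascendant has upper bound $\floor{u_{\delta(k+1)}}$ while $\ch{\delta(k+1)}$ is still unplaced. Using validity of $\delta$, I would first argue that $\delta(k+1)^\vartriangle(\mathcal{T}^{\delta,k-1})=\pa{\delta(k+1)}$ and $\delta(k)=\ch{\delta(k+1)}$. The key is then to exploit the greedy rule of Algorithm \ref{alg:tree_greedy} at the earlier iteration $k':=\delta^{-1}(\pa{\delta(k+1)})$: the fact that $\pa{\delta(k+1)}$ was chosen over $\delta(k+1)$ as the maximizer of $t$ at step~$k'$ yields
\[
\frac{\overline{z}_{\pa{\delta(k+1)}}-\overline{z}_{\pa{\delta(k+1)}^\vartriangle(\mathcal{T}^{\delta,k'-1})}}{u_{\pa{\delta(k+1)}}-u_{\pa{\delta(k+1)}^\vartriangle(\mathcal{T}^{\delta,k'-1})}}\ \geq\ \frac{\eta_{\delta(k+1)}(\overline{\mathbf{z}})-\overline{z}_{\pa{\delta(k+1)}^\vartriangle(\mathcal{T}^{\delta,k'-1})}}{\floor{u_{\delta(k+1)}}-u_{\pa{\delta(k+1)}^\vartriangle(\mathcal{T}^{\delta,k'-1})}},
\]
which, using $u_{\pa{\delta(k+1)}}=\floor{u_{\delta(k+1)}}$, collapses to $\overline{z}_{\pa{\delta(k+1)}}\geq \eta_{\delta(k+1)}(\overline{\mathbf{z}})$. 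Substituting the definition of $\eta_{\delta(k+1)}$ and invoking $\ceil{u_{\delta(k+1)}}-\floor{u_{\delta(k+1)}}=1$ then reorganizes this into $\overline{z}_{\delta(k)}-\overline{z}_{\pa{\delta(k+1)}}\geq (\overline{z}_{\delta(k+1)}-\overline{z}_{\pa{\delta(k+1)}})/(u_{\delta(k+1)}-\floor{u_{\delta(k+1)}})$, i.e., $t(\delta,\overline{\mathbf{z}})_k\geq t(\delta,\overline{\mathbf{z}})_{k+1}$. The delicate step is linking the greedy-optimality inequality from step~$k'$ to the MIR-style expression defining $\eta_{\delta(k+1)}$; everything else reduces to routine use of the monotonicity and MIR constraints characterizing $\conv{\mathcal{Z}(\mathcal{G},\mathbf{u})}$.
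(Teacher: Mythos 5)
Your plan matches the paper's proof essentially step for step: you case-split on the line of Algorithm~\ref{alg:tree_valid} that disqualifies $\delta(k+1)$, argue that $\delta(k)$ must be the resolving vertex (parent-child relationship in (i), $\ch{\delta(k+1)}$ in (ii) and (iii)), and then close each case with the monotonicity constraint, the MIR inequality, and the greedy-optimality inequality at step $k'=\delta^{-1}(\pa{\delta(k+1)})$, respectively. The routes are the same; the only substantive thing you have not written down is the step the paper supplies before invoking the greedy rule in (iii): one must first verify that $\delta(k+1)$ was actually a valid candidate at iteration $k'$ (it is, because $\floor{u_{\delta(k+1)}}\neq 0$ and no ascendant of $\delta(k+1)$ with upper bound $\floor{u_{\delta(k+1)}}$ has been placed before step $k'$), since otherwise $\pa{\delta(k+1)}$ being chosen over $\delta(k+1)$ is vacuous. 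A smaller point: in (i) you say both $t$-values fall under the generic formula \eqref{t_3}, but if $\phi=\delta(k)^\vartriangle(\mathcal{T}^{\delta,k-1})\in\Psi$ they actually fall under \eqref{t_2}; the paper's $\nu_\phi$/$\gamma_\phi$ notation is designed exactly to absorb this, and your comparison still goes through because $u_{\delta(k)}=u_{\delta(k+1)}$ gives equal denominators either way.
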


\begin{observation}
\label{obs:monotone_valid}
If $\delta(k+1)$ is a valid candidate in the $k$-th iteration,  $k\in\{1,\dots, |\mathcal{V}|-1\}$, then 
\begin{equation}
t(({\delta}(1), \dots, {\delta}(k-1), {\delta}(k)),\overline{\mathbf{z}})_k \geq t(({\delta}(1), \dots, {\delta}(k-1), {\delta}(k+1)),\overline{\mathbf{z}})_k.
\end{equation}
If $\delta(k)$ is not involved in the evaluation of $t(\delta,\overline{\mathbf{z}})_{k+1}$, then 
\[t(\delta,\overline{\mathbf{z}})_k \geq t(({\delta}(1), \dots, {\delta}(k-1), {\delta}(k+1)),\overline{\mathbf{z}})_k =t(\delta,\overline{\mathbf{z}})_{k+1}.\]

There are two scenarios in which $\delta(k)$ is involved in the evaluation of $t(\delta,\overline{\mathbf{z}})_{k+1}$, and we explore them next.
\begin{enumerate}
\item[(i)] In the first scenario, $\delta(k+1)\in R^+(\delta(k))$, and $\delta(k) = \delta(k+1)^\vartriangle(\mathcal{T}^{\delta, k})$. Let $\phi = \delta(k)^\vartriangle(\mathcal{T}^{\delta, k-1})$. We note that $\delta(k+1)^\vartriangle(\mathcal{T}^{\delta, k-1}) = \phi$. By abusing notation, we use $\nu$ and $\gamma$ defined in \eqref{nu}-\eqref{gamma} to avoid verbose discussion by case. With this set of notation, 
\[t(\delta,\overline{\mathbf{z}})_k = \frac{\nu_{\delta(k)} - \nu_{\phi}}{\gamma_{\delta(k)} - \gamma_{\phi}}, \quad  t(\delta,\overline{\mathbf{z}})_{k+1} = \frac{\nu_{\delta(k+1)} - \nu_{\delta(k)}}{\gamma_{\delta(k+1)} - \gamma_{\delta(k)}},\]
and  
\[t(({\delta}(1), \dots, {\delta}(k-1), {\delta}(k+1)),\overline{\mathbf{z}})_k = \frac{\nu_{\delta(k+1)} - \nu_{\phi}}{\gamma_{\delta(k+1)} - \gamma_{\phi}}.\] 
Given $t(\delta,\overline{\mathbf{z}})_k \geq t(({\delta}(1), \dots, {\delta}(k-1), {\delta}(k+1)),\overline{\mathbf{z}})_k$, we observe that 
\begingroup
\allowdisplaybreaks
\begin{align*}
& \hspace{14pt} \frac{\nu_{\delta(k)} - \nu_{\phi}}{\gamma_{\delta(k)} - \gamma_{\phi}} \geq \frac{\nu_{\delta(k+1)} - \nu_{\phi}}{\gamma_{\delta(k+1)} - \gamma_{\phi}} \\
& \Rightarrow \frac{(\gamma_{\delta(k+1)} - \gamma_{\phi})(\nu_{\delta(k)} - \nu_{\phi})}{\gamma_{\delta(k)} - \gamma_{\phi}} \geq \nu_{\delta(k+1)} - \nu_{\phi} \\
& \hspace{14pt} \text{($\gamma_{\delta(k+1)} > \gamma_{\phi}$ because the partial permutation $({\delta}(1), \dots, {\delta}(k-1), {\delta}(k+1))$ is valid)}\\
& \Rightarrow \frac{(\gamma_{\delta(k+1)} - \gamma_{\phi})(\nu_{\delta(k)} - \nu_{\phi})}{\gamma_{\delta(k)} - \gamma_{\phi}} + \nu_{\phi} - \nu_{\delta(k)} \geq \nu_{\delta(k+1)} - \nu_{\delta(k)} \\
& \Rightarrow \frac{(\gamma_{\delta(k+1)} - \gamma_{\phi})- (\gamma_{\delta(k)} - \gamma_{\phi})}{\gamma_{\delta(k)} - \gamma_{\phi}} (\nu_{\delta(k)} - \nu_{\phi}) \geq \nu_{\delta(k+1)} - \nu_{\delta(k)} \\
& \Rightarrow \frac{\gamma_{\delta(k+1)} - \gamma_{\delta(k)}}{\gamma_{\delta(k)} - \gamma_{\phi}} (\nu_{\delta(k)} - \nu_{\phi}) \geq \nu_{\delta(k+1)} - \nu_{\delta(k)} \\
& \Rightarrow \frac{\nu_{\delta(k)} - \nu_{\phi}}{\gamma_{\delta(k)} - \gamma_{\phi}} \geq \frac{\nu_{\delta(k+1)} - \nu_{\delta(k)}}{\gamma_{\delta(k+1)} - \gamma_{\delta(k)}} \quad \text{($\gamma_{\delta(k+1)} > \gamma_{\delta(k)}$ for validity of $\delta$)}\\
& \Rightarrow t(\delta,\overline{\mathbf{z}})_k \geq t(\delta,\overline{\mathbf{z}})_{k+1}.
\end{align*}
\endgroup
 
\item[(ii)] Suppose $\delta(k+1)\in R^-(\delta(k))$. Any term involving $\delta(k)$ is only present in $t(\delta,\overline{\mathbf{z}})_{k+1}$ when $\delta(k+1)\in\Psi$ and $\delta(k) = \ch{\delta(k+1)}$. These conditions imply that $\delta(k+1)^\vartriangle(\mathcal{T}^{\delta, k-1}) = \delta(k)^\vartriangle(\mathcal{T}^{\delta, k-1})$, which we denote by $\phi$. Due to validity of $\delta$, $u_\phi < \floor{u_{\delta(k+1)}}$. For ease of notation, we let $\nu_\phi$ and $\gamma_\phi$ be defined as in \eqref{nu} and \eqref{gamma} respectively. We observe that
\begingroup
\allowdisplaybreaks
\begin{align*}
& \hspace{14pt} t(\delta,\overline{\mathbf{z}})_k \geq t(({\delta}(1), \dots, {\delta}(k-1), {\delta}(k+1)),\overline{\mathbf{z}})_k \\
& \Rightarrow \frac{\overline{z}_{\delta(k)} - \nu_{\phi}}{u_{\delta(k)} - \gamma_{\phi}} \geq \frac{\eta_{\delta(k+1)}(\overline{\mathbf{z}}) - \nu_{\phi}}{\floor{u_{\delta(k+1)}} - \gamma_{\phi}} \\
& \Rightarrow \frac{(\floor{u_{\delta(k+1)}} - \gamma_{\phi})(\overline{z}_{\delta(k)} - \nu_{\phi})}{u_{\delta(k)} - \gamma_{\phi}} \geq \eta_{\delta(k+1)}(\overline{\mathbf{z}}) - \nu_{\phi} \\
&  \hspace{14pt} \text{($\floor{u_{\delta(k+1)}} > \gamma_{\phi}$ because the partial permutation $({\delta}(1), \dots, {\delta}(k-1), {\delta}(k+1))$ is valid)} \\
& \Rightarrow \frac{(\floor{u_{\delta(k+1)}} - \gamma_{\phi})(\overline{z}_{\delta(k)} - \nu_{\phi})}{u_{\delta(k)} - \gamma_{\phi}} \geq \frac{\overline{z}_{\delta(k+1)}-(u_{\delta(k+1)} - \floor{u_{\delta(k+1)}})\overline{z}_{\delta(k)}}{u_{\delta(k)} - u_{\delta(k+1)}} - \nu_{\phi} \\
& \Rightarrow  \frac{(u_{\delta(k)} - \gamma_{\phi}-1)(\overline{z}_{\delta(k)} - \nu_{\phi})}{u_{\delta(k)} - \gamma_{\phi}} \geq \frac{\overline{z}_{\delta(k+1)}-(u_{\delta(k+1)} - \floor{u_{\delta(k+1)}})\overline{z}_{\delta(k)}}{u_{\delta(k)} - u_{\delta(k+1)}} - \nu_{\phi} \\
&  \hspace{14pt} \text{(recall $\floor{u_{\delta(k+1)}} = u_{\delta(k)}-1$)} \\
& \Rightarrow \overline{z}_{\delta(k)} - \nu_{\phi} - \frac{\overline{z}_{\delta(k)} - \nu_{\phi}}{u_{\delta(k)} - \gamma_{\phi}} \geq \frac{\overline{z}_{\delta(k+1)}-(u_{\delta(k+1)} - \floor{u_{\delta(k+1)}})\overline{z}_{\delta(k)}}{u_{\delta(k)} - u_{\delta(k+1)}} - \nu_{\phi} \\
& \Rightarrow (u_{\delta(k)} - u_{\delta(k+1)})\overline{z}_{\delta(k)}  - \frac{(u_{\delta(k)} - u_{\delta(k+1)})(\overline{z}_{\delta(k)} - \nu_{\phi})}{u_{\delta(k)} - \gamma_{\phi}} \geq \overline{z}_{\delta(k+1)}-(u_{\delta(k+1)} - \floor{u_{\delta(k+1)}})\overline{z}_{\delta(k)} \\
& \Rightarrow (u_{\delta(k+1)} - \floor{u_{\delta(k+1)}} + u_{\delta(k)} - u_{\delta(k+1)})\overline{z}_{\delta(k)}  - \frac{(u_{\delta(k)} - u_{\delta(k+1)})(\overline{z}_{\delta(k)} - \nu_{\phi})}{u_{\delta(k)} - \gamma_{\phi}} \geq \overline{z}_{\delta(k+1)} \\
& \Rightarrow \overline{z}_{\delta(k)} - \nu_{\phi} - \frac{(u_{\delta(k)} - u_{\delta(k+1)})(\overline{z}_{\delta(k)} - \nu_{\phi})}{u_{\delta(k)} - \gamma_{\phi}} \geq \overline{z}_{\delta(k+1)} - \nu_{\phi}\\
& \Rightarrow \frac{(u_{\delta(k)} - \gamma_{\phi})-(u_{\delta(k)} - u_{\delta(k+1)})}{u_{\delta(k)} - \gamma_{\phi}}(\overline{z}_{\delta(k)} - \nu_{\phi}) \geq \overline{z}_{\delta(k+1)} - \nu_{\phi}\\
& \Rightarrow \frac{\overline{z}_{\delta(k)} - \nu_{\phi}}{u_{\delta(k)} - \gamma_{\phi}} \geq \frac{\overline{z}_{\delta(k+1)} - \nu_{\phi}}{u_{\delta(k+1)} -  \gamma_{\phi}}\\
& \Rightarrow t(\delta,\overline{\mathbf{z}})_k \geq t(\delta,\overline{\mathbf{z}})_{k+1}.
\end{align*}
\endgroup
\end{enumerate}
Hence, $t(\delta,\overline{\mathbf{z}})_{k} \geq t(\delta,\overline{\mathbf{z}})_{k+1}$ when $\delta(k+1)$ is a valid candidate in the $k$-th iteration, for any $k\in\{1,\dots,|\mathcal{V}|-1\}$. 
\end{observation}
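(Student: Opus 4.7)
The claim in Observation \ref{obs:monotone_valid} is that whenever $\delta(k+1)$ is a valid candidate at step $k$ of Algorithm \ref{alg:tree_greedy}, we have $t(\delta,\overline{\mathbf{z}})_k \geq t(\delta,\overline{\mathbf{z}})_{k+1}$. My plan is to exploit the greedy selection rule in line 5 of Algorithm \ref{alg:tree_greedy} and then split into cases based on whether $\delta(k)$ actually appears in the formula defining $t(\delta,\overline{\mathbf{z}})_{k+1}$.

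First, since $\delta(k+1)$ was a valid but unchosen candidate at step $k$, the greedy rule yields
\[t((\delta(1),\dots,\delta(k-1),\delta(k)),\overline{\mathbf{z}})_k \geq t((\delta(1),\dots,\delta(k-1),\delta(k+1)),\overline{\mathbf{z}})_k,\]
which I will refer to as $(\star)$; note that the left-hand side equals $t(\delta,\overline{\mathbf{z}})_k$ by definition. If $\delta(k)$ does not appear in the formula for $t(\delta,\overline{\mathbf{z}})_{k+1}$---meaning neither $\delta(k) = \delta(k+1)^\vartriangle(\mathcal{T}^{\delta,k})$ nor ($\delta(k+1)\in\Psi$ and $\delta(k) = \ch{\delta(k+1)}$)---then the predecessor set relevant to $t(\delta,\overline{\mathbf{z}})_{k+1}$ is unchanged whether $\delta(k)$ is appended first or not, so the right-hand side of $(\star)$ equals $t(\delta,\overline{\mathbf{z}})_{k+1}$, and we are done.

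Otherwise I would split into two structural sub-cases: (i) $\delta(k+1)\in R^+(\delta(k))$ with $\delta(k) = \delta(k+1)^\vartriangle(\mathcal{T}^{\delta,k})$, and (ii) $\delta(k+1)\in\Psi$ with $\delta(k) = \ch{\delta(k+1)}$. In each I would fix the common ancestor $\phi := \delta(k)^\vartriangle(\mathcal{T}^{\delta,k-1}) = \delta(k+1)^\vartriangle(\mathcal{T}^{\delta,k-1})$ and adopt the uniform shorthand $\nu_a,\gamma_a$ (equal to $\eta_a(\overline{\mathbf{z}}),\floor{u_a}$ if $a\in\Psi$ and $\overline{z}_a,u_a$ otherwise) from Observation \ref{obs:monotone_invalid}. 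In sub-case (i), all three relevant $t$-values---$t(\delta,\overline{\mathbf{z}})_k$, $t(\delta,\overline{\mathbf{z}})_{k+1}$, and the right-hand side of $(\star)$---take the form $(\nu_a-\nu_b)/(\gamma_a-\gamma_b)$ with a common anchor, and a standard mediant-style manipulation (using that validity of $\delta$ forces $\gamma_\phi < \gamma_{\delta(k)} < \gamma_{\delta(k+1)}$) transforms $(\star)$ into the desired comparison $t(\delta,\overline{\mathbf{z}})_k \geq t(\delta,\overline{\mathbf{z}})_{k+1}$.

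The main technical obstacle is sub-case (ii), where $\delta(k+1)\in\Psi$ causes $\eta_{\delta(k+1)}(\overline{\mathbf{z}})$ to couple $\overline{z}_{\delta(k+1)}$ and $\overline{z}_{\delta(k)}$ nonlinearly inside the $t$-formula. My plan here is to substitute the explicit form of $\eta_{\delta(k+1)}(\overline{\mathbf{z}})$ into the right-hand side of $(\star)$, clear denominators using $u_{\delta(k)} = \ceil{u_{\delta(k+1)}}$ and $\floor{u_{\delta(k+1)}} = u_{\delta(k)} - 1$ from Property \ref{p}, and collect $\overline{z}_{\delta(k)}$ terms so that the coefficients $(u_{\delta(k+1)}-\floor{u_{\delta(k+1)}}) + (u_{\delta(k)}-u_{\delta(k+1)}) = 1$ telescope cleanly. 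This should reduce $(\star)$ to the single-ratio inequality $(\overline{z}_{\delta(k)} - \nu_\phi)/(u_{\delta(k)} - \gamma_\phi) \geq (\overline{z}_{\delta(k+1)} - \nu_\phi)/(u_{\delta(k+1)} - \gamma_\phi)$, which is exactly $t(\delta,\overline{\mathbf{z}})_k \geq t(\delta,\overline{\mathbf{z}})_{k+1}$. Positivity of the denominators arising in this manipulation, in particular $\gamma_\phi < \floor{u_{\delta(k+1)}}$, is guaranteed precisely by condition (3) of Definition \ref{def:tree_valid_perm}, which is why the validity hypothesis on $\delta$ is essential and not merely cosmetic.
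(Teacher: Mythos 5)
Your proposal mirrors the paper's argument essentially verbatim: you invoke the greedy selection inequality $(\star)$, observe that if appending $\delta(k)$ does not change the anchor or the active sub-formula then the RHS of $(\star)$ already equals $t(\delta,\overline{\mathbf{z}})_{k+1}$, and then split into the same two residual sub-cases — (i) $\delta(k)$ becomes the new maximum-depth ascendant of $\delta(k+1)$, handled by the mediant-style chain of inequalities with denominators positive by permutation validity, and (ii) $\delta(k+1)\in\Psi$ with $\delta(k)=\ch{\delta(k+1)}$, handled by expanding $\eta_{\delta(k+1)}$, clearing denominators via $\floor{u_{\delta(k+1)}}=u_{\delta(k)}-1$, and collapsing $(u_{\delta(k+1)}-\floor{u_{\delta(k+1)}})+(u_{\delta(k)}-u_{\delta(k+1)})=1$. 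You also correctly flag that condition (3) of Definition~\ref{def:tree_valid_perm} is what makes the intermediate denominators positive, which is precisely the reason the paper cites validity of $\delta$ at the same step. This is the same proof.
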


\begin{lemma}
\label{lem:monotone_t}
Let $\delta$ be the permutation returned by Algorithm \ref{alg:tree_greedy} given $\mathcal{G},\mathbf{u}$ and $\overline{\mathbf{z}}\in\conv{\mathcal{Z}(\mathcal{G}, \mathbf{u})}$. For all $k\in\{1,\dots,|\mathcal{V}|-1\}$, $t(\delta,\overline{\mathbf{z}})_k \geq t(\delta,\overline{\mathbf{z}})_{k+1}$. 
\end{lemma}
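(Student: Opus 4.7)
My plan is to prove the inequality $t(\delta,\overline{\mathbf{z}})_k \geq t(\delta,\overline{\mathbf{z}})_{k+1}$ pointwise for each $k \in \{1,\dots,|\mathcal{V}|-1\}$ by a case split on whether $\delta(k+1)$ was already a valid candidate at the $k$-th iteration of Algorithm \ref{alg:tree_greedy}. The preceding two observations do the real work: Observation \ref{obs:monotone_invalid} settles the case where $\delta(k+1)$ was \emph{not} a valid candidate at the $k$-th iteration (so it only became eligible after $\delta(k)$ was appended to the partial permutation), and Observation \ref{obs:monotone_valid} settles the complementary case where $\delta(k+1)$ was a valid candidate but was rejected by the greedy tie-breaking rule in favor of $\delta(k)$. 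Since every $k$ falls into exactly one of these two buckets, the lemma will follow immediately from the two observations combined.

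Concretely, I would fix $k$ and first examine the set $\Delta$ produced by Algorithm \ref{alg:tree_valid} on the valid partial permutation $\delta^{k-1} = (\delta(1),\dots,\delta(k-1))$. If $\delta(k+1) \notin \Delta$, then by inspection of Algorithm \ref{alg:tree_valid} the only three mechanisms that could have excluded it are those in lines 4, 8, and 11, which correspond bijectively to the three sub-cases of Observation \ref{obs:monotone_invalid}; invoking that observation closes this branch. If instead $\delta(k+1) \in \Delta$, then line 5 of Algorithm \ref{alg:tree_greedy}, together with the fact that $\delta(k)$ was the maximizer chosen over $\delta(k+1)$, yields
\[
 t\bigl((\delta(1),\dots,\delta(k-1),\delta(k)),\overline{\mathbf{z}}\bigr)_k \geq t\bigl((\delta(1),\dots,\delta(k-1),\delta(k+1)),\overline{\mathbf{z}}\bigr)_k,
\]
and Observation \ref{obs:monotone_valid} lifts this selection inequality into the desired monotonicity statement, treating separately the sub-cases where $\delta(k)$ does and does not appear in the expression for $t(\delta,\overline{\mathbf{z}})_{k+1}$.

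The main technical obstacle has in fact already been absorbed into those two observations; what remains here is almost pure bookkeeping. The one point I would double-check is that the case split is genuinely exhaustive: every consecutive pair of positions in a valid permutation returned by Algorithm \ref{alg:tree_greedy} must be covered by exactly one of the two branches. This relies on the fact that Algorithm \ref{alg:tree_valid} provides a \emph{complete} characterization of the valid one-step extensions of a valid partial permutation (so no hidden exclusion mechanism escapes Observation \ref{obs:monotone_invalid}), together with the greedy rule in Algorithm \ref{alg:tree_greedy} ensuring that in the complementary case the selection inequality is available to feed into Observation \ref{obs:monotone_valid}. With those two sanity checks made explicit, the lemma is a one-line consequence of the preceding observations.
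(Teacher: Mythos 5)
Your proposal is correct and follows essentially the same route as the paper: the paper's proof of this lemma is literally the one-line citation "This lemma follows from Observations \ref{obs:monotone_invalid} and \ref{obs:monotone_valid}," and your write-up simply makes the underlying case split explicit. The exhaustiveness check you flag is indeed the only subtle point, and it holds because Observation \ref{obs:monotone_invalid} enumerates exactly the three exclusion mechanisms in Algorithm \ref{alg:tree_valid}, while Observation \ref{obs:monotone_valid} covers everything that survives into $\Delta$.
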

\begin{proof}
This lemma follows from Observations \ref{obs:monotone_invalid} and \ref{obs:monotone_valid}.
\end{proof}

\begin{lemma}
\label{lem:last_nonneg}
If $\delta$ is the output from Algorithm \ref{alg:tree_greedy} for $\mathcal{G},\mathbf{u}$ and $\overline{\mathbf{z}}\in\conv{\mathcal{Z}(\mathcal{G}, \mathbf{u})}$, then $t(\delta,\overline{\mathbf{z}})_{|\mathcal{V}|} \geq 0$. 
\end{lemma}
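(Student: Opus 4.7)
The plan is to prove the inequality by a direct case analysis on the three branches of the definition of $\mathbf{t}$ at index $k=|\mathcal{V}|$, using only that $\overline{\mathbf{z}}\in\mathcal{CZ}=\conv{\mathcal{Z}(\mathcal{G},\mathbf{u})}$ (so it satisfies the non-negativity, box, monotonicity, and MIR inequalities) together with the validity of $\delta$ returned by Algorithm~\ref{alg:tree_greedy}. Observation~\ref{obs:denom} already tells us that every denominator appearing in $t(\delta,\overline{\mathbf{z}})_{|\mathcal{V}|}$ is strictly positive, so it suffices to argue that the corresponding numerator is non-negative in each case. Let $i=\delta(|\mathcal{V}|)$ and note that $\mathcal{T}^{\delta,|\mathcal{V}|-1}=\mathcal{V}\setminus\{i\}$.

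First I would rule out branch \eqref{t_1}. This case requires $i=\psi\in\Psi$ and $\ch{\psi}\notin\mathcal{T}^{\delta,|\mathcal{V}|-1}$; but $\ch{\psi}\neq\psi=i$, so $\ch{\psi}\in\mathcal{V}\setminus\{i\}=\mathcal{T}^{\delta,|\mathcal{V}|-1}$, a contradiction. Next, for branch \eqref{t_2}, suppose $i^\vartriangle(\mathcal{T}^{\delta,|\mathcal{V}|-1})=\psi\in\Psi$. Since $i\notin\mathcal{T}^{\delta,|\mathcal{V}|-1}$ while $\psi\in\mathcal{T}^{\delta,|\mathcal{V}|-1}$, we have $i\neq\psi$, hence $i$ is a strict descendant of $\psi$. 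By Property~\ref{p} every strict descendant of $\psi$ other than $\ch{\psi}$ itself must lie in $R^+(\ch{\psi})$; if $i\neq\ch{\psi}$ then $\ch{\psi}$ is a strict ascendant of $i$ with $\dep{\ch{\psi}}>\dep{\psi}$, and since $\ch{\psi}\in\mathcal{T}^{\delta,|\mathcal{V}|-1}$ this contradicts the definition of $i^\vartriangle$. Therefore $i=\ch{\psi}$, and Observation~\ref{obs:ch_t} gives
\[
t(\delta,\overline{\mathbf{z}})_{|\mathcal{V}|}=\frac{\overline{z}_{\ch{\psi}}-\overline{z}_\psi}{u_{\ch{\psi}}-u_\psi},
\]
whose numerator is non-negative by the monotonicity constraint $z_\psi\le z_{\ch{\psi}}$ in $\mathcal{CZ}$.

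Finally, in branch \eqref{t_3} we have
\[
t(\delta,\overline{\mathbf{z}})_{|\mathcal{V}|}=\frac{\overline{z}_i-\overline{z}_{i^\vartriangle(\mathcal{T}^{\delta,|\mathcal{V}|-1})}}{u_i-u_{i^\vartriangle(\mathcal{T}^{\delta,|\mathcal{V}|-1})}}.
\]
If $i^\vartriangle(\mathcal{T}^{\delta,|\mathcal{V}|-1})=0$, the convention $\overline{z}_0=u_0=0$ combined with $\overline{z}_i\ge 0$ makes the numerator non-negative. Otherwise $i^\vartriangle(\mathcal{T}^{\delta,|\mathcal{V}|-1})$ is a genuine ascendant of $i$ in $\mathcal{G}$, and the monotonicity constraints along the directed path from $i^\vartriangle(\mathcal{T}^{\delta,|\mathcal{V}|-1})$ to $i$ yield $\overline{z}_{i^\vartriangle(\mathcal{T}^{\delta,|\mathcal{V}|-1})}\le \overline{z}_i$.

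The only mildly delicate point is the argument in branch \eqref{t_2} that the last vertex $i$ must coincide with $\ch{\psi}$; this is where Property~\ref{p} (single child $\ch{\psi}$ of $\psi$) and the maximum-depth definition of $i^\vartriangle$ must be combined carefully. Once that is done, every case reduces to a direct application of a constraint in $\mathcal{CZ}$, so no use of the greedy choice rule in Algorithm~\ref{alg:tree_greedy} beyond validity of $\delta$ is required.
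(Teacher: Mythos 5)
Your proof is correct and follows essentially the same route as the paper's: reduce $t(\delta,\overline{\mathbf{z}})_{|\mathcal{V}|}$ to the form $(\overline{z}_i-\overline{z}_{\pa{i}})/(u_i-u_{\pa{i}})$ (noting branch \eqref{t_1} cannot occur at $k=|\mathcal{V}|$ and that branch \eqref{t_2} forces $i=\ch{\psi}$, handled by Observation~\ref{obs:ch_t}), then invoke the positive denominator from Observation~\ref{obs:denom} and the monotonicity constraints for the numerator. The paper's version is terser, simply asserting this form directly, but the underlying argument is the same.
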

\begin{proof}
We first observe that $t(\delta,\overline{\mathbf{z}})_{|\mathcal{V}|} = \frac{\overline{z}_i - \overline{z}_{\pa{i}}}{u_i - u_{\pa{i}}}$. Even if $\delta(|\mathcal{V}|) = \ch{\psi}$ for some $\psi\in\Psi$, Observation \ref{obs:ch_t} justifies this claim. The denominator $u_i - u_{\pa{i}} > 0$ due to Observation \ref{obs:denom}. The numerator $\overline{z}_i - \overline{z}_{\pa{i}} \geq 0$ because $\mathbf{z}$ satisfies the monotonicity constraints. Therefore, $t(\delta,\overline{\mathbf{z}})_{|\mathcal{V}|} \geq 0$. 
\end{proof}

\begin{proposition}
\label{prop:tree_conv_z}
Let $\delta$ be the output from Algorithm \ref{alg:tree_greedy} given $\overline{\mathbf{z}}\in\conv{\mathcal{Z}(\mathcal{G},\mathbf{u})}$. Then \[\overline{\mathbf{z}} = \sum_{k=0}^{|\mathcal{V}|} \left[t(\delta, \overline{\mathbf{z}})_k-t(\delta, \overline{\mathbf{z}})_{k+1}\right] P(\delta, k),\]
where $\sum_{k=0}^{|\mathcal{V}|} \left[t(\delta, \overline{\mathbf{z}})_k-t(\delta, \overline{\mathbf{z}})_{k+1}\right] = 1$ and $t(\delta, \overline{\mathbf{z}})_k-t(\delta, \overline{\mathbf{z}})_{k+1} \geq 0$ for every $k\in\{0,\dots,|\mathcal{V}|\}$. 
\end{proposition}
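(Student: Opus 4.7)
The plan is to observe that this proposition is essentially an assembly result: the affine combination identity and the fact that the coefficients sum to one are already delivered by Lemma \ref{lem:tree_affine}, which applies to \emph{any} valid permutation of $\mathcal{V}$. Since Algorithm \ref{alg:tree_greedy} is designed precisely to return a valid permutation (as noted in the discussion just after the algorithm statement, using Observation \ref{obs:Delta} at each iteration), Lemma \ref{lem:tree_affine} yields immediately both the equation $\overline{\mathbf{z}} = \sum_{k=0}^{|\mathcal{V}|}[t(\delta,\overline{\mathbf{z}})_k - t(\delta,\overline{\mathbf{z}})_{k+1}]P(\delta,k)$ and the identity $\sum_{k=0}^{|\mathcal{V}|}[t(\delta,\overline{\mathbf{z}})_k - t(\delta,\overline{\mathbf{z}})_{k+1}] = 1$. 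So the only remaining task is to establish non-negativity of each coefficient, which is where the greedy choice in Algorithm \ref{alg:tree_greedy} and membership of $\overline{\mathbf{z}}$ in $\conv{\mathcal{Z}(\mathcal{G},\mathbf{u})}$ (rather than merely $\mathbb{R}^{|\mathcal{V}|}$) are used.

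For non-negativity I would split on $k$ and apply the three preceding lemmas, each of which is already tailored to one of the boundary/interior cases. First, for $k=0$, the coefficient is $t(\delta,\overline{\mathbf{z}})_0 - t(\delta,\overline{\mathbf{z}})_1 = 1 - t(\delta,\overline{\mathbf{z}})_1$, which is non-negative by Lemma \ref{lem:first_and_one}. Next, for each $k \in \{1,\dots,|\mathcal{V}|-1\}$, the coefficient $t(\delta,\overline{\mathbf{z}})_k - t(\delta,\overline{\mathbf{z}})_{k+1}$ is non-negative by Lemma \ref{lem:monotone_t}. Finally, for $k=|\mathcal{V}|$, the coefficient is $t(\delta,\overline{\mathbf{z}})_{|\mathcal{V}|} - t(\delta,\overline{\mathbf{z}})_{|\mathcal{V}|+1} = t(\delta,\overline{\mathbf{z}})_{|\mathcal{V}|} - 0$, which is non-negative by Lemma \ref{lem:last_nonneg}.

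Since the three lemmas exhaust every value of $k \in \{0,1,\dots,|\mathcal{V}|\}$, the coefficient vector is non-negative and sums to one, so the identity in Lemma \ref{lem:tree_affine} upgrades from an affine combination to a genuine convex combination, completing the proof. There is no real obstacle here --- the bulk of the work has been done in Lemmas \ref{lem:first_and_one}--\ref{lem:last_nonneg} and in the feasibility checking embedded in \texttt{Valid\_Candidates} --- so the proof is essentially a two- or three-line citation of the prior results. The only subtlety worth flagging explicitly is the need to justify that the $\delta$ returned by Algorithm \ref{alg:tree_greedy} is in fact a full valid permutation (so that Lemma \ref{lem:tree_affine} applies); this has already been remarked on immediately after the algorithm, but we should reiterate it to make the chain of implications transparent.
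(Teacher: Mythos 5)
Your proof is correct and follows essentially the same approach as the paper's: invoke Lemma \ref{lem:tree_affine} for the affine identity and coefficient sum, and then establish non-negativity of each coefficient by citing Lemmas \ref{lem:first_and_one}, \ref{lem:monotone_t}, and \ref{lem:last_nonneg} for $k=0$, $k\in\{1,\dots,|\mathcal{V}|-1\}$, and $k=|\mathcal{V}|$ respectively. Your version is slightly more explicit in matching each lemma to the corresponding range of $k$, but the argument is the same.
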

\begin{proof}
Lemma \ref{lem:tree_affine} shows that $\sum_{k=0}^{|\mathcal{V}|} \left[t(\delta, \overline{\mathbf{z}})_k-t(\delta, \overline{\mathbf{z}})_{k+1}\right] P(\delta, k)$ is an affine combination of $\{P(\delta, k)\}_{k=1}^{|\mathcal{V}|}$ that equates $\overline{\mathbf{z}}$. For $k\in\{0,\dots,|\mathcal{V}|\}$, $t(\delta, \overline{\mathbf{z}})_k - t(\delta, \overline{\mathbf{z}})_{k+1} \geq 0$ by Lemmas \ref{lem:first_and_one}, \ref{lem:monotone_t}, and \ref{lem:last_nonneg}, which completes the proof.
\end{proof}

\begin{remark}
Algorithm \ref{alg:tree_greedy} completes after $|\mathcal{V}|$ iterations. The steps in each iteration are dominated by sorting at most $|\mathcal{V}|$ values. Therefore, this is an $\mathcal{O}(|\mathcal{V}|^2\log|\mathcal{V}|)$ algorithm. 
\end{remark}

In the next section, we propose a class of inequalities using $\{P(\delta, k)\}_{k=0}^{|\mathcal{V}|}$ and $\mathbf{t}(\delta,\mathbf{z})$. We argue that these inequalities are valid for the epigraph $\mathcal{P}_f^{\mathcal{Z}(\mathcal{G},\mathbf{u})}$.

\section{Valid Inequalities for $\mathcal{P}_f^{\mathcal{Z}(\mathcal{G},\mathbf{u})}$}
\label{sect:valid}
Let $\delta\in\mathfrak{S}(\mathcal{V})$ be a valid permutation. We define a \emph{DR inequality} associated with $\delta$ by
\begin{equation}
\label{eq:DR}
w \geq \sum_{k=1}^{|\mathcal{V}|} t(\delta,\mathbf{z})_k[f(P(\delta,k)) - f(P(\delta,k-1))],
\end{equation}
where $P(\cdot)\in\mathbb{R}^{|\mathcal{V}|}$ is given by \eqref{ext} and function $\mathbf{t}(\cdot,\mathbf{z})$ is defined in \eqref{t_1}-\eqref{t_3}. DR inequalities are linear and homogeneous. 

\begin{example}
Again, consider the directed rooted forest $\mathcal{G}$ in Figure \ref{fig:valid_perm} and the valid permutation $\delta = (6,4,7,5,2,\\3,9,1,11,8,10,12)$. Recall $\{P(\delta, k)\}_{k=0}^{|\mathcal{V}|}$ from Example \ref{eg:delta_extreme_points} and $\mathbf{t}(\delta, \mathbf{z})$ from Example \ref{eg:t_eg}. The DR inequality associated with $\delta$ is 
\begin{align*}
w & \geq f(P(\delta, 1)) z_6/9 + [f(P(\delta, 2)) - f(P(\delta,1))]z_4/8 \\
& \quad\quad + [f(P(\delta, 3)) - f(P(\delta,2))]z_7/12 + [f(P(\delta, 4)) - f(P(\delta,3))]z_5/11.75 \\
& \quad \quad + [f(P(\delta, 5)) - f(P(\delta,4))]z_2 + [f(P(\delta, 6)) - f(P(\delta,5))](z_3-z_2)/7 \\
& \quad\quad + [f(P(\delta, 7)) - f(P(\delta,6))](z_9 - 0.5z_{10})/5 +  10[f(P(\delta, 8)) - f(P(\delta,7))]z_1 \\
& \quad \quad + [f(P(\delta, 9)) - f(P(\delta,8))](-2z_9+z_{10}+z_{11}) + [f(P(\delta, 10)) - f(P(\delta,9))]z_8/10 \\
& \quad\quad  + 2[f(P(\delta, 11)) - f(P(\delta,10))](z_{10} - z_9) + [f(P(\delta, 12)) - f(P(\delta,11))](z_{12} - z_7)/7.9.
\end{align*}
When $\mathbf{z} = P(\delta,9)$, for instance, the right-hand side of this inequality becomes 
\begin{align*}
& \quad f(P(\delta, 1)) + [f(P(\delta, 2)) - f(P(\delta,1))] \\
& \quad\quad + [f(P(\delta, 3)) - f(P(\delta,2))] + [f(P(\delta, 4)) - f(P(\delta,3))] \\
& \quad \quad + [f(P(\delta, 5)) - f(P(\delta,4))] + [f(P(\delta, 6)) - f(P(\delta,5))] \\
& \quad\quad + [f(P(\delta, 7)) - f(P(\delta,6))] +  [f(P(\delta, 8)) - f(P(\delta,7))] \\
& \quad \quad + [f(P(\delta, 9)) - f(P(\delta,8))] + [f(P(\delta, 10)) - f(P(\delta,9))]\cdot 0 \\
& \quad\quad  + [f(P(\delta, 11)) - f(P(\delta,10))]\cdot 0 + [f(P(\delta, 12)) - f(P(\delta,11))]\cdot 0 \\ 
& = f(P(\delta, 9)).
\end{align*} \eged
\end{example}

{
\begin{remark}
The DR inequalities have simplified forms in special cases of $\mathcal{Z}(\mathcal{G},\mathbf{u})$. For example, consider cases (a)--(d) described in Remark \ref{remark:trivialA1}. Given any instance of $\mathcal{Z}(\mathcal{G},\mathbf{u})$ under case (a), a DR inequality associated with $\delta\in\mathfrak{S}(\mathcal{V})$ assumes the following form: 
\begin{equation}
\label{eq:special_a_DR}
w \geq \sum_{k=1}^{|\mathcal{V}|} [f(P(\delta, k)) - f(P(\delta, k-1))]z_{\delta(k)}/u_{\delta(k)}. 
\end{equation}
In cases (b), (c), and (d), a DR inequality associated with a valid permutation $\delta$ is 
\begin{equation}
\label{eq:special_bcd_DR}
w \geq \sum_{k=1}^{|\mathcal{V}|} [f(P(\delta, k)) - f(P(\delta, k-1))](z_{\delta(k)}- z_{{\delta(k)}^{\vartriangle}(\mathcal{T}^{\delta, k-1})})/(u_{\delta(k)} - u_{{\delta(k)}^\vartriangle(\mathcal{T}^{\delta, k-1})}).
\end{equation}
\end{remark}
}

With Lemma \ref{lem:tree_partial_valid} and Proposition \ref{prop:tree_valid_at_ext} below, we establish that a DR inequality is valid for $\mathcal{P}_f^{\mathcal{Z}(\mathcal{G},\mathbf{u})}$ if it is valid at $[P(\mathcal{S}), f(P(\mathcal{S}))]$ for all $\mathcal{S}\subseteq\mathcal{V}$. Recall that $\{P(\mathcal{S})\}_{\mathcal{S}\subseteq \mathcal{V}}$ are the extreme points of $\conv{\mathcal{Z}(\mathcal{G},\mathbf{u})}$.  

\begin{lemma}
\label{lem:tree_partial_valid}
For any $\overline{\mathbf{z}}\in \conv{\mathcal{Z}(\mathcal{G},\mathbf{u})}$, let $\tau\in\mathfrak{S}(\mathcal{V})$ be the corresponding output from Algorithm \ref{alg:tree_greedy}. Then the following inequality holds: 
\[f(\overline{\mathbf{z}}) \geq \sum_{k=1}^{|\mathcal{V}|} t(\tau,\overline{\mathbf{z}})_k[f(P(\tau,k)) - f(P(\tau,k-1))]. \]
\end{lemma}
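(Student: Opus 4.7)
The plan is to exploit the structural decomposition of $\overline{\mathbf{z}}$ given by Proposition \ref{prop:tree_conv_z} and telescope the inequality one step at a time. Writing $t_k := t(\tau,\overline{\mathbf{z}})_k$ and $\mathbf{d}_k := P(\tau,k)-P(\tau,k-1)\geq \mathbf{0}$, Proposition \ref{prop:tree_conv_z} rearranges (by Abel summation, using $P(\tau,0)=\mathbf{0}$) into the compact identity $\overline{\mathbf{z}} = \sum_{k=1}^{|\mathcal{V}|} t_k\,\mathbf{d}_k$. Moreover, combining Lemmas \ref{lem:first_and_one}, \ref{lem:monotone_t}, and \ref{lem:last_nonneg} guarantees that each $t_k\in[0,1]$. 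I therefore introduce the interpolants $\mathbf{y}^0 := \mathbf{0}$ and $\mathbf{y}^k := \mathbf{y}^{k-1}+t_k\mathbf{d}_k$ for $k\geq 1$, so that $\mathbf{y}^{|\mathcal{V}|}=\overline{\mathbf{z}}$, and aim to prove the per-step bound
\[f(\mathbf{y}^k)-f(\mathbf{y}^{k-1}) \;\geq\; t_k\bigl[f(P(\tau,k))-f(P(\tau,k-1))\bigr]\]
for every $k\in\{1,\dots,|\mathcal{V}|\}$; summing these and using $f(\mathbf{0})=0$ will then deliver the stated inequality.

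Each per-step bound is established in two stages. First, applying the concavity-like property along the non-negative direction $\mathbf{d}_k$ with scalar $t_k\in[0,1]$ (Lemma \ref{lem:pos_dir_increment}) at the base point $\mathbf{y}^{k-1}$ yields
\[f(\mathbf{y}^k)-f(\mathbf{y}^{k-1}) \;\geq\; t_k\bigl[f(\mathbf{y}^{k-1}+\mathbf{d}_k)-f(\mathbf{y}^{k-1})\bigr].\]
Second, since $t_j\leq 1$ for all $j<k$, we have the componentwise domination $\mathbf{y}^{k-1}=\sum_{j=1}^{k-1}t_j\mathbf{d}_j \leq \sum_{j=1}^{k-1}\mathbf{d}_j = P(\tau,k-1)$. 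I then decompose $\mathbf{d}_k$ into its positive coordinate pieces, fix a common ordering, and apply coordinate-wise DR-submodularity (Definition \ref{def:dr}) one coordinate at a time; adding the same intermediate coordinate increments to both $\mathbf{y}^{k-1}$ and $P(\tau,k-1)$ preserves the componentwise domination at every stage, so summing the coordinate-wise inequalities gives $f(\mathbf{y}^{k-1}+\mathbf{d}_k)-f(\mathbf{y}^{k-1}) \geq f(P(\tau,k))-f(P(\tau,k-1))$. Multiplying this by $t_k\geq 0$ and chaining with the first inequality delivers the per-step bound.

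The main obstacle is conceptual rather than computational: choosing the correct interpolation path $(\mathbf{y}^k)_{k=0}^{|\mathcal{V}|}$ that simultaneously makes Lemma \ref{lem:pos_dir_increment} applicable (non-negative direction moves scaled by a factor in $[0,1]$) and preserves the domination $\mathbf{y}^{k-1}\leq P(\tau,k-1)$ required for the DR-submodular comparison. Once the path is in hand, the remaining work---verifying the domination, unpacking multi-coordinate DR-submodularity into single-coordinate applications, and the final telescoping---is routine. Domain membership of every intermediate vector produced along the way is covered by the standing assumption that $f$ is defined over the continuous relaxation of $\mathcal{Z}(\mathcal{G},\mathbf{u})$, as explicitly noted after Lemma \ref{lem:pos_dir_increment} in Section \ref{sect:prop}.
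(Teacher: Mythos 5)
Your proof is correct and follows essentially the same route as the paper's: you construct the identical interpolation path $\mathbf{y}^j = \sum_{k\le j} t_k[P(\tau,k)-P(\tau,k-1)]$, establish the domination $\mathbf{y}^{j-1}\le P(\tau,j-1)$ from $t_k\in[0,1]$, apply Lemma \ref{lem:pos_dir_increment} to rescale the increment, then compare via DR-submodularity at the dominating base point, and telescope. The only cosmetic difference is that you unpack the multi-coordinate DR comparison into explicit single-coordinate steps, which the paper leaves implicit.
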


We note that the property stated in Lemma \ref{lem:tree_partial_valid} is not equivalent to function $f$ being concave---the extreme points $\{P(\tau,k)\}_{k=0}^{|\mathcal{V}|}$ are associated with the carefully chosen permutation $\tau$. The same inequality does not necessarily hold for other possible convex combinations that represent $\overline{\mathbf{z}}\in\mathcal{Z}(\mathcal{G},\mathbf{u})$. 

\begin{proposition}
\label{prop:tree_valid_at_ext}
Let $\delta\in\mathfrak{S}(\mathcal{V})$ be any valid permutation. Suppose that the DR inequality associated with $\delta$ is valid at $[\hat{\mathbf{z}}, f(\hat{\mathbf{z}})]$ for every extreme point $\hat{\mathbf{z}}$ of $\conv{\mathcal{Z}(\mathcal{G},\mathbf{u})}$. Then this DR inequality is valid for $\mathcal{P}_f^{\mathcal{Z}(\mathcal{G},\mathbf{u})}$. 
\end{proposition}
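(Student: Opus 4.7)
The plan is to reduce the claim to an inequality of the form $f(\overline{\mathbf{z}}) \geq \text{RHS of DR inequality associated with } \delta$ for any $\overline{\mathbf{z}} \in \mathcal{Z}(\mathcal{G},\mathbf{u})$, and then prove that inequality by chaining Lemma \ref{lem:tree_partial_valid} with the extremal hypothesis. Concretely, take any $(\overline{\mathbf{z}}, \overline{w}) \in \mathcal{P}_f^{\mathcal{Z}(\mathcal{G},\mathbf{u})}$; since $\overline{w} \geq f(\overline{\mathbf{z}})$, it suffices to show
\[
f(\overline{\mathbf{z}}) \;\geq\; \sum_{j=1}^{|\mathcal{V}|} t(\delta, \overline{\mathbf{z}})_j \bigl[f(P(\delta,j)) - f(P(\delta,j-1))\bigr].
\]

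The natural first step is to apply Algorithm \ref{alg:tree_greedy} to $\overline{\mathbf{z}}$ to obtain a (generally different) valid permutation $\tau$ and, via Proposition \ref{prop:tree_conv_z}, a convex combination $\overline{\mathbf{z}} = \sum_{k=0}^{|\mathcal{V}|} \lambda_k P(\tau,k)$ with $\lambda_k = t(\tau,\overline{\mathbf{z}})_k - t(\tau,\overline{\mathbf{z}})_{k+1} \geq 0$ and $\sum_k \lambda_k = 1$. Lemma \ref{lem:tree_partial_valid} then immediately gives
\[
f(\overline{\mathbf{z}}) \;\geq\; \sum_{k=1}^{|\mathcal{V}|} t(\tau,\overline{\mathbf{z}})_k \bigl[f(P(\tau,k)) - f(P(\tau,k-1))\bigr] \;=\; \sum_{k=0}^{|\mathcal{V}|} \lambda_k\, f(P(\tau,k)),
\]
where the last equality is the Abel-style rearrangement using the boundary conventions $t(\tau,\overline{\mathbf{z}})_0 = 1$, $t(\tau,\overline{\mathbf{z}})_{|\mathcal{V}|+1} = 0$, and $f(P(\tau,0)) = f(\mathbf{0}) = 0$.

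The next step is to invoke the hypothesis at each extreme point $P(\tau,k)$ of $\conv{\mathcal{Z}(\mathcal{G},\mathbf{u})}$, yielding
\[
f(P(\tau,k)) \;\geq\; \sum_{j=1}^{|\mathcal{V}|} t(\delta, P(\tau,k))_j \bigl[f(P(\delta,j)) - f(P(\delta,j-1))\bigr]
\]
for every $k \in \{0,1,\dots,|\mathcal{V}|\}$ (the $k=0$ case reduces to $0 \geq 0$ since $P(\tau,0)=\mathbf{0}$ and $\mathbf{t}(\delta,\mathbf{0}) = \mathbf{T}^\delta \mathbf{0} = \mathbf{0}$). Multiplying by $\lambda_k \geq 0$, summing over $k$, and exploiting the key fact that $t(\delta,\cdot)_j = \mathbf{T}^\delta_{j,\cdot}\,\cdot$ is a \emph{linear} functional of its second argument gives
\[
\sum_{k=0}^{|\mathcal{V}|} \lambda_k\, t(\delta, P(\tau,k))_j \;=\; t\!\left(\delta,\; \sum_{k=0}^{|\mathcal{V}|}\lambda_k P(\tau,k)\right)_{\!j} \;=\; t(\delta, \overline{\mathbf{z}})_j,
\]
so combining the two chains of inequalities yields
\[
f(\overline{\mathbf{z}}) \;\geq\; \sum_{k=0}^{|\mathcal{V}|} \lambda_k f(P(\tau,k)) \;\geq\; \sum_{j=1}^{|\mathcal{V}|} t(\delta, \overline{\mathbf{z}})_j \bigl[f(P(\delta,j)) - f(P(\delta,j-1))\bigr],
\]
which is the desired bound.

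The main conceptual obstacle is that the DR inequality to be verified is parameterized by the fixed permutation $\delta$, whereas the only structural tool available for writing $\overline{\mathbf{z}}$ as a convex combination of extreme points with a guaranteed concavity-like bound (Lemma \ref{lem:tree_partial_valid}) is the permutation $\tau$ produced by Algorithm \ref{alg:tree_greedy}, which depends on $\overline{\mathbf{z}}$. Bridging the two permutations is exactly where linearity of $\mathbf{t}(\delta,\cdot)$ in its second argument is essential: it lets us commute the convex combination past $t(\delta,\cdot)_j$ and recover $t(\delta,\overline{\mathbf{z}})_j$ on the nose. Once this linear-algebraic identity is in hand, the rest is bookkeeping on the telescoping sums and the non-negativity of the $\lambda_k$.
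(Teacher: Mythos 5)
Your argument is correct and matches the paper's proof essentially step for step: both pass through the $\overline{\mathbf{z}}$-dependent permutation $\tau$ from Algorithm~\ref{alg:tree_greedy}, apply Lemma~\ref{lem:tree_partial_valid} to obtain $f(\overline{\mathbf{z}}) \geq \sum_k \lambda_k f(P(\tau,k))$, then use the extremal hypothesis at each $P(\tau,k)$ together with non-negativity of the $\lambda_k$ (from Proposition~\ref{prop:tree_conv_z}) and linearity of $\mathbf{z}\mapsto \mathbf{T}^\delta\mathbf{z}$ to fold the double sum back into $t(\delta,\overline{\mathbf{z}})_j$. The paper writes the linearity step in matrix form as $\mathbf{d}^\delta\mathbf{T}^\delta$ and orders the chain slightly differently, but the underlying mechanism is identical.
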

\begin{proof} 
For any $\hat{\mathbf{z}} = P(\mathcal{S})$, $\mathcal{S}\subseteq\mathcal{V}$, we have
\begin{equation}
\label{eq:illustrate_valid_ext}
 f(\hat{\mathbf{z}}) \geq \sum_{k=1}^{|\mathcal{V}|}[f(P(\delta,k)) - f(P(\delta,k-1))] t(\delta,\hat{\mathbf{z}})_k  = \mathbf{d}^\delta\mathbf{T}^\delta \hat{\mathbf{z}},
 \end{equation}
where $\mathbf{d}^\delta = [f(P(\delta,k)) - f(P(\delta,k))]_{k=1}^{|\mathcal{V}|} \in\mathbb{R}^{1\times |\mathcal{V}|}$ and $\mathbf{T}^\delta$ is given by \eqref{T1}-\eqref{T3}. Now consider any $(\mathbf{z}, w)\in\mathcal{P}_f^{\mathcal{Z}(\mathcal{G},\mathbf{u})}$. We know that ${\mathbf{z}}\in \mathcal{Z}(\mathcal{G},\mathbf{u})$ and $w \geq f({\mathbf{z}})$. Let $\tau\in\mathfrak{S}(\mathcal{V})$ be the output from Algorithm \ref{alg:tree_greedy} corresponding to $\mathcal{G},\mathbf{u}$ and $\mathbf{z}$. We observe that 
\begingroup
\allowdisplaybreaks
\begin{align*}
w\geq f({\mathbf{z}}) & \geq \sum_{k=1}^{|\mathcal{V}|}  t(\tau,{\mathbf{z}})_k[f(P(\tau,k)) - f(P(\tau,k-1))] \quad \text{(by Lemma \ref{lem:tree_partial_valid})} \\
& = \sum_{k=0}^{|\mathcal{V}|} [t(\tau,{\mathbf{z}})_k - t(\tau,{\mathbf{z}})_{k+1}]f(P(\tau,k)) \\
& \geq \sum_{k=0}^{|\mathcal{V}|} [t(\tau,{\mathbf{z}})_k - t(\tau,{\mathbf{z}})_{k+1}] \mathbf{d}^\delta\mathbf{T}^\delta P(\tau,k) \\
& \hspace{14pt}  \text{(by \eqref{eq:illustrate_valid_ext} and the fact that $t(\tau,{\mathbf{z}})_k \geq t(\tau,{\mathbf{z}})_{k+1}$ from Proposition \ref{prop:tree_conv_z})}\\
& = \mathbf{d}^\delta\mathbf{T}^\delta \left[\sum_{k=0}^{|\mathcal{V}|} [t(\tau,{\mathbf{z}})_k - t(\tau,{\mathbf{z}})_{k+1}] P(\tau,k) \right] \\
& = \mathbf{d}^\delta\mathbf{T}^\delta {\mathbf{z}} \quad \text{(due to Proposition \ref{prop:tree_conv_z})}\\
& = \sum_{k=1}^{|\mathcal{V}|} t(\delta,{\mathbf{z}})_k[f(P(\delta,k)) - f(P(\delta,k-1))].
\end{align*}
\endgroup
We conclude that the DR inequalities are valid for $\mathcal{P}_f^{\mathcal{Z}(\mathcal{G},\mathbf{u})}$ if they are valid at $[P(\mathcal{S}), f(P(\mathcal{S}))]$ for all $\mathcal{S}\subseteq\mathcal{V}$.
\end{proof}

Next, we check validity of the DR inequalities with respect to $[P(\mathcal{S}), f(P(\mathcal{S}))]$ for every extreme point $P(\mathcal{S})$ of $\conv{\mathcal{Z}(\mathcal{G},\mathbf{u})}$. We do so by analyzing the following cases:
\begin{enumerate}
\item[(\hypertarget{1}{1})] $\Psi = \emptyset$. In this case, there is no fractionally upper-bounded continuous variable with any discrete descendant.  
\item[(\hypertarget{2}{2})] $\Psi \neq \emptyset$, and for every $\psi\in\Psi$, $0< u_\psi < 1$. In this case, every fractionally upper bounded continuous variable, with at least one discrete descendant, assumes an upper bound strictly between 0 and 1. 
\item[(\hypertarget{3}{3})] $\Psi \neq \emptyset$, and there is no restriction on $u_\psi$ for $\psi\in\Psi$. 
\end{enumerate} 
We jointly consider cases (\hyperlink{1}{1}) and (\hyperlink{2}{2}) in Section \ref{sect:valid_P}, and we attack case (\hyperlink{3}{3}) in Section \ref{sect:valid_N}.

\subsection{Validity for Cases (1) and (2)}
\label{sect:valid_P}
We consider any valid $\delta\in\mathfrak{S}(\mathcal{V})$ and any $\mathbf{z} = P(\mathcal{S})$, $\mathcal{S}\subseteq\mathcal{V}$. We first note a desirable property of $\mathbf{t}(\delta,\mathbf{z})$ in cases (\hyperlink{1}{1}) and (\hyperlink{2}{2}). 

\begin{observation}
\label{obs:pos_t}
In cases (\hyperlink{1}{1}) and (\hyperlink{2}{2}), $t(\delta,\mathbf{z})_k$ for every $k\in\{1,\dots, |\mathcal{V}|\}$ falls under \eqref{t_3}. To see this, $\Psi = \emptyset$ in case (\hyperlink{1}{1}), so \eqref{t_1}-\eqref{t_2} are not applicable. In case (\hyperlink{2}{2}), $\floor{u_\psi} = 0$ for every $\psi\in\Psi$, so $\delta^{-1}(\psi) > \delta^{-1}(\ch{\psi})$ because $\delta$ is valid. Again, only \eqref{t_3} applies. Given that $\mathbf{z}$ satisfies the monotonicity constraints, $\mathbf{t}(\delta,\mathbf{z})\geq \mathbf{0}$.
\end{observation}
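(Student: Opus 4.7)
The plan is to verify Observation \ref{obs:pos_t} by handling its two assertions in turn: first, that each $t(\delta,\mathbf{z})_k$ reduces to the third branch \eqref{t_3}, and second, that this common branch is non-negative at every extreme point $\mathbf{z}=P(\mathcal{S})$.

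Case (\hyperlink{1}{1}) is essentially vacuous: the hypotheses of \eqref{t_1} and \eqref{t_2} both require membership in $\Psi$, but $\Psi=\emptyset$ by assumption, so every index $k$ automatically falls into \eqref{t_3}. The substantive content lies in case (\hyperlink{2}{2}), where $\floor{u_\psi}=0$ for every $\psi\in\Psi$, and I expect this to be the main obstacle because it requires combining the validity conditions in Definition \ref{def:tree_valid_perm} with the structural restriction on $\Psi$. Ruling out \eqref{t_1} is short: if $\delta(k)=\psi\in\Psi$, then validity condition~(2) forces $\delta^{-1}(\ch{\psi})<\delta^{-1}(\psi)=k$, so $\ch{\psi}\in\mathcal{T}^{\delta,k-1}$, contradicting the hypothesis of \eqref{t_1}.

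Ruling out \eqref{t_2} is subtler. Suppose for contradiction that $\delta(k)^\vartriangle(\mathcal{T}^{\delta,k-1})=\psi\in\Psi$, so $\psi\in\mathcal{T}^{\delta,k-1}$ is the deepest ancestor of $\delta(k)$ inserted so far. By Property \ref{p}, $\psi$ has a unique child $\ch{\psi}$, and every descendant of $\psi$ passes through $\ch{\psi}$; in particular $\delta(k)\in R^+(\ch{\psi})$. Validity condition~(2) applied at $\psi$ again yields $\delta^{-1}(\ch{\psi})<\delta^{-1}(\psi)<k$, so $\ch{\psi}\in\mathcal{T}^{\delta,k-1}$. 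But $\ch{\psi}$ is then a strictly deeper ancestor of $\delta(k)$ in $\mathcal{T}^{\delta,k-1}$ than $\psi$, contradicting the maximality in the definition of $i^\vartriangle$. (In the edge sub-case $\delta(k)=\ch{\psi}$, the same argument instead contradicts $\delta(k)\notin \mathcal{T}^{\delta,k-1}$.)

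Once \eqref{t_3} is confirmed as the only active branch, non-negativity is routine: the denominator $u_{\delta(k)}-u_{\delta(k)^\vartriangle(\mathcal{T}^{\delta,k-1})}$ is strictly positive by Observation \ref{obs:denom}, and the numerator $z_{\delta(k)}-z_{\delta(k)^\vartriangle(\mathcal{T}^{\delta,k-1})}$ is non-negative either by the convention $z_0=0$ (when $\delta(k)$ has no ancestor in $\mathcal{T}^{\delta,k-1}$) or by chaining the monotonicity constraints along the directed path from $\delta(k)^\vartriangle(\mathcal{T}^{\delta,k-1})$ down to $\delta(k)$ in $\mathcal{G}$. This yields $\mathbf{t}(\delta,\mathbf{z})\geq\mathbf{0}$ and completes the observation.
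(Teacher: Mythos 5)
Your proof is correct and follows essentially the same route as the paper's terse argument: use validity condition~(2) of Definition~\ref{def:tree_valid_perm} (which forces $\delta^{-1}(\ch{\psi})<\delta^{-1}(\psi)$ when $\floor{u_\psi}=0$) together with the maximal-depth characterization of $i^\vartriangle$ to exclude branches~\eqref{t_1} and~\eqref{t_2}, then obtain non-negativity from the monotonicity constraints and the $z_0=u_0=0$ convention. You merely spell out the exclusion of~\eqref{t_2} explicitly, which the paper compresses into the single phrase ``Again, only \eqref{t_3} applies,'' relying on the implication stated just after the definition of $\mathbf{t}$ that $i^\vartriangle(\mathcal{T}^{\delta,k-1})=\psi\in\Psi$ forces $\ch{\psi}\notin\mathcal{T}^{\delta,k-1}$.
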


For every $j\in\{0,\dots,|\mathcal{V}|\}$, we define
\begin{equation}
\label{eq:tildez}
\widetilde{\mathbf{z}}^j := \sum_{k=1}^{j} t(\delta, \mathbf{z})_k [P(\delta,k) - P(\delta,k-1)].   
\end{equation}

\begin{lemma}
\label{lem:tildez_and_P}
For every $j\in\{0,\dots,|\mathcal{V}|\}$, 
\[
\widetilde{\mathbf{z}}^j = \sum_{k=1}^{j} t(\delta, \mathbf{z})_k [P(\delta,k) - P(\delta,k-1)] \leq P(\delta, j). 
\]
\end{lemma}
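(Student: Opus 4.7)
The plan is to establish the component-wise identity that for every $i\in\mathcal{V}$ and every $j\in\{0,\dots,|\mathcal{V}|\}$,
\[
\widetilde{\mathbf{z}}^j_i = z_{i^\vartriangle(\mathcal{T}^{\delta, j})},
\]
with the convention $z_0 = 0$. Once this identity is in hand, the lemma follows because the definition \eqref{ext} simplifies to $P(\delta, j)_i = u_{i^\vartriangle(\mathcal{T}^{\delta, j})}$ in cases (\hyperlink{1}{1}) and (\hyperlink{2}{2}): case (\hyperlink{1}{1}) has $\Psi=\emptyset$, while in case (\hyperlink{2}{2}) each $\psi\in\Psi$ must be a root of its component (any parent $p$ of $\psi$ would satisfy $0<u_p\leq u_\psi<1$, forcing $p\in M$ with $R^+(p)\cap N\neq\emptyset$, hence $p\in\Psi$ on the same directed path as $\psi$, contradicting Assumption \hyperlink{A1}{1}), and then condition (2) of Definition \ref{def:tree_valid_perm} forces $\ch{\psi}\in\mathcal{T}^{\delta, j}$ whenever $\psi\in\mathcal{T}^{\delta, j}$, ruling out the $\floor{u_\psi}$ branch. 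The box constraint $\mathbf{z}\leq\mathbf{u}$ (with $z_0=u_0=0$) then yields $\widetilde{\mathbf{z}}^j_i \leq P(\delta,j)_i$.

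I would prove the identity by induction on $j$. The base case $j=0$ is immediate since $\widetilde{\mathbf{z}}^0 = \mathbf{0}$ and $i^\vartriangle(\emptyset) = 0$. For the inductive step, fix $i\in\mathcal{V}$. If $\delta(j)\notin R^-(i)$, or $\delta(j)\in R^-(i)$ but $\dep{\delta(j)}\leq \dep{i^\vartriangle(\mathcal{T}^{\delta, j-1})}$, then $i^\vartriangle(\mathcal{T}^{\delta, j}) = i^\vartriangle(\mathcal{T}^{\delta, j-1})$ and $P(\delta,j)_i = P(\delta,j-1)_i$, so the identity transfers from $j-1$ to $j$ trivially. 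Otherwise $\delta(j)$ becomes the new deepest ancestor of $i$ in $\mathcal{T}^{\delta, j}$, so $i^\vartriangle(\mathcal{T}^{\delta, j}) = \delta(j)$.

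In that nontrivial subcase the key observation is the ancestor identification $\delta(j)^\vartriangle(\mathcal{T}^{\delta, j-1}) = i^\vartriangle(\mathcal{T}^{\delta, j-1})$, which holds because ancestors of $i$ lie on a single root-to-$i$ path: every ancestor of $i$ already in $\mathcal{T}^{\delta, j-1}$ has depth strictly less than $\dep{\delta(j)}$, hence is also an ancestor of $\delta(j)$; conversely every ancestor of $\delta(j)$ is an ancestor of $i$. Since Observation \ref{obs:pos_t} places $t(\delta, \mathbf{z})_j$ in the form \eqref{t_3} in cases (\hyperlink{1}{1}) and (\hyperlink{2}{2}), the step-$j$ increment collapses to
\[
t(\delta, \mathbf{z})_j \bigl[P(\delta, j)_i - P(\delta, j-1)_i\bigr] = z_{\delta(j)} - z_{i^\vartriangle(\mathcal{T}^{\delta, j-1})},
\]
via cancellation of $u_{\delta(j)} - u_{i^\vartriangle(\mathcal{T}^{\delta, j-1})}$. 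Adding this to the inductive hypothesis $\widetilde{\mathbf{z}}^{j-1}_i = z_{i^\vartriangle(\mathcal{T}^{\delta, j-1})}$ telescopes to $\widetilde{\mathbf{z}}^j_i = z_{\delta(j)} = z_{i^\vartriangle(\mathcal{T}^{\delta, j})}$, completing the induction.

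The main obstacle is bookkeeping rather than a deep new idea: one must carefully verify that the $\floor{u_\psi}$ branch of \eqref{ext} is inactive for $\mathcal{S} = \mathcal{T}^{\delta, j}$ in case (\hyperlink{2}{2}) and justify the ancestor identification above. Both hinge on the path structure of ancestors in $\mathcal{G}$ and on validity of $\delta$; the telescoping itself is routine once the identification is established.
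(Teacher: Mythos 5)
Your proof is correct and takes essentially the same approach as the paper: both establish the component-wise identity $\widetilde{z}^j_i = z_{i^\vartriangle(\mathcal{T}^{\delta,j})}$ (the paper via a direct telescoping sum over the chain of ancestors $\mathcal{I}$, you via induction on $j$ with the equivalent ancestor-identification observation) and then conclude from the box constraints. Your explicit justification that the $\floor{u_\psi}$ branch of \eqref{ext} is inactive in cases (\hyperlink{1}{1}) and (\hyperlink{2}{2}), so that $P(\delta,j)_i = u_{i^\vartriangle(\mathcal{T}^{\delta,j})}$, is a welcome added detail that the paper's final step $z_{i^\vartriangle(\mathcal{T}^{\delta, j})} \leq P(\delta, j)_i$ leaves implicit.
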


\begin{observation}
\label{obs:P_tpp}
We note an implication from the proof of Lemma \ref{lem:tildez_and_P} which will be helpful in Section \ref{sect:valid_N}. For $j\in\{0,\dots,|\mathcal{V}|\}$ and $i\in\mathcal{V}$,  
\[z_{i^\vartriangle(\mathcal{T}^{\delta, j})} - z_{i^\vartriangle(\mathcal{T}^{\delta, j-1})}=  t(\delta, \mathbf{z})_j [P(\delta,j)_i - P(\delta,j-1)_i].\]
\end{observation}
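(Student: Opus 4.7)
The plan is to verify the equation pointwise, viewing the update from $\mathcal{T}^{\delta,j-1}$ to $\mathcal{T}^{\delta,j}$ as adding the single vertex $\delta(j)$. Let $\phi := i^\vartriangle(\mathcal{T}^{\delta, j-1})$ (with the convention $u_0 = z_0 = 0$ when $\phi = 0$). Since $\phi$ and (if $\delta(j) \in R^-(i)$) $\delta(j)$ both lie on the unique directed path from the root to $i$, they are comparable by depth, and since $\delta(j) \notin \mathcal{T}^{\delta, j-1}$, they are distinct. A one-line case split then gives $i^\vartriangle(\mathcal{T}^{\delta, j}) = \delta(j)$ exactly when $\delta(j) \in R^-(i)$ and $\dep{\delta(j)} > \dep{\phi}$, and $i^\vartriangle(\mathcal{T}^{\delta, j}) = \phi$ otherwise. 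These two alternatives drive the proof.

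In the first alternative the left-hand side equals $z_\phi - z_\phi = 0$, and I need to show the right-hand side is also $0$ by checking that $P(\delta, j)_i = P(\delta, j-1)_i$. By \eqref{ext} both entries are functions of $\phi$ alone unless $\phi \in \Psi$ and the inclusion of $\ch{\phi}$ in the index set changes between $\mathcal{T}^{\delta, j-1}$ and $\mathcal{T}^{\delta, j}$. I rule this out by invoking Observation \ref{obs:pos_t}: in case (\hyperlink{1}{1}) we have $\Psi = \emptyset$, and in case (\hyperlink{2}{2}) validity of $\delta$ (condition (2) in Definition \ref{def:tree_valid_perm}) forces $\ch{\psi}$ to precede $\psi$ for every $\psi \in \Psi$, which implies $\phi \in \Psi$ only if $i = \phi = \psi$, in which case $\ch{\psi} \in \mathcal{T}^{\delta, j-1} \subseteq \mathcal{T}^{\delta, j}$ already, so both $P$-values equal $u_\psi$.

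In the second alternative the left-hand side equals $z_{\delta(j)} - z_\phi$. Because $\phi$ is the deepest ascendant of $i$ in $\mathcal{T}^{\delta,j-1}$ and $\delta(j)$ is a strictly deeper ascendant of $i$, $\phi$ is also the deepest ascendant of $\delta(j)$ in $\mathcal{T}^{\delta,j-1}$; that is, $\delta(j)^\vartriangle(\mathcal{T}^{\delta, j-1}) = \phi$. By Observation \ref{obs:pos_t}, $t(\delta,\mathbf{z})_j$ assumes form \eqref{t_3} and equals $(z_{\delta(j)} - z_\phi)/(u_{\delta(j)} - u_\phi)$. A parallel check on \eqref{ext}, again using validity plus Assumption \hyperlink{A1}{1} to preclude the $\floor{u_\psi}$-branch (in case (\hyperlink{2}{2}), $\delta(j) \in \Psi$ would again force $i = \delta(j)$, and then $\ch{\delta(j)} \in \mathcal{T}^{\delta, j-1}$ by validity), yields $P(\delta, j)_i = u_{\delta(j)}$ and $P(\delta, j-1)_i = u_\phi$. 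Multiplying gives $t(\delta,\mathbf{z})_j [P(\delta,j)_i - P(\delta,j-1)_i] = z_{\delta(j)} - z_\phi$, matching the left-hand side.

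The main obstacle is the careful bookkeeping around $\Psi$: \eqref{ext} bifurcates at $\psi \in \Psi$ depending on whether $\ch{\psi}$ has already been included, so I must make sure that in both cases (\hyperlink{1}{1}) and (\hyperlink{2}{2}) this bifurcation never actually activates for the entries that change. The clean resolution is to repeatedly exploit the validity condition $\delta^{-1}(\ch{\psi}) < \delta^{-1}(\psi)$ (forced in case (\hyperlink{2}{2}) by $\floor{u_\psi} = 0$) together with Property \ref{p}, so that whenever $\psi$ enters the running set its unique child $\ch{\psi}$ is already present, collapsing \eqref{ext} to the single-branch evaluation $u_{i^\vartriangle(\cdot)}$ used in the computation above.
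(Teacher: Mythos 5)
Your proof is correct and matches the paper's approach: the paper simply refers back to the proof of Lemma \ref{lem:tildez_and_P}, where the key displayed step shows that $t(\delta,\mathbf{z})_k[P(\delta,k)_i - P(\delta,k-1)_i]$ equals $z_{\delta(k)}-z_{\delta(k)^\vartriangle(\mathcal{T}^{\delta,k-1})}$ when $i\in\sigma_{\delta(k)}(\mathcal{T}^{\delta,k-1})$ and $0$ otherwise, which is exactly your two-case pointwise split on whether $\delta(j)$ becomes the new deepest ascendant of $i$. Your write-up is somewhat more careful than the source in that it explicitly argues (via validity condition (2) of Definition \ref{def:tree_valid_perm} and Assumption \hyperlink{A1}{1}) that the $\floor{u_\psi}$-branch of \eqref{ext} never fires in cases (\hyperlink{1}{1}) and (\hyperlink{2}{2}), a point the proof of Lemma \ref{lem:tildez_and_P} leaves implicit when it asserts $P(\delta,k)_i - P(\delta,k-1)_i = u_{\delta(k)} - u_{\delta(k)^\vartriangle(\mathcal{T}^{\delta,k-1})}$.
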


\begin{lemma}
\label{lem:tree_t_small}
For any $j\in\{1,\dots,|\mathcal{V}|\}$, when $0\leq t(\delta, \mathbf{z})_j \leq 1$, 
\[ f\left(\widetilde{\mathbf{z}}^j\right) - f\left(\widetilde{\mathbf{z}}^{j-1}\right) \geq  t(\delta, \mathbf{z})_j  \left[ f\left(P(\delta,j)\right) - f\left(P(\delta,j-1)\right) \right].\]
\end{lemma}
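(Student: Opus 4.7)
The plan is to combine Lemma \ref{lem:pos_dir_increment} with an iterative application of the defining DR-submodularity inequality, using the comparison $\widetilde{\mathbf{z}}^{j-1}\leq P(\delta,j-1)$ established in Lemma \ref{lem:tildez_and_P}.

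First I would write $\widetilde{\mathbf{z}}^{j} - \widetilde{\mathbf{z}}^{j-1} = t_j\mathbf{d}$, where $t_j := t(\delta,\mathbf{z})_j \in [0,1]$ by hypothesis and $\mathbf{d} := P(\delta,j) - P(\delta,j-1) \geq \mathbf{0}$ (non-negativity follows from validity of $\delta$, as noted before Definition \ref{def:tree_valid_perm}). Applying Lemma \ref{lem:pos_dir_increment} at the point $\widetilde{\mathbf{z}}^{j-1}$ with non-negative direction $\mathbf{d}$ and scalar $t_j$ immediately yields
\[
f(\widetilde{\mathbf{z}}^{j}) - f(\widetilde{\mathbf{z}}^{j-1}) \;=\; f(\widetilde{\mathbf{z}}^{j-1}+t_j\mathbf{d}) - f(\widetilde{\mathbf{z}}^{j-1}) \;\geq\; t_j\bigl[f(\widetilde{\mathbf{z}}^{j-1}+\mathbf{d}) - f(\widetilde{\mathbf{z}}^{j-1})\bigr].
\]

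Next I would establish the comparison
\[
f(\widetilde{\mathbf{z}}^{j-1}+\mathbf{d}) - f(\widetilde{\mathbf{z}}^{j-1}) \;\geq\; f(P(\delta,j-1)+\mathbf{d}) - f(P(\delta,j-1)) \;=\; f(P(\delta,j)) - f(P(\delta,j-1)),
\]
which, combined with $t_j\geq 0$ and the previous display, gives the desired conclusion. To prove this comparison, I would decompose $\mathbf{d} = \sum_{\ell \in L} d_\ell \mathbf{e}^\ell$ with $d_\ell>0$ for $\ell\in L$, enumerate $L = \{\ell_1,\dots,\ell_{|L|}\}$ in any order, and telescope both sides. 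For each $k\in\{1,\dots,|L|\}$, I would apply Definition \ref{def:dr} with $\mathbf{x} = \widetilde{\mathbf{z}}^{j-1}+\sum_{r<k} d_{\ell_r}\mathbf{e}^{\ell_r}$, $\mathbf{y} = P(\delta,j-1)+\sum_{r<k}d_{\ell_r}\mathbf{e}^{\ell_r}$, index $\ell_k$ and step $\alpha = d_{\ell_k}$. Since $\widetilde{\mathbf{z}}^{j-1}\leq P(\delta,j-1)$ by Lemma \ref{lem:tildez_and_P}, the coordinate-wise ordering $\mathbf{x}\leq \mathbf{y}$ is preserved at every intermediate step, so DR-submodularity gives the per-step diminishing-returns inequality; summing over $k$ yields the telescoped comparison.

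The main obstacle is confirming that every intermediate point in both telescoping chains lies in the domain $\mathcal{X}$ so that Definition \ref{def:dr} is applicable. Non-negativity is immediate from $\widetilde{\mathbf{z}}^{j-1},\,P(\delta,j-1)\geq \mathbf{0}$ and $\mathbf{D}^k := \sum_{r\leq k} d_{\ell_r}\mathbf{e}^{\ell_r}\geq \mathbf{0}$; for the upper bound, $\widetilde{\mathbf{z}}^{j-1}+\mathbf{D}^k\leq P(\delta,j-1)+\mathbf{D}^k\leq P(\delta,j-1)+\mathbf{d}=P(\delta,j)\leq \mathbf{u}$, using Lemma \ref{lem:tildez_and_P} and feasibility of $P(\delta,j)\in\mathcal{Z}(\mathcal{G},\mathbf{u})$. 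The domain $\mathcal{X}$ is stipulated in Section \ref{sect:prob_desc} to contain the continuous relaxation of $\mathcal{Z}(\mathcal{G},\mathbf{u})$, and the remark following Lemma \ref{lem:pos_dir_increment} explicitly asserts that the requisite membership conditions always hold in this setting, so all intermediate points belong to $\mathcal{X}$ and the argument goes through.
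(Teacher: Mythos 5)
Your proof is correct and follows essentially the same route as the paper: apply Lemma \ref{lem:pos_dir_increment} at $\widetilde{\mathbf{z}}^{j-1}$ with direction $\mathbf{d}=P(\delta,j)-P(\delta,j-1)$ and scalar $t(\delta,\mathbf{z})_j$, then push $\widetilde{\mathbf{z}}^{j-1}$ up to $P(\delta,j-1)$ using Lemma \ref{lem:tildez_and_P} together with DR-submodularity. Where the paper simply cites ``DR-submodularity of $f$ and Lemma \ref{lem:tildez_and_P}'' for the second comparison, you spell out the coordinate-by-coordinate telescoping that justifies that multivariate diminishing-returns step, and you verify the domain membership of the intermediate points explicitly; both are faithful elaborations of what the paper leaves implicit rather than a different argument.
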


\begin{lemma}
\label{lem:tree_t_large}
For $j\in\{1,\dots,|\mathcal{V}|\}$, when $t(\delta, \mathbf{z})_j > 1$, 
\[ f\left(\widetilde{\mathbf{z}}^j\right) - f\left(\widetilde{\mathbf{z}}^{j-1}\right) \geq  t(\delta, \mathbf{z})_j  \left[ f\left(P(\delta,j)\right) - f\left(P(\delta,j-1)\right) \right].\]
\end{lemma}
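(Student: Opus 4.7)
The plan is to write $\widetilde{\mathbf{z}}^j - \widetilde{\mathbf{z}}^{j-1} = t\mathbf{d}$, where $\mathbf{d} := P(\delta,j) - P(\delta,j-1)\geq \mathbf{0}$ and $t := t(\delta,\mathbf{z})_j>1$, and then split the $t$-scaled jump into $\floor{t}$ full jumps of length $\mathbf{d}$ preceded by one fractional jump of length $r\mathbf{d}$, where $r := t - \floor{t}\in[0,1)$. The key structural input is Lemma \ref{lem:tildez_and_P}, which gives both $\widetilde{\mathbf{z}}^{j-1}\leq P(\delta,j-1)$ and $\widetilde{\mathbf{z}}^j\leq P(\delta,j)$; subtracting the former from the latter yields $(t-1)\mathbf{d}\leq P(\delta,j-1)-\widetilde{\mathbf{z}}^{j-1}$, so every intermediate point $\widetilde{\mathbf{z}}^{j-1}+s\mathbf{d}$ with $s\in[0,t-1]$ is dominated component-wise by $P(\delta,j-1)$.

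For the $\floor{t}$ unit jumps, I would use the standard coordinate-wise consequence of DR-submodularity: for any $\mathbf{x}\leq \mathbf{y}$ and non-negative $\mathbf{d}$, one has $f(\mathbf{x}+\mathbf{d})-f(\mathbf{x})\geq f(\mathbf{y}+\mathbf{d})-f(\mathbf{y})$ (derived from Definition \ref{def:dr} by adding the coordinates of $\mathbf{d}$ one at a time). Applying this with $\mathbf{x}=\widetilde{\mathbf{z}}^{j-1}+(r+k)\mathbf{d}$ and $\mathbf{y}=P(\delta,j-1)$ for $k=0,1,\dots,\floor{t}-1$, and using $r+k\leq t-1$ so that $\mathbf{x}\leq P(\delta,j-1)$ is guaranteed by the slack estimate above, each step contributes at least $h := f(P(\delta,j))-f(P(\delta,j-1))$. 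Telescoping these $\floor{t}$ inequalities produces
\[
f\!\left(\widetilde{\mathbf{z}}^{j-1}+t\mathbf{d}\right)-f\!\left(\widetilde{\mathbf{z}}^{j-1}+r\mathbf{d}\right)\geq \floor{t}\, h.
\]

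For the fractional remainder, I would apply Lemma \ref{lem:pos_dir_increment} with base $\widetilde{\mathbf{z}}^{j-1}$, direction $\mathbf{d}$, and scale $r\in[0,1]$, obtaining $f(\widetilde{\mathbf{z}}^{j-1}+r\mathbf{d})-f(\widetilde{\mathbf{z}}^{j-1})\geq r\bigl[f(\widetilde{\mathbf{z}}^{j-1}+\mathbf{d})-f(\widetilde{\mathbf{z}}^{j-1})\bigr]$, and then use $\widetilde{\mathbf{z}}^{j-1}\leq P(\delta,j-1)$ together with the same monotone-marginal consequence of DR-submodularity to conclude $f(\widetilde{\mathbf{z}}^{j-1}+\mathbf{d})-f(\widetilde{\mathbf{z}}^{j-1})\geq h$. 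Since $r\geq 0$, chaining the two gives $f(\widetilde{\mathbf{z}}^{j-1}+r\mathbf{d})-f(\widetilde{\mathbf{z}}^{j-1})\geq r h$ regardless of the sign of $h$, and adding the two pieces yields $f(\widetilde{\mathbf{z}}^j)-f(\widetilde{\mathbf{z}}^{j-1})\geq (\floor{t}+r)h = th$, which is exactly the claimed inequality.

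The main obstacle is precisely the case $t>1$: the standard concavity ingredient Lemma \ref{lem:pos_dir_increment}, applied to direction $t\mathbf{d}$ with scale $1/t$, produces only the upper bound $f(\widetilde{\mathbf{z}}^j)-f(\widetilde{\mathbf{z}}^{j-1})\leq t[f(\widetilde{\mathbf{z}}^{j-1}+\mathbf{d})-f(\widetilde{\mathbf{z}}^{j-1})]$, which is the opposite of what the lemma demands, and attempts to push Lemma \ref{lem:pos_dir_increment} beyond scale $1$ require evaluating $f$ at points that may lie outside $\mathbf{u}$. The workaround is to refuse to scale the direction $\mathbf{d}$ beyond $1$, and instead assemble the total increment from $\floor{t}$ telescoped diminishing-returns steps plus one genuine Lemma \ref{lem:pos_dir_increment} application on the sub-unit remainder $r<1$. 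The legality of this assembly rests entirely on the slack estimate $(t-1)\mathbf{d}\leq P(\delta,j-1)-\widetilde{\mathbf{z}}^{j-1}$ from Lemma \ref{lem:tildez_and_P}, which keeps every base point of the $\floor{t}$ telescoping steps dominated by $P(\delta,j-1)$.
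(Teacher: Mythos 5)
Your argument is correct, and it is a genuinely different decomposition from the paper's. The paper applies the rearranged form of Lemma~\ref{lem:pos_dir_increment}, namely $f(\mathbf{z}+\mathbf{d})-f(\mathbf{z})\geq\tfrac{1}{1-\gamma}[f(\mathbf{z}+\mathbf{d})-f(\mathbf{z}+\gamma\mathbf{d})]$, to the whole scaled direction $\widetilde{\mathbf{z}}^j-\widetilde{\mathbf{z}}^{j-1}=t\mathbf{d}$ with $\gamma=(t-1)/t$; since $1/(1-\gamma)=t$, this collapses the entire increment to $t$ times the single top unit step $f(\widetilde{\mathbf{z}}^j)-f(\widetilde{\mathbf{z}}^j-\mathbf{d})$, which is then compared once to $f(P(\delta,j))-f(P(\delta,j-1))$ using $\widetilde{\mathbf{z}}^j\leq P(\delta,j)$ from Lemma~\ref{lem:tildez_and_P}. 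You instead split $t=\floor{t}+r$, telescope $\floor{t}$ unit steps and one fractional step, and compare each unit step to the one at $P(\delta,j-1)$ using the slack bound $(t-1)\mathbf{d}\leq P(\delta,j-1)-\widetilde{\mathbf{z}}^{j-1}$ that you extract from Lemma~\ref{lem:tildez_and_P}; the fractional step is handled by Lemma~\ref{lem:pos_dir_increment} with scale $r\leq 1$, and the non-negativity $r\geq 0$ lets you multiply through the DR comparison regardless of the sign of the marginal. Both routes need exactly the same two ingredients, Lemma~\ref{lem:pos_dir_increment} and coordinate-wise diminishing returns, plus Lemma~\ref{lem:tildez_and_P}; the paper's approach is tighter (one application of each lemma) and avoids any floor/remainder bookkeeping, whereas yours is more explicit about where the case $t>1$ genuinely differs from $t\leq1$ and keeps every evaluation point transparently inside $[\mathbf{0},\mathbf{u}]$ rather than relying on the variant inequality form. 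One small caution: the paper uses the nonstandard convention $\floor{\alpha}=\alpha-1$ for integer $\alpha$, under which $r=t-\floor{t}\in(0,1]$ rather than $[0,1)$; your argument still goes through (the fractional step then is a full unit step, and the telescoping runs to $k=t-2$), but the exposition should match the paper's convention.
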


\begin{proposition}
\label{prop:valid_P}
The DR inequality associated with $\delta$ is valid at every $[P(\mathcal{S}), f(P(\mathcal{S}))]$, $\mathcal{S}\subseteq\mathcal{V}$ in cases (\hyperlink{1}{1}) and (\hyperlink{2}{2}).
\end{proposition}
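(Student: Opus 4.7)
The plan is to prove Proposition \ref{prop:valid_P} by telescoping the one-step estimates provided by Lemmas \ref{lem:tree_t_small} and \ref{lem:tree_t_large} along the sequence $\widetilde{\mathbf{z}}^0, \widetilde{\mathbf{z}}^1, \dots, \widetilde{\mathbf{z}}^{|\mathcal{V}|}$ defined in \eqref{eq:tildez}. First, I would invoke Observation \ref{obs:pos_t} to ensure that in cases (\hyperlink{1}{1}) and (\hyperlink{2}{2}) every component $t(\delta, \mathbf{z})_k$ falls under formula \eqref{t_3} and is non-negative, which is the hypothesis shared by the two lemmas.

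Next, taking $\mathbf{z} = P(\mathcal{S})$, I would partition the index set $\{1,\dots,|\mathcal{V}|\}$ according to whether $t(\delta,\mathbf{z})_k \leq 1$ or $t(\delta,\mathbf{z})_k > 1$, apply Lemma \ref{lem:tree_t_small} to the former and Lemma \ref{lem:tree_t_large} to the latter, and sum to obtain
\[
f\bigl(\widetilde{\mathbf{z}}^{|\mathcal{V}|}\bigr) - f\bigl(\widetilde{\mathbf{z}}^{0}\bigr) \;\geq\; \sum_{k=1}^{|\mathcal{V}|} t(\delta,\mathbf{z})_k \bigl[f(P(\delta,k)) - f(P(\delta,k-1))\bigr].
\]
The left-hand side then collapses: $\widetilde{\mathbf{z}}^{0} = \mathbf{0}$ by construction and $f(\mathbf{0}) = 0$ by our standing assumption; moreover, $\widetilde{\mathbf{z}}^{|\mathcal{V}|} = \mathbf{z} = P(\mathcal{S})$, which is an immediate consequence of Lemma \ref{lem:tree_affine} applied to $\mathbf{z} \in \conv{\mathcal{Z}(\mathcal{G},\mathbf{u})}$ (since $\sum_{k=1}^{|\mathcal{V}|} t(\delta,\mathbf{z})_k [P(\delta,k)-P(\delta,k-1)]$ is exactly the reorganisation of the affine combination furnished by that lemma). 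Substituting yields the desired inequality
\[
f(P(\mathcal{S})) \;\geq\; \sum_{k=1}^{|\mathcal{V}|} t(\delta, P(\mathcal{S}))_k \bigl[f(P(\delta,k)) - f(P(\delta,k-1))\bigr],
\]
so the DR inequality associated with $\delta$ is valid at $[P(\mathcal{S}), f(P(\mathcal{S}))]$.

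The only thing that could go wrong is the verification that the two regimes genuinely exhaust the non-negative line and that the telescoping is unconditional. Since Observation \ref{obs:pos_t} rules out negative coefficients in the present cases, and Lemmas \ref{lem:tree_t_small}–\ref{lem:tree_t_large} provide matching estimates at each index regardless of whether $t(\delta,\mathbf{z})_k$ is in $[0,1]$ or exceeds $1$, this dichotomy is complete and the proof reduces to a few lines. The substantive work has already been done in those two lemmas; the present proposition is essentially their corollary under the additional fact that the full affine expansion coincides with $\widetilde{\mathbf{z}}^{|\mathcal{V}|}$ at extreme points of $\conv{\mathcal{Z}(\mathcal{G},\mathbf{u})}$.
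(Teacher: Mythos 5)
Your proof is correct and follows essentially the same route as the paper: telescope $f(\mathbf{z}) = \sum_{j=1}^{|\mathcal{V}|}[f(\widetilde{\mathbf{z}}^j)-f(\widetilde{\mathbf{z}}^{j-1})]$ using $\widetilde{\mathbf{z}}^0 = \mathbf{0}$, $f(\mathbf{0})=0$, and $\widetilde{\mathbf{z}}^{|\mathcal{V}|} = \mathbf{z}$, then bound each increment via Lemma~\ref{lem:tree_t_small} or Lemma~\ref{lem:tree_t_large} according to whether $t(\delta,\mathbf{z})_k$ is at most $1$ or exceeds $1$, relying on Observation~\ref{obs:pos_t} for non-negativity. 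The only cosmetic difference is that you make explicit the appeal to Lemma~\ref{lem:tree_affine} for $\widetilde{\mathbf{z}}^{|\mathcal{V}|} = \mathbf{z}$, which the paper leaves implicit.
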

\begin{proof}
Based on Lemmas \ref{lem:tree_t_small} and \ref{lem:tree_t_large}. 
\[
f(\mathbf{z})  = \sum_{j=1}^{|\mathcal{V}|} [f(\widetilde{\mathbf{z}}^j) - f(\widetilde{\mathbf{z}}^{j-1})]  \geq \sum_{j=1}^{|\mathcal{V}|}  t(\delta, \mathbf{z})_j  \left[ f\left(P(\delta,j)\right) - f\left(P(\delta,j-1)\right) \right]  
\]
 in cases (\hyperlink{1}{1}) and (\hyperlink{2}{2}).
\end{proof}

\subsection{Validity for Case (3)} 
\label{sect:valid_N}
Recall that $\mathbf{z} = P(\mathcal{S})$ for some $\mathcal{S}\subseteq\mathcal{V}$. In case (\hyperlink{3}{3}), $\Psi \neq \emptyset$, which means that there exists fractionally upper bounded continuous variables with at least one discrete descendant. In this case, $u_\psi$ is free for all $\psi\in\Psi$. Recall 
\[\eta_\psi(\mathbf{z}) = \frac{z_\psi - (u_\psi - \floor{u_\psi})u_{\ch{\psi}}}{u_{\ch{\psi}} - u_\psi}.\]
\begin{observation}
\label{obs:good_delta}
For all $\psi\in\Psi$ with $u_\psi > 1$, if a valid permutation $\delta$ satisfies $\delta^{-1}(\psi) > \delta^{-1}(\ch{\psi})$, then $\mathbf{t}(\delta, \mathbf{z})$ does not involve $\eta_\psi(\mathbf{z})$ for any $\psi\in\Psi$. It follows that $\mathbf{t}(\delta, \mathbf{z})\geq \mathbf{0}$, and the validity of the DR inequality associated with $\delta$ with respect to $\mathbf{z}$ follows from the arguments in Section \ref{sect:valid_P}. 
\end{observation}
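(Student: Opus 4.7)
The plan is to show that, under the stated hypothesis, neither case \eqref{t_1} nor case \eqref{t_2} in the piecewise definition of $t(\delta,\mathbf{z})_k$ is ever triggered, so that every entry of $\mathbf{t}(\delta,\mathbf{z})$ is computed via \eqref{t_3}. Once this is established, $\eta_\psi(\mathbf{z})$ is automatically absent, non-negativity follows from the monotonicity of $\mathbf{z}$, and the argument for validity of the DR inequality reduces verbatim to the analysis of Section~\ref{sect:valid_P}.

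To rule out \eqref{t_1}, I would observe that this case requires $\delta(k)=\psi\in\Psi$ with $\ch{\psi}\notin\mathcal{T}^{\delta,k-1}$, i.e.\ $\psi$ preceding $\ch{\psi}$ in $\delta$. For $\psi$ with $u_\psi>1$ this is exactly what the hypothesis forbids; for $\psi$ with $0<u_\psi<1$ it is condition~(2) of Definition~\ref{def:tree_valid_perm} that forbids it. To rule out \eqref{t_2}, I would argue by contradiction: suppose $i^{\vartriangle}(\mathcal{T}^{\delta,k-1})=\psi\in\Psi$ with $i=\delta(k)$. Then $\psi\in\mathcal{T}^{\delta,k-1}$, and by the argument just given $\ch{\psi}\in\mathcal{T}^{\delta,k-1}$ as well. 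Since $i\notin\mathcal{T}^{\delta,k-1}$, we have $i\in R^+(\psi)\setminus\{\psi,\ch{\psi}\}$; invoking Property~\ref{p}, $\ch{\psi}$ is the unique child of $\psi$, so $i\in R^+(\ch{\psi})$. Consequently $\ch{\psi}\in\mathcal{T}^{\delta,k-1}\cap R^-(i)$ with strictly greater depth than $\psi$, contradicting $i^{\vartriangle}(\mathcal{T}^{\delta,k-1})=\psi$.

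With both \eqref{t_1} and \eqref{t_2} excluded, every $t(\delta,\mathbf{z})_k$ takes the form of \eqref{t_3}, whose denominator is strictly positive by Observation~\ref{obs:denom} and whose numerator $z_i-z_{i^{\vartriangle}(\mathcal{T}^{\delta,k-1})}$ is non-negative because $\mathbf{z}$ satisfies the monotonicity constraints of $\mathcal{G}$. Hence $\mathbf{t}(\delta,\mathbf{z})\geq\mathbf{0}$, and the situation is structurally identical to that of Observation~\ref{obs:pos_t}: validity of the DR inequality at $[\mathbf{z},f(\mathbf{z})]$ then follows from Lemmas~\ref{lem:tree_t_small} and \ref{lem:tree_t_large} exactly as in Proposition~\ref{prop:valid_P}. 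I anticipate the most delicate bookkeeping to be in the second step, since without Property~\ref{p} (uniqueness of $\ch{\psi}$), the inclusion $R^+(\psi)\setminus\{\psi\}\subseteq R^+(\ch{\psi})$ could fail and case \eqref{t_2} might plausibly arise.
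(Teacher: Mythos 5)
Your proof is correct and follows the same route the paper implicitly takes (the one made explicit in Observation~\ref{obs:pos_t} for cases (1) and (2)): under the hypothesis, combined with condition~(2) of Definition~\ref{def:tree_valid_perm}, $\ch{\psi}$ precedes $\psi$ in $\delta$ for every $\psi\in\Psi$, which rules out both \eqref{t_1} and \eqref{t_2}, so every $t(\delta,\mathbf{z})_k$ is of form \eqref{t_3} and hence non-negative, after which Proposition~\ref{prop:valid_P} applies verbatim. Note that your contradiction argument for excluding \eqref{t_2} reproves a fact the paper already records in the paragraph after \eqref{t_3} (that $i^\vartriangle(\mathcal{T}^{\delta,k-1})=\psi\in\Psi$ implies $\ch{\psi}\notin\mathcal{T}^{\delta,k-1}$), so that step could simply be a citation, but it is sound as written.
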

In fact, a valid permutation $\delta$ could be such that $\delta^{-1}(\psi) < \delta^{-1}(\ch{\psi})$ for $\psi\in\overline{\Psi}$, where $\overline{\Psi}\subseteq \{\psi\in\Psi: u_\psi > 1\}$. For brevity, we denote $\delta^{-1}(\psi)$ by $j$ for an arbitrary $\psi\in\overline{\Psi}$. Given such a $\delta$, 
\begin{align*}
t(\delta, \mathbf{z})_{j} & = \frac{\eta_\psi(\mathbf{z}) - z_{\psi^\vartriangle(\mathcal{T}^{\delta, j-1})}}{\floor{u_\psi} - u_{\psi^\vartriangle(\mathcal{T}^{\delta, j-1})}}\\
& = \frac{z_\psi - (u_\psi - \floor{u_\psi}) z_{\ch{\psi}}  - (u_{\ch{\psi}} - u_\psi)z_{\psi^\vartriangle(\mathcal{T}^{\delta, j-1})}}{(\floor{u_\psi} - u_{\psi^\vartriangle(\mathcal{T}^{\delta, j-1})})(u_{\ch{\psi}} - u_\psi)}\\
& = \frac{(u_\psi - \floor{u_\psi}) z_\psi + (u_{\ch{\psi}} - u_\psi)z_\psi - (u_\psi - \floor{u_\psi}) z_{\ch{\psi}}  - (u_{\ch{\psi}} - u_\psi)z_{\psi^\vartriangle(\mathcal{T}^{\delta, j-1})}}{(\floor{u_\psi} - u_{\psi^\vartriangle(\mathcal{T}^{\delta, j-1})})(u_{\ch{\psi}} - u_\psi)}\\
& = \frac{z_\psi- z_{\psi^\vartriangle(\mathcal{T}^{\delta, j-1})}}{\floor{u_\psi} - u_{\psi^\vartriangle(\mathcal{T}^{\delta, j-1})}} - \frac{(u_\psi - \floor{u_\psi})(z_{\ch{\psi}} - z_\psi)}{(\floor{u_\psi} - u_{\psi^\vartriangle(\mathcal{T}^{\delta, j-1})})(u_{\ch{\psi}} - u_\psi)},
\end{align*}
where the denominators are strictly positive because $\delta$ is valid. Now we analyze the sign of $t(\delta, \mathbf{z})_{j}$ in relation to $\mathcal{S}$. Suppose $\psi, \ch{\psi}\in\mathcal{S}$. Then $z_{\ch{\psi}} = u_{\ch{\psi}}$ and $z_\psi = u_\psi$, such that 
\[t(\delta, \mathbf{z})_{j} = \frac{u_\psi- z_{\psi^\vartriangle(\mathcal{T}^{\delta, j-1})}}{\floor{u_\psi} - u_{\psi^\vartriangle(\mathcal{T}^{\delta, j-1})}} - \frac{u_\psi - \floor{u_\psi}}{\floor{u_\psi} - u_{\psi^\vartriangle(\mathcal{T}^{\delta, j-1})}} > 0.\]
This is positive because $z_{\psi^\vartriangle(\mathcal{T}^{\delta, j-1})} \leq u_{\psi^\vartriangle(\mathcal{T}^{\delta, j-1})} \leq \floor{u_\psi} - 1$ must be satisfied for such a valid $\delta$ to exist. Now suppose $\psi\in\mathcal{S}$ and $\ch{\psi}\notin\mathcal{S}$, or $\psi,\ch{\psi}\notin\mathcal{S}$. In either case, $z_{\ch{\psi}} = z_\psi$, so 
\[t(\delta, \mathbf{z})_{j} = \frac{z_\psi- z_{\psi^\vartriangle(\mathcal{T}^{\delta, j-1})}}{\floor{u_\psi} - u_{\psi^\vartriangle(\mathcal{T}^{\delta, j-1})}} \geq 0.\]
\begin{observation}
\label{obs:good_S}
Suppose a valid permutation $\delta$ is such that  $\delta^{-1}(\psi) < \delta^{-1}(\ch{\psi})$ for $\psi\in\overline{\Psi}$. For any $\mathbf{z} = P(\mathcal{S})$ such that for every $\psi\in\overline{\Psi}$, $\ch{\psi}\in\mathcal{S}$ only if $\psi\in\mathcal{S}$, we know that $t(\delta, \mathbf{z})_{\delta^{-1}(\psi)} \geq 0$. Therefore, $\mathbf{t}(\delta, \mathbf{z})\geq \mathbf{0}$, and the validity of the DR inequality associated with $\delta$ with respect to $\mathbf{z}$ follows from the arguments in Section \ref{sect:valid_P}. 
\end{observation}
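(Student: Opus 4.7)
The plan is to verify the two nontrivial claims packaged into the observation: (a) that $\mathbf{t}(\delta,\mathbf{z}) \geq \mathbf{0}$ componentwise, and (b) that the DR inequality associated with $\delta$ holds at the extreme point $[\mathbf{z}, f(\mathbf{z})] = [P(\mathcal{S}), f(P(\mathcal{S}))]$.

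For (a), I would recycle the sign analysis worked out in the paragraph immediately before the observation. For any $\psi \in \overline{\Psi}$ and $j = \delta^{-1}(\psi)$, that paragraph rewrites $t(\delta,\mathbf{z})_j$ as a difference of two ratios with positive denominators, and enumerates the four possible membership configurations of $\{\psi,\ch{\psi}\}$ in $\mathcal{S}$. The hypothesis ``$\ch{\psi}\in\mathcal{S}$ only if $\psi\in\mathcal{S}$'' excludes exactly the single configuration $(\psi \notin \mathcal{S}, \ch{\psi} \in \mathcal{S})$ in which the expression could be negative; the remaining three configurations were verified to yield $t(\delta,\mathbf{z})_j \geq 0$. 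I then need to handle indices $k$ with $\delta(k) \notin \overline{\Psi}$: case \eqref{t_1} cannot occur (it forces $\delta(k) \in \overline{\Psi}$); case \eqref{t_3} is immediate from the monotonicity constraints satisfied by $\mathbf{z}$; and case \eqref{t_2}, where $i^\vartriangle(\mathcal{T}^{\delta,k-1}) = \psi \in \overline{\Psi}$, requires a small direct check. At $\mathbf{z} = P(\mathcal{S})$ under our hypothesis one verifies that $\eta_\psi(\mathbf{z})$ evaluates to one of $\floor{u_\psi}$, $0$, or $u_{\psi^\vartriangle(\mathcal{S})}$, and in each configuration the numerator $z_i - \eta_\psi(\mathbf{z})$ is non-negative (using monotonicity of $P(\mathcal{S})$ along the ascendant chain from $\psi^\vartriangle(\mathcal{S})$ through $\psi$ to $i$).

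For (b), once $\mathbf{t}(\delta,\mathbf{z}) \geq \mathbf{0}$ is established, I would replay the telescoping argument of Section \ref{sect:valid_P} verbatim: define the partial sums $\widetilde{\mathbf{z}}^j := \sum_{k=1}^{j} t(\delta,\mathbf{z})_k[P(\delta,k) - P(\delta,k-1)]$, apply Lemma \ref{lem:tildez_and_P} to obtain $\widetilde{\mathbf{z}}^j \leq P(\delta,j)$, and then use Lemmas \ref{lem:tree_t_small} and \ref{lem:tree_t_large} (depending on whether $t(\delta,\mathbf{z})_j \leq 1$ or $> 1$) to produce $f(\widetilde{\mathbf{z}}^j) - f(\widetilde{\mathbf{z}}^{j-1}) \geq t(\delta,\mathbf{z})_j[f(P(\delta,j)) - f(P(\delta,j-1))]$. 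Summing from $j = 1$ to $|\mathcal{V}|$, and noting that $\widetilde{\mathbf{z}}^{|\mathcal{V}|} = \mathbf{z}$ because $\mathbf{t}(\delta,\mathbf{z})$ now provides a valid affine decomposition of $\mathbf{z}$ into the extreme points $\{P(\delta,k)\}$ (Lemma \ref{lem:tree_affine}), yields exactly the DR inequality evaluated at $\mathbf{z}$. The observation then combines with Proposition \ref{prop:tree_valid_at_ext} to give full validity on $\mathcal{P}_f^{\mathcal{Z}(\mathcal{G},\mathbf{u})}$.

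The step I expect to be the main obstacle is the audit required to make the ``the arguments in Section \ref{sect:valid_P}'' invocation rigorous in case (\hyperlink{3}{3}). Section \ref{sect:valid_P} was written under Observation \ref{obs:pos_t}, in which every $t(\delta,\mathbf{z})_k$ falls under formula \eqref{t_3}. In case (\hyperlink{3}{3}) formulas \eqref{t_1} and \eqref{t_2} legitimately appear, so one must reinspect the proofs of Lemmas \ref{lem:tildez_and_P}, \ref{lem:tree_t_small}, and \ref{lem:tree_t_large} to confirm that they depend only on $\mathbf{t}(\delta,\mathbf{z})\geq\mathbf{0}$ and the pointwise inequality $\widetilde{\mathbf{z}}^j \leq P(\delta,j)$, rather than on the specific \eqref{t_3} structure; wherever the proofs invoke \eqref{t_3}-specific identities, a parallel derivation using \eqref{t_1} or \eqref{t_2} is needed. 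Everything else is an assembly of already-established pieces.
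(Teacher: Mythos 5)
Your proposal is correct and takes essentially the same approach as the paper: the sign analysis of $t(\delta,\mathbf{z})_{\delta^{-1}(\psi)}$ in the paragraph preceding the observation, followed by the reduction to Section~\ref{sect:valid_P}. You are in fact more careful than the paper itself at two points: the paper's ``Therefore, $\mathbf{t}(\delta, \mathbf{z})\geq \mathbf{0}$'' tacitly assumes the non-$\eqref{t_1}$ entries are handled, whereas you explicitly dispatch the $\eqref{t_2}$ entries (verifying $\eta_\psi(\mathbf{z}) \in \{\floor{u_\psi},\, u_{\psi^\vartriangle(\mathcal{S})}\}$ at $\mathbf{z}=P(\mathcal{S})$ under the hypothesis and then using monotonicity along the ascendant chain); and you correctly flag that ``the arguments in Section~\ref{sect:valid_P}'' were written under Observation~\ref{obs:pos_t}, so Lemma~\ref{lem:tildez_and_P} in particular needs re-auditing because its proof invokes the $\eqref{t_3}$-specific identity $t(\delta,\mathbf{z})_k[P(\delta,k)_i - P(\delta,k-1)_i] = z_{\delta(k)} - z_{\delta(k)^\vartriangle(\mathcal{T}^{\delta,k-1})}$, which can fail when $\psi,\ch{\psi}\in\mathcal{S}$. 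That audit does go through---the conclusion $\widetilde{\mathbf{z}}^j \leq P(\delta,j)$ still holds (at indices $j$ with $\delta^{-1}(\psi)\leq j < \delta^{-1}(\ch{\psi})$ the partial sum reaches $\floor{u_\psi}$, which equals $P(\delta,j)_i$ rather than $z_\psi = u_\psi$, so the inequality is tight instead of strict), and Lemmas~\ref{lem:tree_t_small}--\ref{lem:tree_t_large} use nothing beyond $\mathbf{t}(\delta,\mathbf{z})\geq\mathbf{0}$ and $\widetilde{\mathbf{z}}^{j-1}\leq P(\delta,j-1)$---so your plan is complete once that verification is written out.
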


We next explore the remaining scenario not covered by Observations \ref{obs:good_delta} and \ref{obs:good_S}. Given $\mathcal{S}\subseteq\mathcal{V}$, we let 
\[\widecheck{\Psi} := \{\psi\in\Psi: u_\psi > 1, \delta^{-1}(\psi) < \delta^{-1}(\ch{\psi}), \psi\notin\mathcal{S}, \ch{\psi}\in\mathcal{S}\}. \]
Suppose $\widecheck{\Psi}\neq\emptyset$. Then $z_{\ch{\psi}} = u_{\ch{\psi}}$ and $z_\psi \leq \floor{u_\psi}-1$ for any $\psi\in\widecheck{\Psi}$. It is now possible that $t(\delta, \mathbf{z})_{\delta^{-1}(\psi)} < 0$. \\

Consider any $\psi\in\widecheck{\Psi}$. Let $j = \delta^{-1}(\psi)$ for brevity. For any $j < j' < \delta^{-1}(\ch{\psi})$ with $\delta(j') \in R^+(\psi)$, we observe
\begin{align*}
t(\delta, \mathbf{z})_{j'} & = \frac{z_{\delta(j')} - \eta_\psi(\mathbf{z})}{u_{\delta(j')} - \floor{u_\psi}}\\
& = \frac{(u_{\ch{\psi}} - u_\psi)z_{\delta(j')} - z_\psi + (u_\psi - \floor{u_\psi})z_{\ch{\psi}}}{(u_{\delta(j')} - \floor{u_\psi})(u_{\ch{\psi}} - u_\psi)} \\
& = \frac{(u_{\ch{\psi}} - u_\psi)z_{\delta(j')} - z_\psi + (u_\psi - \floor{u_\psi})z_{\ch{\psi}}- z_{\ch{\psi}} + z_{\ch{\psi}}}{(u_{\delta(j')} - \floor{u_\psi})(u_{\ch{\psi}} - u_\psi)} \\
& = \frac{(u_{\ch{\psi}} - u_\psi)(z_{\delta(j')}-z_{\ch{\psi}}) + z_{\ch{\psi}} - z_\psi}{(u_{\delta(j')} - \floor{u_\psi})(u_{\ch{\psi}} - u_\psi)}.
\end{align*}
Particularly, when $u_{\delta(j')}=u_{\ch{\psi}}$, $z_{\delta(j')}=z_{\ch{\psi}}=u_{\ch{\psi}}$, which gives $t(\delta, \mathbf{z})_{j'}$ a special form: 
\[
t(\delta, \mathbf{z})_{j'} = \frac{ z_{\ch{\psi}} - z_\psi}{(u_{\ch{\psi}} - \floor{u_\psi})(u_{\ch{\psi}} - u_\psi)} = \frac{ z_{\ch{\psi}} - z_\psi}{u_{\ch{\psi}} - u_\psi}.
\] We notice that
\[
t(\delta, \mathbf{z})_{j} = \frac{z_\psi- z_{\psi^\vartriangle(\mathcal{T}^{\delta, j-1})}}{\floor{u_\psi} - u_{\psi^\vartriangle(\mathcal{T}^{\delta, j-1})}} - \frac{u_\psi - \floor{u_\psi}}{\floor{u_\psi} - u_{\psi^\vartriangle(\mathcal{T}^{\delta, j-1})}}t(\delta, \mathbf{z})_{j'}. 
\]
We impose the following assumption for case (\hyperlink{3}{3}), so that the special form of $t(\delta, \mathbf{z})_{j'}$ holds. \\

\textbf{Assumption \hypertarget{A2}{2}.} {When there exists a fractionally upper bounded continuous variable $\psi\in\Psi$ with $u_\psi > 1$}, we assume that $u_i = \ceil{u_\psi}$ for all $i\in R^+(\psi)\backslash \{\psi\}$. 

{
\begin{remark}
Any instance of $\mathcal{Z}(\mathcal{G},\mathbf{u})$ under the special cases (a)--(d) described in Remark \ref{remark:trivialA1} trivially satisfies Assumption \hyperlink{A2}{2} because $\Psi = \emptyset$. 
\end{remark}
}

For $j\in\{0,\dots,|\mathcal{V}|\}$ and $i\in\mathcal{V}$, we define $\widecheck{\mathbf{z}}^j\in\mathbb{R}^{|\mathcal{V}|}$ by  
\begin{equation}
\widecheck{z}^j_i := z_{i^\vartriangle(\mathcal{T}^{\delta, j})}. 
\end{equation}
We note that $z_{i^\vartriangle(\mathcal{T}^{\delta, j})} = u_{[i^\vartriangle(\mathcal{T}^{\delta, j})]^\vartriangle(\mathcal{S})}$, so $\widecheck{\mathbf{z}}^j \leq P(\delta, j)$ for all $j\in\{0,\dots,|\mathcal{V}|\}$. By definition, $\widecheck{\mathbf{z}}^0 = \mathbf{0}$ and $\widecheck{\mathbf{z}}^{|\mathcal{V}|} = \mathbf{z}$. As such, $\mathbf{z} = \sum_{j=1}^{|\mathcal{V}|} (\widecheck{\mathbf{z}}^j - \widecheck{\mathbf{z}}^{j-1})$, and 
\[f(\mathbf{z})  = \sum_{j=1}^{|\mathcal{V}|} [f(\widetilde{\mathbf{z}}^j) - f(\widetilde{\mathbf{z}}^{j-1})]. \]
\begin{observation}
\label{obs:main_r_step}
When $\delta(j) = \psi\in\widecheck{\Psi}$, $\widecheck{z}^j_i = z_{\psi}$ for every $i\in\sigma_{\psi}(\mathcal{T}^{\delta, j})$ where $\sigma_{\psi}(\mathcal{T}^{\delta, j}) = \{i\in R^+(\psi): \delta^{-1}(i) \geq j\}$. This means that 
\begin{align*}
\widecheck{\mathbf{z}}^j - \widecheck{\mathbf{z}}^{j-1} & = (z_\psi - z_{\psi^\vartriangle(\mathcal{T}^{\delta, j-1})})\sum_{i\in\sigma_{\psi}(\mathcal{T}^{\delta, j})}\mathbf{e}^i \\
& = \frac{z_\psi - z_{\psi^\vartriangle(\mathcal{T}^{\delta, j-1})}}{\floor{u_\psi} - u_{\psi^\vartriangle(\mathcal{T}^{\delta, j-1})}} (\floor{u_\psi} - u_{\psi^\vartriangle(\mathcal{T}^{\delta, j-1})})\sum_{i\in\sigma_{\psi}(\mathcal{T}^{\delta, j})}\mathbf{e}^i \\
& =  \frac{z_\psi - z_{\psi^\vartriangle(\mathcal{T}^{\delta, j-1})}}{\floor{u_\psi} - u_{\psi^\vartriangle(\mathcal{T}^{\delta, j-1})}}[P(\delta, j) - P(\delta, j-1)].
\end{align*}
We further notice that $\frac{z_\psi - z_{\psi^\vartriangle(\mathcal{T}^{\delta, j-1})}}{\floor{u_\psi} - u_{\psi^\vartriangle(\mathcal{T}^{\delta, j-1})}} \geq 0$. Therefore, 
\[
f(\widecheck{\mathbf{z}}^j) - f(\widecheck{\mathbf{z}}^{j-1}) \geq \frac{z_\psi - z_{\psi^\vartriangle(\mathcal{T}^{\delta, j-1})}}{\floor{u_\psi} - u_{\psi^\vartriangle(\mathcal{T}^{\delta, j-1})}} \left[f\left(P(\delta, j)\right) - f(P(\delta, j-1))\right] 
\]
by the same arguments as in Lemmas \ref{lem:tree_t_small} and \ref{lem:tree_t_large}.
\end{observation}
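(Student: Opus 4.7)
The plan is to first verify the set identity $\sigma_\psi(\mathcal{T}^{\delta,j}) = \{i \in R^+(\psi) : \delta^{-1}(i) \geq j\}$, next compute both $\widecheck{\mathbf{z}}^j - \widecheck{\mathbf{z}}^{j-1}$ and $P(\delta,j) - P(\delta,j-1)$ on the same support so that their proportionality is transparent, and finally derive the functional inequality by the DR-submodular scaling argument behind Lemmas \ref{lem:tree_t_small} and \ref{lem:tree_t_large}.

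For the set identity, the key inputs are Assumption \hyperlink{A2}{2}, which forces $u_\ell = \ceil{u_\psi}$ for every $\ell \in R^+(\psi) \setminus \{\psi\}$, and validity property (1) of $\delta$, which then orders such $\ell$ in reverse depth. For any $i \in R^+(\psi)$ with $\delta^{-1}(i) \geq j$, every strict intermediate ascendant $\ell \in R^-(i) \cap R^+(\psi) \setminus \{\psi, i\}$ satisfies $u_\ell = u_i$, hence $\delta^{-1}(\ell) > \delta^{-1}(i) \geq j$, so $\ell \notin \mathcal{T}^{\delta,j}$ and the closest ascendant of $i$ in $\mathcal{T}^{\delta,j}$ is $\psi$. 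Conversely, if $i \in \sigma_\psi(\mathcal{T}^{\delta,j})$ and $i \neq \psi$, then $i \in R^+(\psi) \setminus \{\psi\}$, and since $i \in R^+(i)$ trivially, the defining exclusion $i \notin R^+(k)$ for $k \in \mathcal{T}^{\delta,j} \cap [R^+(\psi) \setminus \{\psi\}]$ forces $i \notin \mathcal{T}^{\delta,j}$, i.e., $\delta^{-1}(i) > j$.

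Given the identity, the increment calculation is routine: only coordinates $i \in \sigma_\psi(\mathcal{T}^{\delta,j})$ see $i^\vartriangle$ change, moving from $\psi^\vartriangle(\mathcal{T}^{\delta,j-1})$ to $\psi$, so $\widecheck{z}^j_i - \widecheck{z}^{j-1}_i = z_\psi - z_{\psi^\vartriangle(\mathcal{T}^{\delta,j-1})}$ on this support and vanishes elsewhere. Because $\psi \in \widecheck{\Psi}$ forces $\ch{\psi} \notin \mathcal{T}^{\delta,j}$, the definition \eqref{ext} gives $P(\delta,j)_i = \floor{u_\psi}$ on the same support, while Assumption \hyperlink{A1}{1} places $\psi^\vartriangle(\mathcal{T}^{\delta,j-1})$ outside $\Psi$ so that $P(\delta,j-1)_i = u_{\psi^\vartriangle(\mathcal{T}^{\delta,j-1})}$. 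Factoring yields the stated proportionality, and non-negativity of the ratio follows from the monotonicity constraints satisfied by $\mathbf{z}$ together with Observation \ref{obs:denom}.

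For the final step, I would set $r := (z_\psi - z_{\psi^\vartriangle(\mathcal{T}^{\delta,j-1})})/(\floor{u_\psi} - u_{\psi^\vartriangle(\mathcal{T}^{\delta,j-1})})$ and mimic the argument of Lemmas \ref{lem:tree_t_small} and \ref{lem:tree_t_large}, exploiting the componentwise bound $\widecheck{\mathbf{z}}^{j-1} \leq P(\delta,j-1)$ (valid because each entry of $\widecheck{\mathbf{z}}^{j-1}$ is of the form $z_{i^\vartriangle(\cdot)} \leq u_{i^\vartriangle(\cdot)}$) together with the non-negative direction $\mathbf{d} = \sum_{i \in \sigma_\psi(\mathcal{T}^{\delta,j})} \mathbf{e}^i$. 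The main subtlety I anticipate is the case split on $r$: if $r \leq 1$, Lemma \ref{lem:pos_dir_increment} applies directly with scale $r$ at base $\widecheck{\mathbf{z}}^{j-1}$, and a DR-submodular step lifts the base to $P(\delta,j-1)$; if $r > 1$, I would instead apply Lemma \ref{lem:pos_dir_increment} with scale $1/r \leq 1$ along the segment from $P(\delta,j-1)$ to $P(\delta,j)$ and rearrange, exactly as in Lemma \ref{lem:tree_t_large}.
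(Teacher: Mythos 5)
Your proposal is correct and follows the same route the paper takes: verify the set identity for $\sigma_\psi(\mathcal{T}^{\delta,j})$ via Assumption \hyperlink{A2}{2} and validity condition (1), compute the coordinatewise increment, factor it as $\frac{z_\psi - z_{\psi^\vartriangle(\mathcal{T}^{\delta,j-1})}}{\floor{u_\psi}- u_{\psi^\vartriangle(\mathcal{T}^{\delta,j-1})}}\left[P(\delta,j)-P(\delta,j-1)\right]$ using the fact that $\psi\in\widecheck\Psi$ forces $\ch{\psi}\notin\mathcal{T}^{\delta,j}$ and Assumption \hyperlink{A1}{1} places $\psi^\vartriangle(\mathcal{T}^{\delta,j-1})$ outside $\Psi$, and then replay the argument of Lemmas \ref{lem:tree_t_small} and \ref{lem:tree_t_large} with the scalar $r$ in the two regimes $r\le 1$ and $r>1$. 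One parenthetical needs tightening: justifying $\widecheck{\mathbf{z}}^{j-1}\le P(\delta,j-1)$ by ``each entry is of the form $z_{i^\vartriangle(\cdot)}\le u_{i^\vartriangle(\cdot)}$'' is not enough in the coordinates where $P(\delta,j-1)_i=\floor{u_{i^\vartriangle(\mathcal{T}^{\delta,j-1})}}$ (the case $i^\vartriangle(\mathcal{T}^{\delta,j-1})\in\Psi$ with its child outside $\mathcal{T}^{\delta,j-1}$), since $u$ strictly exceeds its floor there; the paper establishes $\widecheck{\mathbf{z}}^j\le P(\delta,j)$ immediately before the observation, so it is cleaner to cite that fact rather than rederive it loosely.
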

For ease of notation, we let 
\begin{equation}
J(\psi) := \{\delta^{-1}(i) : i\in \sigma_{\psi}(\mathcal{T}^{\delta, \delta^{-1}(\psi)})\backslash\{\psi\}\}
\end{equation}
for every $\psi\in\widecheck{\Psi}$. We sort the elements in $J(\psi)$ by ascending order. In other words, $J(\psi)_1$ is the closest to $\delta^{-1}(\psi)$ among elements of $J(\psi)$, and $\delta(J(\psi)_{|J(\psi)|}) = \ch{\psi}$. We let $J(\psi)_0 = \delta^{-1}(\psi)$. \\

For any $j\in J(\psi)$, let $i = \delta(j)$. We observe that for any $\ell\in\mathcal{V}$, 
\begin{equation}
\label{z_ell}
\widecheck{z}^{j}_{\ell} = \begin{cases}
z_i = u_{\ch{\psi}}, & \ell\in \sigma_{i}(\mathcal{T}^{\delta, j}) = \{i\}, \\
\widecheck{z}^{j-1}_{\ell}, & \text{otherwise.}
\end{cases}
\end{equation}
Here, $\sigma_{i}(\mathcal{T}^{\delta, j}) = \{i\}$ because all elements of $R^+(i)$ have the same upper bound and $\delta$ is valid. We now define two new sets of points $Q(\delta, j), T(\delta, j)\in\mathbb{R}^{|\mathcal{V}|}$. For every $\ell\in\mathcal{V}$, 
\begin{equation}
Q(\delta, j)_\ell := \begin{cases}
u_\psi, & \ell = \delta(j), \\
P(\delta, j)_\ell, & \text{otherwise.}
\end{cases}
\end{equation}
Moreover,
\begin{equation}
T(\delta, j)_\ell := 
\begin{cases}
u_\psi, & \ell \in \{\delta(j'): j' \leq j, j'\in J(\psi)\}, \text{ or } \ell = \psi \text{ and } \delta(j) = \ch{\psi},\\
P(\delta, \delta^{-1}(\psi))_\ell, & \text{otherwise.}
\end{cases} 
\end{equation}
We let $T(\delta, \delta^{-1}(\psi)) = P(\delta, \delta^{-1}(\psi))$.

\begin{lemma}
\label{lem:Jr}
For any $\psi\in\widecheck{\Psi}$ and any $j\in J(\psi)$, the following inequality holds:  
\[f(\widecheck{\mathbf{z}}^{j}) - f(\widecheck{\mathbf{z}}^{j-1}) \geq \frac{u_{\ch{\psi}}-z_\psi}{u_{\ch{\psi}}-u_\psi} [f(P(\delta, j)) - f(P(\delta, j-1)) + f(P(\delta, j-1)) - f(Q(\delta, j))].\]
\end{lemma}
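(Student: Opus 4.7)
The plan is to decompose $f(\widecheck{\mathbf{z}}^j) - f(\widecheck{\mathbf{z}}^{j-1})$ through an intermediate point and control each piece separately. I first observe that the bracketed expression on the right telescopes: the terms $+f(P(\delta, j-1))$ and $-f(P(\delta, j-1))$ cancel, so the claimed inequality is equivalent to $f(\widecheck{\mathbf{z}}^j) - f(\widecheck{\mathbf{z}}^{j-1}) \geq \tfrac{u_{\ch{\psi}} - z_\psi}{u_{\ch{\psi}} - u_\psi}[f(P(\delta, j)) - f(Q(\delta, j))]$. Writing $i = \delta(j)$, equation \eqref{z_ell} tells us that $\widecheck{\mathbf{z}}^{j-1}$ and $\widecheck{\mathbf{z}}^j$ agree outside coordinate $i$, while $\widecheck{z}^{j-1}_i = z_\psi$ and $\widecheck{z}^j_i = u_{\ch{\psi}}$. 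I introduce the intermediate point $\hat{\mathbf{z}} := \widecheck{\mathbf{z}}^{j-1} + (u_\psi - z_\psi)\mathbf{e}^i$, whose $i$-th coordinate equals $u_\psi$, and set $A := f(\widecheck{\mathbf{z}}^j) - f(\hat{\mathbf{z}})$ and $B := f(\hat{\mathbf{z}}) - f(\widecheck{\mathbf{z}}^{j-1})$, so $f(\widecheck{\mathbf{z}}^j) - f(\widecheck{\mathbf{z}}^{j-1}) = A + B$.

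The main step is to verify the component-wise domination $\hat{\mathbf{z}} \leq Q(\delta, j)$. At coordinate $i$ both sides equal $u_\psi$; for coordinates $\ell \neq i$ one must show $P(\mathcal{S})_{\ell^\vartriangle(\mathcal{T}^{\delta, j-1})} \leq P(\delta, j)_\ell$, using the form \eqref{ext} of $P(\cdot)$, the containment $\mathcal{T}^{\delta, j-1} \subseteq \mathcal{T}^{\delta, j}$, the fact that $z_\psi \in \mathbb{Z}_+$ (because $\psi \notin \mathcal{S}$ and $\psi^\vartriangle(\mathcal{S}) \notin \Psi$ by Assumption \hyperlink{A1}{1}), condition (1) of validity of $\delta$, and Assumption \hyperlink{A2}{2} (all descendants of $\psi$ share upper bound $u_{\ch{\psi}}$, so the strict descendants of $\psi$ precede $\ch{\psi}$ in $\delta$). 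Once the domination is in hand, increasing coordinate $i$ of both $\hat{\mathbf{z}}$ and $Q(\delta, j)$ by $u_{\ch{\psi}} - u_\psi$ yields $\widecheck{\mathbf{z}}^j$ and $P(\delta, j)$, respectively, so DR-submodularity (Definition \ref{def:dr}) delivers $A \geq f(P(\delta, j)) - f(Q(\delta, j))$.

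For $B$, I would invoke Lemma \ref{lem:pos_dir_increment} with base $\widecheck{\mathbf{z}}^{j-1}$, direction $\mathbf{d} = (u_{\ch{\psi}} - z_\psi)\mathbf{e}^i$, and scalar $t = (u_\psi - z_\psi)/(u_{\ch{\psi}} - z_\psi) \in (0,1]$; the required containment of intermediate points in $\mathcal{X}$ is automatic because $f$ is defined on the continuous relaxation of $\mathcal{Z}(\mathcal{G}, \mathbf{u})$. This yields $B \geq t(A+B)$, which rearranges to $B \geq \tfrac{u_\psi - z_\psi}{u_{\ch{\psi}} - u_\psi}\,A$. Combining,
\[
A + B \;\geq\; \left(1 + \frac{u_\psi - z_\psi}{u_{\ch{\psi}} - u_\psi}\right) A \;=\; \frac{u_{\ch{\psi}} - z_\psi}{u_{\ch{\psi}} - u_\psi}\,A \;\geq\; \frac{u_{\ch{\psi}} - z_\psi}{u_{\ch{\psi}} - u_\psi}\bigl[f(P(\delta, j)) - f(Q(\delta, j))\bigr],
\]
completing the proof. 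The principal obstacle is the domination step $\hat{\mathbf{z}} \leq Q(\delta, j)$: tracking which entries of $P(\delta, j)$ use the $\floor{u_\psi}$ branch of \eqref{ext} versus the $u_{\ell^\vartriangle(\mathcal{T}^{\delta, j})}$ branch, and checking that $P(\mathcal{S})$ at the relevant ancestors never exceeds them, is where the validity of $\delta$ and Assumptions \hyperlink{A1}{1}--\hyperlink{A2}{2} do all of the combinatorial work; everything else is a one-line application of DR-submodularity and Lemma \ref{lem:pos_dir_increment}.
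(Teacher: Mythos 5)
Your proposal is correct and uses exactly the same two ingredients as the paper's proof: Lemma \ref{lem:pos_dir_increment} applied at $\widecheck{\mathbf{z}}^{j-1}$ in the direction $(u_{\ch{\psi}}-z_\psi)\mathbf{e}^{\delta(j)}$ with scalar $t=(u_\psi-z_\psi)/(u_{\ch{\psi}}-z_\psi)$, and DR-submodularity comparing $\widecheck{\mathbf{z}}^{j-1}+(u_\psi-z_\psi)\mathbf{e}^{\delta(j)}$ with $Q(\delta,j)$ (which dominates it) under an increment of $(u_{\ch{\psi}}-u_\psi)\mathbf{e}^{\delta(j)}$. The only cosmetic difference is that you split $f(\widecheck{\mathbf{z}}^j)-f(\widecheck{\mathbf{z}}^{j-1})$ explicitly into $A+B$ and recombine, whereas the paper chains the two inequalities directly; also the domination $\hat{\mathbf{z}}\leq Q(\delta,j)$ follows quickly from $\widecheck{\mathbf{z}}^{j-1}\leq P(\delta,j-1)\leq Q(\delta,j)$ (the latter is immediate from Observation \ref{obs:PQT}), so it is less of an ``obstacle'' than your closing remark suggests.
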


The right-hand side of the inequality in Lemma \ref{lem:Jr} contains the term
\[\frac{u_{\ch{\psi}}-z_\psi}{u_{\ch{\psi}}-u_\psi} [f(P(\delta, j)) - f(P(\delta, j-1))] = t(\delta,\mathbf{z})_j[f(P(\delta, j)) - f(P(\delta, j-1))].\]
We next relate the remaining terms to $t(\delta,\mathbf{z})_{\delta^{-1}(\psi)} [f(P(\delta, \delta^{-1}(\psi))) - f(P(\delta, \delta^{-1}(\psi)-1))]$. Observation \ref{obs:PQT} provides relevant details for the derivation in Lemma \ref{lem:tr}. 

\begin{observation}
\label{obs:PQT}
Let any $j\in J(\psi)$ be given. We denote $\delta(j)$ by $i$, and $j = J(\psi)_{\ell}$ for some $\ell\in\{1,\dots, |J(\psi)|\}$.  For any $q\in \mathcal{V}\backslash \{i\}$, $P(\delta, j-1)_q = P(\delta, j)_q = Q(\delta, j)_q$ when $i \neq \ch{\psi}$, and $Q(\delta, j) - P(\delta, j-1) = (u_\psi - \floor{u_\psi}) \mathbf{e}^i$. When $i = \ch{\psi}$, $P(\delta, j-1)_q = Q(\delta, j)_q$ for all $q\in\mathcal{V}\backslash\{\psi, \ch{\psi}\}$, and $Q(\delta, j) - P(\delta, j-1) = (u_\psi - \floor{u_\psi}) (\mathbf{e}^{\psi} + \mathbf{e}^{\ch{\psi}})$. By construction, $Q(\delta, j)_q= T(\delta, j)_q$ for all $q \in \{\delta(j'): j'\in J(\psi)\}$, and $Q(\delta, j)_q\geq T(\delta, j)_q$ for all other entries. Therefore, $Q(\delta,j) \geq T(\delta, j)$ component-wise. Furthermore, $Q(\delta, j) - P(\delta, j-1) = T(\delta, J(\psi)_{\ell}) - T(\delta, J(\psi)_{\ell-1})$ regardless of $i = \ch{\psi}$ or not. 
\end{observation}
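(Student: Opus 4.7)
The plan is to verify each sub-claim of the observation by directly unfolding the definitions of $P(\delta, \cdot)$, $Q(\delta, \cdot)$, and $T(\delta, \cdot)$ and tracking how the entries move when $j$ increments within $J(\psi)$. The central fact that organizes everything is that Assumption~\hyperlink{A2}{2} forces $u_q = u_{\ch{\psi}}$ for every $q \in R^+(\psi)\setminus\{\psi\}$; combined with condition~(1) of Definition~\ref{def:tree_valid_perm}, this requires every strict descendant of any $i \in R^+(\psi)\setminus\{\psi\}$ to precede $i$ in $\delta$. In particular $\ch{\psi}$ is the last element of $J(\psi)$, and at the moment $i = \delta(j)$ is about to be added, $\mathcal{T}^{\delta, j-1}$ already contains every strict descendant of $i$ while it contains no strict ancestor of $i$ lying inside $R^+(\psi)\setminus\{\psi\}$.

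First I would derive the auxiliary facts $\sigma_i(\mathcal{T}^{\delta, j}) = \{i\}$ and $i^\vartriangle(\mathcal{T}^{\delta, j-1}) = \psi$. The former holds because for every $q \in R^+(i)\setminus\{i\}$ the vertex $q$ is already in $\mathcal{T}^{\delta, j-1}$, so $q \notin \sigma_i(\mathcal{T}^{\delta, j})$. The latter holds because any strict ancestor of $i$ inside $R^+(\psi)\setminus\{\psi\}$ shares the upper bound $u_{\ch{\psi}}$ with $i$ and must therefore \emph{follow} $i$ in $\delta$, so the maximum-depth element of $R^-(i)\cap \mathcal{T}^{\delta, j-1}$ is $\psi$ itself.

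With those structural facts in hand the two entry-difference identities become routine. For $i \neq \ch{\psi}$, only the $i$-th entry can change between $P(\delta, j-1)$ and $P(\delta, j)$: the former equals $\floor{u_\psi}$ by the first case of \eqref{ext} (since $i^\vartriangle = \psi \in \Psi$ and $\ch{\psi}\notin \mathcal{T}^{\delta, j-1}$), the latter equals $u_i$, and replacing the $i$-th entry with $u_\psi$ in $Q(\delta, j)$ yields $Q(\delta, j) - P(\delta, j-1) = (u_\psi - \floor{u_\psi})\mathbf{e}^i$. For $i = \ch{\psi}$ two entries shift: the $\ch{\psi}$-th entry of $Q(\delta, j)$ is $u_\psi$ while $P(\delta, j-1)_{\ch{\psi}} = \floor{u_\psi}$, and the $\psi$-th entry of $P(\delta, j)$ jumps from $\floor{u_\psi}$ to $u_\psi$ because inserting $\ch{\psi}$ into $\mathcal{T}$ switches the evaluation of \eqref{ext} at $\psi$ from the first case to the ``otherwise'' case. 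All other entries coincide, producing $Q(\delta, j) - P(\delta, j-1) = (u_\psi - \floor{u_\psi})(\mathbf{e}^\psi + \mathbf{e}^{\ch{\psi}})$.

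The remaining claims compare $Q(\delta, j)$ to $T(\delta, j)$ coordinatewise, noting that $Q$ uses baseline $P(\delta, j)$ while $T$ uses baseline $P(\delta, \delta^{-1}(\psi))$. At $q = \delta(j) = i$, both equal $u_\psi$ by construction; at $q = \delta(j')$ for $j' \in J(\psi)$ with $j' > j$, the same $i^\vartriangle$ analysis applied to both $\mathcal{T}^{\delta, j}$ and $\mathcal{T}^{\delta, \delta^{-1}(\psi)}$ shows both equal $\floor{u_\psi}$; on all remaining entries, monotonicity of $P(\delta, \cdot)$ in the index gives $P(\delta, j)_q \geq P(\delta, \delta^{-1}(\psi))_q$, and hence $Q(\delta, j) \geq T(\delta, j)$ componentwise. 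Finally, the identity $Q(\delta, j) - P(\delta, j-1) = T(\delta, J(\psi)_\ell) - T(\delta, J(\psi)_{\ell-1})$ follows from the same case split on whether $\delta(J(\psi)_\ell) = \ch{\psi}$: in the first subcase only the $i$-th entry of $T$ changes, from $\floor{u_\psi}$ to $u_\psi$, while in the second subcase the secondary clause ``$\ell = \psi$ and $\delta(j) = \ch{\psi}$'' in $T$'s definition activates, making the $\psi$- and $\ch{\psi}$-entries of $T$ both jump by $u_\psi - \floor{u_\psi}$. The main obstacle is the careful bookkeeping between the two distinct baselines used by $Q$ and $T$; no step is conceptually deep once the ordering forced by Assumption~\hyperlink{A2}{2} together with validity of $\delta$ is nailed down.
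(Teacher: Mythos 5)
Your direct unfolding of the definitions is the intended verification, and the structural facts you isolate---that Assumption~\hyperlink{A2}{2} combined with condition~(1) of Definition~\ref{def:tree_valid_perm} forces every strict descendant of $\delta(j)$ inside $R^+(\psi)$ to precede $\delta(j)$ in $\delta$, so that $\sigma_{\delta(j)}(\mathcal{T}^{\delta,j})=\{\delta(j)\}$ and $\delta(j)^\vartriangle(\mathcal{T}^{\delta,j-1})=\psi$, and that $\ch{\psi}$ is last in $J(\psi)$---are exactly what drives the observation. The entry-by-entry computations for the two cases $\delta(j)\neq\ch{\psi}$ and $\delta(j)=\ch{\psi}$, and the telescoping identity $Q(\delta,j)-P(\delta,j-1)=T(\delta,J(\psi)_\ell)-T(\delta,J(\psi)_{\ell-1})$, are carried out correctly.

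One bookkeeping step is slightly loose. Your ``all remaining entries'' monotonicity argument silently identifies $T(\delta,j)_q$ with $P(\delta,\delta^{-1}(\psi))_q$, but that identification fails at precisely the remaining entries $q=\delta(j')$ with $j'<j$, $j'\in J(\psi)$, where $T(\delta,j)_q=u_\psi>\floor{u_\psi}=P(\delta,\delta^{-1}(\psi))_q$, and at $q=\psi$ when $\delta(j)=\ch{\psi}$. The conclusion $Q(\delta,j)\geq T(\delta,j)$ still holds on those entries---in the first case $Q(\delta,j)_q=P(\delta,j)_q=u_{\ch{\psi}}=\ceil{u_\psi}>u_\psi=T(\delta,j)_q$ by Assumption~\hyperlink{A2}{2}, and in the second $Q(\delta,j)_\psi=P(\delta,j)_\psi=u_\psi=T(\delta,j)_\psi$ since $\ch{\psi}\in\mathcal{T}^{\delta,j}$---but they need their own short check rather than the monotonicity bound as written. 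Incidentally, the observation as stated claims equality $Q(\delta,j)_q=T(\delta,j)_q$ on all of $\{\delta(j'):j'\in J(\psi)\}$, which is only an inequality for $j'<j$; you correctly refrain from asserting it, and Lemma~\ref{lem:tr} in any case only uses $Q(\delta,j)\geq T(\delta,j)$ componentwise together with the telescoping identity, both of which you establish.
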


\begin{lemma}
\label{lem:tr}
For any $\psi\in\widecheck{\Psi}$, 
\[\sum_{j\in J(\psi)} [f(P(\delta, j-1)) - f(Q(\delta, j))] \geq - \frac{u_\psi - \floor{u_\psi}}{\floor{u_\psi}- u_{\psi^\vartriangle(\mathcal{T}^{\delta, j-1})}} [f(P(\delta, \delta^{-1}(\psi))) - f(P(\delta, \delta^{-1}(\psi)-1))].\]
\end{lemma}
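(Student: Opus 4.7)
My plan is to telescope $\sum_{j \in J(\psi)}[f(Q(\delta, j)) - f(P(\delta, j-1))]$ along the auxiliary points $T(\delta,\cdot)$ and then invoke concavity in a non-negative direction. Write $J(\psi)=\{j_1<\cdots<j_L\}$ with $\delta(j_L)=\ch{\psi}$, let $r=\delta^{-1}(\psi)$, set $\mathbf{d}:=P(\delta,r)-P(\delta,r-1)$, and abbreviate $t:=(u_\psi-\floor{u_\psi})/(\floor{u_\psi}-u_{\psi^\vartriangle(\mathcal{T}^{\delta,r-1})})$, which lies in $(0,1)$. Note that $T(\delta,j_0):=T(\delta,r)=P(\delta,r)$ by definition.

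First I would verify that $T(\delta,j_{\ell-1})\leq P(\delta,j_\ell-1)$ component-wise for every $\ell\in\{1,\ldots,L\}$. At indices in $\{\delta(j_{\ell'}):\ell'<\ell\}$ we have $T=u_\psi\leq u_{\ch{\psi}}=P$; at $\psi$ both equal $\floor{u_\psi}$ since $\ch{\psi}\notin\mathcal{T}^{\delta,j_\ell-1}$ whenever $\ell\leq L$; at every other index $T=P(\delta,r)_i\leq P(\delta,j_\ell-1)_i$ because $k\mapsto P(\delta,k)_i$ is non-decreasing. Combined with Observation \ref{obs:PQT}, which supplies $Q(\delta,j_\ell)-P(\delta,j_\ell-1)=T(\delta,j_\ell)-T(\delta,j_{\ell-1})$, DR-submodularity (larger base point, smaller increment) gives
\[
f(Q(\delta,j_\ell))-f(P(\delta,j_\ell-1))\leq f(T(\delta,j_\ell))-f(T(\delta,j_{\ell-1})).
\]
Summing and telescoping on the right yields $\sum_{j\in J(\psi)}[f(Q(\delta,j))-f(P(\delta,j-1))]\leq f(T(\delta,j_L))-f(P(\delta,r))$.

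Next I would relate $T(\delta,j_L)$ back to $\mathbf{d}$. For each $i\in\sigma_\psi(\mathcal{T}^{\delta,r})$ one has $T(\delta,j_L)_i=u_\psi$, $P(\delta,r)_i=\floor{u_\psi}$, and $\mathbf{d}_i=\floor{u_\psi}-u_{\psi^\vartriangle(\mathcal{T}^{\delta,r-1})}$, so $T(\delta,j_L)_i-P(\delta,r-1)_i=u_\psi-u_{\psi^\vartriangle(\mathcal{T}^{\delta,r-1})}=(1+t)\mathbf{d}_i$, while the two vectors agree off $\sigma_\psi(\mathcal{T}^{\delta,r})$. Hence $T(\delta,j_L)=P(\delta,r-1)+(1+t)\mathbf{d}$, and Assumption \hyperlink{A2}{2} ensures $T(\delta,j_L)\leq\mathbf{u}$ so that the whole segment lies in $\mathcal{X}$. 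Applying Lemma \ref{lem:pos_dir_increment} to the non-negative direction $(1+t)\mathbf{d}$ with parameter $1/(1+t)\in(0,1)$ gives
\[
f(P(\delta,r))-f(P(\delta,r-1))\geq \tfrac{1}{1+t}\bigl[f(T(\delta,j_L))-f(P(\delta,r-1))\bigr],
\]
which rearranges to $f(T(\delta,j_L))-f(P(\delta,r))\leq t\bigl[f(P(\delta,r))-f(P(\delta,r-1))\bigr]$. Chaining with the telescoped bound and negating both sides produces the inequality in the lemma.

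The main hurdle is this last concavity step: the familiar form of Lemma \ref{lem:pos_dir_increment} only covers scalars in $[0,1]$, so I have to rescale the direction to $(1+t)\mathbf{d}$ in order to push the concavity argument past $P(\delta,r)$ all the way out to $T(\delta,j_L)$; this is what converts the telescoped ``upper envelope'' into the clean multiplicative factor $t$. A secondary, more tedious point will be bookkeeping the dominance $T(\delta,j_{\ell-1})\leq P(\delta,j_\ell-1)$, which leans on Assumption \hyperlink{A2}{2} so that every processed entry of $P$ has already reached $u_{\ch{\psi}}\geq u_\psi$.
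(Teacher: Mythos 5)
Your argument is correct and follows essentially the same route as the paper's proof: telescope via the auxiliary points $T(\delta,\cdot)$ using Observation \ref{obs:PQT} and DR-submodularity, then apply Lemma \ref{lem:pos_dir_increment} along the direction $(1+t)\mathbf{d}=(u_\psi-u_{\psi^\vartriangle(\mathcal{T}^{\delta,r-1})})\sum_{i'\in\sigma_\psi(\mathcal{T}^{\delta,r})}\mathbf{e}^{i'}$ with scalar $1/(1+t)=(\floor{u_\psi}-u_{\psi^\vartriangle(\mathcal{T}^{\delta,r-1})})/(u_\psi-u_{\psi^\vartriangle(\mathcal{T}^{\delta,r-1})})$, which is exactly the paper's application written in a rescaled form. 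Your explicit check that $T(\delta,j_{\ell-1})\leq P(\delta,j_\ell-1)$ is a slightly more detailed justification of what the paper extracts from Observation \ref{obs:PQT}, but the content is identical.
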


Observation \ref{obs:main_r_step}, Lemma \ref{lem:Jr} and Lemma \ref{lem:tr} have shown that for any $\psi\in\widecheck{\Psi}$, 
\begin{equation}
\label{eq:Jr_tr}
\sum_{j\in J(\psi)} [f(\widecheck{\mathbf{z}}^{j}) - f(\widecheck{\mathbf{z}}^{j-1})] \geq \sum_{j\in J(\psi)\cup\{\psi\}} t(\delta,\mathbf{z})_j[f(P(\delta, j)) - f(P(\delta, j-1))]. 
\end{equation}

\begin{lemma}
\label{lem:other_j}
For all $j\notin \bigcup_{\psi\in\widecheck{\Psi}} \left(J(\psi)\cup \{\psi\}\right)$, 
\[f(\widecheck{\mathbf{z}}^{j}) - f(\widecheck{\mathbf{z}}^{j-1}) \geq t(\delta,\mathbf{z})_j [f(P(\delta, j)) - f(P(\delta, j-1))]. \]
\end{lemma}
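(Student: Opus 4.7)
The approach is to extend the step-wise argument of Lemmas~\ref{lem:tree_t_small} and~\ref{lem:tree_t_large} from Section~\ref{sect:valid_P} to case~(\hyperlink{3}{3}), with $\widecheck{\mathbf{z}}^j$ replacing $\widetilde{\mathbf{z}}^j$. The proof hinges on promoting the displacement identity
\[
\widecheck{\mathbf{z}}^{j} - \widecheck{\mathbf{z}}^{j-1} = t(\delta,\mathbf{z})_j \bigl[P(\delta,j) - P(\delta,j-1)\bigr]
\]
(the direct analog of Observation~\ref{obs:P_tpp}) to the present setting whenever $j$ lies outside $\bigcup_{\psi'\in\widecheck{\Psi}}(J(\psi')\cup\{\delta^{-1}(\psi')\})$, and then reusing the Lemma~\ref{lem:pos_dir_increment}/DR-submodularity sandwich already developed in Section~\ref{sect:valid_P}.

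First, I would verify the identity. When $\delta(j)\notin\Psi$ and $\delta(j)^{\vartriangle}(\mathcal{T}^{\delta,j-1})\notin\Psi$, formula~\eqref{t_3} applies and the identity is immediate from the definitions of $\widecheck{\mathbf{z}}^j$ and $P(\delta,\cdot)$. The delicate situation is $\delta(j)=\psi\in\Psi$ (form~\eqref{t_1}) or $\delta(j)^{\vartriangle}(\mathcal{T}^{\delta,j-1})=\psi\in\Psi$ (form~\eqref{t_2}), because $t(\delta,\mathbf{z})_j$ then carries an $\eta_{\psi}(\mathbf{z})$ factor. I would use the exclusion hypothesis to force any such $\psi$ into $\Psi\setminus\widecheck{\Psi}$, and via a sub-case split on exactly how $\psi$ fails the membership criteria for $\widecheck{\Psi}$ (mirroring the analyses of Observations~\ref{obs:good_delta} and~\ref{obs:good_S}) conclude $z_{\ch{\psi}}=z_{\psi}$ at the extreme point $\mathbf{z}=P(\mathcal{S})$. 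Substituting this identity into $\eta_{\psi}(\mathbf{z})$ yields $\eta_{\psi}(\mathbf{z})=z_{\psi}$, which collapses~\eqref{t_1} and~\eqref{t_2} onto the form of~\eqref{t_3} and restores the displacement identity.

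Second, I would record the componentwise bound $\widecheck{\mathbf{z}}^{j-1}\leq P(\delta,j-1)$, which is inherited from the definition of $\widecheck{\mathbf{z}}^{j-1}$ after handling the $\floor{u_\psi}$-adjustment when the relevant ascendant lies in $\Psi$. With the identity and this bound in hand, the remainder follows exactly as in Lemmas~\ref{lem:tree_t_small} and~\ref{lem:tree_t_large}: for $0\leq t(\delta,\mathbf{z})_j\leq 1$, Lemma~\ref{lem:pos_dir_increment} applied at $\widecheck{\mathbf{z}}^{j-1}$ in the nonnegative direction $\mathbf{d}=P(\delta,j)-P(\delta,j-1)$ yields
\[
f(\widecheck{\mathbf{z}}^{j})-f(\widecheck{\mathbf{z}}^{j-1}) \geq t(\delta,\mathbf{z})_j\bigl[f(\widecheck{\mathbf{z}}^{j-1}+\mathbf{d})-f(\widecheck{\mathbf{z}}^{j-1})\bigr],
\]
after which DR-submodularity (shifting the base from $\widecheck{\mathbf{z}}^{j-1}$ up to $P(\delta,j-1)$ along $\mathbf{d}$) reduces the bracketed increment to $f(P(\delta,j))-f(P(\delta,j-1))$. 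For $t(\delta,\mathbf{z})_j>1$, the reversed application of Lemma~\ref{lem:pos_dir_increment} used in Lemma~\ref{lem:tree_t_large}, performed now relative to $\widecheck{\mathbf{z}}^j$, delivers the same bound.

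The main obstacle is the case analysis in the first stage: every scenario in which $\psi\in\Psi$ enters the formula for $t(\delta,\mathbf{z})_j$ but $j$ lies outside $\bigcup_{\psi'\in\widecheck{\Psi}}(J(\psi')\cup\{\delta^{-1}(\psi')\})$ must be shown to force $z_{\psi}=z_{\ch{\psi}}$ at the extreme point, since without this identification the $\eta_{\psi}(\mathbf{z})$ term would break the displacement identity and the downstream DR-submodularity argument. Once that identification is pinned down, the rest of the proof is a direct transcription of the technique in Section~\ref{sect:valid_P}, with $\widecheck{\mathbf{z}}^j$ in place of $\widetilde{\mathbf{z}}^j$.
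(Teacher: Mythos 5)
Your route coincides with the paper's own terse argument: establish the per-step displacement identity $\widecheck{\mathbf{z}}^{j}-\widecheck{\mathbf{z}}^{j-1}=t(\delta,\mathbf{z})_j\,[P(\delta,j)-P(\delta,j-1)]$, record $t(\delta,\mathbf{z})_j\geq 0$ and $\widecheck{\mathbf{z}}^{j-1}\leq P(\delta,j-1)$, and then transcribe the sandwich argument of Lemmas~\ref{lem:tree_t_small} and~\ref{lem:tree_t_large}. Where you go further than the paper (which just cites Observation~\ref{obs:P_tpp}) is in trying to actually prove the displacement identity by forcing $z_{\ch{\psi}}=z_\psi$ whenever a $\psi\in\Psi$ enters $t(\delta,\mathbf{z})_j$ through \eqref{t_1} or \eqref{t_2}.

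That reduction fails in a concrete sub-case, which you flag as the obstacle but claim can always be resolved. Suppose $\psi\in\Psi$ has $u_\psi>1$, $\delta^{-1}(\psi)<\delta^{-1}(\ch{\psi})$, and \emph{both} $\psi\in\mathcal{S}$ and $\ch{\psi}\in\mathcal{S}$; then $\psi\in\Psi\setminus\widecheck{\Psi}$ (since $\psi\in\mathcal{S}$), and if some other $\psi'\in\Psi$ supplies $\widecheck{\Psi}\neq\emptyset$, Assumption~\hyperlink{A1}{1} puts $\psi$ and $\psi'$ on disjoint paths so $j:=\delta^{-1}(\psi)\notin J(\psi')$. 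Thus $j$ lies inside the index set of Lemma~\ref{lem:other_j} and $t(\delta,\mathbf{z})_j$ takes form~\eqref{t_1}, yet $z_\psi=u_\psi\neq u_{\ch{\psi}}=z_{\ch{\psi}}$, so $\eta_\psi(\mathbf{z})=\floor{u_\psi}\neq z_\psi$ and the displacement identity does \emph{not} collapse to~\eqref{t_3}: the actual increment is $\frac{u_\psi-z_{\psi^{\vartriangle}(\mathcal{T}^{\delta,j-1})}}{\floor{u_\psi}-u_{\psi^{\vartriangle}(\mathcal{T}^{\delta,j-1})}}\bigl[P(\delta,j)-P(\delta,j-1)\bigr]$, a strictly larger multiple than $t(\delta,\mathbf{z})_j$. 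Worse, $\widecheck{z}^{j}_i=u_\psi>\floor{u_\psi}=P(\delta,j)_i$ for $i\in\sigma_\psi(\mathcal{T}^{\delta,j})$, so the auxiliary bound $\widecheck{\mathbf{z}}^{j}\leq P(\delta,j)$ required for the DR-submodularity step of Lemma~\ref{lem:tree_t_large} also fails. So your plan — show $z_\psi=z_{\ch{\psi}}$ and transcribe Section~\ref{sect:valid_P} — cannot succeed at this index; a different argument (exploiting that $t(\delta,\mathbf{z})_j\geq 1$ here, in the spirit of the computation preceding Observation~\ref{obs:good_S}, or restructuring the decomposition so that such $\psi$ are treated like the $\widecheck{\Psi}$ elements) is needed. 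To be fair, the paper's citation of Observation~\ref{obs:P_tpp} — which is established only under form~\eqref{t_3} — glosses over the same sub-case, but your proposal commits explicitly to a resolution that is not available.
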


\begin{proposition}
\label{prop:valid_N}
The DR inequality associated with $\delta$ is valid at every $[P(\mathcal{S}), f(P(\mathcal{S}))]$, $\mathcal{S}\subseteq\mathcal{V}$, in case (\hyperlink{3}{3}) under Assumption \hyperlink{A2}{2}.
\end{proposition}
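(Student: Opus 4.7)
The plan is to decompose $f(P(\mathcal{S}))$ as the telescoping sum $\sum_{j=1}^{|\mathcal{V}|}[f(\widecheck{\mathbf{z}}^j)-f(\widecheck{\mathbf{z}}^{j-1})]$ along the sequence introduced earlier in this subsection, and then lower-bound each increment by the corresponding DR-inequality term $t(\delta,\mathbf{z})_j[f(P(\delta,j)) - f(P(\delta,j-1))]$. First I would dispose of the easy situation $\widecheck{\Psi} = \emptyset$: by Observations \ref{obs:good_delta} and \ref{obs:good_S}, $\mathbf{t}(\delta,P(\mathcal{S}))\geq \mathbf{0}$, so the argument of Proposition \ref{prop:valid_P} (with $\widetilde{\mathbf{z}}^j$ in place of $\widecheck{\mathbf{z}}^j$) applies verbatim and yields the inequality.

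When $\widecheck{\Psi} \neq \emptyset$, the next step is to partition $\{1,\dots,|\mathcal{V}|\}$ into the blocks $\{J(\psi)\cup\{\delta^{-1}(\psi)\}\}_{\psi\in\widecheck{\Psi}}$ together with their complement $R$. These blocks are pairwise disjoint: by Assumption \hyperlink{A1}{1}, distinct $\psi^1,\psi^2 \in \Psi$ have disjoint descendant subtrees $\mathcal{G}(R^+(\psi^1))$ and $\mathcal{G}(R^+(\psi^2))$, and by construction each $J(\psi)\cup\{\delta^{-1}(\psi)\}$ consists of indices corresponding to vertices in $R^+(\psi)$. For each block keyed by $\psi\in\widecheck{\Psi}$, inequality \eqref{eq:Jr_tr}---obtained in this subsection by combining Observation \ref{obs:main_r_step} (for $j=\delta^{-1}(\psi)$), Lemma \ref{lem:Jr} (for $j\in J(\psi)$), and Lemma \ref{lem:tr}---provides the block-level lower bound in terms of $t(\delta,\mathbf{z})_j[f(P(\delta,j))-f(P(\delta,j-1))]$. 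For the remaining indices $j\in R$, Lemma \ref{lem:other_j} gives the desired pointwise lower bound. Summing these contributions block by block produces the DR inequality evaluated at $[P(\mathcal{S}),f(P(\mathcal{S}))]$, which is what we need.

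The main obstacle is the presence of potentially negative coefficients $t(\delta,\mathbf{z})_{\delta^{-1}(\psi)} < 0$ for $\psi\in\widecheck{\Psi}$; these cannot be absorbed by a naive pointwise lower bound on $f(\widecheck{\mathbf{z}}^{j})-f(\widecheck{\mathbf{z}}^{j-1})$. The resolution, already worked out in Lemma \ref{lem:tr}, is to redistribute this negative mass across the block $J(\psi)$, a maneuver that depends essentially on Assumption \hyperlink{A2}{2}: the uniform upper bound $u_i = \ceil{u_\psi}$ for $i\in R^+(\psi)\backslash\{\psi\}$ is what forces the clean expression $t(\delta,\mathbf{z})_{j'} = (z_{\ch{\psi}}-z_\psi)/(u_{\ch{\psi}}-u_\psi)$ at every $j'\in J(\psi)$ and what validates the telescoping identity $Q(\delta,j)-P(\delta,j-1) = T(\delta,J(\psi)_\ell)-T(\delta,J(\psi)_{\ell-1})$ from Observation \ref{obs:PQT} that aligns the auxiliary points $Q(\delta,\cdot)$ and $T(\delta,\cdot)$ so DR-submodularity of $f$ can be applied consistently across the block.
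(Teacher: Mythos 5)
Your proposal is correct and follows essentially the same route as the paper's proof: reduce to the case $\widecheck{\Psi}\neq\emptyset$ via Observations \ref{obs:good_delta} and \ref{obs:good_S}, telescope along $\widecheck{\mathbf{z}}^j$, partition the index set into the blocks $J(\psi)\cup\{\delta^{-1}(\psi)\}$ and their complement, bound each block by \eqref{eq:Jr_tr}, and bound the remainder by Lemma \ref{lem:other_j}. Your remarks on the disjointness of the blocks and the precise role of Assumption \hyperlink{A2}{2} make explicit two points the paper leaves implicit but needs.
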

\begin{proof}
By Observations \ref{obs:good_delta} and \ref{obs:good_S}, it suffices to discuss the case where $\widecheck{\Psi}\neq \emptyset$. When $\widecheck{\Psi}\neq \emptyset$, 
\begingroup
\allowdisplaybreaks
\begin{align*}
f(\mathbf{z}) & = \sum_{j=1}^{|\mathcal{V}|} [f(\widecheck{\mathbf{z}}^j) - f(\widecheck{\mathbf{z}}^{j-1})] \\
& = \sum_{\psi\in\widecheck{\Psi}} [f(\widecheck{\mathbf{z}}^{\delta^{-1}(\psi)}) - f(\widecheck{\mathbf{z}}^{{\delta^{-1}(\psi)}-1})] + \sum_{\psi\in\widecheck{\Psi}} \sum_{j\in J(\psi)} [f(\widecheck{\mathbf{z}}^j) - f(\widecheck{\mathbf{z}}^{j-1})]  \\
& \quad + \sum_{j\notin \bigcup_{\psi\in\widecheck{\Psi}} \left(J(\psi)\cup \{\psi\}\right)}[f(\widecheck{\mathbf{z}}^j) - f(\widecheck{\mathbf{z}}^{j-1})]  \\ 
& \geq \sum_{\psi\in\widecheck{\Psi}} \sum_{j\in J(\psi)\cup\{\psi\}} t(\delta,\mathbf{z})_j[f(P(\delta, j)) - f(P(\delta, j-1))] \quad \text{(from Observation \ref{obs:main_r_step}, Lemmas \ref{lem:Jr} and \ref{lem:tr})}\\
& \quad + \sum_{j\notin \bigcup_{\psi\in\widecheck{\Psi}} \left(J(\psi)\cup \{\psi\}\right)} t(\delta,\mathbf{z})_j [f(P(\delta, j)) - f(P(\delta, j-1))] \quad \text{(by Lemma \ref{lem:other_j})} \\
& =  \sum_{j=1}^{|\mathcal{V}|} t(\delta,\mathbf{z})_j [f(P(\delta, j)) - f(P(\delta, j-1))]. 
\end{align*}
\endgroup
\end{proof}

With the results derived in Sections \ref{sect:valid_P} and \ref{sect:valid_N}, we conclude on the validity of the DR inequalities. 
\begin{proposition}
\label{prop:tree_valid}
Every DR inequality \eqref{eq:DR} associated with a valid permutation $\delta\in\mathfrak{S}(\mathcal{V})$ is valid for $\mathcal{P}_f^{\mathcal{Z}(\mathcal{G},\mathbf{u})}$, where $\mathcal{Z}(\mathcal{G},\mathbf{u})$ satisfies Assumptions \hyperlink{A1}{1} and \hyperlink{A2}{2}. 
\end{proposition}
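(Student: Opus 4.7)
The plan is to combine the three earlier propositions that already do essentially all the work. Proposition \ref{prop:tree_valid_at_ext} reduces validity of a DR inequality over $\mathcal{P}_f^{\mathcal{Z}(\mathcal{G},\mathbf{u})}$ to validity merely at the extreme points $[P(\mathcal{S}),f(P(\mathcal{S}))]$ of $\conv{\mathcal{Z}(\mathcal{G},\mathbf{u})}\times\mathbb{R}$, since any feasible $(\mathbf{z},w)$ can be expressed via Algorithm \ref{alg:tree_greedy} as a convex combination guaranteed by Proposition \ref{prop:tree_conv_z}. Therefore, it suffices to verify the inequality on extreme points of $\conv{\mathcal{Z}(\mathcal{G},\mathbf{u})}$.

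Next I would observe that the three cases (\hyperlink{1}{1}), (\hyperlink{2}{2}), and (\hyperlink{3}{3}) identified at the end of Section \ref{sect:valid} together cover every instance of $\mathcal{Z}(\mathcal{G},\mathbf{u})$ satisfying Assumption \hyperlink{A1}{1}: either $\Psi=\emptyset$ (case \hyperlink{1}{1}), or every $\psi\in\Psi$ has $0<u_\psi<1$ (case \hyperlink{2}{2}), or there exists $\psi\in\Psi$ with $u_\psi>1$ (case \hyperlink{3}{3}). Cases (\hyperlink{1}{1}) and (\hyperlink{2}{2}) are handled by Proposition \ref{prop:valid_P}, which establishes validity of the DR inequality associated with $\delta$ at every $[P(\mathcal{S}),f(P(\mathcal{S}))]$, and case (\hyperlink{3}{3}) is handled by Proposition \ref{prop:valid_N} under Assumption \hyperlink{A2}{2}.

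Finally I would invoke Proposition \ref{prop:tree_valid_at_ext} to lift validity from extreme points to the full epigraph, completing the proof. Since this statement is a direct assembly of previously proved results, there is essentially no new obstacle; the only thing to double-check is that the three cases truly partition the situations covered by Assumption \hyperlink{A1}{1}, so that applying Propositions \ref{prop:valid_P} and \ref{prop:valid_N} is exhaustive. The proof itself is therefore a short citation chain rather than a new calculation.
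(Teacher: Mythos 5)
Your proof is correct and follows exactly the paper's argument: combine Propositions~\ref{prop:valid_P} and~\ref{prop:valid_N} to establish validity at all extreme points $[P(\mathcal{S}),f(P(\mathcal{S}))]$, then invoke Proposition~\ref{prop:tree_valid_at_ext} to lift validity to the full epigraph. The only small imprecision is in describing case~(\hyperlink{3}{3}): the paper states it as $\Psi\neq\emptyset$ with no restriction on $u_\psi$ (so it subsumes case~(\hyperlink{2}{2}) rather than being disjoint from it), whereas you phrase it as requiring some $u_\psi>1$; this does not affect the conclusion since Proposition~\ref{prop:valid_N} handles the general unrestricted case.
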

\begin{proof}
Propositions \ref{prop:valid_P} and \ref{prop:valid_N} show that the DR inequality associated with $\delta$ is valid at $[P(\mathcal{S}), f(P(\mathcal{S}))]^\top$ for all $\mathcal{S}\in\mathcal{V}$. By Proposition \ref{prop:tree_valid_at_ext}, the DR inequality is valid for $\mathcal{P}_f^{\mathcal{Z}(\mathcal{G},\mathbf{u})}$. 
\end{proof}

{
\begin{remark}
Suppose we are given an epigraph $\mathcal{P}^{\mathcal{Z}(\mathcal{G}, \mathbf{u})}_f$, where $\mathcal{Z}(\mathcal{G}, \mathbf{u})$ does not satisfy Assumption \hyperlink{A1}{1} or Assumption \hyperlink{A2}{2}. We remark that the DR inequalities associated with a slight relaxation of $\mathcal{P}^{\mathcal{Z}(\mathcal{G}, \mathbf{u})}_f$ are valid for $\mathcal{P}^{\mathcal{Z}(\mathcal{G}, \mathbf{u})}_f$. We first relax $\mathcal{Z}(\mathcal{G}, \mathbf{u})$ by increasing the fractional upper bounds of the continuous variables that violate the assumptions, so that $\Psi$ becomes empty. For example, given any $\psi\in\Psi$, we replace $u_\psi$ by $u'_\psi = \ceil{u_\psi}$. The resulting relaxed feasible set, $\mathcal{Z}(\mathcal{G}, \mathbf{u}')$, satisfies both assumptions by construction. We now consider the DR inequalities for $\mathcal{P}^{\mathcal{Z}(\mathcal{G}, \mathbf{u}')}_f$. Based on the argument for validity of these DR inequalities for $\mathcal{P}^{\mathcal{Z}(\mathcal{G}, \mathbf{u}')}_f$, we obtain that every DR inequality holds at $(\overline{\mathbf{z}}, \overline{w})$, for any $\overline{\mathbf{z}} \in \conv{\mathcal{Z}(\mathcal{G}, \mathbf{u}')} \supseteq \mathcal{Z}(\mathcal{G}, \mathbf{u})$ and any $\overline{w} \geq f(\overline{\mathbf{z}})$. Therefore, these DR inequalities are valid for $\mathcal{P}^{\mathcal{Z}(\mathcal{G}, \mathbf{u})}_f$. 
\end{remark}
}

\section{Characterization of $\conv{\mathcal{P}_f^{\mathcal{Z}(\mathcal{G},\mathbf{u})}}$ and Exact Separation}
\label{sect:punchline}
We are now ready to fully characterize the convex hull of the epigraph of $f$. In the next theorem, we show that the trivial inequalities and the DR inequalities are sufficient to capture $\conv{\mathcal{P}_f^{\mathcal{Z}(\mathcal{G},\mathbf{u})}}$. Throughout this section, we assume that Assumptions \hyperlink{A1}{1} and \hyperlink{A2}{2} hold for $\mathcal{Z}(\mathcal{G},\mathbf{u})$.

\begin{theorem}
\label{thm:tree_m_conv}
All the DR inequalities associated with valid permutations, along with the MIR inequalities, the box constraints, and the monotonicity constraints for $\conv{\mathcal{Z}(\mathcal{G},\mathbf{u})}$, fully describe $\conv{\mathcal{P}_f^{\mathcal{Z}(\mathcal{G},\mathbf{u})}}$.  
\end{theorem}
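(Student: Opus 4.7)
Let $\mathcal{R}$ denote the polyhedron cut out by the box constraints, the monotonicity constraints, the MIR inequalities, and every DR inequality associated with a valid permutation. The plan is to prove the two containments separately.

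The easy direction is $\conv{\mathcal{P}_f^{\mathcal{Z}(\mathcal{G},\mathbf{u})}}\subseteq \mathcal{R}$. Any $(\mathbf{z},w)\in\mathcal{P}_f^{\mathcal{Z}(\mathcal{G},\mathbf{u})}$ has $\mathbf{z}\in \mathcal{Z}(\mathcal{G},\mathbf{u})\subseteq \conv{\mathcal{Z}(\mathcal{G},\mathbf{u})}=\mathcal{CZ}$ by Theorem \ref{thm:convZ_frac}, so the box, monotonicity, and MIR inequalities hold at $\mathbf{z}$. Proposition \ref{prop:tree_valid} supplies validity of every DR inequality on $\mathcal{P}_f^{\mathcal{Z}(\mathcal{G},\mathbf{u})}$. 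Since all inequalities defining $\mathcal{R}$ are linear, they are preserved under convex combinations, which gives the inclusion.

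The main step is the reverse containment $\mathcal{R}\subseteq \conv{\mathcal{P}_f^{\mathcal{Z}(\mathcal{G},\mathbf{u})}}$. Fix any $(\overline{\mathbf{z}},\overline{w})\in\mathcal{R}$. Since $\overline{\mathbf{z}}\in \mathcal{CZ}=\conv{\mathcal{Z}(\mathcal{G},\mathbf{u})}$, I would feed $\overline{\mathbf{z}}$ into Algorithm \ref{alg:tree_greedy} to obtain a valid permutation $\delta\in\mathfrak{S}(\mathcal{V})$ for which Proposition \ref{prop:tree_conv_z} yields the convex representation
\begin{equation*}
\overline{\mathbf{z}}=\sum_{k=0}^{|\mathcal{V}|}\lambda_k\,P(\delta,k),\qquad \lambda_k:=t(\delta,\overline{\mathbf{z}})_k-t(\delta,\overline{\mathbf{z}})_{k+1}\ge 0,\qquad \sum_{k=0}^{|\mathcal{V}|}\lambda_k=1.
\end{equation*}
Since $(\overline{\mathbf{z}},\overline{w})\in\mathcal{R}$, it satisfies the DR inequality for this particular $\delta$. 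A summation by parts, using the boundary values $t(\delta,\overline{\mathbf{z}})_0=1$, $t(\delta,\overline{\mathbf{z}})_{|\mathcal{V}|+1}=0$, and the normalization $f(P(\delta,0))=f(\mathbf{0})=0$, rewrites that inequality as
\begin{equation*}
\overline{w}\ge \sum_{k=1}^{|\mathcal{V}|}t(\delta,\overline{\mathbf{z}})_k\bigl[f(P(\delta,k))-f(P(\delta,k-1))\bigr] = \sum_{k=0}^{|\mathcal{V}|}\lambda_k\,f(P(\delta,k)).
\end{equation*}

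Let $M:=\sum_{k=0}^{|\mathcal{V}|}\lambda_k f(P(\delta,k))$ and $s:=\overline{w}-M\ge 0$. By Lemma \ref{lem:F_P_feas}, every extreme point $P(\delta,k)$ lies in $\mathcal{Z}(\mathcal{G},\mathbf{u})$, so each point $(P(\delta,k),\,f(P(\delta,k))+s)$ belongs to $\mathcal{P}_f^{\mathcal{Z}(\mathcal{G},\mathbf{u})}$ (the epigraph is closed under upward shifts of the $w$-coordinate). Taking the convex combination of these lifted points with weights $\lambda_k$ recovers exactly $(\overline{\mathbf{z}},M+s)=(\overline{\mathbf{z}},\overline{w})$, certifying that $(\overline{\mathbf{z}},\overline{w})\in\conv{\mathcal{P}_f^{\mathcal{Z}(\mathcal{G},\mathbf{u})}}$.

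The only delicate piece is the Abel-summation identity that converts the DR inequality into a bound on the convex combination $\sum_k\lambda_k f(P(\delta,k))$; all the nontrivial work has been pushed into the earlier machinery, namely the convex-combination representation of Proposition \ref{prop:tree_conv_z} and the validity of the DR inequalities from Proposition \ref{prop:tree_valid}. The key conceptual point is that Algorithm \ref{alg:tree_greedy} selects a $\delta$ whose DR inequality is simultaneously active enough to witness the lifting—this coupling between the combinatorial choice of $\delta$ and the inequality it generates is what makes the DR family tight.
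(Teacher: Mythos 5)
Your proof is correct and follows essentially the same route as the paper: use Algorithm \ref{alg:tree_greedy} with $\overline{\mathbf{z}}$ to select $\delta$, invoke Proposition \ref{prop:tree_conv_z} for the convex representation of $\overline{\mathbf{z}}$, and use the DR inequality at $\delta$ to bound $\overline{w}$ from below by $\sum_k \lambda_k f(P(\delta,k))$; the paper handles the slack $s=\overline{w}-\sum_k\lambda_k f(P(\delta,k))$ by adding the recession ray $(\mathbf{0},1)$, whereas you fold $s$ uniformly into the $w$-coordinate of each lifted extreme point, which is an equivalent bookkeeping choice.
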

\begin{proof}
Let $\mathcal{CP}$ be the set constructed by all the aforementioned inequalities. We would like to show that $\mathcal{CP} = \conv{\mathcal{P}_f^{\mathcal{Z}(\mathcal{G},\mathbf{u})}}$. By validity of the DR inequalities and the MIR inequalities, $\mathcal{CP} \supseteq \conv{\mathcal{P}_f^{\mathcal{Z}(\mathcal{G},\mathbf{u})}}$. For the reverse containment, we show that any $(\mathbf{z}, w)\in \mathcal{CP}$ belongs to $\conv{\mathcal{P}_f^{\mathcal{Z}(\mathcal{G},\mathbf{u})}}$, by writing $(\mathbf{z}, w)$ as a convex combination of certain elements of $\conv{\mathcal{P}_f^{\mathcal{Z}(\mathcal{G},\mathbf{u})}}$. Given that $(\mathbf{z}, w)\in \mathcal{CP}$, we have $\mathbf{z} \in \conv{\mathcal{Z}(\mathcal{G},\mathbf{u})}$. Thus we can run Algorithm \ref{alg:tree_greedy} with input $\mathcal{G}$, $\mathbf{u}$ and $\mathbf{z}$ to determine a valid permutation $\delta\in\mathfrak{S}(\mathcal{V})$. For brevity, we let $\lambda_k = t(\delta,\mathbf{z})_k - t(\delta,\mathbf{z})_{k+1}$ for every $k\in\{0,\dots, |\mathcal{V}|\}$. It follows from Proposition \ref{prop:tree_conv_z} that $\mathbf{z} = \sum_{k=0}^{|\mathcal{V}|} \lambda_k P(\delta, k)$ is a convex combination of $\{P(\delta, k)\}_{k=0}^{|\mathcal{V}|}$. Moreover, $(\mathbf{z}, w)$ satisfies the DR inequality associated with $\delta$, which means that 
\[w \geq \sum_{k=1}^{|\mathcal{V}|} [f(P(\delta, k)) -f(P(\delta, k-1))] t(\delta,\mathbf{z})_k = \sum_{k=0}^{|\mathcal{V}|} \lambda_k f(P(\delta, k)). \]
To this point, we have shown that 
\[(\mathbf{z}, w) = \sum_{k=0}^{|\mathcal{V}|} \lambda_k [P(\delta, k), f(P(\delta, k))] + \left(w - \sum_{k=0}^{|\mathcal{V}|} \lambda_k f(P(\delta, k))\right)(\mathbf{0}, 1).\]
This is a convex combination of $ [P(\delta, k), f(P(\delta, k))] \in \mathcal{P}_f^{\mathcal{Z}(\mathcal{G},\mathbf{u})}$ for all $k\in\{0,1,\dots,|\mathcal{V}|\}$ plus a non-negative multiple of the ray $(\mathbf{0},1)$. Therefore, $(\mathbf{z}, w)\in \conv{\mathcal{P}_f^{\mathcal{Z}(\mathcal{G},\mathbf{u})}}$, and  $\mathcal{CP} \subseteq \conv{\mathcal{P}_f^{\mathcal{Z}(\mathcal{G},\mathbf{u})}}$. We conclude that $\mathcal{CP} = \conv{\mathcal{P}_f^{\mathcal{Z}(\mathcal{G},\mathbf{u})}}$.
\end{proof}

{
\begin{remark}
The convex hull characterization given in Theorem \ref{thm:tree_m_conv} can be stated more concisely in the special cases of $\mathcal{Z}(\mathcal{G},\mathbf{u})$ described in Remark \ref{remark:trivialA1}. Recall that Assumptions \hyperlink{A1}{1} and \hyperlink{A2}{2} trivially hold in all these cases. In case (a), when $\mathcal{Z}(\mathcal{G},\mathbf{u})$ is defined by box constraints only, $\conv{\mathcal{P}_f^{\mathcal{Z}(\mathcal{G},\mathbf{u})}}$ is completely described by the box constraints and the DR inequalities associated with all $\delta\in\mathfrak{S}(\mathcal{V})$, which assume the simplified form of \eqref{eq:special_a_DR}. Now consider $\mathcal{Z}(\mathcal{G},\mathbf{u})$ that belongs to any of the following cases. 
\begin{itemize}
\item[(b)] All variables are purely integers. 
\item[(c)] All variables are purely continuous. 
\item[(d)] The upper bounds $\mathbf{u}\in\mathbb{Z}^{|\mathcal{V}|}$, while each variable $z_i$, $i\in\mathcal{V}$, can be either discrete or continuous. 
\end{itemize}
Then $\conv{\mathcal{P}_f^{\mathcal{Z}(\mathcal{G},\mathbf{u})}}$ is completely described by the box constraints, the monotonicity constraints, and the DR inequalities associated with valid $\delta\in\mathfrak{S}(\mathcal{V})$, which assume the simplified form of \eqref{eq:special_bcd_DR}. 
\end{remark}
}

We propose Algorithm \ref{alg:tree_sepa} to separate DR inequalities at any infeasible solution $(\overline{\mathbf{z}}, \overline{w})\in\conv{\mathcal{Z}(\mathcal{G},\mathbf{u})}\times \mathbb{R}$. In this algorithm, we determine a valid permutation $\delta$ using Algorithm \ref{alg:tree_greedy} with respect to $\mathcal{G},\mathbf{u}$ and $\overline{\mathbf{z}}$. The DR inequality associated with this particular permutation $\delta$ is the most violated DR inequality at $(\overline{\mathbf{z}}, \overline{w})$. We justify this claim in Proposition \ref{prop:tree_exact_sepa}. 

\begin{algorithm}[H]
\label{alg:tree_sepa}
\SetAlgoLined
\textbf{Input} $(\overline{\mathbf{z}}, \overline{w})\in\conv{\mathcal{Z}(\mathcal{G},\mathbf{u})}\times \mathbb{R}$\;
$\delta \leftarrow$ \texttt{Permutation\_Finder}($\mathcal{G}, \mathbf{u}, \overline{\mathbf{z}}$) (Algorithm \ref{alg:tree_greedy})\;
   \textbf{Output} DR inequality associated with $\delta$.
   \caption{\texttt{DR\_Inequality\_Separation}}
\end{algorithm}

\begin{proposition}
\label{prop:tree_exact_sepa}
Algorithm \ref{alg:tree_sepa} is an exact separation method for DR inequalities. 
\end{proposition}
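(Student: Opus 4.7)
The plan is to show that among all DR inequalities associated with valid permutations of $\mathcal{V}$, the one returned by Algorithm \ref{alg:tree_sepa} has the largest right-hand side when evaluated at $\overline{\mathbf{z}}$. If any DR inequality is violated at $(\overline{\mathbf{z}}, \overline{w})$, then the one output by the algorithm is also violated (in fact, \emph{most} violated), so Algorithm \ref{alg:tree_sepa} is exact. This is essentially a repackaging of the chain of inequalities already established in the proof of Proposition \ref{prop:tree_valid_at_ext}, but now interpreted as a statement about optimality of $\tau$ over the set of valid permutations, without requiring $\overline{\mathbf{z}}$ to be feasible.

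Concretely, let $\tau$ be the valid permutation returned by \texttt{Permutation\_Finder}$(\mathcal{G},\mathbf{u},\overline{\mathbf{z}})$ and let $\delta\in\mathfrak{S}(\mathcal{V})$ be any other valid permutation, with $\mathbf{d}^\delta = [f(P(\delta,k)) - f(P(\delta,k-1))]_{k=1}^{|\mathcal{V}|}$. By Proposition \ref{prop:tree_conv_z}, setting $\lambda_k = t(\tau,\overline{\mathbf{z}})_k - t(\tau,\overline{\mathbf{z}})_{k+1} \geq 0$ for $k\in\{0,\dots,|\mathcal{V}|\}$, we have $\sum_{k=0}^{|\mathcal{V}|}\lambda_k = 1$ and $\overline{\mathbf{z}} = \sum_{k=0}^{|\mathcal{V}|}\lambda_k P(\tau,k)$. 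Each $P(\tau,k)$ is an extreme point of $\conv{\mathcal{Z}(\mathcal{G},\mathbf{u})}$, so Propositions \ref{prop:valid_P} and \ref{prop:valid_N} give $f(P(\tau,k)) \geq \mathbf{d}^\delta \mathbf{T}^\delta P(\tau,k)$. Using summation by parts, nonnegativity of $\lambda_k$, and linearity,
\begin{align*}
\sum_{k=1}^{|\mathcal{V}|} t(\tau,\overline{\mathbf{z}})_k [f(P(\tau,k))-f(P(\tau,k-1))]
&= \sum_{k=0}^{|\mathcal{V}|} \lambda_k\, f(P(\tau,k)) \\
&\geq \sum_{k=0}^{|\mathcal{V}|} \lambda_k\, \mathbf{d}^\delta \mathbf{T}^\delta P(\tau,k)
= \mathbf{d}^\delta \mathbf{T}^\delta \overline{\mathbf{z}} \\
&= \sum_{k=1}^{|\mathcal{V}|} t(\delta,\overline{\mathbf{z}})_k [f(P(\delta,k))-f(P(\delta,k-1))],
\end{align*}
which shows the DR inequality associated with $\tau$ has the largest right-hand side at $\overline{\mathbf{z}}$.

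From this maximality, exactness is immediate: if there exists some valid $\delta$ for which the DR inequality is violated, that is, $\overline{w} < \sum_{k=1}^{|\mathcal{V}|} t(\delta,\overline{\mathbf{z}})_k [f(P(\delta,k))-f(P(\delta,k-1))]$, then by the above chain $\overline{w}$ also violates the DR inequality for $\tau$, which is precisely what Algorithm \ref{alg:tree_sepa} returns. Since no real obstacle remains, the whole argument amounts to extracting the relevant subchain from the proof of Proposition \ref{prop:tree_valid_at_ext}; the only subtlety worth flagging is that the chain does \emph{not} need the step $\overline{w}\geq f(\overline{\mathbf{z}})$, so it applies to any $\overline{\mathbf{z}}\in\conv{\mathcal{Z}(\mathcal{G},\mathbf{u})}$, not just feasible points of $\mathcal{P}_f^{\mathcal{Z}(\mathcal{G},\mathbf{u})}$. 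Together with the polynomial complexity of Algorithm \ref{alg:tree_greedy} noted earlier, this yields a polynomial-time exact separation routine for the class of DR inequalities.
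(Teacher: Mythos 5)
Your argument is correct and takes a genuinely different route from the paper's. The paper sets the separation task up as a linear program over all linear under-estimators $\pi^\top\mathbf{z}+\pi_0 \le f(\mathbf{z})$ of $f$ on $\mathcal{Z}(\mathcal{G},\mathbf{u})$, writes its dual in terms of convex-combination weights $\lambda(\mathbf{z})$, exhibits the $\tau$-DR inequality as a primal-feasible solution and the weights $t(\tau,\overline{\mathbf{z}})_k-t(\tau,\overline{\mathbf{z}})_{k+1}$ from Proposition \ref{prop:tree_conv_z} as a dual-feasible solution, and closes via complementary slackness (using $\mathbf{T}^\tau\mathbf{D}^\tau=\mathbf{I}$ to show that the dual's support constraints are tight at $P(\tau,j)$). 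You instead show directly that the $\tau$-DR inequality pointwise dominates every other DR inequality at $\overline{\mathbf{z}}$, by reusing the chain from Proposition \ref{prop:tree_valid_at_ext} with the step $\overline{w}\ge f(\overline{\mathbf{z}})$ stripped off; the only ingredients you need are validity at extreme points (Propositions \ref{prop:valid_P}, \ref{prop:valid_N}), the convex decomposition of $\overline{\mathbf{z}}$ (Proposition \ref{prop:tree_conv_z}), and $f(\mathbf{0})=0$ for the summation by parts. Both proofs work; the paper's LP-duality version additionally establishes that the $\tau$-DR inequality is optimal over \emph{all} valid linear inequalities (i.e., it also dominates the trivial constraints for $\conv{\mathcal{Z}(\mathcal{G},\mathbf{u})}$ given $\overline{\mathbf{z}}\in\conv{\mathcal{Z}(\mathcal{G},\mathbf{u})}$), which is slightly stronger than the stated proposition, while your domination argument proves exactly the claim about the DR family and is arguably more self-contained since it reuses machinery already developed. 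Your remark that the argument applies to any $\overline{\mathbf{z}}\in\conv{\mathcal{Z}(\mathcal{G},\mathbf{u})}$ rather than only to epigraph-feasible points is precisely what makes this a separation argument rather than a validity argument, and is worth flagging as you do.
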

\begin{proof}
Given any infeasible solution $(\overline{\mathbf{z}}, \overline{w})\in\conv{\mathcal{Z}(\mathcal{G},\mathbf{u})}\times \mathbb{R}$, we consider the separation problem 
\[\max\left\{\overline{\mathbf{z}}^\top \boldsymbol\pi + \pi_0: \mathbf{z}^\top\boldsymbol\pi + \pi_0 \leq f(\mathbf{z}), \forall \: \mathbf{z}\in\mathcal{Z}(\mathcal{G},\mathbf{u})\right\}.\]
The permutation $\delta$ returned by Algorithm \ref{alg:tree_greedy} is valid by construction, so there is a DR inequality associated with $\delta$. Let $\boldsymbol\pi$ be the coefficients of this DR inequality and $\pi_0=0$; this is a feasible solution to the primal separation problem by Proposition \ref{prop:tree_valid}. \\

Next, we examine the dual separation problem. For each $\mathbf{z}\in\mathcal{Z}(\mathcal{G},\mathbf{u})$, we assign a dual variable $\lambda(\mathbf{z})$. There are infinitely many $\lambda(\cdot)$ because of the continuity in $\mathcal{Z}(\mathcal{G},\mathbf{u})$. The dual problem has the form:
\[\min\left\{\sum_{\mathbf{z}\in\mathcal{Z}(\mathcal{G},\mathbf{u})} \lambda(\mathbf{z}) f(\mathbf{z}): \sum_{\mathbf{z}\in\mathcal{Z}(\mathcal{G},\mathbf{u})} \lambda(\mathbf{z})\mathbf{z} = \overline{\mathbf{z}}, \sum_{\mathbf{z}\in\mathcal{Z}(\mathcal{G},\mathbf{u})} \lambda(\mathbf{z}) = 1, \lambda(\mathbf{z}) \geq 0, \forall \: \mathbf{z}\in\mathcal{Z}(\mathcal{G},\mathbf{u})\right\}.\]
We propose the following dual solution: 
\[
\lambda(\mathbf{z}) = 
\begin{cases}
t(\delta,\overline{\mathbf{z}})_k - t(\delta,\overline{\mathbf{z}})_{k+1}, & \mathbf{z} = P(\delta, k), k\in\{0,\dots, |\mathcal{V}|\}, \\
0, &\text{otherwise.}
\end{cases}
\]
This solution is dual feasible due to Proposition \ref{prop:tree_conv_z}. We next check complementary slackness for optimality. For $j\in\mathcal{V}$, we would like to show that 
\[f(P(\delta, j)) = \sum_{k=1}^{|\mathcal{V}|} [f(P(\delta, k)) - f(P(\delta, k-1))] t(\delta, P(\delta, j))_k. \]
For $1 \leq k \leq j$, $t(\delta, P(\delta, j))_k = 1$; whereas $t(\delta, P(\delta, j))_k = 0$ for $j+1 \leq k \leq |\mathcal{V}|$. Therefore, 
\begin{align*}
\sum_{k=1}^{|\mathcal{V}|} [f(P(\delta, k)) - f(P(\delta, k-1))] t(\delta, P(\delta, j))_k & = \sum_{k=1}^{j} [f(P(\delta, k)) - f(P(\delta, k-1))] \\
& = f(P(\delta, j)) - f(P(\delta, 0)) \\
& =  f(P(\delta, j)).
\end{align*}
Hence, the proposed primal solution to the separation problem is optimal and our separation method is exact.
\end{proof}

The separation method Algorithm \ref{alg:tree_sepa} runs in $\mathcal{O}(|\mathcal{V}|^2\log|\mathcal{V}|))$, following the complexity of Algorithm \ref{alg:tree_greedy}. Hence, we reach the next corollary regarding the complexity of this class of constrained mixed-integer DR-submodular minimization problems. 

\begin{corollary}
DR-submodular minimization over the mixed-integer feasible set $\mathcal{Z}(\mathcal{G},\mathbf{u})$, that satisfies Assumptions \hyperlink{A1}{1} and \hyperlink{A2}{2}, is polynomial-time solvable.
\end{corollary}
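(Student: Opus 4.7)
The plan is to reduce the DR-submodular minimization problem to solving a single linear program over $\conv{\mathcal{P}_f^{\mathcal{Z}(\mathcal{G},\mathbf{u})}}$, and then invoke the equivalence of separation and optimization. First, I would observe that problem \eqref{eq:DR_min} is equivalent to the linear program \eqref{eq:DR_min_epi}, namely $\min\{w : (\mathbf{z},w) \in \conv{\mathcal{P}_f^{\mathcal{Z}(\mathcal{G},\mathbf{u})}}\}$. By Theorem \ref{thm:tree_m_conv}, $\conv{\mathcal{P}_f^{\mathcal{Z}(\mathcal{G},\mathbf{u})}}$ is fully described by the box constraints, the monotonicity constraints, the MIR inequalities, and the DR inequalities associated with all valid permutations in $\mathfrak{S}(\mathcal{V})$. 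The first three families are polynomial in $|\mathcal{V}|$ and can therefore be listed explicitly. The DR inequalities, on the other hand, form a family of size up to $|\mathcal{V}|!$, so they cannot be enumerated, but we have a polynomial-time exact separation oracle for them.

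Next, I would assemble a polynomial-time separation oracle for the entire convex hull. Given any candidate $(\overline{\mathbf{z}},\overline{w})$, we first check the trivial $\mathcal{O}(|\mathcal{V}|)$ box, monotonicity, and MIR inequalities directly. If all of these are satisfied, then $\overline{\mathbf{z}} \in \conv{\mathcal{Z}(\mathcal{G},\mathbf{u})}$ by Theorem \ref{thm:convZ_frac}, so we may invoke Algorithm \ref{alg:tree_sepa}. By Proposition \ref{prop:tree_exact_sepa}, this algorithm returns the most violated DR inequality (or certifies that none is violated) in $\mathcal{O}(|\mathcal{V}|^2\log|\mathcal{V}|)$ time, assuming constant-time access to values of $f$. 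Combining these two components yields a strongly polynomial separation oracle for $\conv{\mathcal{P}_f^{\mathcal{Z}(\mathcal{G},\mathbf{u})}}$.

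Finally, by the classical equivalence of separation and optimization due to Gr\"otschel, Lov\'asz, and Schrijver (the ellipsoid method applied with a polynomial-time separation oracle), the linear program $\min\{w : (\mathbf{z},w) \in \conv{\mathcal{P}_f^{\mathcal{Z}(\mathcal{G},\mathbf{u})}}\}$ is solvable in polynomial time in $|\mathcal{V}|$, independently of the magnitude of the upper bounds $\mathbf{u}$. Combined with the equivalence between \eqref{eq:DR_min} and \eqref{eq:DR_min_epi}, this establishes polynomial-time solvability of the original constrained mixed-integer DR-submodular minimization problem.

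The main obstacle, which has been handled by the earlier machinery of the paper rather than by this corollary itself, was obtaining an exact polynomial-time separation routine without resorting to unary or binary expansions of the integer variables; once Algorithm \ref{alg:tree_sepa} and Proposition \ref{prop:tree_exact_sepa} are in hand, the corollary follows essentially by invoking the ellipsoid framework. One subtlety to flag in a fully rigorous write-up is the assumption that $f$ can be evaluated in polynomial time at the extreme points $P(\delta,k)$, which is the standard oracle model for this line of work; under this standard assumption the separation oracle, and hence the optimization procedure, truly run in time polynomial in $|\mathcal{V}|$ alone.
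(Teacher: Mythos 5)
The proposal is correct and takes essentially the same approach as the paper: the paper derives this corollary immediately from the $\mathcal{O}(|\mathcal{V}|^2\log|\mathcal{V}|)$ complexity of Algorithm \ref{alg:tree_sepa} together with Theorem \ref{thm:tree_m_conv}, leaving the assembly of a full separation oracle and the appeal to the Gr\"otschel--Lov\'asz--Schrijver equivalence implicit. Your write-up simply spells out those standard steps (checking the polynomially many explicit inequalities, invoking Algorithm \ref{alg:tree_sepa} for the DR inequalities, then running the ellipsoid method on \eqref{eq:DR_min_epi} in the value-oracle model), which is exactly the intended argument.
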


\begin{remark}
Minimizing a DR-submodular function $f$ over a mixed-integer feasible set defined by box constraints is a special case of problem \eqref{eq:DR_min}, in which the directed rooted forest $\mathcal{G}$ consists of $|\mathcal{V}|$ disjoint trees with height zero. Suppose, additionally, $M = \emptyset$ and $u_i = 1$ for all $i\in N$; in other words, $\mathbf{z}\in\{0,1\}^{|\mathcal{V}|}$. Then problem \eqref{eq:DR_min} reduces to unconstrained submodular set function minimization. We note that our exact separation algorithm (Algorithm \ref{alg:tree_sepa}) generalizes the well-known greedy algorithm. To see this, any permutation $\delta$ of $\mathcal{V}$ is valid, and function $\mathbf{t}(\delta,\mathbf{z})$ has a special form $t(\delta,\mathbf{z})_k = z_{\delta(k)}/u_{\delta(k)}$. Given any $(\overline{\mathbf{z}}, \overline{w})\in\conv{\mathcal{Z}(\mathcal{G},\mathbf{u})}\times \mathbb{R}$, Algorithm \ref{alg:tree_sepa} sorts $\overline{z}_i/u_{i}$ for all $i\in\mathcal{V}$ just like the greedy algorithm. 
\end{remark}

\section*{Acknowledgement}
We thank the reviewers for their helpful feedback which improved this paper. This research is supported, in part, by ONR Grant N00014-22-1-2602.

\bibliography{ref}{}
\bibliographystyle{apalike}

\appendix

\section{Proofs of Lemmas} \label{sec:app}

\begin{manualtheorem}{3.1}
Let any $\mathbf{z}\in\mathcal{X}$ and any index $i\in N$ be given. For all $\alpha,\beta\in\mathbb{Z}$ such that $0 < \alpha < \beta$ and $\mathbf{z}+\alpha \mathbf{e}^i, \mathbf{z}+\beta \mathbf{e}^i\in\mathcal{X}$, 
\[f(\mathbf{z}+\alpha \mathbf{e}^i)-f(\mathbf{z})\geq \frac{\alpha}{\beta}[f(\mathbf{z}+\beta \mathbf{e}^i)-f(\mathbf{z})].\] 
\end{manualtheorem}
\begin{proof}
We first observe that
\begingroup
\allowdisplaybreaks
\begin{subequations}
\begin{align}
[f(\mathbf{z}+\alpha \mathbf{e}^i)-f(\mathbf{z})]/\alpha & =\frac{1}{\alpha}\sum_{j = 1}^\alpha [f(\mathbf{z}+j\mathbf{e}^i) - f(\mathbf{z}+(j-1)\mathbf{e}^i)] \notag \\
& \geq \frac{1}{\alpha} \sum_{j = 1}^\alpha [f(\mathbf{z}+(\alpha+1)\mathbf{e}^i)-f(\mathbf{z}+\alpha \mathbf{e}^i)  \quad \text{($f$ is DR-submodular)} \notag\\
& = f(\mathbf{z}+(\alpha+1)\mathbf{e}^i)-f(\mathbf{z}+\alpha \mathbf{e}^i). \label{dr_min_relation}
\end{align}
\end{subequations}
\endgroup 
With this observation, we deduce the following: 
\begingroup
\allowdisplaybreaks
\begin{align*}
f(\mathbf{z}+\beta \mathbf{e}^i) - f(\mathbf{z}) & = f(\mathbf{z}+\alpha \mathbf{e}^i)-f(\mathbf{z}) + \sum_{j=1}^{\beta-\alpha} [f(\mathbf{z}+\alpha \mathbf{e}^i + j\mathbf{e}^i) - f(\mathbf{z}+\alpha \mathbf{e}^i + (j-1)\mathbf{e}^i)] \\
& \leq f(\mathbf{z}+\alpha \mathbf{e}^i)-f(\mathbf{z}) + \sum_{j=1}^{\beta-\alpha} [f(\mathbf{z}+(\alpha+1)\mathbf{e}^i)-f(\mathbf{z}+\alpha\mathbf{e}^i)]  \quad \text{($f$ is DR-submodular)}\\
& \leq f(\mathbf{z}+\alpha \mathbf{e}^i)-f(\mathbf{z}) + (\beta-\alpha)[f(\mathbf{z}+\alpha \mathbf{e}^i)-f(\mathbf{z})]/{\alpha} \quad \text{(from \eqref{dr_min_relation})} \\
& =  \frac{\beta}{\alpha}[f(\mathbf{z}+\alpha \mathbf{e}^i)-f(\mathbf{z})].
\end{align*}
\endgroup 
\end{proof}

\begin{manualtheorem}{3.2}
Let any $\mathbf{z}\in\mathcal{X}$ and any index $i\in M$ be given. For any $\alpha,\beta\in\mathbb{R}$ with $0 < \alpha < \beta$ such that $\mathbf{z}+\alpha \mathbf{e}^i, \mathbf{z}+\beta \mathbf{e}^i\in\mathcal{X}$, 
\[f(\mathbf{z}+\alpha \mathbf{e}^i)-f(\mathbf{z})\geq \frac{\alpha}{\beta}[f(\mathbf{z}+\beta \mathbf{e}^i)-f(\mathbf{z})].\] 
\end{manualtheorem}
\begin{proof}
Recall that $\mathcal{X}_i\subseteq \mathbb{R}$ is a closed interval. Consider any $\mathbf{x},\mathbf{y}\in\mathcal{X}$ such that $x_j = y_j$ for all $j \in N \cup M\backslash \{i\}$. Without loss of generality, suppose $\mu = y_i - x_i \geq 0$. Then $\mathbf{y} = \mathbf{x} + \mu\mathbf{e}^i$. By DR-submodularity, 
\[f\left(\mathbf{x}+ \frac{\lambda}{2}\mathbf{e}^i\right) - f(\mathbf{x}) \geq f(\mathbf{x}+\lambda\mathbf{e}^i)-  f\left(\mathbf{x}+ \frac{\lambda}{2}\mathbf{e}^i\right).\]
After rearranging the terms, we obtain
\[f\left(\frac{1}{2}\mathbf{x} + \frac{1}{2}\mathbf{y}\right) = f\left(\frac{1}{2}\mathbf{x} + \frac{1}{2}(\mathbf{x}+\lambda \mathbf{e}^i)\right) \geq  \frac{1}{2} f(\mathbf{x}) +  \frac{1}{2} f\left(\mathbf{x}+ \lambda \mathbf{e}^i\right) = \frac{1}{2} f(\mathbf{x}) +  \frac{1}{2} f\left(\mathbf{y}\right),\]
which shows that $f$ is concave in $\mathcal{X}_i$ for any $i\in M$. Therefore, 
\[ f(\mathbf{z}+\alpha \mathbf{e}^i) - f(\mathbf{z}) \geq \frac{\alpha}{\beta}[f(\mathbf{z}+\beta \mathbf{e}^i) - f(\mathbf{z})].\] 
\end{proof}

\begin{manualtheorem}{3.3}
Let any $\mathbf{z}\in\mathcal{X}$ and any non-negative direction $\mathbf{d} = \sum_{i=1}^{n+m} d_i\mathbf{e}^i \geq \mathbf{0}$ be given. For a real number $0\leq t\leq 1$, such that $\mathbf{z} + \sum_{j=1}^i d_j\mathbf{e}^j, \mathbf{z} + t\sum_{j=1}^i d_j\mathbf{e}^j \in\mathcal{X}$ for all $i\in\{1,\dots,n+m\}$, the inequality 
\[f(\mathbf{z}+t\mathbf{d})-f(\mathbf{z})\geq t[f(\mathbf{z}+\mathbf{d})-f(\mathbf{z})]\] 
is satisfied. 
\end{manualtheorem}
\begin{proof}
We rewrite the left-hand side of the given inequality and find that
\begingroup
\allowdisplaybreaks
\begin{align*}
\quad  f(\mathbf{z}+t\mathbf{d}) - f(\mathbf{z}) 
& = \sum_{i=1}^{n+m} \left[f\left(\mathbf{z}+ t\sum_{j=1}^{i-1} d_j\mathbf{e}^j + td_i\mathbf{e}^i\right) - f\left(\mathbf{z}+ t\sum_{j=1}^{i-1} d_j\mathbf{e}^j\right)\right] \\
& \geq \sum_{i=1}^{n+m} \left[f\left(\mathbf{z}+ \sum_{j=1}^{i-1} d_j\mathbf{e}^j + td_i\mathbf{e}^i\right) - f\left(\mathbf{z}+ \sum_{j=1}^{i-1} d_j\mathbf{e}^j\right)\right]\\
& \quad \text{(because $0<t<1$ and $f$ is DR-submodular)}\\
& \geq \sum_{i=1}^{n+m} t\left[f\left(\mathbf{z}+ \sum_{j=1}^{i-1} d_j\mathbf{e}^j + d_i\mathbf{e}^i\right) - f\left(\mathbf{z}+ \sum_{j=1}^{i-1} d_j\mathbf{e}^j\right)\right]\\
& \quad \text{(by Lemmas \ref{lem:int_scale_increment} and \ref{lem:con_scale_increment})}\\
& = t[f(\mathbf{z}+\mathbf{d}) - f(\mathbf{z})].
\end{align*}
\endgroup 
\end{proof}

\begin{manualtheorem}{4.4}
Function $F: \mathcal{X}^E\rightarrow\mathbb{R}$ defined by \eqref{def:F} is DR-submodular. 
\end{manualtheorem}
\begin{proof}
Let any $\mathbf{x},\mathbf{y}\in\mathcal{X}^E$ with $\mathbf{x}\leq \mathbf{y}$ component-wise be given. We consider any $i\in \mathcal{V}^E$ and an arbitrary positive real number $\alpha$ such that $\mathbf{x}+\alpha\mathbf{e}^i, \mathbf{y}+\alpha\mathbf{e}^i\in\mathcal{X}^E$. Since $\mathbf{x}\leq \mathbf{y}$, $\mathbf{z}^{\mathbf{x}} \leq \mathbf{z}^{\mathbf{y}}$ must hold. 
If $i\in \mathcal{V}$, then $\mathbf{z}^{\mathbf{x}+\alpha\mathbf{e}^i} = \mathbf{z}^{\mathbf{x}}+\alpha\mathbf{e}^i$, where we assume $\mathbf{e}^i$ to have appropriate dimension by abusing notation. Similarly, $\mathbf{z}^{\mathbf{y}+\alpha\mathbf{e}^i} = \mathbf{z}^{\mathbf{y}}+\alpha\mathbf{e}^i$. We observe that, 
\begin{align*}
F(\mathbf{x}+\alpha\mathbf{e}^i) - F(\mathbf{x}) & = f(\mathbf{z}^{\mathbf{x}+\alpha\mathbf{e}^i}) - f(\mathbf{z}^{\mathbf{x}}) \quad \text{(by \eqref{def:F})} \\
& = f(\mathbf{z}^{\mathbf{x}}+\alpha\mathbf{e}^i) - f(\mathbf{z}^{\mathbf{x}}) \\
& \geq f(\mathbf{z}^{\mathbf{y}}+\alpha\mathbf{e}^i) - f(\mathbf{z}^{\mathbf{y}}) \quad \text{(by DR-submodularity of $f$)} \\
& = F(\mathbf{y}+\alpha\mathbf{e}^i) - F(\mathbf{y}). 
\end{align*} 
On the other hand, if $i = \rho $, then $\mathbf{z}^{\mathbf{x}+\alpha\mathbf{e}^i} = \mathbf{z}^{\mathbf{x}}$ and $\mathbf{z}^{\mathbf{y}+\alpha\mathbf{e}^i} = \mathbf{z}^{\mathbf{y}}$. It follows that 
\begin{align*}
F(\mathbf{x}+\alpha\mathbf{e}^i) - F(\mathbf{x}) & = f(\mathbf{z}^{\mathbf{x}}) - f(\mathbf{z}^{\mathbf{x}}) \\
& = f(\mathbf{z}^{\mathbf{y}}) - f(\mathbf{z}^{\mathbf{y}}) \\
& = f(\mathbf{z}^{\mathbf{y}+\alpha\mathbf{e}^i}) - f(\mathbf{z}^{\mathbf{y}}) \\
& = F(\mathbf{y}+\alpha\mathbf{e}^i) - F(\mathbf{y}). 
\end{align*}
Hence, $F$ is a DR-submodular function.
\end{proof}

\begin{manualtheorem}{4.5}
Given any $\overline{\mathbf{z}}\in\mathcal{Z}(\mathcal{G}, \mathbf{u})$, the vector
\[\overline{\mathbf{x}} := \begin{cases}
\overline{z}_i, & i\in \mathcal{V}, \\
 \ceil{\overline{z}_{\psi}}, &  i = \rho , 
\end{cases}\]
is an element of $\mathcal{Z}(\mathcal{G}^E, \mathbf{u}^E)$. Furthermore, for any ${\mathbf{x}}\in\mathcal{Z}(\mathcal{G}^E, \mathbf{u}^E)$, $\mathbf{z}^{{\mathbf{x}}}\in\mathcal{Z}(\mathcal{G}, \mathbf{u})$.
\end{manualtheorem}
\begin{proof}
By definition, $\overline{\mathbf{x}}$ satisfies the integrality restrictions and box constraints in $\mathcal{Z}(\mathcal{G}^E, \mathbf{u}^E)$. For every $i\in\ch{\rho }$ from $\mathcal{G}^E$, $i\in \ch{\psi}$  and $i\in N$ in $\mathcal{G}$ by Assumption \hyperlink{A1}{1}, so $\overline{x}_{\rho } = \ceil{\overline{z}_{\psi}} \leq \overline{z}_i = \overline{x}_i$. The monotonicity constraints in $\mathcal{Z}(\mathcal{G}^E, \mathbf{u}^E)$ are satisfied by $\overline{\mathbf{x}}$. Therefore, $\overline{\mathbf{x}}\in\mathcal{Z}(\mathcal{G}^E, \mathbf{u}^E)$. Now given any ${\mathbf{x}}\in\mathcal{Z}(\mathcal{G}^E, \mathbf{u}^E)$, $x_i = {z}^{{\mathbf{x}}}_i$ satisfies the box constraints and integrality restrictions for all $i\in \mathcal{V}$. We further observe that ${z}^{{\mathbf{x}}}_\psi \leq \ceil{x_\psi} \leq x_{\rho } \leq x_j =  {z}^{{\mathbf{x}}}_j$  for all $j\in\ch{\psi}$ in $\mathcal{G}$. Hence, we conclude that for any ${\mathbf{x}}\in\mathcal{Z}(\mathcal{G}^E, \mathbf{u}^E)$, $\mathbf{z}^{{\mathbf{x}}}\in\mathcal{Z}(\mathcal{G}, \mathbf{u})$.
\end{proof}

\begin{manualtheorem}{4.8}
For any $\mathcal{S}\subseteq\mathcal{V}$, $P(\mathcal{S})\in\mathcal{Z}(\mathcal{G},\mathbf{u})$. 
\end{manualtheorem}
\begin{proof}
First, we show that $P(\mathcal{S})$ satisfies the integrality constraints. Consider any $i\in N$. By definition, either $P(\mathcal{S})_i$ is $\floor{u_\psi}$ for some $\psi\in\Psi$, which is an integer; or $P(\mathcal{S})_i = u_{i^\vartriangle(\mathcal{S})}$ where $i^\vartriangle(\mathcal{S})\in \{0\}\cup N\cup \{j\in M: u_j\in\mathbb{Z}\}$, so $P(\mathcal{S})_i \in\mathbb{Z}$ as well. For any $i\in\mathcal{V}$,  $0\leq \floor{u_{i^\vartriangle(\mathcal{S})}} \leq u_{i^\vartriangle(\mathcal{S})} \leq u_i$ because $i^\vartriangle(\mathcal{S}) \in R^-(i)$. Thus $P(\mathcal{S})$ satisfies the box constraints. Given any $(i,j)\in\mathcal{A}$, $R^-(i)\cap\mathcal{S} \subseteq R^-(j)\cap\mathcal{S}$, so $i^\vartriangle(\mathcal{S}) \in R^-(j^\vartriangle(\mathcal{S}))$ and $P(\mathcal{S})_i = u_{i^\vartriangle(\mathcal{S})} \leq u_{j^\vartriangle(\mathcal{S})} = P(\mathcal{S})_j$. Therefore, $P(\mathcal{S})\in\mathcal{Z}(\mathcal{G}, \mathbf{u})$.
\end{proof}

\begin{manualtheorem}{4.9}
For every $\mathcal{S}\subseteq \mathcal{V}$, $P(\mathcal{S})$ is feasible to problem \eqref{frac_convZ_lp}. 
\end{manualtheorem}
\begin{proof}
The feasible set of problem \eqref{frac_convZ_lp} is $\mathcal{CZ}$. By validity of the MIR inequalities, $\mathcal{CZ} \supseteq \mathcal{Z}(\mathcal{G},\mathbf{u})$. It follows from Lemma \ref{lem:F_P_feas} that $P(\mathcal{S})\in\mathcal{CZ}$.
\end{proof}

\begin{manualtheorem}{4.11}
The solution \eqref{p14}-\eqref{r14} is feasible to problem \eqref{frac_convZ_dual} in cases (\hyperlink{C1}{C1}) and (\hyperlink{C4}{C4}).
\end{manualtheorem}
\begin{proof}
We claim that in both (\hyperlink{C1}{C1}) and (\hyperlink{C4}{C4}), $s^i < 0$ for all $i\in\mathcal{S}^*$ and $s^j \geq 0$ for all $j\in\mathcal{V}\backslash\mathcal{S}^*$. It would follow from this claim that $\overline{\mathbf{p}}, \overline{\mathbf{q}} \leq \mathbf{0}$. We justify the claim using the following observations. By construction of $\mathcal{S}^2$, $s^i < 0$ for all $i\in\mathcal{S}^2$ and $s^j \geq 0$ for all $j\notin\mathcal{S}^2\backslash\{\psi,\rho \}$. Now we focus on $\psi$ and $\rho $. In case (\hyperlink{C1}{C1}), $\psi,\rho \notin\mathcal{S}^*$. We observe that $s^{\rho } \geq -\min(0, a_{\psi}) \geq 0$, and $s^\psi = a_\psi + s^{\rho } \geq \min(0, a_{\psi}) + s^{\rho } \geq 0$. In case (\hyperlink{C4}{C4}), $s^{\rho } < 0$ and $\rho  \in\mathcal{S}^*$. By \eqref{si} and \eqref{Sd}, $\mathcal{S}^1 = \{\rho \}\cup\mathcal{S}^2$ and $s^\psi = a_\psi$. When $a_\psi<0$, $\{\psi\} = \mathcal{R} \subseteq \mathcal{S}^*$. Else, $\mathcal{R} = \emptyset$ and $\psi \notin \mathcal{S}^*$. \\

Next, we verify feasibility with respect to constraints \eqref{constr:frac_chain_dual_1}-\eqref{constr:frac_chain_dual_3}. Given $\overline{r} = 0$, \eqref{constr:frac_chain_dual_1}-\eqref{constr:frac_chain_dual_3} become 
\begin{equation}
\label{constr:tree_dual}
p_i + \sum_{j\in\ch{i}}q_{i,j} - q_{\pa{i},i} \leq a_i, \:\forall\: i\in\mathcal{V},
\end{equation} 
where $q_{\pa{\psi},\psi} = 0$. We observe that $s^i = \sum_{k\in\sigma_i(\mathcal{S}^*)} a_k = a_i + \sum_{j\in\ch{i}\backslash \mathcal{S}^*} \sum_{k\in\sigma_j(\mathcal{S}^*)} a_k$. Therefore, in the case of $i\in\mathcal{S}^*$, the left-hand side of constraint \eqref{constr:tree_dual} is 
\[
\overline{p}_i + \sum_{j\in\ch{i}} \overline{q}_{i,j} - \overline{q}_{\pa{i},i} = \sum_{k\in\sigma_i(\mathcal{S}^*)} a_k - \sum_{j\in\ch{i}\backslash \mathcal{S}^*} \sum_{k\in\sigma_j(\mathcal{S}^*)} a_k  = s^i + (a_i - s^i)  = a_i.
\]
On the other hand, suppose $i\notin\mathcal{S}^*$. If $\pa{i}$ exists, then 
\[
\overline{p}_i + \sum_{j\in\ch{i}} \overline{q}_{i,j} - \overline{q}_{\pa{i},i}  = - \sum_{j\in\ch{i}\backslash \mathcal{S}^*} \sum_{k\in\sigma_j(\mathcal{S}^*)} a_k + \sum_{k\in\sigma_i(\mathcal{S}^*)} a_k  = (a_i - s^i) + s^i  = a_i.
\]
Else, $i = \psi$, and 
\[
\overline{p}_i + \sum_{j\in\ch{i}} \overline{q}_{i,j} - \overline{q}_{\pa{i},i}  = - \sum_{j\in\ch{i}\backslash \mathcal{S}^*} \sum_{k\in\sigma_j(\mathcal{S}^*)} a_k  = a_i - s^i \leq a_i
\]
because $s^i \geq 0$ given $i\notin\mathcal{S}^*$. Hence, $[\overline{\mathbf{p}}, \overline{\mathbf{q}},\overline{r}]$ is a feasible dual solution.
\end{proof}

\begin{manualtheorem}{4.12}
In both cases (\hyperlink{C1}{C1}) and (\hyperlink{C4}{C4}), the primal objective values at $\overline{\mathbf{z}} = P(\mathcal{S}^*)$ match the dual objective values at $[\overline{\mathbf{p}}, \overline{\mathbf{q}},\overline{r}]$ given in \eqref{p14}-\eqref{r14}.
\end{manualtheorem}
\begin{proof}
In these two cases, it is impossible to have $\psi\in\mathcal{S}^*$ and $\rho \notin\mathcal{S}^*$, so $P(\mathcal{S}^*)_j = u_{j^\vartriangle(\mathcal{S}^*)}$ for all $j\in\mathcal{V}$. Hence, 
\[
\mathbf{a}^\top \overline{\mathbf{z}}  = \sum_{j\in\mathcal{V}} a_j u_{j^\vartriangle(\mathcal{S}^*)}  =\sum_{i\in\mathcal{S}^*} u_i \sum_{j\in\sigma_i(\mathcal{S}^*)} a_j = \mathbf{u}^\top \overline{\mathbf{p}} + 0\cdot \frac{\floor{u_\psi}(\ceil{u_\psi} - u_\psi)}{u_\psi - \floor{u_\psi}}. 
\]
\end{proof}

\begin{manualtheorem}{4.13}
The solution \eqref{p2}-\eqref{r2} is feasible to problem \eqref{frac_convZ_dual} in case (\hyperlink{C2}{C2}).
\end{manualtheorem}
\begin{proof}
First, we verify that the dual solution is non-positive. Given the observation that $a_\psi + s^{\rho } < 0$, we have $\overline{r} < 0$. By construction, $s^i < 0$ for all $i\in \mathcal{S}^2$, and $s^j \geq 0$ for all $j\notin\mathcal{S}^2 \cup \{\psi,\rho \}$. Therefore, $\overline{\mathbf{p}} \leq \mathbf{0}$ and $\overline{q}_{i,j} \leq 0$ for $j \in\mathcal{V}\backslash  \{\psi,\rho \}$. Lastly, 
\begin{align*}
-s^\rho  - \overline{r} & = -s^\rho  -  \frac{u_\psi - \floor{u_\psi}}{\ceil{u_\psi}- u_\psi}\left(a_\psi + s^\rho \right) \\
& = -\left(\frac{\ceil{u_\psi}- u_\psi}{\ceil{u_\psi}- u_\psi} + \frac{u_\psi - \floor{u_\psi}}{\ceil{u_\psi}- u_\psi} \right) s^\rho  -  \frac{u_\psi - \floor{u_\psi}}{\ceil{u_\psi}- u_\psi}a_\psi \\
& = - \frac{1}{\ceil{u_\psi}- u_\psi} [s^\rho  + (u_\psi - \floor{u_\psi}) a_\psi] \\
& \leq 0 \quad \text{(because $0 \leq s^{\rho } +  (u_\psi - \floor{u_\psi}) a_\psi$ in (\hyperlink{C2}{C2})). }
\end{align*}
Hence, $\overline{\mathbf{q}} \leq \mathbf{0}$ as well. We next check feasibility of $[\overline{\mathbf{p}}, \overline{\mathbf{q}},\overline{r}]$ with respect to \eqref{constr:frac_chain_dual_1}-\eqref{constr:frac_chain_dual_3}. For $i\in\mathcal{V}\backslash\{\psi,\rho \}$, $\pa{i}$ exists, and by construction, $\overline{p}_i - \overline{q}_{\pa{i},i} = s^i = a_i + \sum_{j\in \ch{i}\backslash \mathcal{S}^*} s^j$. Thus, $\overline{p}_i + \sum_{j\in\ch{i}}\overline{q}_{i,j} - \overline{q}_{\pa{i},i} = a_i$ and constraints \eqref{constr:frac_chain_dual_1} are satisfied. In constraint \eqref{constr:frac_chain_dual_2}, 
\begin{align*}
\overline{p}_\psi + \overline{q}_{\psi,\rho } + \frac{\overline{r}}{u_\psi - \floor{u_\psi}} & = 0 - s^\rho  -  \frac{u_\psi - \floor{u_\psi}}{\ceil{u_\psi}- u_\psi}\left(a_\psi + s^\rho \right) + \frac{1}{\ceil{u_\psi}- u_\psi}\left(a_\psi + s^\rho \right) \\
 & = - s^\rho  + \frac{1-u_\psi + \floor{u_\psi}}{\ceil{u_\psi}- u_\psi}\left(a_\psi + s^\rho \right) \\
 & = a_\psi. 
\end{align*}
Lastly, in constraint  \eqref{constr:frac_chain_dual_3}, 
\begin{align*}
\overline{p}_\rho  + \sum_{j\in\ch{\rho }} \overline{q}_{\rho ,j} - \overline{q}_{\psi,\rho } - \overline{r} & = 0 - \sum_{j\in \ch{\rho }\backslash \mathcal{S}^*} s^j + s^\rho  + \overline{r} - \overline{r} \\
& = - \sum_{j\in \ch{\rho }\backslash \mathcal{S}^*} s^j + a_\rho  + \sum_{j\in \ch{\rho }\backslash \mathcal{S}^*} s^j + \overline{r} - \overline{r} \\
& = a_\rho .
\end{align*}
\end{proof}

\begin{manualtheorem}{4.14}
In case (\hyperlink{C2}{C2}), the primal objective value at $\overline{\mathbf{z}} = P(\mathcal{S}^*)$ matches the dual objective value at $[\overline{\mathbf{p}}, \overline{\mathbf{q}},\overline{r}]$ given in \eqref{p2}-\eqref{r2}.
\end{manualtheorem}
\begin{proof}
We have observed earlier that $\psi\in\mathcal{S}^*$ in this case. The primal objective value at $\overline{\mathbf{z}} = P(\mathcal{S}^*)$ is 
\begin{align*}
\mathbf{a}^\top \overline{\mathbf{z}} & = \floor{u_\psi} \sum_{i\in \sigma_\psi(\mathcal{S}^*)} a_i + \sum_{j\in \mathcal{S}^2} u_j \sum_{i\in \sigma_j(\mathcal{S}^*)} a_i \\
& = \floor{u_\psi} (a_\psi + s^\rho ) + \sum_{j\in \mathcal{S}^2} u_j s^j \\
& = \sum_{j\in\mathcal{V}} u_j \overline{p}_j + \frac{\floor{u_\psi}(\ceil{u_\psi} - u_\psi)}{u_\psi - \floor{u_\psi}} \frac{u_\psi - \floor{u_\psi}}{\ceil{u_\psi}- u_\psi}\left(a_\psi + s^\rho \right) \\
& = \sum_{j\in\mathcal{V}} u_j \overline{p}_j + \frac{\floor{u_\psi}(\ceil{u_\psi} - u_\psi)}{u_\psi - \floor{u_\psi}} \overline{r}, 
\end{align*}
which corresponds to the dual objective evaluated at $[\overline{\mathbf{p}}, \overline{\mathbf{q}},\overline{r}]$ defined by \eqref{p2}-\eqref{r2}.
\end{proof}

\begin{manualtheorem}{4.15}
The solution \eqref{p3}-\eqref{r3} is feasible to problem \eqref{frac_convZ_dual} in case (\hyperlink{C3}{C3}).
\end{manualtheorem}
\begin{proof}
By construction, $s^i < 0$ for all $i\in\mathcal{S}^2$, and $-s^j \leq 0$ for all $j\notin\mathcal{S}^*$. Based on the observation that $a_\psi < 0$ and $\mathcal{R} = \{\psi\}$, the conditions of case (\hyperlink{C3}{C3}) become $(u_\psi - \floor{u_\psi})a_\psi  + s^{\rho } < 0$ and $(u_\psi - \floor{u_\psi})a_\psi < 0$. Therefore, $[\overline{\mathbf{p}}, \overline{\mathbf{q}},\overline{r}]\leq\mathbf{0}$. For every $ i\in\mathcal{V}\backslash\{\psi,\rho \}$, 
\[\overline{p}_i + \sum_{j\in\ch{i}}\overline{q}_{i,j} - \overline{q}_{\pa{i},i} = s^i - \sum_{j\in \ch{i}\backslash \mathcal{S}^*} s^j = a_i,\] so constraints \eqref{constr:frac_chain_dual_1} are satisfied. Constraint \eqref{constr:frac_chain_dual_2} holds because 
\[\overline{p}_\psi + \overline{q}_{\psi,\rho } + \frac{\overline{r}}{u_\psi - \floor{u_\psi}}  = 0 + 0 + a_\psi.\] Lastly, the left-hand side of constraint \eqref{constr:frac_chain_dual_3} yields
\[
 \overline{p}_\rho  + \sum_{j\in\ch{\rho }} \overline{q}_{\rho ,j} - \overline{q}_{\psi,\rho } - \overline{r}  = (u_\psi - \floor{u_\psi})a_\psi  + s^{\rho } - \sum_{j\in \ch{\rho }\backslash \mathcal{S}^*} s^j - 0 - (u_\psi - \floor{u_\psi})a_\psi = a_\rho.\]
 Hence, \eqref{p3}-\eqref{r3} is dual feasible in case (\hyperlink{C3}{C3}).
\end{proof}

\begin{manualtheorem}{4.16}
In case (\hyperlink{C3}{C3}), the primal objective value at $\overline{\mathbf{z}} = P(\mathcal{S}^*)$ matches the dual objective value at the solution defined by \eqref{p3}-\eqref{r3}.
\end{manualtheorem}
\begin{proof}
Given the observation that $\psi, \rho \in\mathcal{S}^*$,  $P(\mathcal{S}^*)_j = u_{j^\vartriangle(\mathcal{S}^*)}$ for all $j\in\mathcal{V}$. It follows that
\begin{align*}
\mathbf{a}^\top \overline{\mathbf{z}} & = \sum_{j\in\mathcal{V}} a_j u_{j^\vartriangle(\mathcal{S}^*)}  \\
& = \sum_{i\in\mathcal{S}^*} u_i \sum_{j\in\sigma_i(\mathcal{S}^*)} a_j \\
& = \sum_{i\in\mathcal{S}^2} u_i s^i + u_\rho  s^\rho  + u_\psi s^\psi \\
& = \sum_{i\in\mathcal{S}^2} u_i s^i + u_\rho  [s^\rho  + (u_\psi - \floor{u_\psi}) a_\psi] - u_\rho  (u_\psi - \floor{u_\psi}) a_\psi + u_\psi a_\psi \\
& = \sum_{i\in\mathcal{V}} u_i \overline{p}_i  - u_\rho  (u_\psi - \floor{u_\psi}) a_\psi + u_\psi a_\psi \\
& = \sum_{i\in\mathcal{V}} u_i \overline{p}_i  - u_\rho  u_\psi a_\psi  +  u_\rho \floor{u_\psi} a_\psi + u_\rho u_\psi a_\psi - \floor{u_\psi}u_\psi a_\psi  \\
& = \sum_{i\in\mathcal{V}} u_i \overline{p}_i   +  \floor{u_\psi} (\ceil{u_\psi} - u_\psi)a_\psi  \\
& =  \sum_{i\in\mathcal{V}} u_i \overline{p}_i   + \frac{\floor{u_\psi} (\ceil{u_\psi} - u_\psi)}{u_\psi - \floor{u_\psi}}\overline{r}. 
\end{align*}
\end{proof}

\begin{manualtheorem}{6.3}
For any $\overline{\mathbf{z}}\in \conv{\mathcal{Z}(\mathcal{G},\mathbf{u})}$, let $\tau\in\mathfrak{S}(\mathcal{V})$ be the corresponding output from Algorithm \ref{alg:tree_greedy}. Then the following inequality holds: 
\[f(\overline{\mathbf{z}}) \geq \sum_{k=1}^{|\mathcal{V}|} t(\tau,\overline{\mathbf{z}})_k[f(P(\tau,k)) - f(P(\tau,k-1))]. \]
\end{manualtheorem}
\begin{proof}
For every $j\in\{0,1,2,\dots, |\mathcal{V}|\}$, let $\mathbf{z}^j = \sum_{k=1}^{j} t(\tau,\overline{\mathbf{z}})_k[P(\tau,k) - P(\tau,k-1)]$. Due to Lemmas \ref{lem:first_and_one}, \ref{lem:monotone_t}, and \ref{lem:last_nonneg}, $0\leq t(\tau,\overline{\mathbf{z}})_k\leq 1$ for every $k=1,\dots, j$, so 
\begin{equation}
\label{obs:lem_tree_partial_valid}
\mathbf{z}^j \leq \sum_{k=1}^{j} 1\cdot[P(\tau,k) - P(\tau,k-1)] = P(\tau,j)
\end{equation}
for each $j\in\{1,2,\dots, |\mathcal{V}|\}$. Moreover,
\begin{align*}
f\left(\mathbf{z}^j\right) - f\left(\mathbf{z}^{j-1}\right) 
& = f\left(\mathbf{z}^{j-1} + [P(\tau,j)-P(\tau,j-1)] t(\tau, \overline{\mathbf{z}})_j\right) - f\left(\mathbf{z}^{j-1}\right) \\
& \geq t(\tau, \overline{\mathbf{z}})_j  \left[ f\left(\mathbf{z}^{j-1} +  [P(\tau,j)-P(\tau,j-1)] \right) - f\left(\mathbf{z}^{j-1}\right) \right] \\
& \quad \text{(by Lemma \ref{lem:pos_dir_increment})} \\
& \geq t(\tau, \overline{\mathbf{z}})_j  \left[ f\left(P(\tau,j-1) +  [P(\tau,j)-P(\tau,j-1)] \right) - f\left(P(\tau,j-1)\right) \right] \\
&  \quad \text{(by \eqref{obs:lem_tree_partial_valid} and DR-submodularity of $f$)} \\
& = t(\tau, \overline{\mathbf{z}})_j  \left[ f\left(P(\tau,j)\right) - f\left(P(\tau,j-1)\right) \right].
\end{align*}
Due to Proposition \ref{prop:tree_conv_z}, 
\[\overline{\mathbf{z}} = \sum_{k=0}^{|\mathcal{V}|} \left[t(\tau, \overline{\mathbf{z}})_k-t(\tau, \overline{\mathbf{z}})_{k+1}\right] P(\tau, k) = \sum_{k=1}^{|\mathcal{V}|} t(\tau, \overline{\mathbf{z}})_k \left[P(\tau, k) - P(\tau, k-1)\right] = \mathbf{z}^{|\mathcal{V}|}.\]

Therefore, we obtain
\[f(\overline{\mathbf{z}})  = f(\mathbf{z}^{|\mathcal{V}|}) - f(\mathbf{0})  = \sum_{j=1}^{|\mathcal{V}|} [f(\mathbf{z}^j) - f(\mathbf{z}^{j-1})] \geq \sum_{j=1}^{|\mathcal{V}|}  t(\tau, \overline{\mathbf{z}})_j  \left[ f\left(P(\tau,j)\right) - f\left(P(\tau,j-1)\right) \right].\]
\end{proof}

\begin{manualtheorem}{6.6}
For every $j\in\{0,\dots,|\mathcal{V}|\}$, 
\[
\widetilde{\mathbf{z}}^j = \sum_{k=1}^{j} t(\delta, \mathbf{z})_k [P(\delta,k) - P(\delta,k-1)] \leq P(\delta, j). 
\]
\end{manualtheorem}
\begin{proof}
We prove this lemma by showing that $\widetilde{z}^j_i = z_{i^\vartriangle(\mathcal{T}^{\delta, j})}$ for every $i\in \mathcal{V}$ and $j\in\{0,\dots,|\mathcal{V}|\}$. For each $k\in\{1,\dots,j\}$ and $i\in\mathcal{V}$, 
\[P(\delta,k)_i - P(\delta,k-1)_i = 
\begin{cases}
u_{\delta(k)} - u_{\delta(k)^\vartriangle(\mathcal{T}^{\delta,k-1})}, & i\in\sigma_{\delta(k)}(\mathcal{T}^{\delta,k-1}), \\
0, & \text{otherwise.}
\end{cases}\]
It follows that 
\[t(\delta,\mathbf{z})_k[P(\delta,k)_i - P(\delta,k-1)_i]  = 
\begin{cases}
z_{\delta(k)} - z_{\delta(k)^\vartriangle(\mathcal{T}^{\delta,k-1})}, & i\in\sigma_{\delta(k)}(\mathcal{T}^{\delta,k-1}), \\
0, & \text{otherwise. }
\end{cases}\]

Now we fix an arbitrary $j\in\{0,\dots,|\mathcal{V}|\}$. For every $i\in\mathcal{V}$, we define 
\begin{equation}
\label{I}
\mathcal{I} = \{\delta(k): 1\leq k\leq j \text{ such that } i\in\sigma_{\delta(k)}(\mathcal{T}^{\delta,k})\}.
\end{equation}
Every $\delta(k)\in\mathcal{I}$ is the ascendant of $i$ with maximal depth in $\mathcal{T}^{\delta, k}$. We note that for any $\delta(k)\notin\mathcal{I}$, $1\leq k\leq j$, $P(\delta, k)_i = P(\delta, k-1)_i$. When $\mathcal{I}\neq \emptyset$, all the elements of $\mathcal{I}$ fall in the same directed path and have distinct depths. We note that for any $\delta(k),\delta(k')\in \mathcal{I}$, $\dep{\delta(k)} < \dep{\delta(k')}$ when $k < k'$ and vice versa. For a contradiction, if $\dep{\delta(k)} > \dep{\delta(k')}$ and $k < k'$, then $\delta(k)\in \mathcal{T}^{\delta, k'}$ and $i^\vartriangle(\delta, k')$ must be a descendant of $\delta(k)$; this contradicts the fact that $i^\vartriangle(\mathcal{T}^{\delta,k'}) = \delta(k')$. Now we sort the elements in $\mathcal{I}$ by ascending depths; that is, $\mathcal{I}_1$ and $\mathcal{I}_{|\mathcal{I}|}$ have the minimal and maximal depths, respectively. This is equivalent to sorting $q\in \mathcal{I}$ by ascending $\delta^{-1}(q)$. By definition, $\mathcal{I}_{|\mathcal{I}|}$ is the maximal depth ascendant of $i$ in $\mathcal{T}^{\delta, j}$, so
$i^\vartriangle(\mathcal{T}^{\delta, j}) = \mathcal{I}_{|\mathcal{I}|}$.
Due to the fact that $\mathcal{I}_1$ is the minimal depth ascendant of $i$ in $\mathcal{T}^{\delta, j}$, we have
$\mathcal{I}_1^\vartriangle(\mathcal{T}^{\delta, \delta^{-1}(\mathcal{I}_1)-1}) = 0$. For ease of notation, we let $\mathcal{I}_0 = 0$. For any $\ell\in\{2,\dots, |\mathcal{I}|\}$, 
$\mathcal{I}_{\ell-1} = \mathcal{I}_\ell^\vartriangle(\mathcal{T}^{\delta, \delta^{-1}(\mathcal{I}_{\ell})-1})$ 
because $\mathcal{I}_\ell^\vartriangle(\mathcal{T}^{\delta, \delta^{-1}(\mathcal{I}_{\ell})-1})\in R^+(\mathcal{I}_{\ell-1})$ and it must be an element of $\mathcal{I}$. We then obtain 
\begingroup
\allowdisplaybreaks
\begin{align*}
\widetilde{{z}}^j_i & = \sum_{\ell=1}^{|\mathcal{I}|} t(\delta,{\mathbf{z}})_{\delta^{-1}(\mathcal{I}_\ell)} [P(\delta,\delta^{-1}(\mathcal{I}_\ell)) - P(\delta,\delta^{-1}(\mathcal{I}_\ell)-1)]_{\mathcal{I}_\ell} \\
& =  \sum_{\ell=1}^{|\mathcal{I}|} (z_{\mathcal{I}_\ell} - z_{\mathcal{I}_{\ell-1}}) \\
& = z_{\mathcal{I}_{|\mathcal{I}|}}  - z_{0} \\
& = z_{i^\vartriangle(\mathcal{T}^{\delta, j})} - 0\\
& = z_{i^\vartriangle(\mathcal{T}^{\delta, j})} \\
& \leq P(\delta, j)_i
\end{align*}
\endgroup
for every $j\in\{0,\dots,|\mathcal{V}|\}$ and $i\in\mathcal{V}$. Hence, $\widetilde{\mathbf{z}}^j \leq P(\delta, j)$ component-wise. 
\end{proof}

\begin{manualtheorem}{6.8}
For any $j\in\{1,\dots,|\mathcal{V}|\}$, when $0\leq t(\delta, \mathbf{z})_j \leq 1$, 
\[ f\left(\widetilde{\mathbf{z}}^j\right) - f\left(\widetilde{\mathbf{z}}^{j-1}\right) \geq  t(\delta, \mathbf{z})_j  \left[ f\left(P(\delta,j)\right) - f\left(P(\delta,j-1)\right) \right].\]
\end{manualtheorem}
\begin{proof}
Recall that $t(\delta, \mathbf{z})_j \geq 0$ and $P(\delta,j)-P(\delta,j-1) \geq \mathbf{0}$. We obtain the sequence of inequalities:
\begin{align*}
f\left(\widetilde{\mathbf{z}}^j\right) - f\left(\widetilde{\mathbf{z}}^{j-1}\right) 
& = f\left(\widetilde{\mathbf{z}}^{j-1} + [P(\delta,j)-P(\delta,j-1)] t(\delta, \mathbf{z})_j\right) - f\left(\widetilde{\mathbf{z}}^{j-1}\right) \\
& \geq t(\delta, \mathbf{z})_j  \left[ f\left(\widetilde{\mathbf{z}}^{j-1} +  [P(\delta,j)-P(\delta,j-1)] \right) - f\left(\widetilde{\mathbf{z}}^{j-1}\right) \right] \\
&  \quad \text{(by Lemma \ref{lem:pos_dir_increment} when $t(\delta, \mathbf{z})_j < 1$ and is immediate if $t(\delta, \mathbf{z})_j = 1$)} \\
& \geq t(\delta, \mathbf{z})_j  \left[ f\left(P(\delta,j-1) +  [P(\delta,j)-P(\delta,j-1)] \right) - f\left(P(\delta,j-1)\right) \right] \\
&  \quad \text{(by DR-submodularity of $f$ and Lemma \ref{lem:tildez_and_P})} \\
& \geq t(\delta, \mathbf{z})_j  \left[ f\left(P(\delta,j)\right) - f\left(P(\delta,j-1)\right) \right].
\end{align*}
\end{proof}

\begin{manualtheorem}{6.9}
For $j\in\{1,\dots,|\mathcal{V}|\}$, when $t(\delta, \mathbf{z})_j > 1$, 
\[ f\left(\widetilde{\mathbf{z}}^j\right) - f\left(\widetilde{\mathbf{z}}^{j-1}\right) \geq  t(\delta, \mathbf{z})_j  \left[ f\left(P(\delta,j)\right) - f\left(P(\delta,j-1)\right) \right].\]
\end{manualtheorem}
\begin{proof}
Given $t(\delta, \mathbf{z})_j > 1$, we know that $\widetilde{\mathbf{z}}^j - [P(\delta,j)-P(\delta,j-1)] > \widetilde{\mathbf{z}}^j - [P(\delta,j)-P(\delta,j-1)]t(\delta, \mathbf{z})_j = \widetilde{\mathbf{z}}^{j-1}$. Let \[\gamma = \frac{\widetilde{\mathbf{z}}^j- \widetilde{\mathbf{z}}^{j-1}-(P(\delta,j)-P(\delta,j-1))}{\widetilde{\mathbf{z}}^j- \widetilde{\mathbf{z}}^{j-1}}.\] We notice that $\widetilde{\mathbf{z}}^{j-1} + \gamma(\widetilde{\mathbf{z}}^j- \widetilde{\mathbf{z}}^{j-1}) = \widetilde{\mathbf{z}}^j - (P(\delta,j)-P(\delta,j-1))$. In addition, $0 < \gamma < 1$ and 
\[  \frac{1}{1-\gamma} = \frac{\widetilde{\mathbf{z}}^j- \widetilde{\mathbf{z}}^{j-1}}{P(\delta,j)-P(\delta,j-1)} = t(\delta, \mathbf{z})_j. \]
 Then
\begin{align*}
f\left(\widetilde{\mathbf{z}}^j\right) - f\left(\widetilde{\mathbf{z}}^{j-1}\right) &  \geq \frac{1}{1-\gamma}\left[f\left(\widetilde{\mathbf{z}}^{j-1}+ \widetilde{\mathbf{z}}^j- \widetilde{\mathbf{z}}^{j-1}\right) - f\left(\widetilde{\mathbf{z}}^{j-1} + \gamma (\widetilde{\mathbf{z}}^j- \widetilde{\mathbf{z}}^{j-1})\right) \right] \quad \text{(by Lemma \ref{lem:pos_dir_increment})} \\
& = t(\delta, \mathbf{z})_j \left[f\left(\widetilde{\mathbf{z}}^j\right) - f\left( \widetilde{\mathbf{z}}^j - (P(\delta,j)-P(\delta,j-1))\right) \right] \\
& \geq t(\delta, \mathbf{z})_j  \left[ f\left(P(\delta,j)  \right) - f\left(P(\delta,j) - [P(\delta,j)-P(\delta,j-1)]\right) \right] \\
&  \quad \text{(by DR-submodularity of $f$ and Lemma \ref{lem:tildez_and_P})} \\
& \geq t(\delta, \mathbf{z})_j  \left[ f\left(P(\delta,j)\right) - f\left(P(\delta,j-1)\right) \right].
\end{align*}
\end{proof}

\begin{manualtheorem}{6.15}
For any $\psi\in\widecheck{\Psi}$ and any $j\in J(\psi)$, the following inequality holds:  
\[f(\widecheck{\mathbf{z}}^{j}) - f(\widecheck{\mathbf{z}}^{j-1}) \geq \frac{u_{\ch{\psi}}-z_\psi}{u_{\ch{\psi}}-u_\psi} [f(P(\delta, j)) - f(P(\delta, j-1)) + f(P(\delta, j-1)) - f(Q(\delta, j))].\]
\end{manualtheorem}
\begin{proof}
Let $i = \delta(j)$. Note that $Q(\delta, j) \geq \widecheck{\mathbf{z}}^{j-1} + u_\psi\mathbf{e}^i$, and $P(\delta, j) - Q(\delta,j) = (u_{\ch{\psi}} - u_\psi)\mathbf{e}^i$. We obtain
\begin{align*}
f(\widecheck{\mathbf{z}}^{j}) - f(\widecheck{\mathbf{z}}^{j-1}) & = f(\widecheck{\mathbf{z}}^{j-1} + (u_{\ch{\psi}}-z_\psi)\mathbf{e}^i) - f(\widecheck{\mathbf{z}}^{j-1}) \quad \text{(from \eqref{z_ell})} \\
& \geq \frac{1}{1-(u_\psi-z_\psi)/(u_{\ch{\psi}}-z_\psi)} [f(\widecheck{\mathbf{z}}^{j-1} + (u_{\ch{\psi}}-z_\psi)\mathbf{e}^i) - f(\widecheck{\mathbf{z}}^{j-1} + (u_\psi-z_\psi)\mathbf{e}^i)] \\
& \quad \text{(by Lemma \ref{lem:pos_dir_increment})} \\
& = \frac{u_{\ch{\psi}}-z_\psi}{u_{\ch{\psi}}-u_\psi} [f(\widecheck{\mathbf{z}}^{j-1} + (u_{\ch{\psi}}-z_\psi)\mathbf{e}^i) - f(\widecheck{\mathbf{z}}^{j-1} + (u_\psi-z_\psi)\mathbf{e}^i)]  \\
& \geq \frac{u_{\ch{\psi}}-z_\psi}{u_{\ch{\psi}}-u_\psi} [f(P(\delta, j)) - f(Q(\delta,j))] \quad \text{(because $f$ is DR-submodular)} \\
& = \frac{u_{\ch{\psi}}-z_\psi}{u_{\ch{\psi}}-u_\psi} [f(P(\delta, j)) - f(P(\delta, j-1)) + f(P(\delta, j-1)) - f(Q(\delta,j))]. 
\end{align*}
\end{proof}

\begin{manualtheorem}{6.17}
For any $\psi\in\widecheck{\Psi}$, 
\[\sum_{j\in J(\psi)} [f(P(\delta, j-1)) - f(Q(\delta, j))] \geq - \frac{u_\psi - \floor{u_\psi}}{\floor{u_\psi}- u_{\psi^\vartriangle(\mathcal{T}^{\delta, j-1})}} [f(P(\delta, \delta^{-1}(\psi))) - f(P(\delta, \delta^{-1}(\psi)-1))].\]
\end{manualtheorem}
\begin{proof}
For any $j = J(\psi)_{\ell}$, $\ell\in\{1,\dots, |J(\psi)|\}$, $P(\delta, j-1) \leq Q(\delta, j)$ component-wise, $Q(\delta, j) - P(\delta, j-1) = T(\delta, J(\psi)_{\ell}) - T(\delta, J(\psi)_{\ell-1})$, and $Q(\delta,j) \geq T(\delta, j)$ according to Observation \ref{obs:PQT}. By DR-submodularity of $f$, 
\[f(T(\delta, J(\psi)_{\ell})) - f(T(\delta, J(\psi)_{\ell-1})) \geq f(Q(\delta, j)) - f(P(\delta, j-1)),  \]
which is equivalent to 
\[f(P(\delta, j-1)) - f(Q(\delta, j)) \geq f(T(\delta,J(\psi)_{\ell-1})) - f(T(\delta, J(\psi)_{\ell})).  \]
Following from this inequality, 
\begin{align*}
\sum_{j\in J(\psi)} [f(P(\delta, j-1)) - f(Q(\delta, j))] & \geq \sum_{\ell=1}^{|J(\psi)|} [f(T(\delta, J(\psi)_{\ell-1})) - f(T(\delta,J(\psi)_\ell))] \\
& = f(T(\delta, J(\psi)_0)) - f(T(\delta,J(\psi)_{|J(\psi)|})) \\
& = f(P(\delta, \delta^{-1}(\psi))) -  f(T(\delta, \delta^{-1}(\ch{\psi}))).
\end{align*}
We note that $T(\delta, \delta^{-1}(\ch{\psi})) = P(\delta, \delta^{-1}(\psi)-1) + (u_\psi - u_{\psi^\vartriangle(\mathcal{T}^{\delta, j-1})})\sum_{i'\in\sigma_{\psi}(\mathcal{T}^{\delta, \delta^{-1}(\psi)})}\mathbf{e}^{i'}$, and 
\begin{align*}
& \quad  f(P(\delta, \delta^{-1}(\psi))) - f(P(\delta, \delta^{-1}(\psi)-1)) \\
 & = f\left(P(\delta, \delta^{-1}(\psi)-1) + (\floor{u_\psi}  - u_{\psi^\vartriangle(\mathcal{T}^{\delta, j-1})})\sum_{i'\in\sigma_{\psi}(\mathcal{T}^{\delta, \delta^{-1}(\psi)})}\mathbf{e}^{i'}\right) - f(P(\delta, \delta^{-1}(\psi)-1)) \\
 & \geq  \frac{\floor{u_\psi}  - u_{\psi^\vartriangle(\mathcal{T}^{\delta, j-1})}}{u_\psi  - u_{\psi^\vartriangle(\mathcal{T}^{\delta, j-1})}} \left[f\left(P(\delta, \delta^{-1}(\psi)-1) + (u_\psi  - u_{\psi^\vartriangle(\mathcal{T}^{\delta, j-1})})\sum_{i'\in\sigma_{\psi}(\mathcal{T}^{\delta, \delta^{-1}(\psi)})}\mathbf{e}^{i'}\right) - f(P(\delta, \delta^{-1}(\psi)-1))\right] \\
 & \quad \text{(by Lemma \ref{lem:pos_dir_increment})} \\
 & =  \frac{\floor{u_\psi}  - u_{\psi^\vartriangle(\mathcal{T}^{\delta, j-1})}}{u_\psi  - u_{\psi^\vartriangle(\mathcal{T}^{\delta, j-1})}} \left[ f(T(\delta, \delta^{-1}(\ch{\psi}))) - f(P(\delta, \delta^{-1}(\psi)-1))\right]. 
\end{align*}
In other words, 
\[- \left[ f(T(\delta, \delta^{-1}(\ch{\psi}))) - f(P(\delta, \delta^{-1}(\psi)-1))\right] \geq -  \frac{u_\psi  - u_{\psi^\vartriangle(\mathcal{T}^{\delta, j-1})}}{\floor{u_\psi}  - u_{\psi^\vartriangle(\mathcal{T}^{\delta, j-1})}} \left[f(P(\delta, \delta^{-1}(\psi))) - f(P(\delta, \delta^{-1}(\psi)-1))\right]. \] 
Therefore, 
\begin{align*}
& \quad \sum_{j\in J(\psi)} [f(P(\delta, j-1)) - f(Q(\delta, j))] \\
& \geq  f(P(\delta, \delta^{-1}(\psi))) - \frac{u_\psi}{\floor{u_\psi}} f(P(\delta, \delta^{-1}(\psi))) \\
& = \left[f(P(\delta, \delta^{-1}(\psi))) - f(P(\delta, \delta^{-1}(\psi)-1))\right] - \left[ f(T(\delta, \delta^{-1}(\ch{\psi}))) - f(P(\delta, \delta^{-1}(\psi)-1))\right] \\
& \geq  \left[f(P(\delta, \delta^{-1}(\psi))) - f(P(\delta, \delta^{-1}(\psi)-1))\right] -  \frac{u_\psi  - u_{\psi^\vartriangle(\mathcal{T}^{\delta, j-1})}}{\floor{u_\psi}  - u_{\psi^\vartriangle(\mathcal{T}^{\delta, j-1})}} \left[f(P(\delta, \delta^{-1}(\psi))) - f(P(\delta, \delta^{-1}(\psi)-1))\right] \\
& = - \frac{u_\psi  - \floor{u_\psi}}{\floor{u_\psi}  - u_{\psi^\vartriangle(\mathcal{T}^{\delta, j-1})}}\left[f(P(\delta, \delta^{-1}(\psi))) - f(P(\delta, \delta^{-1}(\psi)-1))\right] .
\end{align*}
\end{proof}

\begin{manualtheorem}{6.18}
For all $j\notin \bigcup_{\psi\in\widecheck{\Psi}} \left(J(\psi)\cup \{\psi\}\right)$, 
\[f(\widecheck{\mathbf{z}}^{j}) - f(\widecheck{\mathbf{z}}^{j-1}) \geq t(\delta,\mathbf{z})_j [f(P(\delta, j)) - f(P(\delta, j-1))]. \]
\end{manualtheorem}
\begin{proof}
Based on Observation \ref{obs:P_tpp}, $\widecheck{\mathbf{z}}^j = \widecheck{\mathbf{z}}^{j-1} + t(\delta,\mathbf{z})_j [P(\delta, j) - P(\delta, j-1)]$ for $j\notin \bigcup_{\psi\in\widecheck{\Psi}} \left(J(\psi)\cup \{\psi\}\right)$. We know that $t(\delta,\mathbf{z})_j \geq 0$ and $\widecheck{\mathbf{z}}^{j-1} \leq P(\delta, j-1)$. Therefore, by the same analysis as in Lemmas \ref{lem:tree_t_small} and \ref{lem:tree_t_large}, the stated inequality holds. 
\end{proof}

\end{document}